\tikzstyle arrowstyle=[scale=2]
\tikzstyle directed=[postaction={decorate,decoration={markings,
    mark=at position .65 with {\arrow[arrowstyle]{stealth}}}}]
\tikzstyle reverse directed=[postaction={decorate,decoration={markings,
    mark=at position .65 with {\arrowreversed[arrowstyle]{stealth};}}}]
    \tikzstyle left directed=[postaction={decorate,decoration={markings,
    mark=at position -.62 with {\arrow[arrowstyle]{stealth}}}}]
\tikzstyle left reverse directed=[postaction={decorate,decoration={markings,
    mark=at position -.62 with {\arrowreversed[arrowstyle]{stealth};}}}]
\newtheorem{thm}{Theorem}[section]
\newtheorem{cor}[thm]{Corollary}
\newtheorem{lem}[thm]{Lemma}
\newtheorem{clm}[thm]{Claim}
\newtheorem{prop}[thm]{Proposition}
\theoremstyle{remark}
\newtheorem{rmk}[thm]{Remark}
\newtheorem{fact}[thm]{\textbf{Fact}}
\newtheorem{formula}[thm]{\textbf{Formula}}
\theoremstyle{definition}
\newtheorem{Def}[thm]{Definition}          
\newtheorem{convention}[thm]{Convention}            
\newtheorem{Notation}[thm]{Notation Convention}                  
\def \C {\mathbb C}
\title{The spectrum of an operator associated with  $G_{2}-$instantons with $1-$dimensional singularities and Hermitian Yang-Mills connections with isolated singularities} 
\date{\vspace{-5ex}}
\begin{document}
\maketitle
\begin{abstract} This is the first step in an attempt at a deformation theory for $G_{2}-$instantons with $1-$dimensional conic singularities. Under a set of model data,
the linearization  yields a self-adjoint first order elliptic operator $P$ on a certain bundle over $\mathbb{S}^{5}$. As a dimension reduction, the operator $P$ also arises from  Hermitian Yang-Mills connections with isolated conic singularities on a Calabi-Yau $3$-fold. 

  Using the Quaternion structure in the Sasakian geometry of $\mathbb{S}^{5}$, we describe the set of all eigenvalues of $P$ (denoted by $Spec P$). We show that $SpecP$ consists of finitely many integers induced by certain sheaf cohomologies on $\mathbb{P}^{2}$, and  infinitely many real numbers induced by the spectrum of the rough Laplacian on the pullback endomorphism bundle over $\mathbb{S}^{5}$. The multiplicities and the form of an eigensection can be described fairly explicitly. 
  
  Using the representation theory of $SU(3)$ and the subgroup $S[U(1)\times U(2)]$,   we show an example in which $SpecP$ and the multiplicities can be completely determined.  
\end{abstract}

\tableofcontents
\section{Introduction}
\subsection{Overview}
  $G_{2}-$instantons (and projective $G_{2}-$instantons) are the analogue of both flat connections in dimension $3$, and anti self-dual connections in dimension $4$. Understanding the  singularities of (projective) $G_{2}-$instantons plays an important role in the programs  proposed by  Donaldson-Thomas \cite{DonaldsonThomas} and Donaldson-Segal \cite{DonaldsonSegal} on higher dimensional gauge theory. 

In conjunction with Jacob-Walpuski \cite{JW}, to construct   (projective) $G_{2}-$instantons with $1-$dimensional singularities on twisted connected-sum $G_{2}-$manifolds via gluing, an important step is a deformation theory built upon a Fredholm theory for the linearized operator. In \cite{WangyuanqiJGP},  it is  shown that  a Fredholm theory, and consequently a deformation theory, always exist for instantons with isolated singularities. However, the situation of instantons with $1-$dimensional singularities is expected to be drastically different. 

On a $G_{2}-$instanton with conic singularities along a circle, in the model setting,  the linearized operator yields a self-adjoint elliptic operator $P$ on a certain bundle over $\mathbb{S}^{5}$ (see Lemma \ref{lem formula of the model dirac deformation operator} below). The set of all eigenvalues  of $P$,  denoted by $SpecP$, plays a crucial role in the construction of a deformation theory. It determines the indicial roots of the linearization.

 The purpose of this paper is to describe $SpecP$ and address the multiplicities. 
\subsection{Background of the operator $P$}

Before stating the main results, we briefly recall some background on $G_{2}-$instantons with $1-$dimensional singularities, and explain where the operator $P$ comes from. 
 More details can be found in Section \ref{sect Sasakian}. 
 
 \subsubsection{General Background}
 Let $M^{7}$ be a $7-$dimensional manifold with a $G_{2}-$structure $\phi$, we denote the co-associative $4-$form by $\psi$. Given a smooth Hermitian vector bundle $E$ over $M^{7}$, let $adE$ denote the bundle of  skew-adjoint endomorphisms. 
 Let $A$  be a unitary connection, and $\sigma$ be a section of $adE$.  The pair $(A,\sigma)$ is called a $G_{2}-$monopole if the following equation is satisfied. 
\begin{equation}\label{equ instanton equation}
\star(F_{A}\wedge \psi)+d_{A}\sigma=0.
\end{equation} 
When $d_{A}\sigma=0$, we say that $A$ is a $G_{2}-$instanton. 

Let $\Omega^{k}_{adE}$ denote the bundle of  $adE-$valued $k-$forms. With gauge fixing, the linearization of   \eqref{equ instanton equation}
 in $A$ is an elliptic operator $L_{A,\phi}$ which maps $C^{\infty}[M^{7}, \Omega^{0}_{adE}\oplus \Omega^{1}_{adE}]$ to itself. Namely, 
\begin{equation}\label{equ introduction formula for deformation operator}L_{A,\phi}[\begin{array}{c}
\sigma   \\
  a   
\end{array}]=[\begin{array}{c}
d_{A}^{\star}a   \\
  d_{A}\sigma+\star(d_{A}a\wedge \psi)  
\end{array}], \end{equation}
 where $\sigma\in C^{\infty}[M^{7}, \Omega^{0}_{adE}]$, and  $a\in C^{\infty}[M^{7}, \Omega^{1}_{adE}]$. $L_{A,\phi}$ only depends on the projective connection induced by $A$. The same applies below to the model linearized operator $L_{A_{O},\phi_{\mathbb{C}^{3}\times \mathbb{S}^{1}}}$, and also to the operator $P$ of interest.
\subsubsection{A holomorphic Hermitian triple and the  associated data setting}
We now introduce the simple manifolds and maps required. 
\begin{Def} Using the dimensions of the domain and range manifold as subscripts,  we consider the following standard projection maps. 
\begin{equation}\label{equ tabular choice of solution 2nd order operator m neq 0}  \begin{tabular}{|p{3cm}|p{4.5cm}|}
  \hline
 Map   &  Domain and Range\\ \hline
 $\pi_{7,6}$ &  $(\mathbb{C}^{3}\setminus O)\times \mathbb{R}\rightarrow \mathbb{C}^{3}\setminus O$\\ \hline
 $\pi_{6,5}$ & $\mathbb{C}^{3}\setminus O\rightarrow \mathbb{S}^{5}$\\ \hline
 $\pi_{5,4}$ &  $\mathbb{S}^{5}\rightarrow \mathbb{P}^{2}$\\ \hline
$\pi_{6,4}\triangleq \pi_{5,4}\cdot \pi_{6,5}$ & $\mathbb{C}^{3}\setminus O \rightarrow  \mathbb{P}^{2}$ \\   \hline
$\pi_{7,4}\triangleq \pi_{6,4}\cdot \pi_{7,6}$ & $(\mathbb{C}^{3}\setminus O)\times \mathbb{S}^{1} \rightarrow  \mathbb{P}^{2}$ \\   \hline
$\pi_{7,5}\triangleq \pi_{6,5}\cdot \pi_{7,6}$ & $(\mathbb{C}^{3}\setminus O)\times \mathbb{S}^{1} \rightarrow  \mathbb{S}^{5}$ \\   \hline
\end{tabular}
 \renewcommand\arraystretch{1.5}
  \end{equation}
\end{Def}

 \begin{Def}\label{Def standard Hermitian metric on Obeta} The standard Hermitian metric on the universal bundle $O(-1)\rightarrow \mathbb{P}^{2}$ is $|Z_{0}|^{2}+|Z_{1}|^{2}+|Z_{2}|^{2}$. We denote it by $h_{O(-1)}$ (see \cite{GH}).  For any integer $l\neq 0$,  this metric induces uniquely  a Hermitian metric $h_{O(l)}$ on $O(l)$. We call the Chern connection of $h_{O(l)}$ \textit{the standard connection}.\end{Def}

We introduce the following terms. 
\begin{Def}\label{Def setting of 3 keywords}

 \textbf{1. Holomorphic Hermitian triple:} A triple $(E,h,A_{O})$ consists of a holomorphic vector bundle $E\rightarrow \mathbb{P}^{2}$, a Hermitian metric $h$ on $E$, and the Chern connection $A_{O}$ 
  is called a \textit{holomorphic Hermitian triple} on $\mathbb{P}^{2}$. 

 
  \textbf{2. Hermitian Yang-Mills triple:} A holomorphic Hermitian triple on $\mathbb{P}^{2}$ is called \textit{Hermitian Yang-Mills} if the connection $A_{O}$ is Hermitian Yang-Mills. 
  
  \textbf{3. Irreducible Hermitian Yang-Mills triple:}   A  Hermitian Yang-Mills triple on $\mathbb{P}^{2}$ is called \textit{irreducible Hermitian Yang-Mills} if $A_{O}$ is irreducible and $rankE\geq 2$. 
    
  A holomorphic Hermitian (Hermitian Yang-Mills) triple on $\mathbb{P}^{n}$ and the standard connection on $O(l)\rightarrow \mathbb{P}^{n}$ are defined in the same manner. Nevertheless, except for the K\"ahler identity in Lemma \ref{lem Kahler identity} below, we only consider such a triple on $\mathbb{P}^{2}$.

The bundle $(End_{0}E)(l)$  is  called the twisted traceless endomorphism bundle,  and $(End E)(l)$ is called the twisted endomorphism bundle. We call  the tensor product of the standard connection on $O(l)$ and $A_{O}$ the \textit{twisted connection}. We also call the tensor product of the standard metric on $O(l)$ and $h$ the twisted metric. A section of $(End E)(l)$ is called a \textit{twisted endomorphism}.  The same applies to the pullbacks. 

     \textbf{4. The associated  data setting:} Given a holomorphic Hermitian triple $(E,h,A_{O})$ on $\mathbb{P}^{2}$. Throughout, the operator $P$ and the other bundle rough Laplacians are defined by the following data.   \begin{itemize}\item On $\mathbb{S}^{5},\ \mathbb{C}^{3}\setminus O$, and $(\mathbb{C}^{3}\setminus O)\times \mathbb{S}^{1}$,  we consider the pullback of $A_{O}$ and  the pullback Hermitian metric on the (pullback) endomorphism bundles. 
     
   On a twisted endomorphism bundle over $\mathbb{P}^{2}$,  we consider the \textit{twisted connection} and the twisted metric.

\item Let \begin{equation}\label{equ def eta}\eta\triangleq d^{c}\log r\triangleq \sqrt{-1}(\bar{\partial}-\partial)\log r\end{equation} be the contact form on $\mathbb{S}^{5}$. Throughout, we consider the Fubini-Study metric $\frac{d\eta}{2}$ on $\mathbb{P}^{2}$, the standard round metric on $\mathbb{S}^{5}$, and the Euclidean metric on $\mathbb{C}^{3}\times \mathbb{S}^{1}$ (and $\mathbb{C}^{3}$).
\end{itemize}
\end{Def}

\subsubsection{The linearized operator under the model data}

Suppose $A_{O}$ is not flat on $\mathbb{P}^{2}$, the pullback connection  on $(\mathbb{C}^{3}\setminus O)\times \mathbb{S}^{1}$
 has conic singularity along the circle $O\times \mathbb{S}^{1}$. This is the prototype of what we are interested in. The model linear problem for $G_{2}-$instantons with conic singularities along a circle is as follows.  
 
On $\mathbb{C}^{3}$, let $\omega_{\mathbb{C}^{3}}=\frac{\sqrt{-1}}{2}(dZ_{0}d\bar{Z}_{0}+dZ_{1} d\bar{Z}_{1}+dZ_{2} d\bar{Z}_{2})$ be the standard K\"ahler form, and let $\Omega_{\mathbb{C}^{3}}=dZ_{0}dZ_{1}dZ_{2}$ be the standard holomorphic volume form.  The standard $G_{2}-$structure on $\mathbb{C}^{3}\times \mathbb{S}^{1}$ is defined by $$\phi_{\mathbb{C}^{3}\times \mathbb{S}^{1}}\triangleq ds\wedge \omega_{\mathbb{C}^{3}}+Re\Omega_{\mathbb{C}^{3}}.$$
The standard co-associative $4-$form is  $\psi_{\mathbb{C}^{3}\times \mathbb{S}^{1}}\triangleq \frac{\omega^{2}_{\mathbb{C}^{3}}}{2}-ds\wedge Im\Omega_{\mathbb{C}^{3}}.$

 Given a holomorphic Hermitian tripe $(E,h,A_{O})$ on $\mathbb{P}^{2}$, the model linearized operator is defined as follows. 
 \begin{equation}\label{equ  formula for model deformation operator}L_{A_{O},\phi_{\mathbb{C}^{3}\times \mathbb{S}^{1}}}[\begin{array}{c}
\sigma   \\
  a_{\mathbb{C}^{3}\times \mathbb{S}^{1}}
\end{array}]=[\begin{array}{c}
d_{A_{O},\mathbb{C}^{3}\times \mathbb{S}^{1}}^{\star_{\mathbb{C}^{3}\times \mathbb{S}^{1}}}a_{\mathbb{C}^{3}\times \mathbb{S}^{1}}   \\
  d_{A_{O},\mathbb{C}^{3}\times \mathbb{S}^{1}}\sigma+\star_{\mathbb{C}^{3}\times \mathbb{S}^{1}}(d_{A_{O},\mathbb{C}^{3}\times \mathbb{S}^{1}}a_{\mathbb{C}^{3}\times \mathbb{S}^{1}}\wedge \psi_{\mathbb{C}^{3}\times \mathbb{S}^{1}})  
\end{array}], \end{equation}
where $\sigma\in C^{\infty}[(\mathbb{C}^{3}\setminus O)\times \mathbb{S}^{1},\Omega^{0}_{ \pi^{\star}_{7,4}(adE)}]$ is a section of the  adjoint bundle, and\\  $a_{\mathbb{C}^{3}\times \mathbb{S}^{1}}\in C^{\infty}[(\mathbb{C}^{3}\setminus O)\times \mathbb{S}^{1}, \Omega^{1}_{ \pi^{\star}_{7,4}adE}]$ is an adjoint bundle-valued $1-$form.

A section $a_{\C^{3}\times \mathbb{S}^{1}}$ of $\Omega^{1}_{\pi_{7,4}^{\star}(adE)}\rightarrow (\C^{3}\setminus O)\times \mathbb{S}^{1}$ can be split into
  \begin{equation}\label{equ 1st splitting of the 1form}a_{\mathbb{C}^{3}\times \mathbb{S}^{1}}=\underline{a}_{s}ds+a_{\mathbb{C}^{3}},\end{equation}where $\underline{a}_{s}$ is a section of $\pi^{\star}_{7,4}(adE)$, and $a_{\mathbb{C}^{3}}$ is a section of  $\pi^{\star}_{7,6}(\Omega_{\mathbb{C}^{3}}^{1})\otimes \pi_{7,4}^{\star}(adE)$ i.e. an adjoint bundle-valued $1-$form without $ds-$component. 
  \subsubsection{The fine splitting}
 We employ the Sasakian geometry of $\mathbb{S}^{5}$, and aim at a more meticulous splitting with respect to the transverse K\"ahler structure.

 We denote the contact distribution $Ker\eta$ by $D$.  Let $\xi$ denote the standard Reed vector field on $\mathbb{S}^{5}$ (tangential to orbit of the $U(1)-$multiplications). The orthogonal complement of the contact form $\eta$ is denoted by $D^{\star}$, we call it the contact co-distribution. We call a form $\theta$  semi-basic if $\xi\lrcorner \theta=0$. A section of $D^{\star}$ is precisely a semi-basic $1-$form. 

We define the finer splitting of a section in the domain of the linearized operator. In view of formula \eqref{equ  formula for model deformation operator} and \eqref{equ 1st splitting of the 1form}, let $u\triangleq r\sigma,\ a_{s}\triangleq r\underline{a}_{s}$, we find 
  \begin{equation}\label{equ fine splitting aC3}  a_{\mathbb{C}^{3}}= a_{r}\frac{dr}{r}+(a_{\eta})\eta+a_{0},\end{equation} where
  \begin{itemize}\item $a_{r}$ and $a_{\eta}$ are sections of $\pi^{\star}_{7,4}(adE)$,    \item  $a_{0}$ is an adjoint bundle-valued semi-basic $1-$form i.e.  a $1-$form with no $ds$, $dr$, or $\eta-$component. 
  \end{itemize}
For further calculation, we let  $a_{\mathbb{S}^{5}}\triangleq a_{\eta}(\eta)+a_{0}$. Fixing $r$ and $s$, both $a_{\mathbb{S}^{5}}$ and $a_{0}$ are forms on $\mathbb{S}^{5}$.  We then obtain the splitting of the domain bundle of the linearized operator.   \begin{eqnarray}\nonumber& & \Omega^{0}_{\pi_{7,4}^{\star}(adE)}\oplus \Omega^{1}_{\pi_{7,4}^{\star}(adE)}=[\pi_{7,4}^{\star}(adE)]^{\oplus 4}\oplus [\pi_{7,5}^{\star}(D^{\star})\otimes  \pi_{7,4}^{\star}(adE)]:\\& & [\begin{array}{c}
\sigma   \\
 a 
\end{array}]=\left[\begin{array}{ccccc}\frac{1}{r} &0 &0 &0 &0 \\ 0 &\frac{ds}{r} &\frac{dr}{r}  &\eta &1 \end{array}\right]\left[\begin{array}{c}u \\ a_{s} \\  a_{r} \\ a_{\eta} \\ a_{0}\end{array}\right].\label{equ  5 element basis}
  \end{eqnarray}
  
  \begin{Def}\label{Def Dom bundle}
Henceforth, let $Dom_{\mathbb{S}^{5}}$ denote $[\pi_{5,4}^{\star}(adE)]^{\oplus 4}\oplus [D^{\star}\otimes  (\pi_{5,4}^{\star}adE)]$ as well as the space of smooth sections of the same bundle on $\mathbb{S}^{5}$. Similarly, on $(\mathbb{C}^{3} \setminus O)\times \mathbb{S}^{1}$, let $Dom_{7}$ denote the pullback $\pi^{\star}_{7,5}Dom_{\mathbb{S}^{5}}$ 
as well as the space of smooth sections. They are the ``domain"  of the operator $P$, and also of the linearized operator in \eqref{equ  formula for model deformation operator}. 
\end{Def}
\subsubsection{Introducing the operator $P$}

Now we can abbreviate Lemma \ref{lem formula of the model dirac deformation operator} below: given a holomorphic Hermitian triple on $\mathbb{P}^{2}$,   there exists isometries $I$, $K$ on the  bundle  $Dom_{7}$,  and a self-adjoint elliptic operator $P$ on $Dom_{\mathbb{S}^{5}}$, such that under the basis in \eqref{equ  5 element basis}, the following splitting holds. 
\begin{equation}\label{equ def of P}L_{A_{O},\phi_{\mathbb{C}^{3}\times \mathbb{S}^{1}}}=\frac{\partial}{\partial s}\circ I+K\circ (\frac{\partial}{\partial r}-\frac{P}{r}).
\end{equation}

As a dimension reduction, in complex dimension $3$, the operator $K\circ (\frac{\partial}{\partial r}-\frac{P}{r})$ is the model linearized operator with gauge fixing for  a Hermitian Yang-Mills monopole with isolated conic singularity (see Appendix \ref{Appendix digression} below). Thus, on a Calabi-Yau $3-$fold, our results  allow us to calculate the index of the linearized operator of Hermitian Yang-Mills monopoles with isolated conic singularities. 

\subsection{The main theorem}
Throughout,  $SpecP$ does not count the  multiplicity of an eigenvalue (cf. the ``$Spec^{mul}$" in Definition \ref{Def spec mul} below that counts multiplicity). This means an eigenvalue appears in  $SpecP$ exactly once, thus it is  a subset of $\mathbb{R}$.  The same applies to $Spec (\nabla^{\star}\nabla|_{\mathbb{S}^{5}})$ below. We address the multiplicities separately. 

\begin{Def}\label{Def eigenspace} (Eigenspaces) Given a  holomorphic Hermitian triple on $\mathbb{P}^{2}$ and in the associated data setting, for any real number $\mu$, let $$\mathbb{E}_{\mu}P\triangleq Ker(P-\mu Id).$$
Resultantly, $\mu\in Spec P$ if and only if  $\mathbb{E}_{\mu}P\neq \{0\}$. 

On the adjoint bundle $\pi^{\star}_{5,4}(adE)\rightarrow \mathbb{S}^{5}$,  let $\nabla^{\star}\nabla$ and $\nabla^{\star}\nabla|_{\mathbb{S}^{5}}$ both abbreviate the  rough Laplacian defined by the pullback connection $A_{O}$ and the standard round metric on $\mathbb{S}^{5}$. The set of all its eigenvalues is denoted by $Spec (\nabla^{\star}\nabla|_{\mathbb{S}^{5}})$. Let $$\mathbb{E}_{\lambda}(\nabla^{\star}\nabla|_{\mathbb{S}^{5}})\triangleq Ker [(\nabla^{\star}\nabla|_{\mathbb{S}^{5}})-\lambda Id].$$ 

We still call $\mathbb{E}_{\mu}P$ ($\mathbb{E}_{\lambda}(\nabla^{\star}\nabla|_{\mathbb{S}^{5}})$) the \textit{eigenspace} with respect to $\mu$ ($\lambda$), though $\mu$ is not necessarily an eigenvalue.  $\mu$ ($\lambda$) is an eigenvalue if and only if $\mathbb{E}_{\mu}P\neq \{0\}$ ($\mathbb{E}_{\lambda}(\nabla^{\star}\nabla|_{\mathbb{S}^{5}})\neq \{0\}$). This convention turns out to work well. 
\end{Def}
\begin{Notation}\label{Notation = I} The equal sign ``$=$" between two  vector spaces (bundles) always means at least a real isomorphism. The notation ``$dim$" means the real dimension. 
\end{Notation}
Before stating the main theorem, we need two more notions.
\begin{Def}\label{Def two parts of spec} Given a holomorphic Hermitian triple on $\mathbb{P}^{2}$, we define the following two subsets of $\mathbb{R}$. 
\begin{itemize}\item $S_{\nabla^{\star}\nabla}\triangleq \{\mu|(\mu^{2}+2\mu-3)\in Spec(\nabla^{\star}\nabla |_{\mathbb{S}^{5}})\}\cup \{\mu|\mu^{2}+4\mu\in Spec(\nabla^{\star}\nabla |_{\mathbb{S}^{5}})\}$ i.e. 
$$S_{\nabla^{\star}\nabla}\triangleq \cup_{\lambda\in Spec\nabla^{\star}\nabla|_{\mathbb{S}^{5}}}[ \{-1+\sqrt{4+\lambda}\}\cup \{-1-\sqrt{4+\lambda}\}\cup \{-2+\sqrt{4+\lambda}\}\cup \{-2-\sqrt{4+\lambda}\} ].$$
\item  $S_{coh}\triangleq \{l|l\ \textrm{is an integer and}\ H^{1}[\mathbb{P}^{2},(End E)(l)]\neq 0\}$. \end{itemize}
\end{Def}
Apparently, the set $S_{\nabla^{\star}\nabla}$ is induced by the spectrum of the rough Laplacian, 
 the set $S_{coh}$ consists of integers and is given by the sheaf cohomologies. 
Intuitively speaking, under natural assumptions,  our main theorem  ``decomposes" $Spec P$ into the union of these two sets. The multiplicities can be described.

\begin{thm}\label{Thm 1}
Let $(E,h,A_{O})$ be an irreducible Hermitian Yang-Mills triple on $\mathbb{P}^{2}$ with the associated data setting. Let $P$ be the first-order self-adjoint elliptic operator defined  in \eqref{equ def of P} (and Lemma \ref{lem formula of the model dirac deformation operator} below). 

$\mathbb{I}\ \ (Spectrum)$. The following spectral decomposition holds for $P$. $$SpecP=S_{\nabla^{\star}\nabla}\cup S_{coh}.$$
Consequently, the set $(SpecP)\cap (-3,0)$ contains and only contains the two numbers $-1,\ -2$.

$\mathbb{II}\ \ (Multiplicities).$ 
\begin{enumerate}\item In view of Definition \ref{Def eigenspace} of the eigenspaces and Remark \ref{rmk EP is complex} below on the complex structure of each $\mathbb{E}_{\mu}P$, the following complex isomorphisms hold. $$\mathbb{E}_{-1}P=H^{1}[\mathbb{P}^{2}, (End E)(-1)],\  \mathbb{E}_{-2}P=H^{1}[\mathbb{P}^{2}, (End E)(-2)].$$

\item If $\mu\in SpecP$ and $\mu$ is not an integer, the following real isomorphism holds.  $$\mathbb{E}_{\mu}P=[\mathbb{E}_{\mu^{2}+2\mu-3}(\nabla^{\star}\nabla|_{\mathbb{S}^{5}})]^{\oplus 2}\oplus [\mathbb{E}_{\mu^{2}+4\mu}(\nabla^{\star}\nabla|_{\mathbb{S}^{5}})]^{\oplus 2}.$$

\item If $\mu\in SpecP$, $\mu$ is an integer, but $\mu \neq -1$ or $-2$, then
 \begin{eqnarray*} 
dim \mathbb{E}_{\mu}P& =& 2dim \mathbb{E}_{\mu^{2}+2\mu-3}(\nabla^{\star}\nabla|_{\mathbb{S}^{5}})+2dim \mathbb{E}_{\mu^{2}+4\mu}(\nabla^{\star}\nabla|_{\mathbb{S}^{5}})+ 2h^{1}[\mathbb{P}^{2}, (End E)(\mu)]
\\& &-2h^{0}[\mathbb{P}^{2}, (End_{0}E)(\mu)]-2h^{0}[\mathbb{P}^{2}, (End_{0}E)(-\mu-3)].\end{eqnarray*}
\end{enumerate}
In particular, $dim KerP=2h^{1}[\mathbb{P}^{2}, End E]$. 
\end{thm}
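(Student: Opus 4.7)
The plan is to exploit the transverse Kähler structure of the Sasakian fibration $\mathbb{S}^{5}\to \mathbb{P}^{2}$ together with Kähler--Weitzenböck identities on $\mathbb{P}^{2}$ in order to reduce the first-order eigenvalue problem for $P$ to a second-order problem for the pullback rough Laplacian, modulo finitely many integer eigenvalues arising from sheaf cohomology. The first step is to write $P$ explicitly as a $5\times 5$ matrix of differential operators on $(u,a_{s},a_{r},a_{\eta},a_{0})$ using the fine splitting, and to further split the contact-distribution piece $a_{0}\in D^{\star}\otimes \pi_{5,4}^{\star}(adE)$ into its $(1,0)$ and $(0,1)$ parts relative to the transverse complex structure. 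The scalar blocks couple to the $(1,0)/(0,1)$ blocks through the transverse Dolbeault operators $\partial_{A_{O}}$ and $\bar\partial_{A_{O}}$, while the scalar blocks couple among themselves through the Reeb derivative $\mathcal{L}_{\xi}$.

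The second step is to compute $P^{2}$ on this refined block structure. Since $P$ is first order, self-adjoint and elliptic, $P^{2}$ is a Laplacian plus zeroth-order curvature, and on the Sasakian Einstein base together with the HYM condition on $A_{O}$ these curvature contributions reduce to explicit constants. On the scalar sub-bundle one gets $P^{2}=\nabla^{\star}\nabla + c_{1}$, and on the semi-basic sub-bundle $P^{2}=\nabla^{\star}\nabla + c_{2}$, for appropriate constants; matching a $P$-eigenvalue $\mu$ with a rough-Laplacian eigenvalue $\lambda$ then yields exactly the two quadratic relations $\mu^{2}+2\mu-3=\lambda$ and $\mu^{2}+4\mu=\lambda$, i.e.\ $\mu=-1\pm\sqrt{4+\lambda}$ and $\mu=-2\pm\sqrt{4+\lambda}$. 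This recovers $S_{\nabla^{\star}\nabla}$ and, for non-integer $\mu$, the doubled multiplicity of Part II.2 because each branch contributes once.

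The cohomology part $S_{coh}$ enters through the off-diagonal transverse Dolbeault blocks. Decomposing a section of $\pi_{5,4}^{\star}(adE)$ on $\mathbb{S}^{5}$ into Fourier modes along the Reeb field $\xi$, the $l$th mode is precisely a section of $(End E)(l)$ on $\mathbb{P}^{2}$, and the transverse $\bar\partial_{A_{O}}$ on this mode has kernel $H^{0}[\mathbb{P}^{2},(End E)(l)]$ and cokernel $H^{1}[\mathbb{P}^{2},(End E)(l)]$ by Hodge theory. Serre duality with $K_{\mathbb{P}^{2}}=O(-3)$ identifies the companion cohomology in Fourier degree $-\mu-3$ that appears in the multiplicity formula of Part II.3. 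For the distinguished values $\mu=-1,-2$ both quadratic relations force $4+\lambda<1$, outside the spectrum of the rough Laplacian on $\pi^{\star}_{5,4}(adE)$ (since $A_{O}$ is irreducible HYM); only the off-diagonal Dolbeault cokernel remains, yielding the clean isomorphisms $\mathbb{E}_{-1}P = H^{1}[\mathbb{P}^{2},(End E)(-1)]$ and $\mathbb{E}_{-2}P=H^{1}[\mathbb{P}^{2},(End E)(-2)]$ of Part II.1.

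The main obstacle will be the careful bookkeeping for Part II.3 at generic integer $\mu$: one must separate the generic doubled contribution from the exceptional cokernel/kernel contributions of the transverse Dolbeault operators at Fourier degrees $\mu$ and $-\mu-3$, tracking which eigenvectors are genuinely new, which are double-counted from the $\pm\sqrt{4+\lambda}$ branches, and which must be subtracted. The resulting corrections match the stated $+2h^{1}[(End E)(\mu)] - 2h^{0}[(End_{0}E)(\mu)] - 2h^{0}[(End_{0}E)(-\mu-3)]$ terms, with the $-3$ arising from Serre duality on $\mathbb{P}^{2}$. Finally, $\dim\ker P = 2h^{1}[\mathbb{P}^{2},End E]$ follows by setting $\mu=0$ in Part II.3: irreducibility of $A_{O}$ kills $h^{0}[(End_{0}E)]=h^{0}[(End_{0}E)(-3)]=0$, while the rough-Laplacian eigenspaces vanish because neither $\lambda=\mu^{2}+2\mu-3=-3$ nor the potential parallel contribution at $\lambda=\mu^{2}+4\mu=0$ survives on $\pi_{5,4}^{\star}(adE)$.
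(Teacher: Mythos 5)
Your overall strategy---a Weitzenb\"ock reduction of $P$ to the rough Laplacian for the generic part of the spectrum, a Reeb--Fourier decomposition isolating the cohomological eigenvalues, and kernel/cokernel bookkeeping at integer $\mu$---is the same as the paper's. But the central mechanism in your second step is misstated in a way that would fail. $P^{2}$ is \emph{not} of the form $\nabla^{\star}\nabla+c$ on any sub-bundle: if it were, an eigenvalue $\mu$ would satisfy $\mu^{2}=\lambda+c$, symmetric about $0$, and could never produce the relations $\mu^{2}+2\mu-3=\lambda$ and $\mu^{2}+4\mu=\lambda$ that you correctly want. The operators that work are $P^{2}+2P$ and $P^{2}+4P$ (Lemma \ref{lem Bochner}), and even these are only \emph{triangular}: $P^{2}+2P$ acts as $\nabla^{\star}\nabla+3$ on the rows $u,a_{s}$ and $P^{2}+4P$ as $\nabla^{\star}\nabla$ on the rows $a_{r},a_{\eta}$, in the sense that those rows do not involve the other unknowns. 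The semi-basic row $a_{0}$ is never of this form (it carries $2F^{0}_{A_{O}}\otimes a_{0}$ and couplings to $d_{0}a_{r}$, $d_{0}a_{\eta}$, $d_{0}u$, $d_{0}a_{s}$), so attributing the second quadratic to ``the semi-basic sub-bundle'' is wrong: \emph{both} quadratics come from the four scalar components, while the semi-basic component is precisely where the cohomological spectrum $S_{coh}$ lives, via the invariant subspaces $V_{l}\cong\mathcal{H}^{0,1}[\mathbb{P}^{2},(End_{0}E)(l)]$. Because the Bochner operators are triangular rather than diagonal, one still needs the eigen-expansion argument of Theorem \ref{Thm eigenvalue Sasakian} to conclude $SpecP|_{V^{\perp}_{coh}}=S_{\nabla^{\star}\nabla}$ and that eigensections with $\mu\notin S_{\nabla^{\star}\nabla}$ have vanishing scalar rows.

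Two further gaps. The assertion that $(SpecP)\cap(-3,0)$ \emph{contains} $-1$ and $-2$ requires the non-vanishing $h^{1}[\mathbb{P}^{2},(EndE)(-1)]=h^{1}[\mathbb{P}^{2},(EndE)(-2)]=c_{2}(EndE)>0$, which your sketch never establishes; this uses Riemann--Roch, the Kobayashi vanishing theorem, and the Uhlenbeck--Yau theorem (Lemma \ref{lem h1}), and is exactly where irreducibility and $rankE\geq 2$ enter. More seriously, Parts $\mathbb{II}.2$ and $\mathbb{II}.3$ are asserted rather than derived: knowing that the scalar components of a $\mu$-eigensection lie in $\mathbb{E}_{\lambda_{1}}$ and $\mathbb{E}_{\lambda_{2}}$ does not count dimensions. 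The paper constructs an explicit projection $(\mathbb{E}_{\lambda_{1}})^{\oplus2}\oplus(\mathbb{E}_{\lambda_{2}})^{\oplus2}\rightarrow\mathbb{E}_{\mu}P|_{V^{\perp}_{coh}}$ (Lemma \ref{lem proj formula}), proves it surjective, and computes its kernel to be $0$ for non-integer $\mu$ and $H^{0}[(End_{0}E)(\mu)]\oplus H^{0}[(End_{0}E)(-\mu-3)]$ for integer $\mu$; the twist $-\mu-3$ arises not from Serre duality but from the constraint $L^{2}_{\xi}h=-(\mu+3)^{2}h$ forcing a two-term Sasaki--Fourier expansion whose coefficients are then shown to be holomorphic. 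This computation is the bulk of the proof of $\mathbb{II}$ and cannot be dismissed as bookkeeping.
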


 By the Enrique-Severi-Zariski Lemma, $S_{coh}$ is a finite set. Theorem \ref{Thm 1}.$\mathbb{I}$ says that up to this finite set, $Spec P$ is induced by $Spec(\nabla^{\star}\nabla |_{\mathbb{S}^{5}})$. 
 
 We can reduce $Spec(\nabla^{\star}\nabla |_{\mathbb{S}^{5}})$ to the spectrum of the rough Laplacians  on the twisted traceless endomorphism bundles on $\mathbb{P}^{2}$ (see Formula \ref{formula laplace on S5 vs laplace on CP2}).  The spectrum of such  bundle rough Laplacians is ``easier" to understand, in a sense like Theorem \ref{Thm spec of rough laplacian P2} below.

The splitting $SpecP=S_{\nabla^{\star}\nabla}\cup S_{coh}$ corresponds to that the domain $L^{2}(\mathbb{S}^{5},Dom_{\mathbb{S}^{5}})$ is the direct sum of a finite dimensional subspace given by the cohomologies and the infinite-dimensional  orthogonal complement. Please see Definition \ref{Def Vcoh} below for detail.

Theorem \ref{Thm 1}.$\mathbb{II}.1$ says that the multiplicity of $-1$ and $-2$ are both equal to $$2h^{1}[\mathbb{P}^{2}, (End E)(-1)]=2h^{1}[\mathbb{P}^{2}, (End E)(-2)]),$$ which are the real dimension of the cohomologies.

The identity $dim KerP=2h^{1}[\mathbb{P}^{2}, End E]$ in the end of Theorem \ref{Thm 1} says that, as  vector spaces over $\mathbb{R}$, the kernel of $P$ is isomorphic to the deformations space of the Hermitian Yang-Mills connection on $\mathbb{P}^{2}$.

Counting multiplicities,  the eigenvalues of  $P$ is symmetric with respect to $-\frac{3}{2}$ (see the Kodaira-Serre duality for eigenspaces in Lemma \ref{lem JH is serre duality}). Therefore, the eta invariant of $P$ can be calculated (for example, see \cite[Proposition 5.4]{WangyuanqiJFA}).

The binomials $\mu^{2}+2\mu-3$ and $\mu^{2}+4\mu$ are given by the  Bochner formulas (see Lemma \ref{lem Bochner} below). 
The Hermitian Yang-Mills condition is required for these  formulas. The irreducible Hermitian Yang-Mills condition is required for the Chern number inequality (see \eqref{equ 1 lem h1} below) that implies $-1$ and $-2$ must be eigenvalues.

\begin{rmk}The eigensections of $P$ admit an explicit form \eqref{equ 0 lem proj formula} in terms of the eigensections of $\nabla^{\star}\nabla |_{\mathbb{S}^{5}}$. Please see Remark \ref{rmk form of eigensection} and Lemma \ref{lem proj formula}  below.   \end{rmk}
\subsection{An example}
As an  application, if $E=T^{\prime}\mathbb{P}^{2}(k)$ is a twisted holomorphic tangent bundle of $\mathbb{P}^{2}$, we can completely determine $SpecP$. We call the Levi-Civita connection of the Fubini-Study metric on $T^{\prime}\mathbb{P}^{2}$ \textit{the Fubini-Study connection}, and denote it by $\nabla^{FS}$. 

\subsubsection{Spectrum of the rough Laplacian}
A step for the above goal is to determine the spectrum of the rough Laplacian. On the twisted endomorphism bundles $(EndT^{\prime}\mathbb{P}^{2})(l)$, the tensor product of the Fubini-Study connection (metric) and the standard connection (metric) on $O(l)$ is called  \textit{the twisted Fubini-Study connection (metric)}, respectively. The same notions also apply to $T^{\prime}\mathbb{P}^{2}(k)$.

\begin{thm}\label{Thm spec of rough laplacian P2} In the setting of Theorem \ref{Thm 1}, let the irreducible Hermitian Yang-Mills triple be a twisted holomorphic tangent bundle $T^{\prime}\mathbb{P}^{2}(k)$  with the twisted Fubini-Study metric and connection. 

 For any integer $l$, let the rough Laplacian $\nabla^{\star}\nabla|_{(End_{0}T^{\prime}\mathbb{P}^{2})(l)\rightarrow \mathbb{P}^{2}}$ be defined by the twisted Fubini-Study connection and the Fubini-Study metric $\frac{d\eta}{2}$. Then
 \begin{eqnarray}\nonumber  Spec\nabla^{\star}\nabla |_{(End_{0}T^{\prime}\mathbb{P}^{2})(l)\rightarrow \mathbb{P}^{2}}
 &=& \{\frac{4}{3}(a^{2}+b^{2}+ab+3a+3b)-\frac{4}{3}l^{2}-8\ |\  a,b\in \mathbb{Z};  \ a , b \geq 0;
\\& &\max(3-a-2b,b-a-3) \leq l\leq \min(2a+b-3,3+b-a)\}.\nonumber
\\& &\label{equ spec Laplacian P2}
\end{eqnarray}
Consequently, in the associated data setting, 

  \begin{eqnarray}\label{equ proof of Thm rough laplacian}\label{equ Cor 1}Spec\nabla^{\star}\nabla|_{\mathbb{S}^{5}}&=& \{\frac{4}{3}(a^{2}+b^{2}+ab+3a+3b)-\frac{l^{2}}{3}-8\ |\  a,b, l \in \mathbb{Z};  \ a , b \geq 0;\\& &\max(3-a-2b,b-a-3) \leq l\leq \min(2a+b-3,3+b-a)\}.\nonumber
\end{eqnarray}
The multiplicity of each eigenvalue in \eqref{equ spec Laplacian P2} and \eqref{equ Cor 1} is determined by Proposition \ref{prop multiplicity TP2}.
 \end{thm}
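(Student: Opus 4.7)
The plan is to exploit the $SU(3)$-homogeneous structure $\mathbb{P}^2 = SU(3)/K$ with $K = S[U(1)\times U(2)]$. The bundle $(End_0 T'\mathbb{P}^2)(l)$, its twisted Fubini-Study connection, and the Fubini-Study metric are all $SU(3)$-equivariant, so the rough Laplacian is $SU(3)$-invariant and can be diagonalized via Peter-Weyl / Frobenius reciprocity. First I would identify the isotropy $K$-representation on the fiber over the basepoint: the tautological line $O(-1)$ gives a character $\chi_{-1}$, so $O(l) \leftrightarrow \chi_l$; the holomorphic tangent $T'\mathbb{P}^2$ restricts to the standard $U(2)$-representation twisted by an appropriate determinant character; and $End_0 T'\mathbb{P}^2$ restricts on the $SU(2)$-factor to the adjoint $\mathfrak{sl}(2)$, giving a fixed representation $\rho_{End_0}$ of $K$. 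Thus the fiber representation of $(End_0 T'\mathbb{P}^2)(l)$ is $\rho_{End_0}\otimes \chi_l$.

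By Peter-Weyl,
\begin{equation*}
L^2\bigl(\mathbb{P}^2,(End_0 T'\mathbb{P}^2)(l)\bigr)\;=\;\bigoplus_{a,b\geq 0} V_{(a,b)}\otimes \mathrm{Hom}_K\bigl(V_{(a,b)},\,\rho_{End_0}\otimes \chi_l\bigr),
\end{equation*}
where $V_{(a,b)}$ denotes the irreducible $SU(3)$-module of highest weight $a\omega_1+b\omega_2$. By Schur's lemma the rough Laplacian acts by a scalar on each isotypic summand. Comparing the twisted Fubini-Study connection to the canonical reductive connection on $SU(3)\to SU(3)/K$, this scalar equals the $SU(3)$-Casimir on $V_{(a,b)}$ minus the $K$-Casimir on the fiber representation. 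In the Fubini-Study normalization $\frac{d\eta}{2}$ the former contributes $\tfrac{4}{3}(a^2+b^2+ab+3a+3b)$ (the standard quadratic Casimir of $SU(3)$ rescaled by the metric), while the latter contributes $\tfrac{4}{3}l^2+8$, where $8$ is the $K$-Casimir of $\rho_{End_0}$. Together this gives the eigenvalue formula in (\ref{equ spec Laplacian P2}).

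Third, I would determine the $(a,b,l)$ for which $\mathrm{Hom}_K(V_{(a,b)},\rho_{End_0}\otimes \chi_l)\neq 0$. This is a branching problem $V_{(a,b)}\!\downarrow\! K$: one must count when the $\chi_l$-eigenspace for the $U(1)$-factor of $K$ contains a copy of the $SU(2)$-adjoint. By direct weight analysis on the standard basis of $V_{(a,b)}$, the lowest and highest such weights yield the four bounds $\max(3-a-2b,\,b-a-3)\leq l\leq \min(2a+b-3,\,3+b-a)$, with the exact multiplicity supplied by Proposition \ref{prop multiplicity TP2}. This completes (\ref{equ spec Laplacian P2}). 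Finally, (\ref{equ Cor 1}) follows from Fourier decomposition along the $U(1)$-bundle $\pi_{5,4}:\mathbb{S}^5\to \mathbb{P}^2$: the pullback $\pi_{5,4}^\star(adE)$ decomposes into $l$-weight modes whose $L^2$-spaces are $L^2(\mathbb{P}^2,(End_0 T'\mathbb{P}^2)(l))$ by Formula \ref{formula laplace on S5 vs laplace on CP2}, and on the $l$-th mode the rough Laplacian on $\mathbb{S}^5$ equals the twisted Laplacian on $\mathbb{P}^2$ plus the vertical contribution $l^2$ from $\partial_\xi^2$. Adding $l^2$ to $-\tfrac{4}{3}l^2-8$ recovers $-\tfrac{l^2}{3}-8$.

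The main obstacle is the third step: the bookkeeping of the branching $SU(3)\downarrow K$ restricted to the $\mathfrak{sl}(2)$-isotypic component, producing the four asymmetric inequalities and the multiplicity of Proposition \ref{prop multiplicity TP2}. A secondary but delicate point is verifying the exact constant $-8$, which demands an explicit computation of the $K$-Casimir on $\rho_{End_0}$ in the chosen normalization, together with checking that the trace-free condition cuts down $End$ to $End_0$ in the way consistent with the weight count.
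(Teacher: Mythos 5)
Your proposal follows essentially the same route as the paper: identify $(End_{0}T^{\prime}\mathbb{P}^{2})(l)$ as the homogeneous bundle associated to $Ad\otimes\rho_{-l}$ with the reductive connection induced by $m_{\mathbb{P}^{2}}$ (Proposition \ref{prop the canonical connection}), express the rough Laplacian as a difference of Casimirs via Formula \ref{formula Cas general}, compute the $K$-Casimir contribution $-8-\tfrac{4}{3}l^{2}$ (Formula \ref{formula of Cas K}), obtain the branching conditions on $(a,b,l)$ by Frobenius reciprocity (the paper quotes Ikeda--Taniguchi for this, where you propose a direct weight computation), and recover \eqref{equ Cor 1} by the $l^{2}$-shift of Formula \ref{formula laplace on S5 vs laplace on CP2}. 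The plan is correct and matches the paper's proof in both structure and the points it flags as delicate.
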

Henceforth, we abbreviate $\nabla^{\star}\nabla |_{(End_{0}T^{\prime}\mathbb{P}^{2})(l)\rightarrow \mathbb{P}^{2}}$ to  $\nabla^{\star}\nabla |_{(End_{0}T^{\prime}\mathbb{P}^{2})(l)}$, and more generally, we  abbreviate $\nabla^{\star}\nabla |_{(End_{0}E)(l)\rightarrow \mathbb{P}^{2}}$ to  $\nabla^{\star}\nabla |_{(End_{0}E)(l)}$.

In view of the spectral reduction in Formula \ref{formula laplace on S5 vs laplace on CP2} below,  it suffices to prove  \eqref{equ spec Laplacian P2}.  Because $T^{\prime}\mathbb{P}^{2}$ and $O(l)$ are both $SU(3)-$homogeneous, we prove it by Peter-Weyl formulation. \begin{itemize}\item The numbers $\frac{4}{3}(a^{2}+b^{2}+ab+3a+3b)$ and $-8$ therein arise from the Casimir operators of $su(3)$ and $su(2)$  on certain irreducible representations respectively. The number $-\frac{4l^{2}}{3}$ arises from the action of a certain element in the Cartan sub-algebra of $su(3)$. Please see \eqref{Cas e5 rhol} and Formula \ref{formula of Cas K},  \ref{equ Cas su3} below. 

The desired equation \eqref{equ spec Laplacian P2} means that these $3$ terms are all the contributions from the representation theoretic quantities. 
\item The condition on $a,\ b$ in \eqref{equ spec Laplacian P2} is the equivalence condition of that a certain irreducible $SU(3)-$representation appears as a summand in a certain infinite dimensional representation (see Fact \ref{fact IT} below). 
\end{itemize}

It is not obvious to the author how to directly calculate $Spec\nabla^{\star}\nabla|_{\mathbb{S}^{5}}\triangleq Spec\nabla^{\star}\nabla|_{\pi^{\star}_{5,4}(adE)\rightarrow \mathbb{S}^{5}}$ on $\mathbb{S}^{5}$.
\subsubsection{The example of $SpecP$}
Theorem \ref{Thm 1}, \ref{Thm spec of rough laplacian P2}, and Proposition \ref{prop multiplicity TP2} below imply the following. 
\begin{cor}\label{Cor 1}In the  setting of Theorem \ref{Thm 1} and \ref{Thm spec of rough laplacian P2}, still let the irreducible Hermitian Yang-Mills triple be a twisted holomorphic tangent bundle $T^{\prime}\mathbb{P}^{2}(k)$  with the twisted Fubini-Study metric and connection. Then $$S_{coh}=\{-1\}\cup \{-2\}.$$ 


Consequently, let $S_{\nabla^{\star}\nabla}$ be defined by Theorem \ref{Thm 1}.$\mathbb{I}$ and  \eqref{equ proof of Thm rough laplacian}, the following splitting holds.   $$SpecP=S_{\nabla^{\star}\nabla}\cup \{-1\}\cup \{-2\}.$$
 The first row of the following table contains all the eigenvalues of $P$ in the closed interval $[-4,1]$. The second row addresses  the multiplicity of each.  
  \begin{equation}\label{equ tabular eigenvalue and multiplicity}  \begin{tabular}{|p{2cm}|p{1.5cm}|p{1.8cm}||p{1.5cm}|p{1.5cm}|p{1.5cm}|p{1.5cm}|}
  \hline
 eigenvalue of $P$ &  $-4$ & $-2\sqrt{2}-1$ & -2 &-1 &$2\sqrt{2}-2$ &1   \\   \hline
  multiplicity &  12  &16 & 6 & 6&  16 &12  \\   \hline
  \end{tabular}
 \renewcommand\arraystretch{1.5}
  \end{equation}

  The multiplicities of the other eigenvalues are also determined by Proposition \ref{prop multiplicity TP2} and Theorem \ref{Thm 1}.$\mathbb{II}$.
\end{cor}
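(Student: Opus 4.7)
The plan is to reduce the statement to a sheaf cohomology computation and a finite enumeration in the spectrum formula \eqref{equ proof of Thm rough laplacian}. By Theorem \ref{Thm 1}.$\mathbb{I}$ one already has $SpecP = S_{\nabla^{\star}\nabla}\cup S_{coh}$, so the announced splitting $SpecP = S_{\nabla^{\star}\nabla}\cup \{-1\}\cup\{-2\}$ follows once $S_{coh}=\{-1,-2\}$ is proved. Since the twist by $O(k)$ cancels in $EndE$, and $H^{1}(\mathbb{P}^{2},O(l))=0$ for every $l$, the problem reduces to computing $H^{1}[\mathbb{P}^{2},(End_{0}T'\mathbb{P}^{2})(l)]$. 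I would tensor the Euler sequence with $\Omega^{1}(l)$ to get
\[
0\to \Omega^{1}(l)\to \Omega^{1}(l+1)^{\oplus 3}\to (EndT'\mathbb{P}^{2})(l)\to 0,
\]
and read $H^{\bullet}(\mathbb{P}^{2},\Omega^{1}(m))$ off the dual Euler sequence (equivalently, Bott's formula on $\mathbb{P}^{2}$). The only nontrivial contribution in the relevant range is $H^{1}(\Omega^{1})=\mathbb{C}$; running the long exact sequence for $l=-1$ gives $H^{1}[(EndT'\mathbb{P}^{2})(-1)]\cong \mathbb{C}^{3}$, while for $l\geq 0$ both $H^{1}(\Omega^{1}(l))$ and $H^{1}(\Omega^{1}(l+1))$ vanish, forcing $H^{1}=0$. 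The range $l\leq -3$ is handled by the Kodaira--Serre duality noted after Theorem \ref{Thm 1} via $l\leftrightarrow -l-3$, giving $S_{coh}=\{-1,-2\}$ with $h^{1}=3$ in each case.

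The table entries split by type. At $\mu=-1,-2$, Theorem \ref{Thm 1}.$\mathbb{II}$.1 applied to the above cohomology gives real dimension $2\cdot 3 = 6$ immediately. For the irrational entries $\mu=-2\sqrt{2}-1$ and $\mu=2\sqrt{2}-2$, one has $\mu^{2}+2\mu-3 = 4$ respectively $\mu^{2}+4\mu = 4$, while the other quadratic becomes negative and hence lies outside $Spec(\nabla^{\star}\nabla|_{\mathbb{S}^{5}})$; Theorem \ref{Thm 1}.$\mathbb{II}$.2 then reduces each to $2\dim \mathbb{E}_{4}(\nabla^{\star}\nabla|_{\mathbb{S}^{5}})$, and enumerating admissible triples $(a,b,l)$ in \eqref{equ proof of Thm rough laplacian} with $\lambda=4$, weighted by Proposition \ref{prop multiplicity TP2}, produces multiplicity $8$, hence $16$ for $\mu$. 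For the integer endpoints $\mu=-4$ and $\mu=1$, Theorem \ref{Thm 1}.$\mathbb{II}$.3 with $h^{1}=0$ (since $\mu\notin S_{coh}$) reads
\[
\dim \mathbb{E}_{\mu}P = 2\dim \mathbb{E}_{\mu^{2}+2\mu-3}(\nabla^{\star}\nabla|_{\mathbb{S}^{5}}) + 2\dim \mathbb{E}_{\mu^{2}+4\mu}(\nabla^{\star}\nabla|_{\mathbb{S}^{5}}) - 2h^{0}[(End_{0}T'\mathbb{P}^{2})(\mu)] - 2h^{0}[(End_{0}T'\mathbb{P}^{2})(-\mu-3)];
\]
a second round of Euler-sequence bookkeeping yields the $h^{0}$ corrections (in particular $h^{0}[(End_{0}T'\mathbb{P}^{2})(1)]=6$ while $h^{0}[(End_{0}T'\mathbb{P}^{2})(-4)]=0$ by stability), and Proposition \ref{prop multiplicity TP2} applied to the triples producing $\lambda=0$ and $\lambda=5$ supplies the remaining $\nabla^{\star}\nabla$ terms. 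Both cases sum to $12$.

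The hard part is not conceptual but combinatorial: one must enumerate all lattice triples $(a,b,l)$ in the polytope $\{a,b\geq 0,\ \max(3-a-2b,b-a-3)\leq l \leq \min(2a+b-3,3+b-a)\}$ for which $\frac{4}{3}(a^{2}+b^{2}+ab+3a+3b)-\frac{l^{2}}{3}-8$ equals each of the small target values $\lambda\in\{0,4,5\}$, and then assemble the $SU(3)$-multiplicities from Proposition \ref{prop multiplicity TP2} with the $h^{0}$ corrections. The Kodaira--Serre symmetry $\mu\leftrightarrow -3-\mu$ pairs the entries of the table, so that only one independent numerical check is needed per pair and the Serre-dual $h^{0}$ terms are matched automatically.
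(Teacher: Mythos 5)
Your proposal is correct and follows essentially the same route as the paper: the paper's Lemma \ref{lem h1 EndTP2} computes $S_{coh}$ by exactly the Euler-sequence/Bott-formula/Serre-duality argument you describe, and the table is then obtained, as you propose, by enumerating the admissible triples $(a,b,l)$ realizing the small eigenvalues $\lambda=4,5$ of $\nabla^{\star}\nabla|_{\mathbb{S}^{5}}$ (with $\lambda=0$ excluded by irreducibility) and feeding the resulting multiplicities, together with the $h^{0}$ corrections $h^{0}[(End_{0}T^{\prime}\mathbb{P}^{2})(1)]=6$, into Theorem \ref{Thm 1}.$\mathbb{II}$. All of your intermediate numbers ($h^{1}=3$ at $l=-1,-2$; multiplicities $8$ and $12$ for $\lambda=4$ and $\lambda=5$; the negativity of the complementary quadratic at the irrational eigenvalues) match the paper's computation.
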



\subsection{Sketch of the proof of Theorem \ref{Thm 1} and Corollary \ref{Cor 1}. \label{sect sketch proof}}
 The ideas in proving Theorem \ref{Thm 1} can be partially described as follows. 
 
Our Sasaki-Quaternion structure on $\mathbb{S}^{5}$ is a special case of a Sasaki-Einstein $SU(2)-$structure mentioned in \cite[4.1]{FHN}. We employ more explicit information to prove the fine formula  for $P$ (Lemma \ref{lem formula of the model dirac deformation operator}). 

  Then we seek for Bochner formulas (see Lemma \ref{lem Bochner} below). The observation is that  the first and second row of $P^{2}+2P$ are ``autonomous" i.e. they are independent of the unknowns corresponding to the other rows. The same is true for the third and fourth row of $P^{2}+4P$. This  implies that if an eigensection does not correspond to an eigenvalue given by $Spec\nabla^{\star}\nabla|_{\mathbb{S}^{5}}$, the first $4-$component of the eigensection (regarding the decomposition in \eqref{equ  5 element basis}) must all be $0$, then it can be identified with a certain sheaf cohomology class. Please see  Theorem \ref{Thm eigenvalue Sasakian} below for more detail.


Corollary \ref{Cor 1} is the direct consequence of Theorem \ref{Thm 1},  \ref{Thm spec of rough laplacian P2}, Proposition \ref{prop multiplicity TP2},  and a little bit of algebraic geometry (Lemma \ref{lem h1 EndTP2}).\\


The paper is organized as follows.  In Section \ref{sect Sasakian}, we fully employ the Sasaki-Quaternion  structure of $(\mathbb{C}^{3}\setminus O)\rightarrow \mathbb{S}^{5}$ to prove the fine formula (Lemma \ref{lem formula of the model dirac deformation operator}) for  the operator  $P$. We prove Theorem \ref{Thm 1} in Section \ref{sect proof of Thm 1}. In Section \ref{sect proof of Cor 1}, we prove Theorem \ref{Thm spec of rough laplacian P2} and Proposition \ref{prop multiplicity TP2}  by representation theoretic methods, then combine them with Theorem \ref{Thm 1} to prove Corollary \ref{Cor 1}. The Appendix collects some results obtained by routine calculations.\\

\textbf{Acknowledgements:} The author would like to thank Simon Donaldson for his guidance, encouragement, and helpful discussions on this project. The author would like to thank Xinyi Yuan for helpful discussions.  

Part of the work is done in Simons Center for Geometry and Physics, Stony Brook University, NY, USA, under the support of Simons Collaboration on Special Holonomy in Geometry, Analysis, and Physics. The author would like to thank SCGP for their hosting and the excellent research environment.  

\section{A little bit of Sasakian geometry and the fine formula for the linearized operator\label{sect Sasakian}}
\subsection{The Sasakian geometry of $\mathbb{S}^{5}$ \label{sect SQ str}}
\subsubsection{General conventions}
We recall some general Riemannian geometry setting under which the required identities are established. 

 Given a Riemannian manifold $(M,g)$, for any tangent vector $v_{x}\in T_{x}M$, let $v_{x}^{\sharp_{M}}$ denote the metric dual form in $T^{\star}_{x}M$. Conversely, for any $1-$form $\theta_{x}\in T^{\star}_{x}M$, let $\theta_{x,\sharp_{M}}$ denote the metric dual vector in $T_{x}M$.  Given a $1-$form $h\in T^{\star}_{x}M$ and a $p-$form $\Omega\in \wedge^{p}T^{\star}_{x}M$, $p\geq 1$, we define the contraction by 
 \begin{equation}\label{equ def contraction between forms} h\lrcorner_{g}\Omega\triangleq h_{\sharp_{M}}\lrcorner \Omega.
 \end{equation}
 
 
 The superscript $\mathbb{C}$ on a (real) vector-bundle (vector space)  means the complexification.   Associated with the Riemannian metric,  in the below, the tensor operators   $\lrcorner$  (contraction), $^\sharp$ (pulling up), $_\sharp$ (pushing down), $^{\parallel_{X}}$ (projection onto a real vector field $X$), $P_{X^{\perp}}$ (projection onto the orthogonal complement of a real vector field $X$), and the star operators $\star_{0},\ \star_{\mathbb{P}^{2}}$  etc are all extended   $\mathbb{C}-$linearly onto the complexified tangent and co-tangent bundle. 
 

 \subsubsection{Sasakian coordinate system\label{sect Sasakian coordinate}}

The purpose of this section is to define the Sasakian coordinate.

We denote the contact distribution $Ker\eta$ by $D$.  Let $\xi$ denote the standard Reed vector field on $\mathbb{S}^{5}$ (tangential to orbit of the $U(1)-$multiplications). 
Then $D=\xi^{\perp}$, and $\eta$ is the metric dual of $\xi$.

The form $\frac{d\eta}{2}=\frac{\sqrt{-1}}{2}\partial \bar{\partial}\log(|Z_{0}|^{2}+|Z_{1}|^{2}+|Z_{2}|^{2})$ is (the pullback of) the (K\"ahler-form of) the Fubini-Study metric on $\mathbb{P}^{2}$. This fixes the scaling of the Fubini-Study metric in this article. The pullback of the Fubini-Study metric $g_{FS}$ to $\mathbb{S}^{5}$ is a metric on $D$, though it is not a metric on $\mathbb{S}^{5}$. It induces a metric on the contact co-distribution $D^{\star}$. Henceforth, \textit{we collectively call the form $\frac{d\eta}{2}$, and the metrics on $\mathbb{P}^{2}$, $D$, $D^{\star}$ mentioned in the underlying paragraph the Fubini-Study metric}. 

 It is well known that $D^{\star}=\pi^{\star}_{5,4}T^{\star}\mathbb{P}^{2}$. On the other hand, because $\pi_{5,4}$ is a Riemannian submersion, the tangent map $\pi_{5,4,\star}$ is an isometry $D\rightarrow T\mathbb{P}^{2}$ i.e.  $$g_{\mathbb{S}^{5}}\langle v, w\rangle=g_{FS}\langle \pi_{5,4,\star}v, \pi_{5,4,\star}w\rangle.$$


Using the Reeb vector field,  we split the tangent bundle of $\mathbb{C}^{3}\setminus O$ as
\begin{equation}\label{equ splitting of tan bundle C3 minus O}T^{\mathbb{C}}(\mathbb{C}^{3}\setminus O)=span(\frac{\partial}{\partial r},\xi)\oplus \pi^{\star}_{6,4}D^{\mathbb{C}}.\end{equation}
Similarly, the tangent bundle of the $7-$dimensional manifold $(\mathbb{C}^{3}\setminus O)\times \mathbb{S}^{1}$ splits as
\begin{equation}\label{equ splitting of tan bundle C3 minus O times S1}T^{\mathbb{C}}[(\mathbb{C}^{3}\setminus O)\times \mathbb{S}^{1}]=span(\frac{\partial}{\partial s},\frac{\partial}{\partial r},\xi)\oplus \pi^{\star}_{7,4}D^{\mathbb{C}}.\end{equation}
Each of the above two splittings is  orthogonal with respect to the underlying Euclidean metric. 
\subsubsection*{The coordinate neighborhoods}



For any $\beta=0,1,$ or $2$, we define the coordinate neighborhoods by the following.   
\begin{eqnarray}\label{equ def coordinate neighbor}& &U_{\beta,\mathbb{P}^{2}}\triangleq \{[Z]\in \mathbb{P}^{2}|Z_{\beta}\neq 0\}\subset \mathbb{P}^{2},\ U_{\beta,\mathbb{C}^{3}}\triangleq \{Z\in \mathbb{C}^{3}\setminus O|Z_{\beta}\neq 0\}\subset \mathbb{C}^{3},\\& &  \textrm{and}\ U_{\beta,\mathbb{S}^{5}}\triangleq \{Z\in \mathbb{C}^{3}\setminus O|Z_{\beta}\neq 0,\ |Z|=1\}\subset \mathbb{S}^{5}.\nonumber\end{eqnarray}

On each of the  open sets in \eqref{equ def coordinate neighbor}, we recall the well known complex coordinate functions by the following table. 
\begin{equation}\label{equ tabular choice of solution 2nd order operator m neq 0}  \begin{tabular}{|p{3.5cm}|p{4.5cm}|}
  \hline
 Coordinate neighborhoods    & Part of the coordinate functions \\ \hline
$U_{0,\mathbb{P}^{2}},\ U_{0,\mathbb{S}^{5}},\ U_{0,\mathbb{C}^{3}}$ & $u_{1}=\frac{Z_{1}}{Z_{0}},\ u_{2}=\frac{Z_{2}}{Z_{0}}$ \\   \hline
$U_{1,\mathbb{P}^{2}},\ U_{1,\mathbb{S}^{5}},\ U_{1,\mathbb{C}^{3}}$ & $v_{0}=\frac{Z_{0}}{Z_{1}},\ v_{2}=\frac{Z_{2}}{Z_{1}}$ \\   \hline
  $U_{2,\mathbb{P}^{2}},\ U_{2,\mathbb{S}^{5}},\ U_{2,\mathbb{C}^{3}}$ & $w_{0}=\frac{Z_{0}}{Z_{2}},\ w_{1}=\frac{Z_{1}}{Z_{2}}$ \\   \hline
\end{tabular}
 \renewcommand\arraystretch{1.5}
  \end{equation}

In  $U_{0,\mathbb{S}^{5}}$, the complexification $D^{\star,\mathbb{C}}$ is spanned by $du_{1},\ du_{2},\ d\bar{u}_{1},\  d\bar{u}_{2}$ everywhere. Defining the real coordinates $(x_{i},\ i=1,2,3,4)$ by   $$u_{1}=x_{1}+\sqrt{-1}x_{2},\ u_{2}=x_{3}+\sqrt{-1}x_{4},$$
both the real vector bundle $D^{\star}$ and the complex bundle $D^{\star,\mathbb{C}}$ are spanned by $dx_{1},\ dx_{2},\ dx_{3},\  dx_{4}$.
Similar facts also hold in the other two neighborhoods $U_{1,\mathbb{S}^{5}}$ and $U_{2,\mathbb{S}^{5}}$.

\subsubsection*{The coordinate maps}
Based on the above table, on $\mathbb{C}^{3}\setminus O$, we define the Sasakian coordinate.
\begin{Def}\label{Def Sasakian coordinate}Let $Z_{\beta}=|Z_{\beta}|e^{\sqrt{-1}\theta_{\beta}}$, $\theta_{\beta}\in \mathbb{S}^{1}\triangleq \mathbb{R}/2\pi \mathbb{Z}$. On $U_{\beta,\mathbb{C}^{3}}$,  the  \textit{Sasakian coordinate}  is defined to be the functions $(r,\theta_{\beta}, u_{j},u_{k})$ which is a homeomorphism from $U_{\beta,\C^{3}}\rightarrow \mathbb{R}^{+}\times \mathbb{S}^{1}\times \mathbb{C}^{2}$. Similarly, we also call the homeomorphism $(\theta_{\beta}, u_{j},u_{k})$ from $U_{\beta,\mathbb{S}^{5}}$ to the trivial circle bundle $\mathbb{S}^{1}\times \mathbb{C}^{2}$ the \textit{Sasakian coordinate}. 

Both the vector field $\frac{\partial}{\partial \theta_{\beta}}$ and the form $d\theta_{\beta}$ descend to the $\mathbb{S}^{1}-$component of $U_{\beta,\mathbb{S}^{5}}$ ($U_{\beta,\mathbb{C}^{3}}$). 
 \end{Def}
A prominent difference from the Sasakian coordinate to the holomorphic coordinate is that with respect to the standard complex structure on $\mathbb{C}^{3}$, 
 $\frac{\partial}{\partial u_{i}}$ ($i=1,\ 2$) is not  $(1,0)$ in  Sasakian coordinate, but it obviously is $(1,0)$ in the holomorphic coordinate. 
 In a Sasakian coordinate chart, the Reeb vector field can be described satisfactorily. 
\begin{fact}\label{fact Reeb is angular}Let $\beta=0,1$, or $2$, then   $\xi=\frac{\partial}{\partial \theta_{\beta}}$ in $U_{\beta,\mathbb{S}^{5}}$ under Sasakian coordinate.
\end{fact}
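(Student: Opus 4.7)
The plan is to show this by direct computation in coordinates, using that $\xi$ is defined as the infinitesimal generator of the $U(1)$-multiplication action $Z\mapsto e^{\sqrt{-1}\phi}Z$ on $\mathbb{S}^{5}$. Since the statement is purely local in $U_{\beta,\mathbb{S}^{5}}$, it suffices to write out how the $U(1)$-action reads in the Sasakian coordinate functions $(\theta_{\beta},u_{j},u_{k})$ of Definition \ref{Def Sasakian coordinate}.

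First, I would check how each coordinate function transforms under $Z\mapsto e^{\sqrt{-1}\phi}Z$. Because $e^{\sqrt{-1}\phi}Z_{\beta}=|Z_{\beta}|e^{\sqrt{-1}(\theta_{\beta}+\phi)}$, the modulus $|Z_{\beta}|$ is invariant and the phase transforms by $\theta_{\beta}\mapsto \theta_{\beta}+\phi$; in particular $r=|Z|$ is fixed. For the transverse coordinates, $u_{j}=Z_{j}/Z_{\beta}$ transforms as
\[
u_{j}\ \mapsto\ \frac{e^{\sqrt{-1}\phi}Z_{j}}{e^{\sqrt{-1}\phi}Z_{\beta}}=u_{j},
\]
and similarly for $u_{k}$. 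Thus in Sasakian coordinates the one-parameter group of $U(1)$-multiplications becomes the translation $(\theta_{\beta},u_{j},u_{k})\mapsto (\theta_{\beta}+\phi,u_{j},u_{k})$.

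Next, differentiating in $\phi$ at $\phi=0$ produces the infinitesimal generator, and the above computation immediately gives $\frac{\partial}{\partial \theta_{\beta}}$. Since this generator is by definition the standard Reeb vector field $\xi$ on $\mathbb{S}^{5}$ (the tangent vector to the $U(1)$-orbits in the round/standard Sasakian structure), one concludes $\xi=\frac{\partial}{\partial \theta_{\beta}}$ throughout $U_{\beta,\mathbb{S}^{5}}$. The same identity holds in $U_{\beta,\mathbb{C}^{3}}$ after noting that $r$ is also invariant under the action, so that $\frac{\partial}{\partial \theta_{\beta}}$ descends consistently to the sphere.

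There is no real obstacle: the only thing to be careful about is that the Sasakian coordinates are not holomorphic, so one should not try to identify $\xi$ by complex formulas like $\xi=\sqrt{-1}(Z_{\alpha}\partial_{Z_{\alpha}}-\bar{Z}_{\alpha}\partial_{\bar{Z}_{\alpha}})$; instead, work directly with the real functions $\theta_{\beta},u_{j},u_{k}$ as above. One small sanity check worth including is that $\eta(\xi)=1$ and $d\theta_{\beta}(\partial_{\theta_{\beta}})=1$ are consistent with Definition \ref{Def standard Hermitian metric on Obeta} and \eqref{equ def eta}, which can be verified from $\eta=d^{c}\log r$ by a brief calculation in the chosen chart.
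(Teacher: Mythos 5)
Your proof is correct and is essentially the paper's own argument: the paper likewise observes that in the Sasakian coordinate the scalar multiplication $e^{\sqrt{-1}t}\cdot(r,\theta_{\beta},u_{j},u_{k})=(r,\theta_{\beta}+t,u_{j},u_{k})$ is translation in the angular variable, and differentiates at $t=0$. Your explicit check that $u_{j}=Z_{j}/Z_{\beta}$ is invariant (phases cancel) just spells out the step the paper leaves implicit.
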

\begin{proof}[Proof of Fact \ref{fact Reeb is angular}:]  It suffices to observe that for any point $Z=(r,\theta_{\beta}, u_{j},u_{k})\in U_{\beta,\mathbb{C}^{3}}$, under the Sasakian coordinate, the scalar multiplication $e^{\sqrt{-1}t}Z$ is given by the translation in the angular variable.
$$e^{\sqrt{-1}t}\cdot (r,\theta_{\beta}, u_{j},u_{k})= (r,\theta_{\beta}+t, u_{j},u_{k})\ (\textrm{where}\ \theta_{\beta}+t\in \frac{\mathbb{R}}{2\pi\mathbb{Z}}). $$

\end{proof}

\subsubsection{The Sasaki-Quaternion  structure\label{sect SQ stru}}
The purpose of this section is to introduce the Sasakian-Quaternion structure i.e. Lemma \ref{lem shk str} below. Again, it is a special case  of a Sasaki-Einstein $SU(2)-$structure mentioned in \cite[4.1]{FHN}.  For our purposes, we look for more explicit information.
\subsubsection*{Preliminary}
We begin with a convenient convention. 
\begin{convention}\label{convention pullback and descent} (\textbf{pullback and descent}) 
Given a  holomorphic Hermitian triple $(E,h,A_{O})$ on $\mathbb{P}^{2}$,  let $\theta$ be a usual form or an endomorphism-valued form on $\mathbb{P}^{2}$,  $\mathbb{S}^{5}$,  $\mathbb{C}^{3}\setminus O$, or  $(\mathbb{C}^{3}\setminus O)\times \mathbb{S}^{1}$. Abusing  notation, we \textit{let $\theta$ also denote any pullback or descent}.

For example, the contact form $\eta$ originally defined on $\mathbb{C}^{3}\setminus O$ also means the one on $\mathbb{S}^{5}$. The form $\frac{d\eta}{2}$ a priori on $\mathbb{S}^{5}$ also means the Fubini-Study K\"aher-form on $\mathbb{P}^{2}$.

Similar abusing of notations also applies to differential operators. For example, in \eqref{equ do and dcp2} below,  the exterior derivative on $\mathbb{P}^{2}$, denoted by $d_{\mathbb{P}^{2}}$, also means the local pullback operator on $\mathbb{S}^{5}$. 
\end{convention}

Next, we introduce a formula for the contact form under a Sasakian coordinate chart.  We shall mainly work in $U_{0,\mathbb{S}^{5}}$. This is because $U_{0,\mathbb{S}^{5}}$ is dense and open in $\mathbb{C}^{3}\setminus O$, therefore it suffices to prove the desired identities therein.
 
For any $\beta=0,\ 1,$ or $2$,  let $\phi_{\beta}\triangleq \frac{r^{2}}{|Z_{\beta}|^{2}}=\frac{|Z_{0}|^{2}+|Z_{1}|^{2}+|Z_{2}|^{2}}{|Z_{\beta}|^{2}}$ be the K\"ahler potential of the Fubini-Study metric $\frac{d\eta}{2}$ in $U_{\beta,\mathbb{P}^{2}}$. Then 
\begin{equation}\label{equ norm of Z0} |Z_{\beta}|=\frac{r}{\sqrt{\phi_{\beta}}},\ \textrm{and}\ Z_{\beta}=\frac{r}{\sqrt{\phi_{\beta}}}e^{\sqrt{-1}\theta_{\beta}}.
\end{equation}
 
 \begin{formula}\label{formula eta}For any $\beta=0,1,$ or $2$, $\eta=d\theta_{\beta}+\frac{d^{c}\log\phi_{\beta}}{2}$ in $U_{\beta,\mathbb{S}^{5}}$.
 \end{formula}
 The above formula, whose proof is deferred to Appendix \ref{sect appendix Sasakian},  is used in the proof of Lemma \ref{lem G and H} below. 
 
 \subsubsection*{The semi-basic forms $G$ and $H$}
Using the standard holomorphic volume form on $\mathbb{C}^{3}$, we now define the forms $G$ and $H$ leading to the Sasaki-Quaternion structure. 

 On the cone $\mathbb{C}^{3}\setminus O$, the vector field $\frac{1}{2r^{3}}(r\frac{\partial}{\partial r}-\sqrt{-1}\xi)$ is $(1,0)$. Contracting with the standard $(3,0)-$form on $\mathbb{C}^{3}$, we obtain a form in $\wedge^{(2,0)}D^{\star,\mathbb{C}}$. Namely, the following Lemma holds.  
\begin{lem}\label{lem G and H} There exist (smooth) semi-basic $2-$forms $H$ and $G$ which are sections  of $$(\wedge^{2,0}\oplus\wedge^{0,2})D^{\mathbb{C},\star}\rightarrow \mathbb{S}^{5}$$ with the following properties. Let $\Omega_{\mathbb{C}^{3}}\triangleq dZ_{0}dZ_{1}dZ_{2}$ be the standard holomorphic volume form on $\mathbb{C}^{3}$, we have 
\begin{eqnarray}& &\Omega_{\mathbb{C}^{3}}=(r^{2}dr+\sqrt{-1}r^{3}\eta)\wedge H+(r^{3}\eta-\sqrt{-1}r^{2}dr)\wedge G, \label{equ formula for Omega full}
\\& &Re\Omega_{\mathbb{C}^{3}}=r^{2}dr\wedge H+r^{3}\eta\wedge G,\ Im\Omega_{\mathbb{C}^{3}}=r^{3}\eta\wedge H-r^{2}dr\wedge G,\ \textrm{and} \label{equ formula for Omega}
\\& & [\frac{1}{2r^{3}}(r\frac{\partial}{\partial r}-\sqrt{-1}\xi)]\lrcorner \Omega_{\mathbb{C}^{3}}\triangleq \Theta =H-\sqrt{-1}G.\label{equ contraction def of H and G}
\end{eqnarray}

Under the Sasakian coordinate in $U_{0,\mathbb{S}^{5}},\ U_{1,\mathbb{S}^{5}},\ U_{2,\mathbb{S}^{5}}$ respectively, the following holds true for $G$, $H$, and $\Theta$.

\begin{equation}\label{equ G and H}  \begin{tabular}{|p{1.5cm}|p{13cm}|} \hline 
  In $U_{0,\mathbb{S}^{5}}$ & $G=-\frac{1}{2\sqrt{-1}}(Z_{0}^{3}du_{1}du_{2}-\bar{Z}_{0}^{3}d\bar{u}_{1}d\bar{u}_{2}),\ H=\frac{1}{2}(Z_{0}^{3}du_{1}du_{2}+\bar{Z}_{0}^{3}d\bar{u}_{1}d\bar{u}_{2})$, \\  
    &  $\Theta= Z_{0}^{3}du_{1}du_{2}$. \\
In $U_{1,\mathbb{S}^{5}}$ & $G=\frac{1}{2\sqrt{-1}}(Z_{1}^{3}dv_{0}dv_{2}-\bar{Z}_{1}^{3}d\bar{v}_{0}d\bar{v}_{2}),\ H=-\frac{1}{2}(Z_{1}^{3}dv_{0}dv_{2}+\bar{Z}_{1}^{3}d\bar{v}_{0}d\bar{v}_{2})$, \\ 
& $\Theta=-Z_{1}^{3}dv_{0}dv_{2}$. \\
In $U_{2,\mathbb{S}^{5}}$ & $G=-\frac{1}{2\sqrt{-1}}(Z_{2}^{3}dw_{0}dw_{1}-\bar{Z}_{2}^{3}d\bar{w}_{0}d\bar{w}_{1}),\ H=\frac{1}{2}(Z_{2}^{3}dw_{0}dw_{1}+\bar{Z}_{2}^{3}d\bar{w}_{0}d\bar{w}_{1})$, \\ 
& $\Theta=Z_{2}^{3}dw_{0}dw_{1}$. \\
  \hline
\end{tabular}
 \renewcommand\arraystretch{1.5}
  \end{equation}

\end{lem}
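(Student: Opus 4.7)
The plan is to first introduce the $(2,0)$-form $\Theta$ by the contraction in the right-hand side of \eqref{equ contraction def of H and G}, then extract $H$ and $G$ as its real and imaginary parts, and only after this verify the global decomposition \eqref{equ formula for Omega full}--\eqref{equ formula for Omega} by a local Sasakian-coordinate computation. The key preliminary algebraic observation is that $\frac{1}{2}(r\partial_{r}-\sqrt{-1}\xi)=\sum_{j=0}^{2}Z_{j}\partial_{Z_{j}}$, which follows from the expansions $r\partial_{r}=\sum_{j}(Z_{j}\partial_{Z_{j}}+\bar{Z}_{j}\partial_{\bar{Z}_{j}})$ and $\xi=\sqrt{-1}\sum_{j}(Z_{j}\partial_{Z_{j}}-\bar{Z}_{j}\partial_{\bar{Z}_{j}})$. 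Consequently $V\triangleq \frac{1}{2r^{3}}(r\partial_{r}-\sqrt{-1}\xi)=r^{-3}\sum_{j}Z_{j}\partial_{Z_{j}}$ is pure $(1,0)$, so $\Theta\triangleq V\lrcorner \Omega_{\mathbb{C}^{3}}$ is of type $(2,0)$. To check that $\Theta$ is semi-basic, compute $\partial_{r}\lrcorner \Omega_{\mathbb{C}^{3}}=r^{2}\Theta$ (the $\bar{Z}_{j}\partial_{\bar{Z}_{j}}$ part of $r\partial_{r}$ annihilates a $(3,0)$-form) and similarly $\xi\lrcorner \Omega_{\mathbb{C}^{3}}=\sqrt{-1}r^{3}\Theta$; then the antisymmetry of interior products gives $\partial_{r}\lrcorner \Theta=-V\lrcorner(\partial_{r}\lrcorner \Omega_{\mathbb{C}^{3}})=-r^{2}V\lrcorner V\lrcorner \Omega_{\mathbb{C}^{3}}=0$, and $\xi\lrcorner \Theta=0$ by the same argument. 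Setting $H\triangleq \tfrac{1}{2}(\Theta+\bar{\Theta})$ and $G\triangleq -\tfrac{1}{2\sqrt{-1}}(\Theta-\bar{\Theta})$ then yields real sections of $(\wedge^{2,0}\oplus\wedge^{0,2})D^{\mathbb{C},\star}$ satisfying $\Theta=H-\sqrt{-1}G$.

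For the local formulas, work in $U_{0,\mathbb{C}^{3}}$ and argue by symmetry for the other charts. The substitutions $Z_{j}=u_{j}Z_{0}$ ($j=1,2$) give $dZ_{j}=u_{j}dZ_{0}+Z_{0}du_{j}$, and a short manipulation produces $\Omega_{\mathbb{C}^{3}}=Z_{0}^{2}\,dZ_{0}\wedge du_{1}\wedge du_{2}$; contracting with $V$ then gives $\Theta=(Z_{0}^{3}/r^{3})du_{1}du_{2}$, which restricts to $Z_{0}^{3}du_{1}du_{2}$ on $\mathbb{S}^{5}$. The sign in the $U_{1,\mathbb{S}^{5}}$ row arises from the reordering $dZ_{0}\wedge dZ_{1}\wedge dZ_{2}=-dZ_{1}\wedge dZ_{0}\wedge dZ_{2}$, and $U_{2,\mathbb{S}^{5}}$ follows from the cyclic permutation. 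For the global identity, write $Z_{0}=r\phi_{0}^{-1/2}e^{\sqrt{-1}\theta_{0}}$ and differentiate logarithmically:
$$Z_{0}^{2}dZ_{0}=\tfrac{1}{3}d(Z_{0}^{3})=\tfrac{Z_{0}^{3}}{r^{3}}\Big[r^{2}dr-\tfrac{r^{3}}{2}d\log\phi_{0}+\sqrt{-1}r^{3}d\theta_{0}\Big].$$
Substituting $d\theta_{0}=\eta-\tfrac{1}{2}d^{c}\log\phi_{0}$ from Formula \ref{formula eta}, and using the elementary identity $d+\sqrt{-1}d^{c}=2\partial$ applied to $\log\phi_{0}$, converts this to
$$Z_{0}^{2}dZ_{0}=\tfrac{Z_{0}^{3}}{r^{3}}(r^{2}dr+\sqrt{-1}r^{3}\eta)-Z_{0}^{3}\,\partial\log\phi_{0}.$$
The second term dies upon wedging with $du_{1}\wedge du_{2}$, since $\partial\log\phi_{0}=\phi_{0}^{-1}(\bar{u}_{1}du_{1}+\bar{u}_{2}du_{2})$ is a linear combination of $du_{1}$ and $du_{2}$. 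Hence $\Omega_{\mathbb{C}^{3}}=(r^{2}dr+\sqrt{-1}r^{3}\eta)\wedge \Theta$ on the dense open set $U_{0,\mathbb{C}^{3}}$, and expanding $\Theta=H-\sqrt{-1}G$ produces \eqref{equ formula for Omega full}; separating real and imaginary parts delivers \eqref{equ formula for Omega}.

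The only delicate step is the global decomposition in the second paragraph: one must use Formula \ref{formula eta} to trade $d\theta_{0}$ for $\eta$ and then recognize that the resulting ``error term'' $\partial\log\phi_{0}$ vanishes against $du_{1}\wedge du_{2}$. Without this cancellation the decomposition would carry an extra $\wedge du_{1}\wedge du_{2}$ piece that does not fit the template in \eqref{equ formula for Omega full}. The semi-basicity of $\Theta$ and the local coordinate expressions are essentially bookkeeping once $V$ has been identified with $r^{-3}\sum_{j}Z_{j}\partial_{Z_{j}}$.
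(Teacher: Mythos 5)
Your proof is correct and follows essentially the same route as the paper's: the same Sasakian-coordinate computation of $dZ_{0}/Z_{0}$, the same substitution of Formula \ref{formula eta} for $d\theta_{0}$, and the same cancellation of the potential-term against $du_{1}\wedge du_{2}$ (your identification of the error term as $\partial\log\phi_{0}$, a combination of $du_{1},du_{2}$ only, is in fact the precise form of the cancellation — the individual terms $d\log\phi_{0}\wedge du_{1}\wedge du_{2}$ and $d^{c}\log\phi_{0}\wedge du_{1}\wedge du_{2}$ do not vanish separately, only the combination $\partial\log\phi_{0}\wedge du_{1}\wedge du_{2}$ does). The only organizational difference is that you define $\Theta$ first via the contraction and then verify the wedge decomposition, while the paper derives the decomposition of $\Omega_{\mathbb{C}^{3}}$ directly and reads off $G$ and $H$; this changes nothing of substance.
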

The proof of the above Lemma is by routine calculation, and is also deferred to Appendix \ref{sect appendix Sasakian}.

In the Sasakian geometry of $\mathbb{C}^{3}\setminus O\rightarrow \mathbb{S}^{5}$, schematically speaking, the form $H-\sqrt{-1}G$ plays similar role as a no-where vanishing holomorphic volume form on a $K3-$surface.  This is analogous to hyper-K\"ahler geometry. 

We  search for more properties of the forms $G$ and $H$. Let $\star_{0}$ denote the Hodge star operator of the Fubini-Study metric on the exterior algebra of $D^{\star}$ (see the paragraph about Fubini-Study metric in Section \ref{sect Sasakian coordinate}). Because $H$ and $G$ are both in $(\wedge^{(2,0)}\oplus \wedge^{(0,2)})D^{\mathbb{C},\star}$, they are $\star_{0}-$self-dual i.e. 
\begin{equation}
\star_{0}G=G,\ \star_{0}H=H. 
\end{equation}
At the point $(1,0,0)\in \mathbb{S}^{5}$, 
\begin{equation}\label{equ G and H at 1.0.0}
G=-Im(du_{1}du_{2}),\ H=Re(du_{1}du_{2}).
\end{equation}

\subsubsection*{The Quaternion structure}
Before proceeding, we stipulate the following. 
\begin{convention}\label{convention contraction} We let $\lrcorner$ denote the contraction between two forms on $\mathbb{S}^{5}$ under the standard round metric. 

The same notation might also denote the usual contraction between a tangent vector and a form (without involving the Riemannian metric). Our rationale is that if a tensor operation or operator has no subscript for the domain, then it is on $\mathbb{S}^{5}$.
\end{convention}

 We now get into the crucial properties of $G$ and $H$. In view of the contraction \eqref{equ def contraction between forms}, for any integer $p\geq 1$,  let $\lrcorner$ denote the contraction between $T^{\star}\mathbb{S}^{5}$ and $\wedge^{p}T^{\star}\mathbb{S}^{5}$ under the standard metric $g_{\mathbb{S}^{5}}$ on $\mathbb{S}^{5}$. At an arbitrary point on $\mathbb{S}^{5}$, it is straight forward to verify the following (for example, under the Sasakian-Quaternion coordinate in Appendix \ref{Appendix SQ coordinate} so that $G$ and $H$ is of the canonical form \eqref{equ G and H at 1.0.0}).\begin{equation}\label{equ G and H are complex structures}
(a\lrcorner G)\lrcorner G=-a,\ (a\lrcorner H)\lrcorner H=-a.
\end{equation}
Consequently, both $\lrcorner G$ and $\lrcorner H$ are almost complex structures on $D^{\star}$. 
From now on, let $J_{G},\  J_{H}$ denote $\lrcorner G,\ \lrcorner H$. Let $J_{0}$ denote $\lrcorner \frac{d\eta}{2}$. They are all isometries.

We define the complex structures on $D$ naturally by the metric pulling up and down i.e. \begin{equation}J_{0}(X)\triangleq [J_{0}(X^{\sharp_{0}})]_{\sharp_{0}},\ J_{H}(X)\triangleq [J_{H}(X^{\sharp_{0}})]_{\sharp_{0}},\ J_{G}(X)\triangleq [J_{G}(X^{\sharp_{0}})]_{\sharp_{0}}.
\end{equation}
The Sasaki-Quaternion structure applies to the contact distribution $D$ as well. 

Based on the above, we routinely verify our main Lemma in the underlying section.
\begin{lem}\label{lem shk str}(The Sasaki-Quaternion  structure) On $D^{\star}\rightarrow \mathbb{S}^{5}$, $\pi^{\star}_{7,5}D^{\star}\rightarrow (\mathbb{C}^{3}\setminus O)\times \mathbb{S}^{1}$, and also on $D\rightarrow \mathbb{S}^{5}$, $D\rightarrow (\mathbb{C}^{3}\setminus O)\times \mathbb{S}^{1}$, the following holds.   \begin{equation}\label{equ shk str}J_{G}J_{H}=J_{0},\ J_{H}J_{0}=J_{G},\ J_{0}J_{G}=J_{H},\ J_{0}^{2}=J_{H}^{2}=J_{G}^{2}=-Id. \end{equation}
 Moreover, the Fubini-Study metric  on $D$ and $D^{\star}$ is preserved by each of $J_{0},\ J_{H},\ J_{G}$.

Consequently, the above is true for the complexfications $D^{\star,\mathbb{C}}$, $D^{\mathbb{C}}$, $\pi^{\star}_{7,5}D^{\star,\mathbb{C}},\ \pi^{\star}_{7,5}D^{\mathbb{C}}$. 

Given a  holomorphic Hermitian triple $(E,h,A_{O})$ on $\mathbb{P}^{2}$,  the identities in \eqref{equ shk str} hold for an endomorphism-valued semi-basic $1-$form. 
\end{lem}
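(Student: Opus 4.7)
The strategy is to reduce Lemma \ref{lem shk str} to a pointwise verification at a single convenient point, and then propagate by homogeneity. Each identity in \eqref{equ shk str} is a pointwise algebraic relation among the endomorphisms $J_0, J_G, J_H$ of the $4$-dimensional real vector space $D^{\star}_x$, so it is enough to verify them at each $x \in \mathbb{S}^5$. The natural choice of base point is $(1,0,0) \in U_{0,\mathbb{S}^5}$, where by \eqref{equ G and H at 1.0.0} one has $G = -Im(du_1 du_2)$ and $H = Re(du_1 du_2)$. Expanding the Fubini--Study K\"ahler form $\frac{d\eta}{2} = \frac{\sqrt{-1}}{2}(du_1 \wedge d\bar{u}_1 + du_2 \wedge d\bar{u}_2)$ and the canonical $G, H$ in the real coordinates $u_1 = x_1 + \sqrt{-1}\,x_2,\ u_2 = x_3 + \sqrt{-1}\,x_4$ yields
\[
\tfrac{d\eta}{2} = dx_1 \wedge dx_2 + dx_3 \wedge dx_4, \quad H = dx_1 \wedge dx_3 - dx_2 \wedge dx_4, \quad G = -dx_1 \wedge dx_4 - dx_2 \wedge dx_3,
\]
in a coframe that is Fubini--Study orthonormal at $(1,0,0)$.

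In this coframe a direct $4\times 4$ computation produces explicit matrices for $J_0, J_H, J_G$. Under the identification $(dx_1, dx_2, dx_3, dx_4) \leftrightarrow (1, i, j, k) \in \mathbb{H}$, the three operators turn out to be left multiplication by $i$, $j$, and $-k$ respectively, so all four relations in \eqref{equ shk str} reduce to the standard quaternion identities $i^2 = j^2 = k^2 = -1$, $ij = k$, $jk = i$, $ki = j$. To transport from $(1,0,0)$ to every point of $\mathbb{S}^5$, I would invoke the transitive action of $SU(3)$: this action preserves the Euclidean metric on $\mathbb{C}^3$, commutes with the $U(1)$-scalar action generating $\xi$, and fixes $\Omega_{\mathbb{C}^3}$, hence it preserves $r, \xi, \eta, D$, and by \eqref{equ contraction def of H and G} also $\Theta$, so it preserves both $G$ and $H$. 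Consequently $J_0, J_G, J_H$ are $SU(3)$-invariant tensors on $D^{\star}$, and the identities verified at $(1,0,0)$ hold at every point.

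Metric invariance is then immediate: each $J_\bullet$ is metric-contraction with a $2$-form and hence antisymmetric, so combining antisymmetry with $J_\bullet^2 = -Id$ gives $g(J_\bullet v, J_\bullet w) = -g(v, J_\bullet^2 w) = g(v, w)$. The statements on $D$ follow via the metric musical isomorphism $D \cong D^{\star}$; the pullbacks to $\pi^{\star}_{7,5}D^{\star}$ over $(\mathbb{C}^3 \setminus O) \times \mathbb{S}^1$ are tautological by Convention \ref{convention pullback and descent}; the complexifications follow from the $\mathbb{C}$-linear extension of the contraction \eqref{equ def contraction between forms}; and for an endomorphism-valued semi-basic $1$-form $a_0 \otimes \sigma$ each $J_\bullet$ acts only on the form factor, so the relations carry over verbatim. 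The only mildly delicate point is sign bookkeeping in the $4 \times 4$ computation, which can equivalently be performed in the Sasakian-Quaternion coordinate of Appendix \ref{Appendix SQ coordinate} where $\frac{d\eta}{2}, G, H$ are simultaneously in canonical form.
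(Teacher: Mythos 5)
Your proposal is correct and follows essentially the same route the paper takes: the paper declares the verification "routine" and points to the Sasaki--Quaternion coordinate of Appendix \ref{Appendix SQ coordinate}, which is exactly your device of putting $\frac{d\eta}{2}$, $G$, $H$ in canonical form at $(1,0,0)$ and transporting by the $SU(3)$-invariance of all three forms. Your explicit $4\times 4$ matrices, the quaternion identification, and the antisymmetry argument for metric invariance all check out.
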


  



\subsubsection{The Reeb Lie derivative and the transverse exterior derivative}
To describe certain eigensections of the operator $P$, we need the two first order differential operators in Formula \ref{formula lie derive of G and H} and Definition \ref{Def d0}.
\begin{formula}\label{formula lie derive of G and H}Let $L_{\xi}$ denote the Lie derivative in the direction of the Reeb vector field. Then the followings is true. 
\begin{equation}
 L_{\xi}H=3G,\ \ L_{\xi}G=-3H,\ \ L_{\xi}(\frac{d\eta}{2})=0. \end{equation}

\end{formula}
\begin{proof}[Proof of Formula \ref{formula lie derive of G and H}:] Differentiating \eqref{equ G and H} with respect to $\theta_{0}$, the equalities hold everywhere in $U_{0,\mathbb{S}^{5}}$, therefore everywhere  in $\mathbb{C}^{3}\setminus O$ by continuity. 
\end{proof}

Given a  holomorphic Hermitian triple on $\mathbb{P}^{2}$, let $a_{0}$ be a $\pi^{\star}_{5,4}EndE-$valued semi-basic $1-$form on $\mathbb{S}^{5}$. The following holds by the formula for $G$ and $H$ (Lemma \ref{equ G and H}) and the local formula for the Reeb vector field (Fact \ref{fact Reeb is angular}). 
\begin{equation}J_{G}L_{\xi}(a_{0})=L_{\xi}J_{G}(a_{0})+3J_{H}(a_{0}),\  J_{H}L_{\xi}(a_{0})=L_{\xi}J_{H}(a_{0})-3J_{G}(a_{0})\label{equ lie commutator with hyperkahler str}
\end{equation}

\begin{Notation}Most of the time, to avoid heavy notation, for an differential operator on the bundle,  we shall suppress the subscript for the connection.\end{Notation}

We now turn to the definition of a derivative operator with respect to the contact co-distribution $D^{\star}$. 
\begin{Def} \label{Def d0}Given a  holomorphic Hermitian triple $(E,h,A_{O})$ on $\mathbb{P}^{2}$, on the bundle $\wedge^{p}D^{\star}\otimes \pi_{5,4}^{\star}EndE \rightarrow \mathbb{S}^{5}$ of endomophism-valued semi-basic $p-$forms, the transverse exterior derivative operator $d_{0}$ is defined by 
\begin{equation}
d_{0}\triangleq d-\eta\wedge L_{\xi}.
\end{equation}
It turns out that if $\theta$ is semi-basic, so is $d_{0}\theta$. The Lie derivative $L_{\xi}$ is well defined because the endomorphism bundle is pulled back from $\mathbb{P}^{2}$.

\end{Def}

The operator $d_{0}$ admits a local splitting in the following sense.

For any $\beta=0,1,2$, let ``$b$" be a section (form) of $\wedge^{p}D^{\star,\mathbb{C}}\otimes \pi_{5,4}^{\star}EndE \rightarrow U_{\beta,\mathbb{S}^{5}}$. For any $\theta_{\beta}\in [0,2\pi)$, the restriction $b(\cdot, \theta_{\beta})$ onto the $\theta_{\beta}-$slice is a form on $U_{\beta,\mathbb{P}^{2}}$. We define $d_{\mathbb{P}^{2}}b$ to be  the (partial) exterior derivative in the direction of $U_{\beta,\mathbb{P}^{2}}$. A priori,  this partial exterior derivative is not defined globally on $\mathbb{S}^{5}$ because it is not the product manifold $\mathbb{P}^{2}\times \mathbb{S}^{1}$. However, the open set $U_{\beta,\mathbb{S}^{5}}$ is a trivial $\mathbb{S}^{1}-$bundle over $U_{\beta,\mathbb{P}^{2}}\subset \mathbb{P}^{2}$. Employing Formula \ref{formula eta} for $\eta$,  this leads to the following splitting of $d_{0}$ in $U_{\beta,\mathbb{S}^{5}}$.
\begin{eqnarray}\label{equ do and dcp2} d_{0}b &=&db -\eta\wedge L_{\xi}b=d_{\mathbb{P}^{2}}b+d\theta_{\beta}\wedge L_{\xi}b-\eta\wedge L_{\xi}b\nonumber
\\&=& d_{\mathbb{P}^{2}}b-\frac{1}{2}(d^{c}\log \phi_{\beta})\wedge L_{\xi}b.
\end{eqnarray}
In view of the above decomposition, we have the further splitting
\begin{eqnarray}\label{equ d0 dbar0 partial0}
d_{0}=\partial_{0}+\bar{\partial}_{0},\ \ \textrm{where}& & \nonumber
\\ \partial_{0}=\partial_{\mathbb{P}^{2}}+\frac{\sqrt{-1}}{2}(\partial_{\mathbb{P}^{2}} \log\phi_{\beta})\wedge L_{\xi},\ \textrm{and}\ & &
\bar{\partial}_{0}=\bar{\partial}_{\mathbb{P}^{2}}-\frac{\sqrt{-1}}{2}(\bar{\partial}_{\mathbb{P}^{2}} \log\phi_{\beta})\wedge L_{\xi}. 
\end{eqnarray}

Given an arbitrary semi-basic $(p,q)-$form $b$, $\partial_{0}b=(d_{0}b)^{p+1,q},\ \bar{\partial}_{0}b=(d_{0}b)^{p,q+1}$. Hence, both of the two operators are globally defined on $\mathbb{S}^{5}$. 

For any $\beta$,  let $x_{i}, i=1,2,3,4$ be the Euclidean coordinate functions on $U_{\beta, \mathbb{S}^{5}}=\mathbb{S}^{1}\times \mathbb{C}^{2}$. The identity $$\eta-d\theta_{\beta}=\eta(\frac{\partial}{\partial x_{j}})dx^{j}$$
is verified on the basis $(\xi=\frac{\partial}{\partial \theta_{\beta}}, \frac{\partial}{\partial x_{j}},\ j=1,2,3,4)$. Given an endomorphism $u$, we have 
\begin{equation}\label{equ d0u Euc formula}
d_{0}u= [\frac{\partial u}{\partial x_{j}}-\xi(u)\eta(\frac{\partial }{\partial x_{j}})]dx^{j}.
\end{equation}
Similarly, given a semi-basic endomorphism-valued $1-$form $a_{0}=\Sigma^{4}_{i=1}a_{i}dx^{i}$, we have 
\begin{equation}
d_{0}a_{0}= [\frac{\partial a_{i}}{\partial x_{j}}-\xi(a_{i})\eta(\frac{\partial }{\partial x_{j}})]dx^{j}\wedge dx^{i}.
\end{equation}

\begin{rmk}Our calculation for the bundle-valued forms remains true for usual forms, unless the irreducible condition is required.  This is because we can simply let it be the trivial line bundle. For example, Definition \ref{Def d0} and its subsequent calculations  hold for  usual forms. \end{rmk}
\subsection{The deformation operator for $G_{2}-$instantons}
\subsubsection{Separation of variables and the ``Quaternion" structure on $(\mathbb{C}^{3}\setminus O)\times \mathbb{S}^{1}$}
The purpose of this section is to state the formula (Lemma \ref{lem formula of the model dirac deformation operator}) for the linearized operator of the $G_{2}-$instanton equation under the model data defined in \eqref{equ  formula for model deformation operator}.
  \begin{formula}\label{formula pre splitting}(\cite[Proposition 3.13]{SW}) In view of the splitting in \eqref{equ 1st splitting of the 1form}, we have \begin{equation}
L_{A_{O},\phi_{\mathbb{C}^{3}\times \mathbb{S}^{1}}}[1,ds,1]\cdot\left[\begin{array}{c}\sigma \\ \underline{a}_{s}   \\  a_{\mathbb{C}^{3}}\end{array}\right]
=[1,ds,1]\cdot\{(\frac{\partial}{\partial s} \left[\begin{array}{ccc}0 & -1 &0 \\ 1 & 0 &0  \\ 0 & 0 &J_{\mathbb{C}^{3}} \end{array}\right]+\square) \left[\begin{array}{c}\sigma\\ \underline{a}_{s}   \\  a_{\mathbb{C}^{3}}\end{array}\right] \}
\end{equation}
where $\square \left[\begin{array}{c}\sigma\\ \underline{a}_{s}   \\  a_{\mathbb{C}^{3}}\end{array}\right]=\left[\begin{array}{c}d^{\star_{\mathbb{C}^{3}} }a_{\mathbb{C}^{3}}\\ (d_{\mathbb{C}^{3}}a_{\mathbb{C}^{3}})\lrcorner_{\mathbb{C}^{3}} \omega_{\mathbb{C}^{3}} \\ d_{\mathbb{C}^{3}}\sigma-J_{\mathbb{C}^{3}} (d_{\mathbb{C}^{3}}\underline{a}_{s})+(d_{\mathbb{C}^{3}}a_{\mathbb{C}^{3}})\lrcorner_{\mathbb{C}^{3}} Re\Omega_{\mathbb{C}^{3}}.\end{array}\right]$.
\end{formula}

To obtain a finer splitting, we need to generalize the Sasaki-Quaternion structure on $\mathbb{S}^{5}$ in Lemma  \ref{lem shk str} to the domain bundle  on the $7-$dimensional manifold $(\C^{3}\setminus O)\times \mathbb{S}^{1}$.

\begin{lem}\label{lem hk on 7-dim model}(The ``Quaternion" structure on $(\C^{3}\setminus O)\times \mathbb{S}^{1}$) Under the setting from \eqref{equ fine splitting aC3} to \eqref{equ  5 element basis} above, let  the column vector $\left[\begin{array}{c}u \\ a_{s} \\  a_{r} \\ a_{\eta} \\ a_{0}\end{array}\right]$ represents the $5$ components of $Dom_{\mathbb{S}^{5}}$ respectively. Let 
\begin{equation}I=\left|\begin{array}{ccccc} 0 &-1 &0 & 0 & 0 \\ 1 & 0 & 0 
& 0 & 0\\ 0 & 0 & 0 
& -1 & 0  \\  0 & 0 & 1 
& 0 & 0 \\
  0 & 0 &  0
& 0 &  J_{0}\end{array}\right|,\ K=\left|\begin{array}{ccccc} 0 &0 &-1 & 0 & 0 \\ 0 & 0 & 0 
& 1 & 0\\ 1 & 0 & 0 
& 0 & 0  \\  0 & -1 & 0
& 0 & 0 \\
  0 & 0 &  0
& 0 &  J_{H}\end{array}\right|,
\end{equation}
\begin{equation}T=\left|\begin{array}{ccccc} 0 &0 &0 & -1 & 0 \\ 0 & 0 & -1 
& 0 & 0\\ 0 & 1 & 0 
& 0 & 0  \\  1 & 0 & 0 
& 0 & 0 \\
  0 & 0 &  0
& 0 &  J_{G}\end{array}\right|,\   \textrm{and}\ \underline{T}=\left|\begin{array}{ccccc} 0 &0 &0 & -1 & 0 \\ 0 & 0 & -1 
& 0 & 0\\ 0 & 1 & 0 
& 0 & 0  \\  1 & 0 & 0 
& 0 & 0 \\
  0 & 0 &  0
& 0 &  -J_{G}\end{array}\right|
\end{equation}
be the isometries of $Dom_{\mathbb{S}^{5}}$ (and $Dom_{7}$)
 acting on the column vector (by left multiplication). Then  $I,K,\underline{T}$ form an quaternion structure. i.e. all the pairwise multiplications anti-commute, and the following is true.  \begin{equation}\label{equ the quarternionic str}K\underline{T}=I,\ IK=\underline{T},\ \underline{T}I=K,\ \textrm{and}\ I^{2}=K^{2}=\underline{T}^{2}=-Id_{Dom_{\mathbb{S}^{5}}}. \end{equation}

\end{lem}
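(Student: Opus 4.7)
The plan is to observe that all three operators $I$, $K$, $\underline{T}$ are block-diagonal with respect to the decomposition of $Dom_{\mathbb{S}^{5}}$ into the four ``scalar'' components $(u,a_{s},a_{r},a_{\eta})$ (each a section of $\pi_{5,4}^{\star}(adE)$) and the ``semi-basic'' component $a_{0}\in D^{\star}\otimes \pi_{5,4}^{\star}(adE)$. Consequently the quaternionic identities \eqref{equ the quarternionic str} split into two independent verifications: one on the $4\times 4$ scalar block, and one on the semi-basic block, where the operators act as $J_{0}$, $J_{H}$, $-J_{G}$ respectively.

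For the semi-basic block the identities reduce to showing that $(J_{0},J_{H},-J_{G})$ is a quaternion triple on $D^{\star}$. From Lemma \ref{lem shk str} we have $J_{G}J_{H}=J_{0}$, $J_{H}J_{0}=J_{G}$, $J_{0}J_{G}=J_{H}$, and $J_{0}^{2}=J_{H}^{2}=J_{G}^{2}=-Id$. Composing with $J_{0}$, $J_{H}$, $J_{G}$ appropriately yields pairwise anti-commutation, e.g. $J_{0}J_{H}=-J_{G}$. Then $K\underline{T}|_{a_{0}}=J_{H}\cdot(-J_{G})=-J_{H}J_{G}=J_{G}J_{H}=J_{0}=I|_{a_{0}}$, and analogously $IK|_{a_{0}}=J_{0}J_{H}=-J_{G}=\underline{T}|_{a_{0}}$ and $\underline{T}I|_{a_{0}}=(-J_{G})J_{0}=J_{0}J_{G}=J_{H}=K|_{a_{0}}$. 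The squaring identities $I^{2}|_{a_{0}}=J_{0}^{2}=-Id$ (and similarly for $K$, $\underline{T}$) are immediate. This also explains the sign flip between $T$ and $\underline{T}$: it is precisely what reconciles the Sasaki--Quaternion sign convention of Lemma \ref{lem shk str} with the standard left-multiplication quaternion convention on the scalar block.

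For the scalar block the verification is a direct matrix calculation. Each of the three $4\times 4$ matrices is built from two $2\times 2$ sub-blocks that permute the pairs $(u,a_{s})$, $(a_{r},a_{\eta})$, $(u,a_{r})$, $(a_{s},a_{\eta})$, $(u,a_{\eta})$, $(a_{s},a_{r})$ with appropriate signs; squaring any one of them produces a diagonal $-Id_{4\times 4}$, and the ordered products $K\underline{T}$, $IK$, $\underline{T}I$ can be read off entry-by-entry and compared with the defining matrix of $I$, $K$, $\underline{T}$, respectively. That the scalar $4\times 4$ part is a quaternion structure may also be seen conceptually by identifying the four scalar slots with the basis $(1,i,j,k)$ of $\mathbb{H}$, under which $I$, $K$, $\underline{T}$ become left multiplication by three mutually orthogonal unit imaginary quaternions.

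There is no genuine obstacle: once Lemma \ref{lem shk str} is in hand, the whole statement is a bookkeeping verification. The only point requiring care is to keep the row/column ordering of the block matrices consistent with the ordering $\left[u,a_{s},a_{r},a_{\eta},a_{0}\right]^{\top}$ fixed in \eqref{equ  5 element basis}, and to track the sign in the fifth slot of $\underline{T}$; these are exactly the ingredients that make the scalar and semi-basic quaternionic sign conventions match.
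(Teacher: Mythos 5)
Your proof is correct: the block-diagonal splitting into the $4\times 4$ scalar part and the semi-basic part acted on by $(J_{0},J_{H},-J_{G})$ is exactly the routine verification the paper has in mind (it omits the proof of this lemma entirely as a direct check), and your matrix products and the anti-commutation relations deduced from Lemma \ref{lem shk str} all check out, including the observation that the sign in the fifth slot of $\underline{T}$ is what makes the two blocks satisfy the same quaternion relations.
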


Under Convention \ref{convention contraction} on the tensor contractions, we state our main Lemma. 
\begin{lem}\label{lem formula of the model dirac deformation operator} Given a Hermitian Yang-Mills triple $(E,h,A_{O})$ on $\mathbb{P}^{2}$, 
 under the model setting in \eqref{equ  formula for model deformation operator}, the following formula for the model linearized operator holds true. 

\begin{equation}L_{A_{O},\phi_{\mathbb{C}^{3}\times \mathbb{S}^{1}}}\left[\begin{array}{ccccc}\frac{1}{r} &0 &0 &0 &0 \\ 0 &\frac{ds}{r} &\frac{dr}{r}  &\eta &1 \end{array}\right]\cdot\left[\begin{array}{c}u \\ a_{s} \\  a_{r} \\ a_{\eta} \\ a_{0}\end{array}\right]
=\left[\begin{array}{ccccc}\frac{1}{r} &0 &0 &0 &0 \\ 0 &\frac{ds}{r} &\frac{dr}{r}  &\eta &1 \end{array}\right] [\frac{\partial}{\partial s}\circ I+K\circ (\frac{\partial}{\partial r}-\frac{P}{r})]\left[\begin{array}{c}u \\ a_{s} \\  a_{r} \\ a_{\eta} \\ a_{0}\end{array}\right],\end{equation}where
\begin{equation}\label{equ formula for P}
P\left[\begin{array}{c}u \\ a_{s} \\  a_{r} \\ a_{\eta} \\ a_{0}\end{array}\right]=\left[\begin{array}{ccccc}1  & -L_{\xi} & 0 & 0& -(d_{0}\cdot)\lrcorner H\\ L_{\xi} & 1 & 0 & 0& (d_{0}\cdot)\lrcorner {G} \\  0 & 0 & -4 & -L_{\xi}& d^{\star_{0}}_{0}  \\  0 & 0 & L_{\xi} & -4& -(d_{0}\cdot)\lrcorner \frac{d\eta}{2}  \\  J_{H}d_{0} & -J_{G}d_{0} & d_{0} & J_{0}d_{0}& -L_{\xi}J_{0}\end{array}\right]\left[\begin{array}{c}u \\ a_{s} \\  a_{r} \\ a_{\eta} \\ a_{0}\end{array}\right]
\end{equation}
is a first-order self-adjoint elliptic operator on the bundle $Dom_{\mathbb{S}^{5}}$ (see Definition \ref{Def Dom bundle}).
\end{lem}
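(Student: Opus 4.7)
The plan is to derive the formula \eqref{equ formula for P} for $P$ by carrying out two successive separations of variables in Formula \ref{formula pre splitting} — the $s$-variable is already isolated there, so the remaining task is to separate the $r$-variable using the cone structure $\mathbb{C}^{3}\setminus O \cong \mathbb{R}^{+}\times \mathbb{S}^{5}$ together with the Sasaki-Quaternion identities from Lemma \ref{lem shk str} and the explicit formulas for $H$, $G$, $\omega_{\mathbb{C}^{3}}$, $\mathrm{Re}\,\Omega_{\mathbb{C}^{3}}$ from Lemma \ref{lem G and H}.

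First I would verify that, under the fine splitting basis of \eqref{equ  5 element basis}, the $s$-derivative block of Formula \ref{formula pre splitting} becomes precisely the matrix $I$ of Lemma \ref{lem hk on 7-dim model}. This amounts to checking that the cone complex structure satisfies $J_{\mathbb{C}^{3}}(r\partial_{r}) = \xi$, equivalently $J_{\mathbb{C}^{3}}(\frac{dr}{r}) = \eta$ and $J_{\mathbb{C}^{3}}|_{D^{\star}} = J_{0}$, which converts the lower $3\times 3$ block $J_{\mathbb{C}^{3}}$ into the lower-right block of $I$ while leaving the upper $2\times 2$ block unchanged.

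Next I would separate the $r$-variable in the remaining operator $\square$. The cone metric supplies
\begin{equation*}
d_{\mathbb{C}^{3}} = dr\wedge\partial_{r} + d_{\mathbb{S}^{5}}, \quad \omega_{\mathbb{C}^{3}} = r\,dr\wedge\eta + r^{2}\tfrac{d\eta}{2}, \quad \mathrm{Re}\,\Omega_{\mathbb{C}^{3}} = r^{2}dr\wedge H + r^{3}\eta\wedge G,
\end{equation*}
together with the analogous identity for $\mathrm{Im}\,\Omega_{\mathbb{C}^{3}}$ and the Hodge star on the cone. Substituting $u=r\sigma$, $a_{s}=r\underline{a}_{s}$, and the fine splitting $a_{\mathbb{C}^{3}}=a_{r}\frac{dr}{r}+a_{\eta}\eta+a_{0}$ into each of the three components of $\square$, the $r$-derivatives collect into a single block matching the matrix $K$ applied to $\partial_{r}$, while the $\mathbb{S}^{5}$-tangential derivatives collect into $-K\circ P/r$ with $P$ acting on $Dom_{\mathbb{S}^{5}}$. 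Each contraction with $\omega_{\mathbb{C}^{3}}$ then contributes either $\lrcorner\tfrac{d\eta}{2}$ or an $L_{\xi}$ entry, and each contraction with $\mathrm{Re}\,\Omega_{\mathbb{C}^{3}}$ contributes either $\lrcorner H$ or $\lrcorner G$ depending on whether $dr$ or $\eta$ was consumed; the splitting $d_{\mathbb{S}^{5}} = d_{0} + \eta\wedge L_{\xi}$ then produces the remaining $d_{0}$ entries, while the $J_{0}$ in the bottom-right comes from $J_{\mathbb{C}^{3}}|_{D^{\star}}=J_{0}$.

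The main technical obstacle will be the bookkeeping of $r$-weights that yield the Weitzenb\"ock-type diagonal entries $1,1,-4,-4,-L_{\xi}J_{0}$ of $P$. They arise from commuting $\partial_{r}$ through the rescalings $\sigma=u/r$, $\underline{a}_{s}=a_{s}/r$ and from the explicit $r$-powers in the Hodge $\star_{\mathbb{C}^{3}}$, which must cancel to leave integer constants. At this step the Hermitian Yang-Mills hypothesis is essential: the trace term $F_{A_{O}}\lrcorner \tfrac{d\eta}{2} = 0$ is what prevents a zeroth-order curvature correction from surviving in $(d_{A_{O}}a_{\mathbb{C}^{3}})\lrcorner\omega_{\mathbb{C}^{3}}$. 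Once the matrix of $P$ is in hand, self-adjointness follows from pairwise adjointness on $\mathbb{S}^{5}$ — the adjoint of $L_{\xi}$ is $-L_{\xi}$, $(d_{0},d_{0}^{\star_{0}})$ is formally adjoint, and $(\lrcorner H, J_{H}d_{0})$, $(\lrcorner G,-J_{G}d_{0})$, $(\lrcorner\tfrac{d\eta}{2}, J_{0}d_{0})$ are adjoint pairs since $H$, $G$, $d\eta/2$ are $\star_{0}$-self-dual $2$-forms — and ellipticity follows from the principal symbol agreeing, up to the Quaternion isometries $I$, $K$, $\underline{T}$ of Lemma \ref{lem hk on 7-dim model}, with that of the original elliptic operator $L_{A_{O},\phi_{\mathbb{C}^{3}\times\mathbb{S}^{1}}}$.
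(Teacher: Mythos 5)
Your route is essentially the paper's: start from Formula \ref{formula pre splitting}, identify the $\partial/\partial s$-block with $I$, then separate the $r$-variable via the cone splittings $d_{\mathbb{C}^{3}}=d_{0}+\eta\wedge L_{\xi}+dr\wedge L_{\partial/\partial r}$, $\omega_{\mathbb{C}^{3}}=r\,dr\wedge\eta+r^{2}\frac{d\eta}{2}$, $Re\,\Omega_{\mathbb{C}^{3}}=r^{2}dr\wedge H+r^{3}\eta\wedge G$, and assemble $\square=K(\partial_{r}-P/r)$; the bookkeeping of $r$-weights and the self-adjointness/ellipticity arguments you sketch are the right ones.

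One claim in your write-up is wrong and worth correcting: you assert that the Hermitian Yang-Mills hypothesis is essential at the step $(d_{A_{O}}a_{\mathbb{C}^{3}})\lrcorner\omega_{\mathbb{C}^{3}}$, because ``$F_{A_{O}}\lrcorner\frac{d\eta}{2}=0$ prevents a zeroth-order curvature correction from surviving.'' No such term arises: $d_{A_{O}}a_{\mathbb{C}^{3}}$ is a single application of the covariant exterior derivative to a $1$-form, so it is purely first order in $a_{\mathbb{C}^{3}}$ and contains no curvature; curvature only enters when $d_{A}$ is applied twice, i.e.\ in the Bochner formulas for $P^{2}+2P$ (Lemma \ref{lem Bochner}), which is where the Hermitian Yang-Mills condition is genuinely used. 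Indeed the paper explicitly remarks, just after the statement, that Lemma \ref{lem formula of the model dirac deformation operator} does \emph{not} require $A_{O}$ to be Hermitian Yang-Mills. Since the hypothesis is granted in the statement anyway this does not break your argument, but the justification you give for that step is spurious and should be deleted.
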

Under the tools established in the introduction, Section \ref{sect SQ str}, and the generalized Quaternion structure in the above Lemma \ref{lem hk on 7-dim model}, the proof of the above Lemma is a routine and fairly tedious tensor calculation. We defer it to Appendix \ref{Appendix proof of formula of P}. 

\begin{convention} Let the $[\pi^{\star}_{5,4}(adE)]^{\oplus 4}-$component be zero, a semi-basic  $\pi_{7,5}^{\star}adE-$valued $1-$form $a_{0}$ can be viewed as in  $Dom_{\mathbb{S}^{5}}$. The corresponding vector under the basis in \eqref{equ  5 element basis} is $\left[\begin{array}{c}0 \\ 0 \\  0 \\ 0 \\ a_{0}\end{array}\right]$ i.e. the first $4$ components are zero. 
\end{convention}
We  note again that Lemma \eqref{lem formula of the model dirac deformation operator} does not require the connection $A_{O}$ to be Hermitian Yang-Mills, but the Bochner formulas in the following section do i.e. it is not known whether they remain true  if $A_{O}$ is not  Hermitian Yang-Mills.

\subsubsection{Bochner formula for $P$}
In the Quaternion structure, both  $I,\ K$ commute with $\frac{\partial}{\partial r}$ and $\frac{\partial}{\partial s}$. Using the Quaternion identities \eqref{equ the quarternionic str}, we routinely verify the following formula for commutators between the operator $P$ and  $I,\ K,\ \underline{T}$.
\begin{equation}\label{equ commutators for PIJK}
PI=IP,\ KP+PK=-3K,\  \underline{T}P+P\underline{T}=-3\underline{T}.
\end{equation}
\begin{rmk}\label{rmk EP is complex} That $P$ commutes with $I$ makes $I$ a complex structure of the eigenspaces of $P$.
\end{rmk}
Consequently, we straight-forwardly verify the following formula for the square of  $L_{A_{O},\phi_{\mathbb{C}^{3}\times \mathbb{S}^{1}}}$. 
\begin{equation}\label{equ L2 by Separation}
L^{2}_{A_{O},\phi_{\mathbb{C}^{3}\times \mathbb{S}^{1}}}=-\frac{\partial^{2}}{\partial s^{2}}-\frac{\partial^{2}}{\partial r^{2}}-\frac{3}{r}\frac{\partial}{\partial r}+\frac{P^{2}+2P}{r^{2}}.
\end{equation}

We have another formula for $L^{2}_{A_{O},\phi_{\mathbb{C}^{3}\times \mathbb{S}^{1}}}$ than the above. 

\begin{rmk} In conjunction with the discussion below equation \eqref{equ introduction formula for deformation operator}, because the linearized operator only depends on the projective connection induced, we denote the curvature form of the projective connection  by $F^{0}_{A_{O}}$ (or $F^{0}_{A_{0}}$).
\end{rmk}

In arbitrary dimension $n$,  on $\mathbb{R}^{n}\setminus O$, let $A_{0}$ be a pullback  connection from $\mathbb{S}^{n-1}$. Let $\boxdot^{dim n}_{A_{0}}\triangleq \nabla^{\star}\nabla+2F^{0}_{A_{0}}\otimes_{\mathbb{R}^{n}}$ be the operator acting on $\Omega^{1}(EndE)\rightarrow \mathbb{R}^{n}\setminus O$ ($EndE$ is the pullback endomorphism bundle). Using the formula 
\cite[Lemma 3.2]{WangyuanqiJGP}  for the rough Laplacian $\nabla^{\star,\mathbb{R}^{n}}\nabla^{\mathbb{R}^{n}}$ on $1-$forms, we find
\begin{equation}\label{equ square dot operator}\boxdot^{dim n}_{A_{0}}=-\frac{\partial^{2}}{\partial r^{2}}-\frac{n-3}{r}\frac{\partial}{\partial r}+\frac{\widehat{B}^{dim n}_{0}}{r^{2}},
\end{equation}
where


\begin{equation}\label{equ formula for B0 general dim}
\widehat{B}_{0,dim n}\left[\begin{array}{c}  a_{r} \\ a \\ \end{array}\right]\triangleq \left[\begin{array}{c}  \nabla^{\mathbb{S}^{n-1},\star}\nabla^{\mathbb{S}^{n-1}}a_{r}-2d^{\star}a+2(n-2)a_{r}\\  \nabla^{\mathbb{S}^{n-1},\star}\nabla^{\mathbb{S}^{n-1}}a-2d_{s}a_{r}+(n-2)a+2F^{0}_{A_{0}}\otimes_{\mathbb{S}^{n-1}}a \end{array}\right].
\end{equation}
$\boxdot^{dim n}_{A_{0}}$ is the ``linearized" operator  for the Yang-Mills equation with gauge fixing (cf. \cite[6.1]{WangyuanqiJFA}).

Then we go on to prove Theorem \ref{Thm eigenvalue Sasakian}. 

\begin{formula}\label{Formula Bochner for P} Given a Hermitian Yang-Mills triple on $\mathbb{P}^{2}$, in view of Lemma \eqref{lem formula of the model dirac deformation operator}, and still let $\nabla^{\star}\nabla$ denote the rough Laplacian on $\pi^{\star}_{5,4}adE$, the following identity holds $$P^{2}+2P=B_{0,dim6},$$ where
\begin{equation}\label{equ formula for B0}
B_{0,dim6}\left[\begin{array}{c}u \\ a_{s} \\  a_{r} \\ a \\ \end{array}\right]\triangleq \left[\begin{array}{c}\nabla^{\star}\nabla u+3u\\ \nabla^{\star}\nabla a_{s}+3a_{s} \\  \nabla^{\star}\nabla a_{r}-2d^{\star}a+8a_{r}\\  \nabla^{\star}\nabla a-2d a_{r}+4a+2F^{0}_{A_{0}}\otimes_{\mathbb{S}^{5}}a \end{array}\right].
\end{equation}\end{formula}
In relation to the remark under Theorem \ref{Thm 1},  we need the Hermitian Yang-Mills condition in Lemma \ref{Formula Bochner for P} but not in \ref{lem formula of the model dirac deformation operator} because the formula  \cite[(33)]{WangyuanqiJGP} needs the pullback   connection on $(\mathbb{C}^{3}\setminus O)\times \mathbb{S}^{1}$ to be a projective $G_{2}-$instanton. 
\begin{proof}[Proof of Formula \ref{Formula Bochner for P}:] The observation is that, using the usual Euclidean coordinates on $\mathbb{C}^{3}\times\mathbb{S}^{1}$ (induced from $\mathbb{R}^{7}$), we have a another way to compute $L^{2}_{A_{O},\phi_{\mathbb{C}^{3}\times \mathbb{S}^{1}}}$ that yields 
\begin{equation}\label{equ L2 by Euc coordinate}L^{2}_{A_{O},\phi_{\mathbb{C}^{3}\times \mathbb{S}^{1}}}=-\frac{\partial^{2}}{\partial s^{2}}-\frac{\partial^{2}}{\partial r^{2}}-\frac{3}{r}\frac{\partial}{\partial r}+\frac{B_{0,dim6}}{r^{2}}.
\end{equation}
The proof is  complete  comparing \eqref{equ L2 by Euc coordinate} with \eqref{equ L2 by Separation}.

It remains to show \eqref{equ L2 by Euc coordinate}. It directly follows from the identities in \cite{WangyuanqiJGP}. In view of the splitting in \eqref{equ 1st splitting of the 1form}, the Bochner formula  \cite[(146)]{WangyuanqiJGP}, which holds for projective $G_{2}-$instantons (because locally the connection form acts  on endorphisms-valued forms via Lie-bracket),  yields 
\begin{equation}\label{equ formula for L2}
L^{2}_{A_{O},\phi_{\mathbb{C}^{3}\times \mathbb{S}^{1}}}[\frac{1}{r},1]\left[\begin{array}{c}u \\ a_{\mathbb{C}^{3}} \end{array}\right]=\left[\begin{array}{c}\nabla^{\star,\mathbb{C}^{3}\times \mathbb{S}^{1}} \nabla^{\mathbb{C}^{3}\times \mathbb{S}^{1}} (\frac{u}{r})\\ \nabla^{\star,\mathbb{C}^{3}\times \mathbb{S}^{1}}\nabla^{\mathbb{C}^{3}\times \mathbb{S}^{1}} a_{\mathbb{C}^{3}}+2F^{0}_{A_{0}}\otimes_{\mathbb{C}^{3}}\underline{a}\end{array}\right].
\end{equation}
 We note that the proof of \cite[(146)]{WangyuanqiJGP} is by Euclidean coordinates for the model $G_{2}-$structure, thus it also holds in our case as $\mathbb{C}^{3}\times \mathbb{S}^{1}$ possesses such coordinates for the standard $G_{2}-$structure $\phi_{\mathbb{C}^{3}\times \mathbb{S}^{1}}$.
  
The point is that $\nabla^{\star,\mathbb{C}^{3}\times \mathbb{S}^{1}} \nabla^{\mathbb{C}^{3}\times \mathbb{S}^{1}}=-\frac{\partial^{2}}{\partial s^{2}}+\nabla^{\star,\mathbb{C}^{3}} \nabla^{\mathbb{C}^{3}}$, and $ds$ is
$\nabla^{\mathbb{C}^{3}\times \mathbb{S}^{1}}-$parallel. We then compute
  $$\nabla^{\star,\mathbb{C}^{3}\times \mathbb{S}^{1}} \nabla^{\mathbb{C}^{3}\times \mathbb{S}^{1}}(\frac{ a_{s}ds}{r})=[\nabla^{\star,\mathbb{C}^{3}\times \mathbb{S}^{1}} \nabla^{\mathbb{C}^{3}\times \mathbb{S}^{1}}(\frac{a_{s}}{r})]ds=[-\frac{\partial^{2}}{\partial s^{2}}(\frac{a_{s}}{r})+\nabla^{\star,\mathbb{C}^{3}} \nabla^{\mathbb{C}^{3}}(\frac{a_{s}}{r})]ds,$$ and the similar identity holds for $\nabla^{\star,\mathbb{C}^{3}\times \mathbb{S}^{1}} \nabla^{\mathbb{C}^{3}\times \mathbb{S}^{1}} (\frac{u}{r})$ i.e. $$\nabla^{\star,\mathbb{C}^{3}\times \mathbb{S}^{1}} \nabla^{\mathbb{C}^{3}\times \mathbb{S}^{1}}(\frac{u}{r})=-\frac{\partial^{2}}{\partial s^{2}}(\frac{u}{r})+\nabla^{\star,\mathbb{C}^{3}} \nabla^{\mathbb{C}^{3}}(\frac{u}{r}).$$ Hence, 
  in view of the splitting in  \eqref{equ fine splitting aC3}, we find 
\begin{equation}\label{equ formula for L2 x}
L^{2}_{A_{O},\phi_{\mathbb{C}^{3}\times \mathbb{S}^{1}}}[\frac{1}{r},\frac{ds}{r},1]\left[\begin{array}{c}u \\ a_{s}\\ a \end{array}\right]\triangleq -\frac{\partial^{2}}{\partial s^{2}}[\frac{1}{r},\frac{ds}{r},1]\left[\begin{array}{c}u \\ a_{s}\\ a \end{array}\right]+ [\frac{1}{r},\frac{ds}{r},1]\left[\begin{array}{c}r\nabla^{\star,\mathbb{C}^{3}} \nabla^{\mathbb{C}^{3}} (\frac{u}{r})\\ r\nabla^{\star,\mathbb{C}^{3}} \nabla^{\mathbb{C}^{3}} (\frac{a_{s}}{r})\\ \nabla^{\star,\mathbb{C}^{3}}\nabla^{\star,\mathbb{C}^{3}} a+2F^{0}_{A_{0}}\otimes_{\mathbb{C}^{3}}a\end{array}\right].
\end{equation}
Now we view $\mathbb{C}^{3}\setminus \{O\}$ as the real $6-$dimensional flat cone over $\mathbb{S}^{5}$. Using the formulas
\cite[(29) and Lemma 3.2]{WangyuanqiJGP} (let $n=6$ therein) for the rough Laplacians $\nabla^{\star,\mathbb{C}^{3}}\nabla^{\mathbb{C}^{3}} a$, $r\nabla^{\star,\mathbb{C}^{3}} \nabla^{\mathbb{C}^{3}} (\frac{a_{s}}{r})$, and $r\nabla^{\star,\mathbb{C}^{3}} \nabla^{\mathbb{C}^{3}} (\frac{u}{r})$, the proof for \eqref{equ L2 by Euc coordinate} is complete. \end{proof}

 The following formula says the $3$ forms yielding the Sasaki-Quaternion structure are all $d_{0}-$harmonic.
\begin{formula}\label{formula d0 closedness of the 3 forms}The following vanishing holds. \begin{equation}\label{equ formula d0 closedness of the 3 forms}d_{0}(\frac{d\eta}{2})=d_{0}G=d_{0}H=d_{0}\Theta=d_{0}\bar{\Theta}=0.\end{equation} Consequently, because they are all $\star_{0}$ self-dual, $$d_{0}^{\star_{0}}(\frac{d\eta}{2})=d_{0}^{\star_{0}}G=d_{0}^{\star_{0}}H=d_{0}^{\star_{0}}\Theta=d_{0}^{\star_{0}}\bar{\Theta}=0.$$
\end{formula}
\begin{proof}[Proof of Formula \ref{formula d0 closedness of the 3 forms}:] 
Routine calculation shows that the two individual terms in the formula \eqref{equ do and dcp2} for $d_{0}$  are 
$$d_{\mathbb{P}^{2}}(Z^{3}_{0}du_{1}du_{2})=-\frac{3}{2}\bar{\partial}\log \phi_{0}\wedge (Z^{3}_{0}du_{1}du_{2}),$$
and 
$$-\frac{1}{2}[(d^{c}_{\mathbb{P}^{2}})\log \phi_{0}]\wedge L_{\xi}(Z^{3}_{0}du_{1}du_{2})=\frac{3}{2}[\bar{\partial}\log \phi_{0}]\wedge (Z^{3}_{0}du_{1}du_{2}).$$
Then \eqref{equ do and dcp2} says that $d_{0}(Z^{3}_{0}du_{1}du_{2})=0$. Taking complex conjugate, 
we find $$d_{0}(\bar{Z}^{3}_{0}d\bar{u}_{1}d\bar{u}_{2})=0.$$ Using the expression of $G,\ H,$ and $\Theta$ in
\eqref{equ G and H}, \eqref{equ formula d0 closedness of the 3 forms} holds in $U_{0,\mathbb{S}^{5}}$. Because they are both smooth forms, by continuity, \eqref{equ formula d0 closedness of the 3 forms} holds true everywhere on $\mathbb{S}^{5}$.
\end{proof}

The $\star_{0}$ self-duality of the forms $\frac{d\eta}{2}$, $G$, $H$ also yields the following identities. 
\begin{formula}\label{formula dstarJ and da contraction} $d_{0}^{\star_{0}} J_{0}(a_{0})=d_{0}(a_{0})\lrcorner \omega_{0}$,  $d_{0}^{\star_{0}} J_{G}(a_{0})=d_{0}(a_{0})\lrcorner G$,  $d_{0}^{\star_{0}} J_{H}(a_{0})=d_{0}(a_{0})\lrcorner H$. 
\end{formula} 
\begin{proof}[Proof of Formula \ref{formula dstarJ and da contraction}:] We only prove the second identity, the other two are similar. We calculate
\begin{eqnarray*}& &d^{\star_{0}}_{0}(J_{G}a_{0})\triangleq -\star_{0}d_{0} \star_{0}(a_{0}\lrcorner G)=-\star_{0}d_{0} \star_{0}\star_{0}(a_{0}\wedge G)=\star_{0}d_{0}(a_{0}\wedge G)\nonumber
\\&=&\star_{0}[(d_{0}a_{0})\wedge G]\ \ \ \ \ \ \ \ \ \ \ \ \ \ \ \ \ \ \ \ \ (\textrm{by the}\ d_{0}-\textrm{closeness in Formula}\ \eqref{formula d0 closedness of the 3 forms}).
\\&=&(d_{0}a_{0})\lrcorner G.
\end{eqnarray*}
\end{proof}

\begin{lem}\label{lem Bochner} (Bochner formulas for $P$) Given a Hermitian Yang-Mills triple on $\mathbb{P}^{2}$, still in view of the 5 component separation in \eqref{equ fine splitting aC3} and Lemma \ref{lem formula of the model dirac deformation operator},  the following holds. \begin{equation}\label{equ formula for P2+2P separation}
(P^{2}+2P)\left[\begin{array}{c}u \\ a_{s} \\  a_{r} \\ a_{\eta} \\ a_{0} \end{array}\right]=\left[\begin{array}{c}\nabla^{\star}\nabla u+3u\\ \nabla^{\star}\nabla a_{s}+3a_{s} \\  \nabla^{\star}\nabla a_{r}-2d^{\star_{0}}_{0}a_{0}+2L_{\xi}a_{\eta}+8a_{r}\\  \nabla^{\star}\nabla a_{\eta}-2L_{\xi}a_{r}+8a_{\eta}+2d_{0}^{\star_{0}}J_{0}(a_{0})\\  \nabla^{\star}\nabla a_{0}-2d_{0}a_{r}-2J_{0}(d_{0}a_{\eta})+4a_{0}+2F^{0}_{A_{O}}\otimes_{\mathbb{S}^{5}}a_{0}\end{array}\right].
\end{equation}
Consequently,
\begin{equation}\label{equ formula for P2+4P separation}
(P^{2}+4P)\left[\begin{array}{c}u \\ a_{s} \\  a_{r} \\ a_{\eta} \\ a_{0} \end{array}\right]=\left[\begin{array}{c}\nabla^{\star}\nabla u+5u-2L_{\xi}a_{s}-2d^{\star_{0}}_{0}J_{H}(a_{0})\\  \nabla^{\star}\nabla a_{s}+5a_{s}+2L_{\xi}u+2d^{\star_{0}}_{0}J_{G}(a_{0}) \\  \nabla^{\star}\nabla a_{r}\\  \nabla^{\star}\nabla a_{\eta}\\ \nabla^{\star}\nabla a_{0}+4a_{0}+2F^{0}_{A_{O}}\otimes_{\mathbb{S}^{5}}a_{0}+2J_{H}(d_{0}u)-2J_{G}(d_{0}a_{s})-2L_{\xi}(J_{0}a_{0})\end{array}\right]. 
\end{equation}
\end{lem}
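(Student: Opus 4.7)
The plan is to derive both identities as consequences of the already-established Formula \ref{Formula Bochner for P}, namely $P^2 + 2P = B_{0,dim6}$ in the coarser 4-component splitting $(u, a_s, a_r, a)$ where $a$ is a full $\pi^\star_{5,4}(adE)$-valued $1$-form on $\mathbb{S}^5$, combined with the Sasakian decomposition $a = a_\eta \eta + a_0$ and Formula \ref{formula dstarJ and da contraction}.

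For identity \eqref{equ formula for P2+2P separation} giving $P^2 + 2P$ in the fine splitting, I substitute $a = a_\eta \eta + a_0$ into each row of $B_{0,dim6}$. The first two rows are unaffected. For the third row I expand $d^\star a$: since $\xi$ is Killing on $\mathbb{S}^5$ we have $d^\star \eta = 0$, so $d^\star(a_\eta \eta) = -L_\xi a_\eta$; and for the semi-basic part, $d^\star a_0 = d_0^{\star_0} a_0$ because $a_0$ has no Reeb component and the Reeb contribution to the divergence vanishes on the round $\mathbb{S}^5$. For the fourth row (the $1$-form output), I decompose $\nabla^\star\nabla(a_\eta\eta + a_0)$, $d a_r$, and $F^0_{A_O}\otimes_{\mathbb{S}^5}(a_\eta\eta + a_0)$ into $\eta$- and semi-basic parts. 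Here $da_r = (L_\xi a_r)\eta + d_0 a_r$ is immediate; the curvature term simplifies because $F^0_{A_O}$ is pulled back from $\mathbb{P}^2$ and hence semi-basic, so $F^0_{A_O}\otimes_{\mathbb{S}^5}(a_\eta \eta) = 0$ while $F^0_{A_O}\otimes_{\mathbb{S}^5} a_0$ survives unchanged. The rough-Laplacian term is the delicate piece: using that $\nabla\eta$ restricted to the contact distribution is the Sasakian almost complex structure $J_0$, the Weitzenb\"ock cross terms produce exactly $+2 d_0^{\star_0} J_0(a_0)$ in the $\eta$-component and $-2 J_0(d_0 a_\eta)$ in the semi-basic component. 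Assembling and reading off the coefficients yields \eqref{equ formula for P2+2P separation}.

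For identity \eqref{equ formula for P2+4P separation} I simply write $P^2 + 4P = (P^2 + 2P) + 2P$ and add twice the explicit matrix formula \eqref{equ formula for P} to the Part~1 expression. The contractions $(d_0 a_0)\lrcorner H$, $(d_0 a_0)\lrcorner G$ and $(d_0 a_0)\lrcorner \frac{d\eta}{2}$ appearing in $P$ are converted via Formula \ref{formula dstarJ and da contraction} into $d_0^{\star_0} J_H a_0$, $d_0^{\star_0} J_G a_0$ and $d_0^{\star_0} J_0 a_0$ respectively. The key cancellations are as follows: in rows~3 and~4, the $\pm 2L_\xi a_\eta$, $\pm 2L_\xi a_r$ terms cancel against those from $2P$, the $d_0^{\star_0}$-type terms cancel after the $J_0$-identification, and the constant coefficients $+8$ are wiped out by $-8$ from $2P$, leaving only $\nabla^\star\nabla a_r$ and $\nabla^\star\nabla a_\eta$. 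In rows~1 and~2 the diagonal entry of $P$ shifts the constant from $3$ to $5$, while the off-diagonal entries introduce the new terms $\pm 2 L_\xi$ and $\pm 2 d_0^{\star_0} J_{H/G}(a_0)$. In row~5 the $\pm 2 d_0 a_r$ and $\pm 2 J_0 (d_0 a_\eta)$ contributions from Part~1 and from $2P$ cancel, producing exactly the expression in \eqref{equ formula for P2+4P separation}.

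The main obstacle is the delicate step in Part~1: identifying the Weitzenb\"ock cross terms in the decomposition of $\nabla^\star\nabla(a_\eta \eta + a_0)$ and showing they reassemble into the $J_0$-twisted differential operators $d_0^{\star_0}J_0$ and $J_0 d_0$. This is a careful but routine Sasakian tensor calculation compatible with Lemma \ref{lem shk str}; Part~2 is then mechanical bookkeeping, provided the conversions of Formula \ref{formula dstarJ and da contraction} are applied systematically.
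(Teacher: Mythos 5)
Your proposal is correct and follows essentially the same route as the paper: it proves the $P^2+2P$ identity by substituting $a=a_\eta\eta+a_0$ into the coarse formula $P^2+2P=B_{0,dim6}$ of Formula \ref{Formula Bochner for P}, splitting $d^{\star}$ and the rough Laplacian into $\eta$- and semi-basic components (the cross terms $2d_0^{\star_0}J_0(a_0)$ and $-2J_0(d_0a_\eta)$ being exactly the paper's Formulas \ref{formula 1 proof lem rough laplacian in eta and a0 component} and \ref{clm 3 proof lem rough laplacian in eta and a0 component}), and then obtains $P^2+4P$ by adding $2P$ from \eqref{equ formula for P} with the contractions converted via Formula \ref{formula dstarJ and da contraction}. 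The only caveat is that the "delicate" Weitzenb\"ock cross-term computation you defer is genuinely the substantive step, carried out in the paper via the transverse geodesic frame and the identities \eqref{equ 1 Appendix}--\eqref{equ hessian of xi and eta}.
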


Here, we need the Hermitian Yang-Mills condition because we need the pullback of the  connection to $(\mathbb{C}^{3}\setminus O)\times \mathbb{S}^{1}$ to be a projective $G_{2}-$instanton. Please see \cite[reduction from (32) to (33) using the projective instanton condition]{WangyuanqiJGP}. 
\begin{proof}[Proof of Lemma \ref{lem Bochner}:] We keep the identity $P^{2}+2P=B_{0,dim6}$ in mind throughout. Formula \eqref{equ formula for P2+4P separation} for $P^{2}+4P$ is obtained simply by adding twice of $P$  to \eqref{equ formula for P2+2P separation} (see formula \eqref{equ formula for P}). It suffices to prove \eqref{equ formula for P2+2P separation}.

 First of all, formula \eqref{formula decomposition for dstar S5} below for the operator $d_{\mathbb{S}^{5}}^{\star_{\mathbb{S}^{5}}}$ yields that row 3 of \eqref{equ formula for B0} is  equal to row 3 of the desired formula \eqref{equ formula for P2+2P separation}.

On row 4 of the desired formula \eqref{equ formula for P2+2P separation},
\begin{itemize}\item  formula \ref{formula 1 proof lem rough laplacian in eta and a0 component} below for the  rough Laplacian says that the $\eta$ component (co-efficient for $\eta$) of $\nabla^{\star}\nabla a_{0}$  is $[2d^{\star_{0}}_{0}J_{0}(a_{0})]\eta$;

\item  the $\eta-$component of $-2d_{s}a_{r}$ is  $-2\eta\wedge L_{\xi}a_{r}$, and that of $4a$ is apparently $4a_{\eta}\eta$;
\item Formula \ref{clm 3 proof lem rough laplacian in eta and a0 component} below says that the $\eta-$component of $\nabla^{\star}\nabla(\eta a_{\eta})$ is $4a_{\eta}+\nabla^{\star}\nabla a_{\eta}$.
\end{itemize}
The above facts amount to that the $\eta-$component of row 4 in \eqref {equ formula for B0}  is $$\eta[\nabla^{\star}\nabla a_{\eta}-2L_{\xi}a_{r}+8a_{\eta}+2d_{0}^{\star_{0}}J_{0}(a_{0})],$$
which exactly gives row 4 of the desired formula \eqref{equ formula for P2+2P separation} as co-efficient of $\eta$. 

For row 5, by  similar idea, we only have to observe that Formula \ref{clm 3 proof lem rough laplacian in eta and a0 component} below also says that the $\pi_{5,4}^{\star}(D^{\star}\otimes adE)$ (semi-basic) component  of $\nabla^{\star}\nabla(\eta a_{\eta})$ is $-2J_{0}(d_{0}a_{\eta})$. This completes the proof of \eqref{equ formula for P2+2P separation}. \end{proof}



\section{Eigenvalues and eigenspaces of the operator $P$ on $\mathbb{S}^{5}$, and the proof of Theorem \ref{Thm 1} \label{sect proof of Thm 1}}

\subsection{Fourier expansion with respect to the Reeb vector field}
Given a holomorphic Hermitian triple $(E,h,A_{O})$ on $\mathbb{P}^{2}$, the purpose of this section is to show that any (sufficiently regular) endomorphism on $\mathbb{S}^{5}$ admits a global ``Fourier series" in terms of the trivializations $s_{-k}$ of  $\pi^{\star}_{5,4}O(-k)\rightarrow \mathbb{S}^{5}$ (Lemma \ref{lem global F series}). The ``Fourier"-co-efficient of
 $s_{-k}$ is a section of $(EndE)(k)$ over $\mathbb{P}^{2}$.

The series is important in characterizing the vector spaces $V_{l}$ (see Section \ref{sect characterizing invariant subspaces} below),  and in the spectral reduction from $\mathbb{S}^{5}$ to $\mathbb{P}^{2}$ (see Formula \ref{formula laplace on S5 vs laplace on CP2} below).

This global Fourier-Series is equal to the usual Fourier-series in terms of $e^{\sqrt{-1}k\theta_{\beta}}$ defined (locally) in $U_{\beta,\mathbb{S}^{5}}$, for any $\beta$ among $0,\ -1,\ -2$.

Let $\nu\in C^{1}[S^{5},\pi^{\star}_{5,4}EndE]$. For any $\beta=0,\ 1$ or $2$, in $U_{\beta,\mathbb{S}^{5}}$,  the usual Fourier expansion 
\begin{equation}\label{equ local Fourier series}\nu=\Sigma_{k\in \mathbb{Z}}\nu_{\beta}(k)e^{\sqrt{-1}k\theta_{\beta}}\end{equation} 
converges uniformly in  $U_{\beta,\mathbb{S}^{5}}$ under the Hermitian metric (see Lemma \ref{lem term by term differentiation of F series} below). For any $k$, $v_{\beta}(k)$ is a section of $\pi^{\star}_{5,4}EndE\rightarrow U_{\beta,\mathbb{S}^{5}}$.


Because on the overlap $U_{\beta,\mathbb{S}^{5}}\cap U_{\alpha,\mathbb{S}^{5}}$, the function  $\frac{e^{\sqrt{-1}\theta_{\beta}}}{e^{\sqrt{-1}\theta_{\alpha}}}$ is equal to $\frac{Z_{\beta}}{Z_{\alpha}}\cdot \sqrt{\frac{\phi_{\beta}}{\phi_{\alpha}}}$, it is pulled back from the open set  $U_{\beta,\mathbb{P}^{2}}\cap U_{\alpha,\mathbb{P}^{2}}$ in $\mathbb{P}^{2}$. Because $\nu$ is globally defined,  given a $\beta$ and $\alpha$ among $0,1,2$, for any integer $k$,  on the overlap 
$U_{\beta,\mathbb{S}^{5}}\cap U_{\alpha,\mathbb{S}^{5}}$, the $k-$th terms of the Fourier-Series satisfy the following.  \begin{equation}\label{equ transition condition for v}\nu_{\beta}(k)e^{\sqrt{-1}k\theta_{\beta}}=\nu_{\alpha}(k)e^{\sqrt{-1}k\theta_{\alpha}}.\end{equation} 

Pointwisely, for any $[Y]\in \mathbb{P}^{2}$, let $(X_{0},X_{1},X_{2})\in \mathbb{C}^{3}\setminus O$ belong to the line  $[Y]$. Tautologically,  we define the dual basis  $(X_{0},X_{1},X_{2})^{\vee}$  as the functional on $[Y]$ whose value at 
$(X_{0},X_{1},X_{2})$ is equal to $1$. Then 
\begin{equation}(\lambda X_{0},\lambda X_{1}, \lambda X_{2})^{\vee}=\frac{1}{\lambda}(X_{0},X_{1},X_{2})^{\vee}.
\end{equation}

\begin{Def}\label{Def section of the pullback universal bundle}  Let 
\begin{equation}
s_{-1}\triangleq (X_{0},X_{1},X_{2})
\end{equation}
be the standard unitary trivialization of $\pi^{\star}_{5,4}[O(-1)]\rightarrow \mathbb{S}^{5}$. Then for any integer $l$, 
\begin{equation}\label{equ def SObeta} s_{l}\triangleq \left\{ \begin{array}{c}s_{-1}^{\otimes -l}\ \textrm{when}\ l\leq 0\\
s_{-1}^{\vee,\otimes l}\ \textrm{when}\ l>0 \end{array} \right. \end{equation}
is the unitary trivialization of $\pi^{\star}_{5,4}[O(l)]\rightarrow \mathbb{S}^{5}$ with respect to the standard metric (see Definition \ref{Def standard Hermitian metric on Obeta}). 

We understand $s_{-1}^{\otimes 0}$ and ${[s_{-1}^{\vee}]}^{\otimes 0}$ as the constant function $1$  which trivializes the trivial rank $1$ complex  bundle on $\mathbb{S}^{5}$. 
\end{Def}

Let $s_{k}\otimes s_{-k}$ denote the section of the trivial complex line bundle $\pi^{\star}_{5,4} O(k)\otimes \pi^{\star}_{5,4} O(-k)$ over $\mathbb{S}^{5}$. Viewing $EndE$ as the tensor product between itself and $\pi^{\star}_{5,4} O(k)\otimes \pi^{\star}_{5,4} O(-k)$, we find for any fixed $k$ that 
\begin{equation}\label{equ local =global expression Fourier series}\nu_{\beta}(k)e^{\sqrt{-1}k\theta_{\beta}}= [\nu_{\beta}(k)e^{\sqrt{-1}k\theta_{\beta}}s_{k}]\otimes  s_{-k}.
\end{equation}

By the transition condition \eqref{equ transition condition for v}, the section $\nu_{k}$ of $EndE\rightarrow \mathbb{S}^{5}$ defined piece-wisely by  $\nu_{\beta}(k)e^{\sqrt{-1}k\theta_{\beta}}s_{k}$ on $U_{\beta,\mathbb{S}^{5}}$, is independent of the coordinate neighborhood $U_{\beta,\mathbb{S}^{5}}$ chosen. Moreover, $\nu_{k}$ is independent of $\theta_{\beta}$ in $U_{\beta}$, thus it descends to $\mathbb{P}^{2}$. This means the section $\nu_{k}$  is a globally defined section pulled back from $\mathbb{P}^{2}$. We then have the global Fourier series. 
\begin{equation}\label{equ local =global expression Fourier series}\nu=\Sigma_{k\in \mathbb{Z}}\nu_{k}\otimes s_{-k}.
\end{equation}
The negative sign in ``$-k$" is to be consistent with the usual local Fourier-Series in \eqref{equ local Fourier series} i.e. for any integer $k$, and any $\beta$ among $0,1,2$, the following is true in $U_{\beta,\mathbb{S}^{5}}$. 
\begin{equation}\label{equ local F series is equal to the global one}\nu_{\beta}(k)e^{\sqrt{-1}k\theta_{\beta}}=\nu_{k}\otimes  s_{-k}.
\end{equation}
\begin{Def}\label{Def Sasaki Fourier}Henceforth, the series \eqref{equ local =global expression Fourier series} is called the \textit{Sasakian-Fourier series} of $\nu$. This is not the same object as the eigen Fourier expansion in Definition \ref{Def eigen F} below.
\end{Def}
We are ready for the main lemma of this section. 
\begin{lem}\label{lem global F series} Given a holomorphic Hermitian triple $(E,h,A_{O})$ on $\mathbb{P}^{2}$,  let $\nu\in C^{10}(\mathbb{S}^{5},\pi^{\star}_{5,4}EndE)$. For any $k\in \mathbb{Z}$, there is an unique section $\nu_{k}$ of $(EndE)(k)$ such that the following holds.

Under the pullback Hermitian metric, the Sasaki-Fourier series $\Sigma_{k}\nu_{k}\otimes s_{-k}$ converges uniformly to $\nu$ on $\mathbb{S}^{5}$. Moreover,  this series can be differentiated term by term by the Reeb Lie derivative $L_{\xi}$ and the rough Laplacian $\nabla^{\star}\nabla$ i.e. 
\begin{itemize}\item  $\Sigma_{k}L_{\xi}(\nu_{k}\otimes s_{-k})$ is the Sasaki-Fourier series of  $L_{\xi}\nu$, and converges uniformly on $\mathbb{S}^{5}$ to $L_{\xi}\nu$.
\item  $\Sigma_{k}\nabla^{\star}\nabla(\nu_{k}\otimes s_{-k})$ is the Sasaki-Fourier series of  $\nabla^{\star}\nabla \nu$, and converges uniformly on $\mathbb{S}^{5}$ to $\nabla^{\star}\nabla \nu$.
\end{itemize}

The Sasaki-Fourier co-efficient $\nu_{k}$ is $(End_{0}E)-$valued if $\nu$ is. 
\end{lem}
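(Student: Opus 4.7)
The strategy is to build the global series from the ordinary angular Fourier expansions in each Sasakian chart, and then use the transition relation \eqref{equ transition condition for v} to recognize that the assembled coefficients descend to $\mathbb{P}^{2}$ as sections of $(End E)(k)$. On $U_{\beta,\mathbb{S}^{5}}$, the Reeb field equals $\partial/\partial\theta_{\beta}$ (Fact \ref{fact Reeb is angular}), so fixing the $\mathbb{P}^{2}$-coordinates $(u_{j},u_{k})$ and expanding the $C^{10}$ function $\theta_{\beta}\mapsto \nu(u_{j},u_{k},\theta_{\beta})$ in an ordinary Fourier series produces the coefficients $\nu_{\beta}(k)(u_{j},u_{k})$ of \eqref{equ local Fourier series}. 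First I would verify (by repeated integration by parts in $\theta_{\beta}$ combined with the elementary one-dimensional Fourier theory applied parameter-wise) that $\|\nu_{\beta}(k)\|_{C^{6}(U_{\beta,\mathbb{P}^{2}})}\le C_{\nu}\,|k|^{-4}$ for $|k|\ge 1$, using the $C^{10}$-norm of $\nu$; this one estimate will supply all the convergence claims in the lemma simultaneously.

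Next I would promote the local coefficients to a global object. By \eqref{equ local =global expression Fourier series} one sets $\nu_{k}\triangleq \nu_{\beta}(k)e^{\sqrt{-1}k\theta_{\beta}}\otimes s_{k}$ on $U_{\beta,\mathbb{S}^{5}}$; the cocycle identity \eqref{equ transition condition for v} shows that this definition is independent of $\beta$, so $\nu_{k}$ is a genuine global section of $\pi^{\star}_{5,4}[(EndE)(k)]\to\mathbb{S}^{5}$. Then I would check that $L_{\xi}\nu_{k}=0$: locally $\nu_{\beta}(k)$ is independent of $\theta_{\beta}$, the factor $e^{\sqrt{-1}k\theta_{\beta}}$ contributes $\sqrt{-1}k$, and the rotation $Z\mapsto e^{\sqrt{-1}t}Z$ gives $L_{\xi}s_{k}=-\sqrt{-1}k\,s_{k}$ (from $s_{-1}=(X_{0},X_{1},X_{2})$ and the definition \eqref{equ def SObeta}), so the contributions cancel. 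Since $\nu_{k}$ is $\xi$-invariant and $\pi_{5,4}:\mathbb{S}^{5}\to\mathbb{P}^{2}$ is the quotient by the Reeb $U(1)$-action, $\nu_{k}$ descends to a section of $(EndE)(k)\to\mathbb{P}^{2}$. Uniqueness is immediate because the local angular Fourier decomposition is unique.

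For convergence and term-by-term differentiation, I would estimate $\|\nu_{k}\otimes s_{-k}\|_{C^{0}(\mathbb{S}^{5})}$ by the pullback Hermitian norm: since $|s_{-k}|=1$ (it is a unitary trivialization by Definition \ref{Def section of the pullback universal bundle}) and $|e^{\sqrt{-1}k\theta_{\beta}}|=1$, the local norm equals $|\nu_{\beta}(k)|$, which is controlled by $C_{\nu}|k|^{-4}$. Weierstrass' $M$-test gives uniform convergence of $\Sigma_{k}\nu_{k}\otimes s_{-k}$ to $\nu$. Because $L_{\xi}(\nu_{k}\otimes s_{-k})$ has local form $\sqrt{-1}k\,\nu_{\beta}(k)e^{\sqrt{-1}k\theta_{\beta}}$, its pointwise norm is bounded by $|k|\cdot C_{\nu}|k|^{-4}$, still summable. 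For the rough Laplacian I would combine the decay of $\nu_{\beta}(k)$ with the Weitzenb\"ock-type expression for $\nabla^{\star}\nabla$ in Sasakian coordinates (for instance via \eqref{equ d0 dbar0 partial0} and Formula \ref{formula lie derive of G and H}) to bound $\|\nabla^{\star}\nabla(\nu_{k}\otimes s_{-k})\|$ by $C(|k|^{2}+1)\|\nu_{\beta}(k)\|_{C^{2}}$, again dominated by an absolutely summable series thanks to the $C^{10}$ hypothesis. Standard calculus then justifies interchanging $L_{\xi}$ and $\nabla^{\star}\nabla$ with $\Sigma_{k}$.

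The traceless claim is a soft consequence: the trace map $\pi^{\star}_{5,4}(EndE)\to \mathbb{C}$ is a bounded linear bundle map, so applying it commutes with the uniformly convergent series, forcing each $tr(\nu_{k})=0$ whenever $tr(\nu)=0$. The only point that requires mild care is to make the one-dimensional Fourier estimate uniform in the transverse $\mathbb{P}^{2}$-variable; this I would handle by treating $\nu$ as a $C^{10}$ map into a finite-dimensional space of sections over a fixed compact neighborhood and invoking the standard parameter-dependent Fourier decay, which is the one mildly technical point in an otherwise bookkeeping argument.
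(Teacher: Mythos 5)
Your proposal is correct and reaches the same conclusion by a genuinely more quantitative route than the paper. The paper's own proof is a two-line assembly of two appendix results: uniform convergence is obtained from a soft $W^{1,2}$ tail estimate for one-dimensional Fourier series (Lemma \ref{lem uniform convergence of F series} and Lemma \ref{lem term by term differentiation of F series}), and the term-by-term differentiation is reduced to the separate structural fact (Claim \ref{clm derivative of Fourier term is = Fourier term of derivative}) that $L_{\xi}$ and $\nabla^{\star}\nabla$ commute with extraction of the $k$-th Sasaki--Fourier coefficient, proved via the transverse geodesic frame of Lemma \ref{lem Kahler geodesic frame} and the observation that $\eta(\partial/\partial x_{i})$ is $\theta_{\beta}$-independent. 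You instead derive a single decay estimate $\|\nu_{\beta}(k)\|_{C^{6}}\lesssim |k|^{-4}$ from the $C^{10}$ hypothesis by integration by parts in $\theta_{\beta}$, and then run the Weierstrass $M$-test on the series and on its first- and second-order derivatives; this is self-contained and makes transparent exactly how much regularity is consumed. The one step you should make explicit rather than bury in ``standard calculus'' is the identification of $\Sigma_{k}\nabla^{\star}\nabla(\nu_{k}\otimes s_{-k})$ as the \emph{Sasaki--Fourier series} of $\nabla^{\star}\nabla\nu$ (not merely a uniformly convergent series summing to it): this requires knowing that $\nabla^{\star}\nabla$ preserves the $k$-th Reeb--isotypic component, which holds because the round metric, the pullback connection and the pullback Hermitian metric are all invariant under the Reeb $U(1)$-action, and is precisely the content of the paper's Claim \ref{clm derivative of Fourier term is = Fourier term of derivative}. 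Your citations of \eqref{equ d0 dbar0 partial0} and Formula \ref{formula lie derive of G and H} for the Laplacian bound are not the right references (the relevant splitting is $-\nabla^{\star}\nabla=\nabla_{v_{i}}\nabla_{v_{i}}+L_{\xi}^{2}$ at a point, as in the proof of Formula \ref{formula laplace on S5 vs laplace on CP2}), but the estimate itself is correct. Everything else, including the cocycle argument for globality, the vanishing $L_{\xi}\nu_{k}=0$, uniqueness, and the traceless claim via continuity of the trace, matches the paper.
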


Please see the remark above Claim \ref {clm derivative of Fourier term is = Fourier term of derivative} that not every operator can differentiate 
the Sasaki-Fourier Series term by term.
\begin{proof}[Proof of Lemma \ref{lem global F series}:] It is a straight forward combination of the uniform convergence in Lemma \ref{lem term by term differentiation of F series} and the term by term-wise differentiation in Claim \ref {clm derivative of Fourier term is = Fourier term of derivative} below. 
\end{proof}

\subsection{Characterizing some $P-$invariant subspaces of sections on $\mathbb{S}^{5}$ by sheaf cohomologies on $\mathbb{P}^{2}$\label{sect characterizing invariant subspaces}}
In this sub-section, we study the special class of eigensections of the operator $P$ consisted of $\pi^{\star}_{5,4}(adE)-$valued semi-basic $1-$forms i.e. an eigensection of which the first $4$ endomorphism components are $0$ (regarding the decomposition in \eqref{equ  5 element basis}). These turn out to be a ``building-block" of $SpecP$ (as in Theorem \ref{Thm 1} above, also see Theorem \ref{Thm eigenvalue Sasakian}  below).
\begin{Def}Let $V_{l}\triangleq \{a_{0}\in C^{10}[\mathbb{S}^{5}, D^{\star}\otimes \pi^{\star}_{5,4}adE]|Pa_{0}=l a_{0}\}$. This means  \begin{eqnarray}\label{equ eigenspace}& &V_{l}
\\&= & \{a_{0}\in C^{10}[\mathbb{S}^{5}, D^{\star}\otimes \pi^{\star}_{5,4}adE]| d_{0}a_{0}\lrcorner H=d_{0}a_{0}\lrcorner G=d_{0}^{\star_{0}}a_{0}=d_{0}a_{0}\lrcorner \frac{d\eta}{2}=0,\nonumber \\ & & L_{\xi}(J_{0}a_{0})=-l a_{0}\},\nonumber
\\&= & \{a_{0}\in C^{10}[\mathbb{S}^{5}, D^{\star}\otimes \pi^{\star}_{5,4}adE]| d^{\star_{0}}_{0}J_{H}(a_{0})=d^{\star_{0}}_{0}J_{G}(a_{0})=d_{0}^{\star_{0}}a_{0}=d^{\star_{0}}_{0}J_{0}(a_{0})=0,\nonumber \\ & & L_{\xi}(J_{0}a_{0})=-l a_{0}\}.\nonumber
\end{eqnarray} 
\end{Def}
The above definition says that $V_{l}$ is a subspace of the eigenspace $\mathbb{E}_{l}P$. Elliptic regularity implies that any $a_{0}\in V_{l}$ is smooth. 

This subsection is devoted to the proof of the following characterization of $V_{l}$.
\begin{prop}\label{cor eigenspace and harmonic forms} Given a holomorphic Hermitian triple on $\mathbb{P}^{2}$,  for any integer $l$,  the sub space $V_{l}$ (of the eigenspace) is isomorphic to $$\mathcal{H}^{0,1}[\mathbb{P}^{2}, (End_{0}E)(l)]\ \textrm{(space of}\  \bar{\partial}-\textrm{harmonic forms}).$$ 
 Consequently, with respect to the complex structure $J_{0}$,  $V_{l}$ is complex isomorphic to the sheaf cohomology $H^{1}[\mathbb{P}^{2}, (End_{0}E)(l)]$.
\end{prop}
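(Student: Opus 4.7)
The plan is to decompose an arbitrary $a_{0}\in V_{l}$ along two orthogonal directions: the Sasakian--Fourier expansion of Lemma \ref{lem global F series} along the Reeb orbits, and the $J_{0}$-splitting $D^{\star,\mathbb{C}}=D^{\star,(1,0)}\oplus D^{\star,(0,1)}$ coming from the transverse complex structure. Because $a_{0}$ is $adE$-valued (hence satisfies a reality condition), the component $a_{0}^{1,0}$ is determined from $a_{0}^{0,1}$ by $a_{0}^{1,0}=-(a_{0}^{0,1})^{\star}$, so it is enough to analyze $a_{0}^{0,1}$. Since $J_{0}=\lrcorner\tfrac{d\eta}{2}$ acts as $+i$ on $(0,1)$-forms and commutes with $L_{\xi}$, the eigen-condition $L_{\xi}(J_{0}a_{0})=-la_{0}$ forces $L_{\xi}a_{0}^{0,1}=ila_{0}^{0,1}$. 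By Lemma \ref{lem global F series} this kills every Sasaki--Fourier mode of $a_{0}^{0,1}$ except the one indexed by $l$, so $a_{0}^{0,1}=\pi_{5,4}^{\star}\alpha\otimes s_{-l}$ for a unique $\alpha\in\Omega^{0,1}[\mathbb{P}^{2},(End_{0}E)(l)]$.

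Next I would convert the four remaining vanishings in \eqref{equ eigenspace} into Dolbeault conditions on $\mathbb{P}^{2}$. By Formula \ref{formula dstarJ and da contraction}, the three equations $d_{0}^{\star_{0}}J_{H}a_{0}=d_{0}^{\star_{0}}J_{G}a_{0}=d_{0}^{\star_{0}}J_{0}a_{0}=0$ are equivalent to $d_{0}a_{0}$ being pointwise orthogonal to each of $H$, $G$, and $\tfrac{d\eta}{2}$. Since these three real $\star_{0}$-self-dual semi-basic $2$-forms span the full $3$-dimensional space of $\star_{0}$-self-dual $2$-forms on the contact distribution, the combined statement is that $d_{0}a_{0}$ is $\star_{0}$-anti-self-dual. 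In particular its $(0,2)$-part vanishes, i.e.\ $\bar{\partial}_{0}a_{0}^{0,1}=0$. The remaining condition $d_{0}^{\star_{0}}a_{0}=0$ splits as $\partial_{0}^{\star_{0}}a_{0}^{1,0}+\bar{\partial}_{0}^{\star_{0}}a_{0}^{0,1}=0$; the two summands live in different Sasaki--Fourier modes when $l\ne 0$ and so must vanish separately, while the $l=0$ case follows from reality together with self-adjointness. Hence $\bar{\partial}_{0}^{\star_{0}}a_{0}^{0,1}=0$ as well.

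Finally, the local expression \eqref{equ d0 dbar0 partial0} for $\bar{\partial}_{0}$ was arranged precisely so that, once one trivializes by $s_{-l}$, the $\log\phi_{\beta}$ correction absorbs the derivative of the Hermitian trivialization and reproduces the Chern $\bar{\partial}$-operator for the twisted metric on $(End_{0}E)(l)$. Thus $\bar{\partial}_{0}a_{0}^{0,1}=0$ and $\bar{\partial}_{0}^{\star_{0}}a_{0}^{0,1}=0$ translate to $\bar{\partial}\alpha=0$ and $\bar{\partial}^{\star}\alpha=0$ on $\mathbb{P}^{2}$, giving $\alpha\in\mathcal{H}^{0,1}[\mathbb{P}^{2},(End_{0}E)(l)]$. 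The construction reverses: any such harmonic form extends to an element of $V_{l}$ by the same trivialization together with the reality $a_{0}^{1,0}=-(a_{0}^{0,1})^{\star}$, and both passages are mutual inverses. The $J_{0}$-action on $V_{l}$ corresponds to multiplication by $i$ on the $(0,1)$-component, so the map is complex linear, and the Dolbeault theorem then identifies $\mathcal{H}^{0,1}$ with $H^{1}[\mathbb{P}^{2},(End_{0}E)(l)]$.

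The main obstacle will be the bookkeeping of sign conventions, in particular verifying (i) that the Sasaki--Fourier mode of $a_{0}^{0,1}$ is exactly $+l$ rather than $-l$, which relies on correctly tracking the interaction of the convention $J_{0}=\lrcorner\tfrac{d\eta}{2}$ with the identification $L_{\xi}s_{-k}=iks_{-k}$, and (ii) that the difference between $\bar{\partial}_{0}$ and $\bar{\partial}_{\mathbb{P}^{2}}$ in \eqref{equ d0 dbar0 partial0} is precisely the correction that converts the Hermitian trivialization $s_{-l}$ of $\pi_{5,4}^{\star}O(l)$ into the holomorphic $\bar{\partial}$-operator of the twisted bundle. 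Once these two points are settled, the rest of the identification is formal.
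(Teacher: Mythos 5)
Your route is essentially the paper's own: the two-term Sasaki--Fourier reduction forced by $L_{\xi}(J_{0}a_{0})=-la_{0}$ is Claim \ref{clm Fourier expansion of eigensection}, the translation of the contraction conditions into Dolbeault conditions is Lemma \ref{lem d0 formulas}, the fact that the $\log\phi_{\beta}$ correction in \eqref{equ d0 dbar0 partial0} makes $s_{-l}$ a $d_{0}$-parallel trivialization (so that $\bar{\partial}_{0}$ descends to the twisted Chern operator) is Lemma \ref{lem d0 parallel section of pullback O(-1)}, and the reality condition $a_{0}^{1,0}=-\overline{a_{0}^{0,1}}^{t}$ is exactly how the paper builds the inverse $\underline{\Gamma}$. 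Your observation that $H$, $G$, $\tfrac{d\eta}{2}$ span the self-dual semi-basic $2$-forms, so that the four contraction conditions say $d_{0}a_{0}$ is $\star_{0}$-anti-self-dual, is a clean pointwise equivalent of the paper's Fact \ref{fact 0,2 and 2,0 contraction}.

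The one step that does not go through as written is the co-closedness of $a_{0}^{0,1}$. First, a notational correction in the paper's conventions: the adjoint of $\bar{\partial}_{0}$ on $(0,1)$-forms is $\partial_{0}^{\star_{0}}$, while $\bar{\partial}_{0}^{\star_{0}}$ annihilates $(0,1)$-forms identically for type reasons (see \eqref{equ 0 proof lem harmonic form}); so the correct splitting is $d_{0}^{\star_{0}}a_{0}=\partial_{0}^{\star_{0}}a_{0}^{0,1}+\bar{\partial}_{0}^{\star_{0}}a_{0}^{1,0}$, not the one you wrote. More substantively, your Fourier-mode separation does dispose of $l\neq 0$ (since $d_{0}^{\star_{0}}$ commutes with $L_{\xi}$ and the two summands carry eigenvalues $\pm il$), but for $l=0$ the appeal to ``reality together with self-adjointness'' is not enough: setting $X\triangleq\partial_{0}^{\star_{0}}a_{0}^{0,1}$, reality only gives $\bar{\partial}_{0}^{\star_{0}}a_{0}^{1,0}=-\overline{X}^{t}$, so $d_{0}^{\star_{0}}a_{0}=0$ says $X=\overline{X}^{t}$ is Hermitian, which does not force $X=0$. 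The missing input is the fourth contraction condition you already derived: by \eqref{equ 2 proof lem harmonic form}, $(d_{0}a_{0})\lrcorner\tfrac{d\eta}{2}=\sqrt{-1}X+\sqrt{-1}\overline{X}^{t}=0$ says $X$ is also skew-Hermitian, and the two together give $X=0$. This is precisely the sign observation in the paper's proof of \eqref{equ lem do formulas deta and dstar}; it handles all $l$ at once and makes your case split unnecessary.
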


To prove the above proposition and for other purposes, it is useful to set the following convention. 
\begin{Notation} In conjunction with Notation Convention \ref{Notation = I}, 
\label{Notation = II} 
when the two vector spaces are complex vector spaces of sections of a complex vector bundle (like the twisted endomorphism bundles), or when they are sheaf cohomologies etc,  the ``$=$" means a complex isomorphism. Otherwise, to say it is a complex isomorphism, the complex structure should be specified in a manner similar to Proposition \ref{cor eigenspace and harmonic forms}. 
\end{Notation}
\subsubsection{The two term Sasaki-Fourier series for elements in $V_{l}$}

We decompose any $\pi_{5,4}^{\star}adE-$valued semi-basic $1-$form $a_{0}$ into the $(1,0)$ and $(0,1)-$components
\begin{equation}\label{equ 10 01 decomposition}
a_{0}=a^{1,0}_{0}+a^{0,1}_{0}.
\end{equation} 
Then the  condition \begin{equation}\label{equ condition Lxi eigen beta} -L_{\xi}J_{0}(a_{0})=l a_{0}\  (\textrm{which is part of}\ \eqref{equ eigenspace})\end{equation} yields that
\begin{equation*}
l a^{1,0}_{0}+l a^{0,1}_{0}=l a_{0}=-L_{\xi}J_{0}(a_{0})=-L_{\xi}(-\sqrt{-1}a^{1,0}_{0}+\sqrt{-1}a^{0,1}_{0})=\sqrt{-1}L_{\xi}a^{1,0}_{0}-\sqrt{-1}L_{\xi}a^{0,1}_{0}.
\end{equation*} 
Comparing $(1,0)$ and $(0,1)-$part of both sides, we find 
\begin{equation}\label{equ 1 proof Thm Sasakian eigen}
L_{\xi}a^{0,1}_{0}=\sqrt{-1}l a^{0,1}_{0},\ L_{\xi}a^{1,0}_{0}=-\sqrt{-1}l a^{1,0}_{0}.
\end{equation}
Consider the Fourier-expansions 
\begin{equation}\label{equ 1.5 proof Thm Sasakian eigen}
a^{0,1}_{0}=\Sigma_{k}a^{0,1}_{0}(k)s_{-k},\ a^{1,0}_{0}=\Sigma_{k}a^{1,0}_{0}(k)s_{-k}.
\end{equation} 
where the summations are over all integers, and each coefficient is a $End_{0}E-$valued $1-$form pulled back from $\mathbb{P}^{2}$.

Since 
\begin{equation}\label{equ Lie derivative of s-k}L_{\xi}s_{-k}=\sqrt{-1}ks_{-k},\end{equation}  the following holds.
\begin{equation}\label{equ 2 proof Thm Sasakian eigen}
L_{\xi}a^{0,1}_{0}=\Sigma_{k}\sqrt{-1}k a^{0,1}_{0}(k)s_{-k},\ \  L_{\xi}a^{1,0}_{0}=\Sigma_{k}\sqrt{-1}k a^{1,0}_{0}(k)s_{-k}.
\end{equation}
Compare the Sasaki-Fouriercoefficients of the first equation in \eqref{equ 2 proof Thm Sasakian eigen} with the first identity 
in \eqref{equ 1 proof Thm Sasakian eigen}, we find that the eigenvalue $l$ must be an integer, and $a^{0,1}_{0}(k)=0$ if $k\neq l$. Consequently, 
$$a^{0,1}_{0}=a^{0,1}_{0}(l) s_{-l}.\ \ \ \textrm{Similarly}, \ a^{1,0}_{0}=a^{1,0}_{0}(-l) s_{l}.$$    In summary, we have found

\begin{clm}\label{clm Fourier expansion of eigensection} Suppose $a_{0}\in C^{10}[\mathbb{S}^{5}, D^{\star}\otimes \pi_{5,4}^{\star}adE]$  satisfies equation \eqref{equ condition Lxi eigen beta} and $a_{0}\neq 0$, then the $l$ therein is an integer. Moreover, in view of the $(1,0)\oplus (0,1)-$decomposition \eqref{equ 10 01 decomposition}, we have   \begin{equation}\label{equ clm Fourier expansion of eigensection} a^{1,0}_{0}=c^{1,0} s_{l}+c^{0,1} s_{-l},\end{equation} where $c^{1,0}$ is an  $(End_{0}E)(-l)-$valued  $(1,0)-$form on $\mathbb{P}^{2}$, and $c^{0,1}$ is an  $(End_{0}E)(l)-$valued $(0,1)-$form on $\mathbb{P}^{2}$.
\end{clm}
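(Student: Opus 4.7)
The statement essentially records the output of the computation immediately preceding it, so my plan is to organize that computation as a clean proof with the three steps: (i) reduce the eigenvalue equation to two scalar eigenvalue equations for $L_\xi$ acting separately on the $(1,0)$- and $(0,1)$-parts, (ii) run the Sasaki-Fourier expansion termwise, and (iii) match Fourier coefficients to read off integrality of $l$ and the bundle identifications.

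First I would split $a_0 = a_0^{1,0} + a_0^{0,1}$ with respect to $J_0$, using $J_0 = \sqrt{-1}$ on $(1,0)$ and $J_0 = -\sqrt{-1}$ on $(0,1)$. Since $L_\xi$ preserves the bidegree of semi-basic forms (it is pulled back from $\mathbb{P}^2$ up to an $\eta$-correction which vanishes on semi-basic forms, and the Sasaki-Quaternion structure of Lemma~\ref{lem shk str} shows $L_\xi$ commutes with $J_0$), the single condition $-L_\xi J_0(a_0) = l\, a_0$ decouples into
\begin{equation*}
L_\xi a_0^{0,1} = \sqrt{-1}\, l\, a_0^{0,1}, \qquad L_\xi a_0^{1,0} = -\sqrt{-1}\, l\, a_0^{1,0}.
\end{equation*}

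Next I would apply the Sasaki-Fourier expansion of Lemma~\ref{lem global F series} to each piece, writing $a_0^{0,1} = \sum_{k\in\mathbb{Z}} a_0^{0,1}(k)\, s_{-k}$ and likewise for $a_0^{1,0}$; the justification for termwise differentiation by $L_\xi$ is exactly the content of that lemma (the $C^{10}$-hypothesis in the claim is there to supply the regularity required). Using the defining property $L_\xi s_{-k} = \sqrt{-1}\, k\, s_{-k}$ of the trivialization from Definition~\ref{Def section of the pullback universal bundle} and the fact that the Fourier coefficients themselves are pulled back from $\mathbb{P}^2$ (hence annihilated by $L_\xi$), the eigenvalue equations become
\begin{equation*}
\sum_k \sqrt{-1}\, k\, a_0^{0,1}(k)\, s_{-k} = \sqrt{-1}\, l \sum_k a_0^{0,1}(k)\, s_{-k},
\end{equation*}
and similarly for the $(1,0)$-part with $-l$ in place of $l$. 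Uniqueness of the Sasaki-Fourier expansion forces $a_0^{0,1}(k) = 0$ for $k \neq l$ and $a_0^{1,0}(k) = 0$ for $k \neq -l$; since $a_0 \neq 0$ one of these coefficients is nonzero at some integer index, so $l$ must be an integer.

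Finally, I would unwind the bundle identification. The coefficient $a_0^{0,1}(l)$ is, by the construction of the Sasaki-Fourier series, a $(0,1)$-form on $\mathbb{P}^2$ with values in $(\mathrm{End}_0 E)(l)$, and similarly $a_0^{1,0}(-l)$ is a $(1,0)$-form valued in $(\mathrm{End}_0 E)(-l)$. Setting $c^{0,1} \triangleq a_0^{0,1}(l)$ and $c^{1,0} \triangleq a_0^{1,0}(-l)$ yields the asserted two-term expansion. The only point requiring genuine care is the termwise differentiation and uniqueness, both of which are already handled by Lemma~\ref{lem global F series}; everything else is formal bookkeeping of the bidegree and the $s_{\pm l}$ trivializations.
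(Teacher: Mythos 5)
Your proof is correct and follows the paper's argument essentially verbatim: decouple the eigenvalue condition into the two scalar $L_{\xi}$-eigenvalue equations on the $(1,0)$- and $(0,1)$-parts, expand each in its Sasaki-Fourier series, and compare coefficients using $L_{\xi}s_{-k}=\sqrt{-1}\,k\,s_{-k}$ together with the uniqueness and termwise differentiability supplied by Lemma \ref{lem global F series}. One small slip worth fixing: the paper's convention (see \eqref{equ 2 proof lem harmonic form} and the proof of Proposition \ref{cor eigenspace and harmonic forms}) is $J_{0}a^{1,0}=-\sqrt{-1}\,a^{1,0}$ and $J_{0}a^{0,1}=+\sqrt{-1}\,a^{0,1}$, the opposite of what you state; your displayed decoupled equations are nonetheless the correct ones, and since this sign is exactly what decides whether the $(0,1)$-coefficient lands in $(End_{0}E)(l)$ or $(End_{0}E)(-l)$, the convention must be kept consistent.
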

The above claim particularly means that there are only two non-zero terms in the Sasaki-Fourier series of $a_{0}$ (if it satisfies  \eqref{equ condition Lxi eigen beta}).

\subsubsection{Equivalence between the conditions on $d_{0}b$ and that $b^{0,1}$ is $\bar{\partial}_{0}-$harmonic\label{sect Equivalence of the dstar closeness and dbar harmonicity}}

The purpose of this section is to prove Lemma \ref{lem d0 formulas} on the equivalent characterization of the $4-$different ``$d^{\star_{0}}_{0}-$closeness" in the defining conditions of \eqref{equ eigenspace}. 

We need the following simple algebraic fact. The $2-$form $H-\sqrt{-1}G=\Theta \in \Lambda^{2,0}D^{\star,\mathbb{C}}$ is  nowhere vanishing, and the complex dimension of $D^{\star,(1,0)}$ is $2$. Then the following holds elementarily. 
\begin{fact}\label{fact 0,2 and 2,0 contraction}Let $p\in \mathbb{S}^{5}$,  $\theta_{1}\in \wedge^{(0,2)}D^{\star,\mathbb{C}}|_{p}$, and $\theta_{2}\in \wedge^{(2,0)}D^{\star,\mathbb{C}}|_{p}$. 
\begin{itemize}\item $\theta_{1}\lrcorner (H-\sqrt{-1}G)=0$ at $p$ if and only if $\theta=0$ at $p$. 

\item $\theta_{2}\lrcorner (H+\sqrt{-1}G)=0$ at $p$ if and only if $\theta=0$ at $p$.
\end{itemize}

\end{fact}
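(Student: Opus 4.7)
The plan is to reduce to pointwise linear algebra on a single fiber of $D^{\star,\mathbb{C}}|_p$, which is a complex vector space of dimension $4$ with a Hermitian structure. First I would pick, at the fixed point $p$, an orthonormal (with respect to the Fubini--Study Hermitian metric on $D$) basis $\alpha^{1},\alpha^{2}$ of $D^{\star,(1,0)}|_{p}$, so that $\bar{\alpha}^{1},\bar{\alpha}^{2}$ is the corresponding basis of $D^{\star,(0,1)}|_{p}$. Then $\wedge^{(2,0)}D^{\star,\mathbb{C}}|_{p}$ is the $1$-dimensional complex line spanned by $\alpha^{1}\wedge\alpha^{2}$, and $\wedge^{(0,2)}D^{\star,\mathbb{C}}|_{p}$ is the $1$-dimensional line spanned by $\bar{\alpha}^{1}\wedge\bar{\alpha}^{2}$. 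Because $\Theta = H-\sqrt{-1}G$ is a nowhere vanishing section of $\wedge^{(2,0)}$, we may write $\Theta|_{p}=c\,\alpha^{1}\wedge\alpha^{2}$ with $c\in\mathbb{C}\setminus\{0\}$.

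Next I would use the fact that the contraction $\lrcorner$, defined from the metric via the musical isomorphism and extended $\mathbb{C}$-linearly, pairs forms of the same degree into a scalar (function), and that under the $\mathbb{C}$-linear extension of the Riemannian metric, the pairing $\wedge^{(2,0)}\otimes \wedge^{(0,2)}\to\mathbb{C}$ is nondegenerate (indeed, this just unfolds to the standard identity $g(e_{j},\bar{e}_{k})=\delta_{jk}$ when $e_{1},e_{2}$ is a unitary frame, from which one reads that $\langle \bar{\alpha}^{1}\wedge\bar{\alpha}^{2},\alpha^{1}\wedge\alpha^{2}\rangle$ equals a fixed nonzero constant $\kappa$). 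Writing an arbitrary $\theta_{1}\in\wedge^{(0,2)}D^{\star,\mathbb{C}}|_{p}$ as $\theta_{1}=\lambda\,\bar{\alpha}^{1}\wedge\bar{\alpha}^{2}$ with $\lambda\in\mathbb{C}$, I compute
\begin{equation*}
\theta_{1}\lrcorner (H-\sqrt{-1}G)=\theta_{1}\lrcorner\Theta=\lambda\, c\,\kappa,
\end{equation*}
which vanishes if and only if $\lambda=0$, i.e.\ if and only if $\theta_{1}=0$. This gives the first bullet.

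For the second bullet, observe $H+\sqrt{-1}G=\overline{\Theta}$, which is a nowhere vanishing section of $\wedge^{(0,2)}D^{\star,\mathbb{C}}$; hence at $p$ it equals $\bar{c}\,\bar{\alpha}^{1}\wedge\bar{\alpha}^{2}$. Writing $\theta_{2}=\mu\,\alpha^{1}\wedge\alpha^{2}$, the same bilinear pairing gives $\theta_{2}\lrcorner\overline{\Theta}=\mu\bar{c}\kappa$, which vanishes if and only if $\theta_{2}=0$. The argument is entirely elementary once the two input facts (one-dimensionality of the top pure-type wedge spaces and nondegeneracy of the complexified metric pairing between pure types of complementary bi-degree) are in hand, so I do not anticipate any real obstacle; the only thing to be careful about is consistently using the $\mathbb{C}$-linear (not Hermitian) extension of $g$ that is fixed in the paper's notational conventions.
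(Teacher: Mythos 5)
Your proof is correct and is exactly the elementary linear-algebra argument the paper has in mind: the paper simply asserts the fact "holds elementarily" from the nonvanishing of $\Theta\in\wedge^{2,0}D^{\star,\mathbb{C}}$ and $\dim_{\mathbb{C}}D^{\star,(1,0)}=2$, and your reduction to the one-dimensional lines spanned by $\alpha^{1}\wedge\alpha^{2}$ and $\bar{\alpha}^{1}\wedge\bar{\alpha}^{2}$ together with the nondegeneracy of the $\mathbb{C}$-linearly extended metric pairing between conjugate pure types is precisely the detail being omitted. Your care in using the $\mathbb{C}$-linear (rather than Hermitian) extension of $g$ matches the paper's convention.
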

Again, because $\Theta$ is $(2,0)$, the contraction between any form in  $\wedge^{(2,0)}D^{\star,\mathbb{C}}$ with\\ $\Theta=H-\sqrt{-1}G$ is automatically $0$. The same holds for the contraction between any form in  $\wedge^{(0,2)}D^{\star,\mathbb{C}}$ with $\overline{\Theta}=H+\sqrt{-1}G$.

In our convention, 
the Hodge star $\star_{0}$ is extended complex linearly. On semi-basic $1-$forms, we define \begin{equation}\label{equ def of ad operators} d_{0}^{\star_{0}}\triangleq-\star_{0} d_{0} \star_{0},\ \ \partial_{0}^{\star_{0}}\triangleq-\star_{0} \partial_{0} \star_{0};\ \ \bar{\partial}_{0}^{\star_{0}}\triangleq-\star_{0} \bar{\partial}_{0} \star_{0}.
\end{equation}
Then the adjoint  $\bar{\partial}_{0}$ with respect to the Hermitian inner-product is $\partial_{0}^{\star_{0}}$, and that of  $\partial_{0}$  is $\bar{\partial}_{0}^{\star_{0}}$ (cf. the other notation convention in \cite{GH}).

\begin{lem}\label{lem d0 formulas}Given a holomorphic Hermitian triple on $\mathbb{P}^{2}$, let $b$ be a smooth section of  $D^{\star}\otimes\pi_{5,4}^{\star}adE\rightarrow \mathbb{S}^{5}$, the following holds true.
\begin{equation}\label{equ lem do formulas H and G}d_{0}b\lrcorner H=d_{0}b\lrcorner G=0\ \Leftrightarrow\  \bar{\partial}_{0}b^{0,1}=
\partial_{0}b^{1,0}=0;
\end{equation}
\begin{equation}\label{equ lem do formulas deta and dstar}d_{0}^{\star_{0}}b=d_{0}b\lrcorner \frac{d\eta}{2}=0\ \Leftrightarrow\ \partial_{0}^{\star_{0}}b^{0,1}=\bar{\partial}_{0}^{\star_{0}}b^{1,0}=0.
\end{equation}
\end{lem}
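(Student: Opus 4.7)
The plan is to use the type decomposition $b = b^{1,0} + b^{0,1}$ induced by $J_{0}$, together with the splitting $d_{0} = \partial_{0} + \bar{\partial}_{0}$ of \eqref{equ d0 dbar0 partial0}, to reduce the four scalar conditions on $d_{0}b$ to four scalar conditions on $\bar{\partial}_{0}b^{0,1}$, $\partial_{0}b^{1,0}$, $\partial_{0}^{\star_{0}}b^{1,0}$ and $\bar{\partial}_{0}^{\star_{0}}b^{0,1}$. The bidegree bookkeeping gives
\[
d_{0}b \;=\; \partial_{0}b^{1,0} \;+\; \bigl(\partial_{0}b^{0,1} + \bar{\partial}_{0}b^{1,0}\bigr) \;+\; \bar{\partial}_{0}b^{0,1},
\]
with the three summands of type $(2,0)$, $(1,1)$, $(0,2)$ respectively.

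For \eqref{equ lem do formulas H and G}, I would first rewrite $H = \tfrac{1}{2}(\Theta+\bar{\Theta})$ and $G = \tfrac{\sqrt{-1}}{2}(\Theta-\bar{\Theta})$, so the conditions $d_{0}b \lrcorner H = d_{0}b \lrcorner G = 0$ are equivalent to $d_{0}b \lrcorner \Theta = d_{0}b \lrcorner \bar{\Theta} = 0$. Since the $\mathbb{C}$-bilinear extension of the real metric pairs a $(p,q)$-form only with a $(q,p)$-form, the pairing with $\Theta \in \wedge^{2,0}D^{\star,\mathbb{C}}$ automatically kills everything except the $(0,2)$-part of $d_{0}b$, leaving $(\bar{\partial}_{0}b^{0,1}) \lrcorner \Theta$; similarly $d_{0}b \lrcorner \bar{\Theta}$ reduces to $(\partial_{0}b^{1,0}) \lrcorner \bar{\Theta}$. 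Fact~\ref{fact 0,2 and 2,0 contraction} then gives the pointwise equivalences $(\bar{\partial}_{0}b^{0,1})\lrcorner \Theta = 0 \Leftrightarrow \bar{\partial}_{0}b^{0,1}=0$ and $(\partial_{0}b^{1,0})\lrcorner \bar{\Theta} = 0 \Leftrightarrow \partial_{0}b^{1,0}=0$, proving \eqref{equ lem do formulas H and G}.

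For \eqref{equ lem do formulas deta and dstar}, the formal-adjoint $\partial_{0}^{\star_{0}}$ (resp.\ $\bar{\partial}_{0}^{\star_{0}}$) lowers the bidegree by $(1,0)$ (resp.\ $(0,1)$), so the cross terms $\bar{\partial}_{0}^{\star_{0}}b^{1,0}$ and $\partial_{0}^{\star_{0}}b^{0,1}$ land in negative bidegrees and vanish, giving
\[
d_{0}^{\star_{0}}b \;=\; \partial_{0}^{\star_{0}}b^{1,0} + \bar{\partial}_{0}^{\star_{0}}b^{0,1}.
\]
On the other hand, $\tfrac{d\eta}{2}$ is of type $(1,1)$ and hence only pairs with the $(1,1)$-part $\partial_{0}b^{0,1}+\bar{\partial}_{0}b^{1,0}$ of $d_{0}b$. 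Invoking the transverse Kähler identities of Lemma~\ref{lem Kahler identity} together with $\Lambda b^{1,0} = \Lambda b^{0,1} = 0$ (degree reasons), I would get
\[
(\bar{\partial}_{0}b^{1,0}) \lrcorner \tfrac{d\eta}{2} = -\sqrt{-1}\,\partial_{0}^{\star_{0}}b^{1,0}, \qquad (\partial_{0}b^{0,1}) \lrcorner \tfrac{d\eta}{2} = \sqrt{-1}\,\bar{\partial}_{0}^{\star_{0}}b^{0,1},
\]
so $d_{0}b \lrcorner \tfrac{d\eta}{2} = \sqrt{-1}\bigl(\bar{\partial}_{0}^{\star_{0}}b^{0,1} - \partial_{0}^{\star_{0}}b^{1,0}\bigr)$. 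The two left-hand conditions in \eqref{equ lem do formulas deta and dstar} then amount to the invertible linear system $\partial_{0}^{\star_{0}}b^{1,0} \pm \bar{\partial}_{0}^{\star_{0}}b^{0,1}=0$, which is equivalent to the simultaneous vanishing on the right.

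The only non-routine step is justifying the transverse Kähler identities for $d_{0}$ on semi-basic $\pi_{5,4}^{\star}adE$-valued forms over $\mathbb{S}^{5}$. I would handle this locally in each Sasakian chart $U_{\beta,\mathbb{S}^{5}}$ using the decomposition \eqref{equ do and dcp2}: since $\partial_{0} = \partial_{\mathbb{P}^{2}} + \tfrac{\sqrt{-1}}{2}(\partial_{\mathbb{P}^{2}}\log \phi_{\beta}) \wedge L_{\xi}$ and $L_{\xi}$ preserves bidegree, the commutator calculation defining $[\Lambda, \bar{\partial}_{0}]$ reduces pointwise (upon using that the extra $(1,0)$ and $(0,1)$ terms in $\partial_{0}$, $\bar{\partial}_{0}$ contribute to the corresponding adjoints by formal symmetry) to the classical Kähler identities on $\mathbb{P}^{2}$ with the Fubini-Study metric, which is the content of Lemma~\ref{lem Kahler identity}. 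This is the only place where genuine care is needed; the rest is linear algebra of types.
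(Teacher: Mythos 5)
Your treatment of \eqref{equ lem do formulas H and G} is correct and is essentially the paper's own argument: decompose $d_{0}b$ by bidegree, note that only the $(0,2)$-part can have nonzero contraction with the $(2,0)$-form $\Theta=H-\sqrt{-1}G$ and only the $(2,0)$-part with $\bar{\Theta}=H+\sqrt{-1}G$, and then invoke Fact \ref{fact 0,2 and 2,0 contraction}. Nothing to add there.

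For \eqref{equ lem do formulas deta and dstar} your skeleton is the right one (the two hypotheses are, up to a factor of $\sqrt{-1}$, the sum and the difference of the same two quantities, hence equivalent to their simultaneous vanishing), but two points need repair. First, you have the adjoint conventions backwards relative to the paper's definition \eqref{equ def of ad operators}: there $\partial_{0}^{\star_{0}}\triangleq-\star_{0}\partial_{0}\star_{0}$ is the adjoint of $\bar{\partial}_{0}$ and lowers bidegree by $(0,1)$, not $(1,0)$. Consequently the terms that vanish for type/dimension reasons are $\partial_{0}^{\star_{0}}b^{1,0}$ and $\bar{\partial}_{0}^{\star_{0}}b^{0,1}$ (e.g.\ $\star_{0}b^{1,0}$ is a $(2,1)$-form, so $\partial_{0}\star_{0}b^{1,0}=0$), and the surviving identity is $d_{0}^{\star_{0}}b=\partial_{0}^{\star_{0}}b^{0,1}+\bar{\partial}_{0}^{\star_{0}}b^{1,0}$ — the opposite pairing from what you wrote — which is what must match the right-hand side of \eqref{equ lem do formulas deta and dstar} as stated. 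Second, the identity $d_{0}b\lrcorner \frac{d\eta}{2}=\sqrt{-1}\partial_{0}^{\star_{0}}b^{0,1}-\sqrt{-1}\bar{\partial}_{0}^{\star_{0}}b^{1,0}$ should not be sourced from Lemma \ref{lem Kahler identity}, which is a Weitzenb\"ock-type identity for the rough Laplacian and contains no commutator of the form $[\Lambda,\bar{\partial}_{0}]$. The paper obtains it from Formula \ref{formula dstarJ and da contraction}, i.e.\ $d_{0}b\lrcorner \frac{d\eta}{2}=d_{0}^{\star_{0}}(J_{0}b)$, a consequence of the $d_{0}$-closedness and $\star_{0}$-self-duality of $\frac{d\eta}{2}$, after which everything reduces to the same pointwise type bookkeeping as in the first part. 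Your proposed detour through transverse K\"ahler identities could be made to work, but it requires showing that the $L_{\xi}$-correction terms in \eqref{equ d0 dbar0 partial0} drop out of the commutators, which you only gesture at; the contraction route avoids that issue entirely.
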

\begin{proof}[Proof of Lemma \ref{lem d0 formulas}:] It is by simple and routine comparison of types of the forms. For the reader's convenience, we still provide the detail.  
As follows, we can split $d_{0}b$  into $(0,2)$, $(2,0)$, and $(1,1)$ components. 
 \begin{equation}
d_{0}b=\bar{\partial}_{0}b^{0,1}+
\partial_{0}b^{1,0}
+(\partial_{0}b^{0,1}+
\bar{\partial}_{0}b^{1,0}). \label{equ formula for d0}
\end{equation}
 The form $H-\sqrt{-1}G$ is $(2,0)$, thus among the $4$ terms in \eqref{equ formula for d0}, only the $(0,2)-$component $\bar{\partial}_{0}b^{0,1}$ might have non-zero contraction with $H-\sqrt{-1}G$, the contraction between each of the other $3$ terms and  $H-\sqrt{-1}G$ vanishes. Combining the  vanishing criteria in Fact \ref{fact 0,2 and 2,0 contraction}, we find
\begin{equation}d_{0}b\lrcorner (H-\sqrt{-1}G)=0 \Leftrightarrow \bar{\partial}_{0}b^{0,1}=0.
\end{equation}
Similarly, because $H+\sqrt{-1}G$ is $(0,2)$, the following holds true.
\begin{equation}d_{0}b\lrcorner (H+\sqrt{-1}G)=0 \Leftrightarrow \partial_{0}b^{1,0}=0.
\end{equation}
The proof of \eqref{equ lem do formulas H and G} is complete. 

To prove \eqref{equ lem do formulas deta and dstar}, we first observe that 
\begin{equation}\label{equ 0 proof lem harmonic form}
\partial^{\star_{0}}_{0}b^{1,0}=0,\  \ \bar{\partial}^{\star_{0}}_{0}b^{0,1}=0. 
\end{equation}
To prove the first identity in \eqref{equ 0 proof lem harmonic form}, it suffices to notice that $\star_{0}b^{1,0}$ is a $(2,1)-$form and the complex dimension of $D^{\star, (1,0)}$ (and of $D^{\star, (0,1)}$) is $2$, then $\partial_{0} \star_{0}b^{1,0}=0$. The proof for the other one is similar. Therefore
\begin{equation}\label{equ 1 proof lem harmonic form}d^{\star_{0}}_{0}b=\partial^{\star_{0}}_{0}b^{0,1}+
\bar{\partial}^{\star_{0}}_{0}b^{1,0}.
\end{equation}

Contracting $d_{0}b$ with $\frac{d\eta}{2}$ using the decomposition \eqref{equ formula for d0}, still using the vanishing in \eqref{equ 0 proof lem harmonic form},  we find 
\begin{equation}d_{0}b\lrcorner \frac{d\eta}{2}=d_{0}^{\star_{0}}J_{0}b=d_{0}^{\star_{0}}(\sqrt{-1}b^{0,1}-\sqrt{-1}b^{1,0})=\sqrt{-1}\partial_{0}^{\star_{0}}b^{0,1}-\sqrt{-1}\bar{\partial}_{0}^{\star_{0}}b^{1,0}.\label{equ 2 proof lem harmonic form}
\end{equation}
Via the two different identities \eqref{equ 1 proof lem harmonic form} and \eqref{equ 2 proof lem harmonic form}, the condition $d^{\star_{0}}_{0}b=d_{0}b\lrcorner \frac{d\eta}{2}=0$ is equivalent to that 
$\sqrt{-1}\partial_{0}^{\star_{0}}b^{0,1}=0=\sqrt{-1}\bar{\partial}_{0}^{\star_{0}}b^{1,0}$. The sign difference between \eqref{equ 1 proof lem harmonic form} and \eqref{equ 2 proof lem harmonic form} (caused by the complex structure $J_{0}$) is crucial. The proof for \eqref{equ lem do formulas deta and dstar} is  complete. \end{proof}

\subsubsection{$d_{0}-$parallel of the trivialization of $\pi^{\star}_{5,4}O(-1)\rightarrow \mathbb{S}^{5}$, and the proof of Proposition \ref{cor eigenspace and harmonic forms}}

The purpose of this section is to show that the map sending $a_{0}$ to $c^{0,1}$ (see Claim \ref{clm Fourier expansion of eigensection}) is the desired isomorphism in Proposition \ref{cor eigenspace and harmonic forms} (identifying $V_{l}$ to the space of $\bar{\partial}-$harmonic $(0,1)$ $End_{0}E-$valued forms on $\mathbb{P}^{2}$). 

We need the following Lemma saying that the section $s_{-l}$ of $\pi^{\star}_{5,4}O(l)\rightarrow \mathbb{S}^{5}$ is $d_{0}-$closed (parallel). It is not parallel under the (full) connection on $\pi^{\star}_{5,4}O(l)$ unless $l=0$. 
\begin{lem}\label{lem d0 parallel section of pullback O(-1)}Under the pullback standard connection on $\pi^{\star}_{5,4}[O(l)]\rightarrow \mathbb{S}^{5}$, the standard trivialization $s_{l}$ (see \eqref{equ def SObeta} and Definition \ref{Def section of the pullback universal bundle}) is $d_{0}-$closed i.e.
$$\partial_{0}s_{l}=\bar{\partial}_{0}s_{l}=0,\ \ \textrm{and}\ d_{0}s_{l}=0.$$
\end{lem}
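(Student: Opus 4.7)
The plan is to work in a Sasakian coordinate chart $U_{\beta,\mathbb{S}^{5}}$ and compute $\nabla^{\pi^{\star}}s_{-1}$ directly, where $\nabla^{\pi^{\star}}$ denotes the pullback to $\mathbb{S}^{5}$ of the Chern connection on $O(-1)$. In $U_{0,\mathbb{P}^{2}}$ the holomorphic frame $e_{0}=(1,u_{1},u_{2})$ has pointwise norm-squared $\phi_{0}$, so the Chern connection 1-form in this frame is $\partial\log\phi_{0}$. On the other hand, the standard $\mathbb{S}^{5}$-trivialization satisfies $s_{-1}=\frac{e^{\sqrt{-1}\theta_{0}}}{\sqrt{\phi_{0}}}\,\pi_{5,4}^{\star}e_{0}$ by \eqref{equ norm of Z0}, so I will differentiate by the Leibniz rule.

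The main calculation is to combine $d\bigl(\tfrac{e^{\sqrt{-1}\theta_{0}}}{\sqrt{\phi_{0}}}\bigr)=\bigl(\sqrt{-1}\,d\theta_{0}-\tfrac{1}{2}d\log\phi_{0}\bigr)\tfrac{e^{\sqrt{-1}\theta_{0}}}{\sqrt{\phi_{0}}}$ with the connection term $\partial\log\phi_{0}$. The pieces $-\tfrac{1}{2}d\log\phi_{0}$ and $\partial\log\phi_{0}$ fuse to $\tfrac{1}{2}(\partial-\bar\partial)\log\phi_{0}=\tfrac{\sqrt{-1}}{2}d^{c}\log\phi_{0}$, and invoking Formula \ref{formula eta} this yields
\begin{equation*}
\nabla^{\pi^{\star}}s_{-1}=\sqrt{-1}\,\eta\cdot s_{-1} \quad\text{on } U_{0,\mathbb{S}^{5}}.
\end{equation*}
By the same argument in $U_{1,\mathbb{S}^{5}}$ and $U_{2,\mathbb{S}^{5}}$ (or by density of $U_{0,\mathbb{S}^{5}}$ and continuity) this identity is global on $\mathbb{S}^{5}$.

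Once the connection identity is in hand, the rest is essentially a cancellation. The $U(1)$-scaling description of $\xi$ in Fact \ref{fact Reeb is angular} gives $L_{\xi}s_{-1}=\sqrt{-1}\,s_{-1}$, so applying $d_{0}=d-\eta\wedge L_{\xi}$ (with $d$ meaning $\nabla^{\pi^{\star}}$ on this pullback bundle) yields $d_{0}s_{-1}=\sqrt{-1}\,\eta s_{-1}-\sqrt{-1}\,\eta s_{-1}=0$. Tensoring or dualizing, with $\nabla^{\pi^{\star}}s_{l}=-l\sqrt{-1}\,\eta\cdot s_{l}$ and $L_{\xi}s_{l}=-l\sqrt{-1}\,s_{l}$, the same cancellation gives $d_{0}s_{l}=0$ for every $l\in\mathbb{Z}$. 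Finally, $\partial_{0}s_{l}$ is a $(1,0)$ semi-basic form valued in $\pi_{5,4}^{\star}O(l)$ and $\bar\partial_{0}s_{l}$ is a $(0,1)$ semi-basic form, so the equation $\partial_{0}s_{l}+\bar\partial_{0}s_{l}=d_{0}s_{l}=0$ forces each piece to vanish separately by type.

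The only real point requiring care is matching conventions between $\partial-\bar\partial$, $d^{c}$, and the defining formula $\eta=d^{c}\log r$; once the sign in $\tfrac{1}{2}(\partial-\bar\partial)=\tfrac{\sqrt{-1}}{2}d^{c}$ is pinned down correctly, the two contributions cancel exactly and the proof is just bookkeeping. No deep input beyond Formula \ref{formula eta} and Fact \ref{fact Reeb is angular} is needed.
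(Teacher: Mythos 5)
Your proposal is correct and follows essentially the same route as the paper: the paper also works in $U_{0,\mathbb{S}^{5}}$, factors $s_{-1}=Z_{0}\cdot(1,u_{1},u_{2})$, and cancels the $-\tfrac{1}{2}d\log\phi_{0}$ coming from $|Z_{0}|$ against the Chern connection term $\partial_{\mathbb{P}^{2}}\log\phi_{0}$, using Formula \ref{formula eta} and Fact \ref{fact Reeb is angular}, before extending to general $l$ by dualizing and the Leibniz rule. The only cosmetic difference is that you package the computation through the clean intermediate identity $\nabla^{\pi^{\star}}s_{-1}=\sqrt{-1}\,\eta\otimes s_{-1}$ and then subtract $\eta\wedge L_{\xi}s_{-1}$, whereas the paper absorbs the $L_{\xi}$ correction into $\partial_{0}Z_{0}$ and $\bar{\partial}_{0}Z_{0}$ first; both are the same bookkeeping.
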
  
\begin{proof}[Proof of Lemma \ref{lem d0 parallel section of pullback O(-1)}:] We only prove it when $l=-1$. When $l=1$, it follows by dualizing. For arbitrary integer $l$, it follows by Leibniz-rule with respect to tensor product. 

It suffices to prove $\partial_{0}s_{-1}=\bar{\partial}_{0}s_{-1}=0$ in $U_{0,\mathbb{S}^{5}}$. The vanishing on the whole $\mathbb{S}^{5}$ follows by continuity. 

First, we routinely verify the following. 
\begin{formula}\label{formula d0 of Z0} In $U_{0,\mathbb{S}^{5}}$, $\partial_{0}Z_{0}=-Z_{0}\partial_{\mathbb{P}^{2}}\log \phi_{0}$, and $ \bar{\partial}_{0}Z_{0}=0.$
\end{formula}
For the reader's convenience, we still give the proof of Formula \ref{formula d0 of Z0}. It is routine to verify the following two identities by  Formula \eqref{equ norm of Z0} for $Z_{0}$ (which particularly says $L_{\xi}Z_{0}=\sqrt{-1}Z_{0}$), Fact \ref{fact Reeb is angular} on the Reeb vector field, and Formula \eqref{equ d0 dbar0 partial0} for $\partial_{0},\ \bar{\partial}_{0}$ etc. 
\begin{eqnarray}& &\partial_{0}Z_{0}=\partial_{\mathbb{P}^{2}}Z_{0}+\frac{\sqrt{-1}}{2}(\partial_{\mathbb{P}^{2}}\log\phi_{0})\wedge (L_{\xi}Z_{0})=-\frac{Z_{0}}{2}\partial_{\mathbb{P}^{2}}\log\phi_{0}-\frac{Z_{0}}{2}\partial_{\mathbb{P}^{2}}\log\phi_{0}\nonumber
\\&=&-Z_{0}\partial_{\mathbb{P}^{2}}\log\phi_{0},\nonumber
\\& &\bar{\partial}_{0}Z_{0}=\bar{\partial}_{\mathbb{P}^{2}}Z_{0}-\frac{\sqrt{-1}}{2}(\bar{\partial}_{\mathbb{P}^{2}}\log\phi_{0})\wedge (L_{\xi}Z_{0})=-\frac{Z_{0}}{2}\bar{\partial}_{\mathbb{P}^{2}}\log\phi_{0}+\frac{Z_{0}}{2}\bar{\partial}_{\mathbb{P}^{2}}\log\phi_{0}\nonumber
\\&=&0.\nonumber
\end{eqnarray}
We continue proving Lemma \ref{lem d0 parallel section of pullback O(-1)}. In $U_{0,\mathbb{S}^{5}}$, the trivialization $(1,u_{1},u_{2})$ of $\pi^{\star}_{5,4}O(-1)$ descends to $U_{0,\mathbb{P}^{2}}$. Then, under the Chern-connection of the standard metric, we find
\begin{equation}
\bar{\partial}_{0}(1,u_{1},u_{2})=\bar{\partial}_{\mathbb{P}^{2}}(1,u_{1},u_{2})=0,
\end{equation}
and
\begin{equation}
\partial_{0}(1,u_{1},u_{2})=\partial_{\mathbb{P}^{2}}(1,u_{1},u_{2})=(\partial_{\mathbb{P}^{2}}\log\phi_{0})(1,u_{1},u_{2}).
\end{equation}
Therefore,
 \begin{equation}\bar{\partial}_{0}(Z_{0},Z_{1},Z_{2})=\bar{\partial}_{0}[Z_{0}(1,u_{1},u_{2})]=(\bar{\partial}_{0}Z_{0})(1,u_{1},u_{2})+Z_{0}[\bar{\partial}_{\mathbb{P}^{2}}(1,u_{1},u_{2})]=0.
\end{equation}
\begin{eqnarray}& & \partial_{0}(Z_{0},Z_{1},Z_{2})=\partial_{0}[Z_{0}(1,u_{1},u_{2})]=(\partial_{0}Z_{0})(1,u_{1},u_{2})+Z_{0}[\partial_{\mathbb{P}^{2}}(1,u_{1},u_{2})]\nonumber
\\&=&-Z_{0}\partial_{\mathbb{P}^{2}}\log\phi_{0}+Z_{0}\partial_{\mathbb{P}^{2}}\log\phi_{0} \nonumber
\\&=& 0.
\end{eqnarray}
The proof is complete. \end{proof}

To extend the usual conjugate transpose of endomorphisms  to twisted endomorphisms,  the following convention helps, for example,  in the proof of  Proposition \ref{cor eigenspace and harmonic forms} and Proposition \ref{prop inj not integer} below. \begin{Notation} For any integer $k$, let $^{\rule{0.5cm}{0.1mm}}$ denote the conjugate linear map from $\pi^{\star}_{5,4}O(k)$ to $\pi^{\star}_{5,4}O(-k)$ defined by $$\overline{s}_{k}\triangleq s_{-k}.$$
Let it applies distributively to a tensor product of the line bundles.

Let $(E,h,A_{O})$ be a holomorphic Hermitian triple on $\mathbb{P}^{2}$, it is obvious that we can take conjugate transpose $^{{\rule{0.5cm}{0.1mm}}_{t}}$ of any endomorphism. Using the above conjugation, let the transpose only applies to the $EndE-$part but not the line bundle part, we can also take $^{{\rule{0.5cm}{0.1mm}}_{t}}$ of any twisted endomorphism. 

This is why we only work with endomorphisms and twisted endomorphisms. 

Under the  identification of the following $3$ objects: 
\begin{itemize} \item $1$ as a section of the trivial line bundle, 
\item $s_{k}\otimes s_{-k}$ as a section of  $[\pi^{\star}_{5,4}O(k)]\otimes [\pi^{\star}_{5,4}O(-k)]$,
\item  and $s_{-k}\otimes s_{k}$ as a section of  $[\pi^{\star}_{5,4}O(-k)]\otimes [\pi^{\star}_{5,4}O(k)]$,
\end{itemize}
 the following diagram commutes (where the vertical maps are the conjugation). 
\begin{equation}\label{equ tikzpicture 1}  \begin{tikzpicture}
 \node at (-2.7, 0.5) {$1$};
    \node at (-5.8, 0.5) {$s_{k}\otimes s_{-k}$};
     \node at (0.3, 0.5) {$s_{-k}\otimes s_{k}$};
\draw[->,semithick] (-2.7,1.7) -- (-2.7,0.7);
\draw[<-,semithick] (-5,2) -- (-2.9,2);
\draw[<-,semithick] (-5,0.5) -- (-2.9,0.5);
\draw[->,semithick] (-2.6,2) -- (-0.5,2);
\draw[->,semithick] (-2.6,0.5) -- (-0.5,0.5);
\draw[->,semithick] (-5.8,1.8) -- (-5.8,0.7);
\draw[->,semithick] (0.3,1.8) -- (0.3,0.7);
\node at (-2.7,2) {$1$} ;
  \node  at (-5.8,2.1) {$s_{-k}\otimes s_{k}$};
   \node at (0.3, 2.1) {$s_{k}\otimes s_{-k}$};
     \node at (0.7, 1.3) {$^{\rule{0.5cm}{0.1mm}}$};
         \node at (-2.2, 1.3) {$^{\rule{0.5cm}{0.1mm}}$};
             \node at (-6.2, 1.3) {$^{\rule{0.5cm}{0.1mm}}$};
\end{tikzpicture}.
\end{equation}
Therefore, the conjugate transpose of a  Fourier-Series of local form (left side of \eqref{equ local F series is equal to the global one}) is equal to that of a series of global form (right side of \eqref{equ local F series is equal to the global one}). 
\end{Notation}

The conventions and equations established so far are at our disposal to characterize the  subspaces $V_{l}$.
\begin{proof}[\textbf{Proof of Proposition} \ref{cor eigenspace and harmonic forms}:]  We show that the map 
\begin{equation}\Gamma(b)\triangleq   b^{0,1}s_{l}
\end{equation}
is the desired isomorphism $V_{l}\rightarrow \mathcal{H}^{0,1}[\mathbb{P}^{2}, (End_{0}E)(l)]$. Because $J_{0}d^{0,1}=\sqrt{-1}d^{0,1}$ for any ($\pi^{\star}_{5,4}End_{0}E-$valued) $(0,1)$ semi-basic form $d^{0,1}$, the $\Gamma$ above is complex linear. 

Let $a_{0}=b$ in Claim \ref{clm Fourier expansion of eigensection} , $\Gamma(b)$ is precisely the ``$c^{0,1}$" in the splitting  \eqref{equ clm Fourier expansion of eigensection}. 

Step 1: $\Gamma(b)$ is a priori a $(EndE)(l)$-valued $(0,1)-$form.  We show that it is $\bar{\partial}_{\mathbb{P}^{2}}-$harmonic. By the $\partial_{0}$ and $\bar{\partial}_{0}-$closeness of $s_{l}$ in Lemma \ref{lem d0 parallel section of pullback O(-1)}, we compute
\begin{equation}
\bar{\partial}_{0}(b^{0,1}s_{l})=(\bar{\partial}_{0}b^{0,1})s_{l}+b^{0,1}(\bar{\partial}_{0}s_{l})=0,\ \end{equation}and 
\begin{eqnarray}& &\partial^{\star_{0}}_{0}(b^{0,1}s_{l})=-\star_{0}\partial_{0}[s_{l}\star_{0}(b^{0,1})]\nonumber
=-\star_{0}(\partial_{0}s_{l})\wedge\star_{0}(b^{0,1})-s_{l}[\star_{0}\partial_{0}\star_{0}(b^{0,1})]\nonumber
\\&=&-s_{l}[\star_{0}\partial_{0}\star_{0}(b^{0,1})]=s_{l}\partial_{0}^{\star_{0}}b^{0,1}\nonumber
\\&=&0.
\end{eqnarray}

 Because $b^{0,1}s_{l}$ descends to $\mathbb{P}^{2}$, the above vanishing implies that 
 $$\bar{\partial}_{\mathbb{P}^{2}}(b^{0,1}s_{l})=\partial^{\star_{\mathbb{P}^{2}}}_{\mathbb{P}^{2}}(b^{0,1}s_{l})=0\ i.e.\ \Gamma(b)\in  \mathcal{H}^{0,1}[\mathbb{P}^{2},O(l)\otimes End_{0}E].$$  

Step 2:  We show that the following map $\underline{\Gamma}: \mathcal{H}^{0,1}[\mathbb{P}^{2},O(l)\otimes End_{0}E]\rightarrow V_{l}$  is the (two-sided) inverse of $\Gamma$.
\begin{equation}\label{equ 0 proof eigenspace and harmonic form}\underline{\Gamma}(d^{0,1})\triangleq  d^{0,1}\otimes s_{-l}-\overline{[d^{0,1}\otimes s_{-l}]}^{t},
\end{equation}
where $d^{0,1}\otimes s_{-l}$ is viewed as an $\pi^{\star}_{5,4}(End_{0}E)-$valued semi-basic $1-$form on $\mathbb{S}^{5}$.

We notice that $\alpha$ is a $\pi^{\star}_{5,4}adE-$valued semi-basic $1-$form if and only if $\alpha^{1,0}=-\overline{\alpha^{0,1}}^{t}$ (cf. \cite[(2.15) VII]{Kobayashi}). Hence $\Gamma$ is injective. By \eqref{equ 0 proof eigenspace and harmonic form}, $\Gamma \underline{\Gamma}=Id$ automatically holds true. 
Then for any $b\in V_{l}$, we find $(\underline{\Gamma}\Gamma)b-b\in Ker \Gamma=\{0\}$. The injectivity says $(\underline{\Gamma} \Gamma )b-b=0$. Then $\underline{\Gamma} \Gamma =Id$, and $\underline{\Gamma}$ is the two-sided inverse of $\Gamma$. The complex linearity of $\Gamma$ then gives the complex linearity of $\underline{\Gamma}$.

Then proof is complete. \end{proof}
\subsection{Describing $SpecP$: proof of Theorem \ref{Thm 1}.$\mathbb{I}$}
\subsubsection{The orthogonal complement of the eigen cohomology space}

Again, we stress that $V_{l}$ is a subspace of the eigenspace $\mathbb{E}_{l}P$.

Based on the above, we first restrict  the operator $P$ onto a closed subspace of $L^{2}(\mathbb{S}^{5},Dom_{\mathbb{S}^{5}})$ of finite co-dimension. 
\begin{Def}\label{Def Vcoh}In view of the invariant subspace $V_{l}$ that is isomorphic to the cohomology (Proposition \ref{cor eigenspace and harmonic forms}), we define the \textit{eigen cohomology space} $V_{coh}\triangleq \oplus_{l\in \mathbb{Z}}V_{l}$, which is isomorphic to the finite dimensional vector space $\oplus_{l}H^{1}[\mathbb{P}^{2}, (End_{0}E)(l)]$. We view 
 $V_{coh}$ as a subspace of the Hilbert space $L^{2}(\mathbb{S}^{5},Dom_{\mathbb{S}^{5}})$. 
 \end{Def}
 \begin{rmk}\label{rmk specPvcoh=scoh} Apparently, $SpecP|_{V_{coh}}=S_{coh}$. Moreover, any element in $L^{2}(\mathbb{S}^{5},Dom_{\mathbb{S}^{5}})$ with vanishing 5th row (bottom entry) lies in $V^{\perp}_{coh}$.
 \end{rmk}
 Because $P$ is self-adjoint, the orthogonal complement $V^{\perp}_{coh}$, a closed subspace of finite co-dimension,  is still $P-$invariant. 
We let  $P|_{V^{\perp}_{coh}}$ denote the restriction of $P$ to $V^{\perp}_{coh}$. It is  convenient to work with   $P|_{V^{\perp}_{coh}}$.

\subsubsection{The crucial intermediate theorem}
The theory for the $P-$invariant spaces $V_{l}$ is at our disposal to show the following result, which is a crucial building block for Theorem \ref{Thm 1}.$\mathbb{I}$. 
\begin{thm}\label{Thm eigenvalue Sasakian}Given a Hermitian Yang-Mills triple on $\mathbb{P}^{2}$, $Spec P|_{V^{\perp}_{coh}}=S_{\nabla^{\star}\nabla}.$

\end{thm}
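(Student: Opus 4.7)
The plan is to prove both inclusions with the Bochner formulas of Lemma \ref{lem Bochner} as the central tool; the dichotomy forced by working inside $V^\perp_{coh}$ drives the forward direction, and explicit resolvent-type constructions drive the reverse direction.

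For the forward inclusion $\mathrm{Spec}\, P|_{V^{\perp}_{coh}} \subseteq S_{\nabla^*\nabla}$, I would take a $\mu$-eigensection $\xi = (u, a_s, a_r, a_\eta, a_0)^T$ in $V^{\perp}_{coh}$, so that $(P^2+2P)\xi = \mu(\mu+2)\xi$ and $(P^2+4P)\xi = \mu(\mu+4)\xi$. The first two rows of \eqref{equ formula for P2+2P separation} are autonomous in $u, a_s$, yielding $\nabla^*\nabla u = (\mu^2+2\mu-3)\,u$ and $\nabla^*\nabla a_s = (\mu^2+2\mu-3)\,a_s$; the third and fourth rows of \eqref{equ formula for P2+4P separation} are autonomous in $a_r, a_\eta$, yielding $\nabla^*\nabla a_r = (\mu^2+4\mu)\,a_r$ and $\nabla^*\nabla a_\eta = (\mu^2+4\mu)\,a_\eta$. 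If any of the first four components is nonzero, this puts $\mu \in S_{\nabla^*\nabla}$. Otherwise $\xi = (0, 0, 0, 0, a_0)^T$ with $a_0 \neq 0$, and inspecting the first four rows of the matrix formula \eqref{equ formula for P} applied to $\xi$ forces $a_0$ to satisfy all four vanishing conditions defining $V_\mu$ in \eqref{equ eigenspace}; together with Claim \ref{clm Fourier expansion of eigensection} (which forces $\mu \in \mathbb{Z}$), this puts $\xi \in V_\mu \subseteq V_{coh}$, contradicting $\xi \in V^\perp_{coh}$.

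For the reverse inclusion $S_{\nabla^*\nabla} \subseteq \mathrm{Spec}\, P|_{V^{\perp}_{coh}}$, given $\lambda \in \mathrm{Spec}(\nabla^*\nabla|_{\mathbb{S}^5})$ with nonzero eigensection $w$ and $\mu$ satisfying $\lambda = \mu^2+2\mu-3$, I would start from the seed $\zeta_u \triangleq (w, 0, 0, 0, 0)^T$. Direct evaluation of \eqref{equ formula for P2+2P separation} shows $(P^2+2P)\zeta_u = (\lambda+3)\zeta_u = \mu(\mu+2)\zeta_u$, and factoring $P^2+2P-\mu(\mu+2) = (P-\mu)(P+\mu+2)$ gives $(P-\mu)(P+\mu+2)\zeta_u = 0$. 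Hence $\xi \triangleq (P+\mu+2)\zeta_u = ((\mu+3)w,\; L_\xi w,\; 0,\; 0,\; J_H d_0 w)$ lies in $\ker(P-\mu)$. When $\mu \neq -3$ the first component $(\mu+3)w$ is nonzero, so $\xi \notin V_\mu$; since $V_l \perp \mathbb{E}_\mu P$ for $l \neq \mu$, the orthogonal projection of $\xi$ off $V_\mu$ is a nonzero element of $\mathbb{E}_\mu P \cap V^\perp_{coh}$. The case $\lambda = \mu^2+4\mu$ is analogous: use the seed $\zeta_r \triangleq (0, 0, w, 0, 0)^T$, the factorization $(P-\mu)(P+\mu+4)$, and the autonomous rows of \eqref{equ formula for P2+4P separation}.

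The main obstacle is the degenerate case where the chosen seed collapses, most notably $\mu = -3$ paired with a parallel $w$ (so $\lambda = 0$), where $(P+\mu+2)\zeta_u$ vanishes identically. Handling this requires switching to alternative seeds such as $\zeta_s \triangleq (0, w, 0, 0, 0)^T$ or $\zeta_\eta \triangleq (0, 0, 0, w, 0)^T$, possibly in linear combination, with nontriviality re-verified using the $d_0$-harmonicity of $G, H, \tfrac{d\eta}{2}$ from Formula \ref{formula d0 closedness of the 3 forms}, the adjoint identities of Formula \ref{formula dstarJ and da contraction}, and the quaternionic commutation relations \eqref{equ commutators for PIJK}. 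A cleaner alternative is to invoke the explicit eigensection formula \eqref{equ 0 lem proj formula} of Lemma \ref{lem proj formula}, which handles these cases uniformly through a projection operator onto $\mathbb{E}_\mu P$ built from $P$, $L_\xi$, and $d_0$.
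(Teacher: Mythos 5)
Your argument is correct in substance but routes both inclusions differently from the paper, and in my view more transparently. For the forward inclusion, the paper does not argue on a single eigensection: it expands the seed sections $Row^{i}(\mathbb{E}_{\lambda}\nabla^{\star}\nabla|_{\mathbb{S}^{5}})$, $i=1,\dots,4$, in the $P|_{V^{\perp}_{coh}}$-eigenbasis, uses completeness of the $\nabla^{\star}\nabla$-eigenbasis to show the whole "first four rows" subspace lies in $\oplus_{\mu\in S_{\nabla^{\star}\nabla}}\mathbb{E}_{\mu}P|_{V^{\perp}_{coh}}$, and only then passes to orthogonal complements to conclude that an eigensection with $\mu\notin S_{\nabla^{\star}\nabla}$ is supported in the fifth row. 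Your direct application of the autonomous rows of Lemma \ref{lem Bochner} to the eigensection itself reaches the same dichotomy in one step (eigensections are smooth by elliptic regularity, so the Bochner formulas apply), and the endgame via \eqref{equ formula for P}, Claim \ref{clm Fourier expansion of eigensection} and $V_{\mu}\subseteq V_{coh}$ matches the paper's. For the reverse inclusion, the paper again works through the eigen Fourier expansion of $(u_{\lambda},0,0,0,0)^{T}$ and a contradiction argument, whereas your factorization $(P-\mu)(P+\mu+2)\zeta_{u}=0$ produces the eigensection $((\mu+3)w,\ L_{\xi}w,\ 0,\ 0,\ J_{H}d_{0}w)^{T}$ explicitly; this is exactly (a scalar multiple of) the projection formula \eqref{equ proj 1st compenent} that the paper derives later for the multiplicity count, so your construction buys the eigensection and its explicit form simultaneously. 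The observation that the first four components force $\xi\notin V_{coh}$, so that the $V^{\perp}_{coh}$-component is a nonzero eigensection of $P|_{V^{\perp}_{coh}}$, is the right way to land inside $V^{\perp}_{coh}$.

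The one point you have misdiagnosed is the degenerate case. Since $\mu=-3$ forces $\lambda=\mu^{2}+2\mu-3=0$, hence $\nabla w=0$ on the compact $\mathbb{S}^{5}$, the collapse of $(P+\mu+2)\zeta_{u}$ happens exactly when the pullback connection admits a nonzero parallel section of $\pi^{\star}_{5,4}adE$ — and no choice of alternative seed can rescue it: for parallel $w$ one computes $P\zeta_{u}=\zeta_{u}$, $P\zeta_{s}=\zeta_{s}$, $P\zeta_{r}=-4\zeta_{r}$, $P\zeta_{\eta}=-4\zeta_{\eta}$, so the four seeds only produce the eigenvalues $1$ and $-4$, never $-3$ or $0$, and the inclusion $S_{\nabla^{\star}\nabla}\subseteq Spec P|_{V^{\perp}_{coh}}$ can genuinely fail for a reducible connection. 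The correct resolution is the one the paper uses: invoke irreducibility (Fact \ref{fact stable=irred} and Fact \ref{fact irred on P2 = irred on S5}) to conclude $0\notin Spec(\nabla^{\star}\nabla|_{\mathbb{S}^{5}})$, so the degenerate case never arises. You should therefore add the irreducibility hypothesis explicitly (it is part of the standing assumptions of Theorem \ref{Thm 1}, where this result is applied) rather than search for workaround seeds.
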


The above theorem means the spectrum of $P$ on $V^{\perp}_{coh}$ is exactly the part induced by the spectrum of the rough Laplacian. Before proving it, to intuitively understand the feature of the Bochner formulas of $P$ (Lemma \ref{lem Bochner}), we introduce the notion of ``autonomous". 
\begin{Def} For any $i=1,\ 2,\ 3,$ or $4$,

\begin{itemize}\item  let $Row^{i}$ denote the injection from $\pi^{\star}_{5,4} adE$ (or $\pi^{\star}_{7,4} adE$) to the $i$-th row of $Dom_{\mathbb{S}^{5}}$ ($Dom_{7}$).

\item Let $v_{i}$ be the $i-$th variable (in the $i-$th row) of $Dom_{\mathbb{S}^{5}}$.
A linear differential operator $L$ on $Dom_{\mathbb{S}^{5}}$ is said to be autonomous with respect to  row $i$, if row $i$ of $L$ is $$(\nabla^{\star}\nabla+kId)v_{i}$$ for some real constant $k$, and the other rows does not depend on $v_{i}$.
\end{itemize}
\end{Def} 

Another ingredient we need is the spectrum counted with multiplicities. 
\begin{Def}\label{Def spec mul}Let the operator be $P,\ P|_{V^{\perp}_{coh}}$, or $\nabla^{\star}\nabla  |_{\mathbb{S}^{5}}$ (which means $\nabla^{\star}\nabla  |_{\pi_{5,4}^{\star}adE \rightarrow \mathbb{S}^{5}}$). Let  $Spec^{mul}(\ \cdot \ )$ denote the set of eigenvalues counted with real multiplicity. This means if $\mu$ is an eigenvalue and the real dimension of the eigenspace is  $m_{\mu}$, $\mu$ appears in $Spec^{mul}(\cdot)$ $m_{\mu}$ times.

Similarly, when the operator is $\nabla^{\star}\nabla  |_{\pi_{5,4}^{\star}End_{0}E \rightarrow \mathbb{S}^{5}}$ or $\nabla^{\star}\nabla|_{(End_{0}E)(l)\rightarrow \mathbb{P}^{2}}$,  let  $Spec^{mul_{\mathbb{C}}}(\ \cdot \ )$ denote the set of eigenvalues  counted with complex multiplicity. This means if $\mu$ is an eigenvalue and the complex dimension of the eigenspace is  $m_{\mu}$, $\mu$ appears in $Spec^{mul_{\mathbb{C}}}(\cdot)$ $m_{\mu}$ times.
   \end{Def}
   \begin{rmk}  The complex bundle $End_{0} E$ is the complexification of $ad E$. Hence, for any $\lambda\in Spec\nabla^{\star}\nabla  |_{\mathbb{S}^{5}}$, $\mathbb{E}_{\lambda} \nabla^{\star}\nabla  |_{\pi_{5,4}^{\star}End_{0}E \rightarrow \mathbb{S}^{5}}$ is the complexification of $$\mathbb{E}_{\lambda} \nabla^{\star}\nabla  |_{\pi_{5,4}^{\star}adE \rightarrow \mathbb{S}^{5}}\triangleq \mathbb{E}_{\lambda} \nabla^{\star}\nabla  |_{\mathbb{S}^{5}}.$$ Because the real dimension of a vector space is equal to the complex dimension of its complexification, the following holds. 
   
 \begin{equation}\label{equ specmul = spec mulC} Spec^{mul}\nabla^{\star}\nabla  |_{\pi_{5,4}^{\star}adE \rightarrow \mathbb{S}^{5}}=Spec^{mul_{\mathbb{C}}}\nabla^{\star}\nabla  |_{\pi_{5,4}^{\star}End_{0}E \rightarrow \mathbb{S}^{5}}. \end{equation}\end{rmk}

\subsubsection{Proving the crucial intermediate theorem}

To prove Theorem \ref{Thm eigenvalue Sasakian}, we need two simple facts   on irreducibility of the connection in a Hermitian Yang-Mills triple.  
\begin{fact}\label{fact stable=irred}(\cite[VII, Proposition 4.14]{Kobayashi}) Given a Hermitian Yang-Mills triple $(E,h,A_{O})$ on $\mathbb{P}^{2}$, $E$ is stable if and only if $A_{O}$ is irreducible on $\mathbb{P}^{2}$ (i.e. there is no non-zero parallel section of $adE$). 
\end{fact}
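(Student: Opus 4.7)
The plan is to exploit the standard Bochner-Kodaira principle, which for a HYM connection identifies holomorphic endomorphisms with parallel ones, and then to translate ``irreducibility'' and ``stability'' into statements about the algebra $H^{0}(\mathbb{P}^{2},EndE)$.

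First I would prove the Bochner lemma: for a HYM connection $A_{O}$ on $E\to\mathbb{P}^{2}$, every holomorphic section $s$ of $EndE$ is parallel. The Weitzenb\"ock identity for the $\bar{\partial}$-Laplacian acting on $EndE$, combined with the HYM identity $\sqrt{-1}F_{A_{O}}\wedge\omega = \lambda\omega^{2}\cdot Id_{E}$, makes the curvature term drop out (the HYM curvature is central in $EndE$, so it kills the relevant commutator), and integration over compact $\mathbb{P}^{2}$ forces $\nabla^{1,0}s=0$; together with holomorphicity this gives $\nabla s=0$. Dually, any parallel endomorphism is automatically $\bar{\partial}$-closed, so one obtains a canonical identification between $H^{0}(\mathbb{P}^{2},EndE)$ and the $\mathbb{C}$-algebra $A_{\mathrm{par}}$ of $A_{O}$-parallel endomorphisms.

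With this in hand, the direction ``$E$ stable $\Rightarrow A_{O}$ irreducible'' is immediate: stability implies simplicity $H^{0}(\mathbb{P}^{2},EndE)=\mathbb{C}\cdot Id_{E}$, so $A_{\mathrm{par}}$ is one-dimensional, and the only parallel traceless skew-Hermitian endomorphism is $0$. For the reverse direction I would argue the contrapositive. If $E$ is not stable, then by the easy half of the Donaldson-Uhlenbeck-Yau correspondence the HYM bundle $E$ is polystable but not stable, and splits holomorphically and $h$-orthogonally as $E=\bigoplus_{i}F_{i}^{\oplus n_{i}}$ with distinct stable factors of the same slope. Any such non-trivial decomposition produces a non-scalar $\phi\in A_{\mathrm{par}}$ (for example an $h$-orthogonal projector onto a summand). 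Then one of $\phi+\phi^{\ast}$ or $\sqrt{-1}(\phi-\phi^{\ast})$ is a non-scalar parallel Hermitian endomorphism; subtracting the appropriate multiple of $Id_{E}$ and multiplying by $\sqrt{-1}$ exhibits a non-zero parallel traceless skew-Hermitian endomorphism, contradicting irreducibility.

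The main obstacle I would anticipate is the orthogonal polystable splitting invoked in the reverse direction. This is precisely the easy half of DUY for HYM bundles on a projective surface, and I would quote it from Kobayashi's monograph rather than reprove it, since the paper itself cites exactly this reference for the fact. All remaining steps are Bochner-Kodaira calculus and elementary linear algebra of parallel endomorphism algebras.
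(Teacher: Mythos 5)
Your proposal is correct; note that the paper itself gives no argument for this Fact beyond the citation to Kobayashi's monograph, and what you have written is precisely the standard proof underlying that citation (simplicity of stable bundles together with the Bochner identification of holomorphic and parallel endomorphisms under the Hermitian Yang--Mills condition, and, for the converse, the orthogonal polystable splitting of a non-stable Einstein--Hermitian bundle). One small point you handle well: since the paper's $adE$ is only stated to consist of skew-adjoint endomorphisms, the reduction must be taken modulo $\sqrt{-1}\,Id_{E}$, and your subtraction of the trace part before multiplying by $\sqrt{-1}$ produces exactly the required non-zero traceless skew-Hermitian parallel section.
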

\begin{fact}\label{fact irred on P2 = irred on S5} In the setting of Fact \ref{fact stable=irred}, $A_{O}$ is irreducible on $\mathbb{P}^{2}$ if and only if $\pi^{\star}_{5,4}A_{O}$ is irreducible on $\mathbb{S}^{5}$.\end{fact}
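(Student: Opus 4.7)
\textbf{Proof proposal for Fact \ref{fact irred on P2 = irred on S5}.} By Fact \ref{fact stable=irred} (and the analogous characterization on $\mathbb{S}^{5}$), irreducibility means the absence of a non-zero parallel section of the relevant adjoint bundle. So the claim is equivalent to showing that $\pi^{\star}_{5,4}(adE) \to \mathbb{S}^{5}$ admits a non-zero parallel section for $\pi^{\star}_{5,4}A_{O}$ if and only if $adE \to \mathbb{P}^{2}$ admits one for $A_{O}$. The ``if'' direction is immediate: if $\sigma$ is $A_{O}$-parallel on $\mathbb{P}^{2}$, then $\pi^{\star}_{5,4}\sigma$ is parallel on $\mathbb{S}^{5}$ by naturality of pullback connections. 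All the work is in the ``only if'' direction.

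So assume $\tilde{\sigma}$ is a non-zero $\pi^{\star}_{5,4}A_{O}$-parallel section of $\pi^{\star}_{5,4}(adE)$ on $\mathbb{S}^{5}$. The first step is to show $\tilde{\sigma}$ descends. Because $\pi^{\star}_{5,4}A_{O}$ is by definition the pullback connection, its local connection $1$-form (with respect to a pullback trivialization coming from a local frame on $\mathbb{P}^{2}$) is semi-basic, i.e.\ it has no $\eta$-component. Consequently, for the Reeb vector field $\xi$ we have $\nabla_{\xi}\tilde{\sigma} = L_{\xi}\tilde{\sigma}$ in any such trivialization. Parallelism therefore gives $L_{\xi}\tilde{\sigma} = 0$, so $\tilde{\sigma}$ is invariant under the $U(1)$-action generated by $\xi$ (Fact \ref{fact Reeb is angular}). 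Since $\pi_{5,4}$ is exactly the quotient by this $U(1)$-action, $\tilde{\sigma}$ descends to a section $\sigma$ of $adE \to \mathbb{P}^{2}$ with $\tilde{\sigma} = \pi^{\star}_{5,4}\sigma$, and $\sigma$ is non-zero since $\tilde{\sigma}$ is.

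The second step is to show $\sigma$ is $A_{O}$-parallel. For any vector field $X$ on $\mathbb{P}^{2}$, let $\tilde{X}$ denote its horizontal lift, which lies in $D$ since $\pi_{5,4}$ is a Riemannian submersion and $D = \xi^{\perp}$. By the very definition of the pullback connection, $\nabla^{\pi^{\star}A_{O}}_{\tilde{X}}\pi^{\star}_{5,4}\sigma = \pi^{\star}_{5,4}(\nabla^{A_{O}}_{X}\sigma)$. The left-hand side vanishes by hypothesis, and $\pi^{\star}_{5,4}$ is injective on sections (it is an isomorphism onto $U(1)$-invariant sections), so $\nabla^{A_{O}}_{X}\sigma = 0$ for every $X$. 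Hence $\sigma$ is a non-zero $A_{O}$-parallel section on $\mathbb{P}^{2}$, as desired.

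The only real point to verify carefully is the identity $\nabla_{\xi}\tilde{\sigma} = L_{\xi}\tilde{\sigma}$ used in step one; this is the single place where the pullback structure of the connection is essential, and it is a standard consequence of the fact that a pullback connection's connection form pulls back from the base, so it annihilates the fiber direction $\xi$. Once this is in hand the rest is bookkeeping, so I do not anticipate any genuine obstacle.
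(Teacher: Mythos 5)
Your argument is correct, but it is not the route the paper takes. The paper disposes of this fact in one line, as a corollary of the spectral reduction in Formula \ref{formula laplace on S5 vs laplace on CP2}: irreducibility on either space is equivalent to $0$ not being an eigenvalue of the corresponding rough Laplacian (by integration by parts on a compact manifold), and the identity $Spec\,\nabla^{\star}\nabla|_{\mathbb{S}^{5}}=\{\alpha_{l}+l^{2}\}$ together with nonnegativity of $\alpha_{l}$ forces any zero eigenvalue upstairs to come from $l=0$ and $\alpha_{0}=0$, i.e.\ from a parallel section downstairs. Your proof instead works directly with parallel sections: the pullback direction is trivial, and for the converse you use $\nabla_{\xi}=L_{\xi}$ on sections of the pullback bundle (an identity the paper itself records in the proof of Lemma \ref{lem term by term differentiation of F series}) to conclude $L_{\xi}\tilde{\sigma}=0$, hence descent along the Reeb orbits, and then functoriality of the pullback connection along horizontal lifts to get parallelism on $\mathbb{P}^{2}$. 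This is in effect an explicit, self-contained treatment of the $l=0$ Sasaki--Fourier mode, and it has the advantage of not requiring the convergence and term-by-term differentiation machinery behind Formula \ref{formula laplace on S5 vs laplace on CP2}; the paper's approach buys brevity because that machinery is needed elsewhere anyway. Both arguments are sound.
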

Fact \ref{fact irred on P2 = irred on S5} is a direct corollary of the spectral decomposition in Formula \ref{formula laplace on S5 vs laplace on CP2} below.

\begin{proof}[\textbf{Proof of Theorem \ref{Thm eigenvalue Sasakian}}:] The crucial observation is that by the Bochner formulas in Lemma \ref{lem Bochner}, $P^{2}+2P$ is autonomous with respect to row $1,\ 2$,  and $P^{2}+4P$ is autonomous with respect to row $3,\ 4$.

Step 1: $Spec P|_{V^{\perp}_{coh}}\subseteq S_{\nabla^{\star}\nabla}$. 

\begin{Def}\label{Def eigen F}Let $\{\phi_{\mu},\ \mu\in Spec^{mul}(P|_{V^{\perp}_{coh}})\}$ be an eigenbasis with respect to $P|_{V^{\perp}_{coh}}$. The eigen  expansion of any $L^{2}-$section of $Dom_{\mathbb{S}^{5}}$ is called the \textit{$P|_{V^{\perp}_{coh}}-$eigen Fourier expansion}. Similar definition of  eigen Fourier expansion applies to the operator $P$ itself (no restriction) and the other rough Laplacians. \textit{This is not the same object as the Sasaki-Fourier series in Definition \ref{Def Sasaki Fourier} above}. 
\end{Def}

     Let $\lambda\in Spec(\nabla^{\star}\nabla|_{\mathbb{S}^{5}})$,  for any non-zero $u_{\lambda}\in \mathbb{E}_{\lambda}(\nabla^{\star}\nabla|_{\mathbb{S}^{5}})$, in view of Remark \ref{rmk specPvcoh=scoh}, we  consider the $P|_{V^{\perp}_{coh}}-$eigen Fourier expansion: $$\left[\begin{array}{c}u_{\lambda} \\ 0 \\  0 \\ 0 \\ 0 \end{array}\right]=\Sigma_{\mu\in Spec^{mul}(P|_{V^{\perp}_{coh}})}u_{\lambda,\mu}\phi_{\mu}.$$  Using the Bochner formula \eqref{equ formula for P2+2P separation},  we calculate that
\begin{eqnarray} \label{equ 0 proof Thm SpecP} \nonumber
& &\Sigma_{\mu\in Spec^{mul}(P|_{V^{\perp}_{coh}})}(\mu^{2}+2\mu )u_{\lambda, \mu}\phi_{\mu}= [P^{2}+2P]\left[\begin{array}{c}u_{\lambda} \\ 0 \\  0 \\ 0 \\ 0 \end{array}\right]
=\left[\begin{array}{c}(\nabla^{\star}\nabla u_{\lambda})+3u_{\lambda} \\ 0 \\  0 \\ 0 \\ 0 \end{array}\right] \nonumber
\\&=&(\lambda+3)\left[\begin{array}{c}u_{\lambda} \\ 0 \\  0 \\ 0 \\ 0 \end{array}\right]\nonumber\\&=&\Sigma_{\mu\in Spec^{mul}(P|_{V^{\perp}_{coh}})}(\lambda+3)u_{\lambda, \mu}\phi_{\mu}.
\end{eqnarray}
Comparing the non-zero coefficients $u_{\lambda,\mu}$ (which must exist because $u_{\lambda}\neq 0$),  we find 
$$\mu^{2}+2\mu =\lambda+3.$$ This means that for any $\lambda \in Spec(\nabla^{\star}\nabla|_{\mathbb{S}^{5}})$, 
$$Row^1[\mathbb{E}_{\lambda}(\nabla^{\star}\nabla|_{\mathbb{S}^{5}})]
 \subseteq  \oplus^{L^{2}}_{\mu,\mu^{2}+2\mu -3=\lambda}\mathbb{E}_{\mu}P|_{V^{\perp}_{coh}}.$$

Because $u_{\lambda}$ is an arbitrary non-zero eigenvector, and that the eigenbasis with respect to $\nabla^{\star}\nabla$ is complete in $L^{2}[\mathbb{S}^{5},\pi^{\star}_{5,4}adE]$, we find 
\begin{eqnarray}\nonumber& &\{\left[\begin{array}{c}u\\ 0 \\  0 \\ 0 \\ 0 \end{array}\right] |u\in L^{2}[\mathbb{S}^{5},\pi^{\star}_{5,4}adE]\}\subseteq  [\oplus^{L^{2}}_{\lambda \in Spec(\nabla^{\star}\nabla|_{\mathbb{S}^{5}})}Row^{1}(\mathbb{E}_{\lambda}\nabla^{\star}\nabla|_{\mathbb{S}^{5}})]
\\& \subseteq  &\oplus^{L^{2}}_{\mu,\mu^{2}+2\mu -3\in Spec(\nabla^{\star}\nabla|_{\mathbb{S}^{5}})}\mathbb{E}_{\mu}P|_{V^{\perp}_{coh}}. 
\end{eqnarray}
Similarly, using \eqref{equ formula for P2+2P separation} again, we verify that 
\begin{equation}\{\left[\begin{array}{c}0\\ a_{s} \\  0 \\ 0 \\ 0 \end{array}\right] |a_{s}\in L^{2}[\mathbb{S}^{5},\pi^{\star}_{5,4}adE]\} \subseteq \oplus^{L^{2}}_{\mu,\mu^{2}+2\mu -3\in Spec(\nabla^{\star}\nabla|_{\mathbb{S}^{5}})}\mathbb{E}_{\mu}P|_{V^{\perp}_{coh}}.
\end{equation}
Using \eqref{equ formula for P2+4P separation} instead of \eqref{equ formula for P2+2P separation}, we verify 
\begin{equation}\{\left[\begin{array}{c}0\\ 0 \\  a_{r} \\ 0 \\ 0 \end{array}\right] |a_{r}\in L^{2}[\mathbb{S}^{5},\pi^{\star}_{5,4}adE]\} \subseteq  \oplus^{L^{2}}_{\mu,\mu^{2}+4\mu \in Spec(\nabla^{\star}\nabla|_{\mathbb{S}^{5}})}\mathbb{E}_{\mu}P|_{V^{\perp}_{coh}},
\end{equation}
and
\begin{equation}\{\left[\begin{array}{c}0\\ 0 \\ 0\\ a_{\eta} \\ 0 \end{array}\right] |a_{\eta}\in L^{2}[\mathbb{S}^{5},\pi^{\star}_{5,4}adE]\} \subseteq  \oplus^{L^{2}}_{\mu,\mu^{2}+4\mu \in Spec(\nabla^{\star}\nabla|_{\mathbb{S}^{5}})}\mathbb{E}_{\mu}P|_{V^{\perp}_{coh}}.
\end{equation}
In summary, by Definition \ref{Def two parts of spec} of  $S_{\nabla^{\star}\nabla}$, we find 
\begin{eqnarray}& &\nonumber\{\left[\begin{array}{c}u\\ a_{s} \\  a_{r} \\ a_{\eta} \\ 0 \end{array}\right] |u,a_{s},a_{r},a_{\eta}\in L^{2}[\mathbb{S}^{5},\pi^{\star}_{5,4}adE]\}
 \subseteq   \oplus^{L^{2}}_{\mu\in S_{\nabla^{\star}\nabla}} \mathbb{E}_{\mu}P|_{V^{\perp}_{coh}}.
\end{eqnarray}
Then  \begin{eqnarray}& & \oplus^{L^{2}}_{\mu \notin S_{\nabla^{\star}\nabla}}\mathbb{E}_{\mu}P|_{V^{\perp}_{coh}}
\nonumber
 \subseteq \{\left[\begin{array}{c}u\\ a_{s} \\  a_{r} \\ a_{\eta} \\ 0 \end{array}\right] |u,a_{s},a_{r},a_{\eta}\in L^{2}[\mathbb{S}^{5},\pi^{\star}_{5,4}adE]\}^{\perp}\\& \subseteq &\{\left[\begin{array}{c}0\\ 0 \\  0 \\ 0\\ a_{0} \end{array}\right] |a_{0}\in L^{2}[\mathbb{S}^{5},D^{\star}\otimes \pi^{\star}_{5,4}adE]\}.
\end{eqnarray}
The above implies that if $\mu$ is an eigenvalue of $P|_{V^{\perp}_{coh}}$ and $\mu \notin S_{\nabla^{\star}\nabla}$,  any eigensection of $\mu$ must be of the form $\left[\begin{array}{c}0\\ 0 \\  0 \\ 0\\ a_{0} \end{array}\right]$ i.e. row $1$ to row $4$ must all vanish. Again, by  Formula \eqref{equ formula for P}, if $a_{0}$ does not vanish,  $a_{0}\in V_{\mu}$ for some integer $\mu$. But  $a_{0}\perp V_{\mu}$ 
by assumption. Then $a_{0}=0$.

The statement $Spec P|_{V^{\perp}_{coh}}\subseteq S_{\nabla^{\star}\nabla}$ is  proved.\\

Step 2: .  $Spec P|_{V^{\perp}_{coh}}\supseteq S_{\nabla^{\star}\nabla}$.

Actually, Step 1 has already hinted on this direction. 
Equation \eqref{equ 0 proof Thm SpecP} can be written as  
\begin{equation}\left[\begin{array}{c}u_{\lambda} \\ 0 \\  0 \\ 0 \\ 0 \end{array}\right]=\Sigma_{\mu \in Spec^{mul}(P|_{V^{\perp}_{coh}}),\mu =-1+\sqrt{4+\lambda}} u_{\lambda,\mu }\phi_{\mu }+\Sigma_{\mu \in Spec^{mul}(P|_{V^{\perp}_{coh}}),\mu =-1-\sqrt{4+\lambda}} u_{\lambda,\mu }\phi_{\mu }.\end{equation}
Because $u_{\lambda}$ is non-zero, the above means at least one of $-1-\sqrt{4+\lambda}$ and $-1+\sqrt{4+\lambda}$ is an eigenvalue of $P|_{V^{\perp}_{coh}}$. We show by contradiction that both of them must be eigenvalues. If not, $\left[\begin{array}{c}u_{\lambda} \\ 0 \\  0 \\ 0 \\ 0 \end{array}\right]$ is an eigensection of $P$, thus $d_{0}u_{\lambda}=L_{\xi}u_{\lambda}=0$ by  Formula \eqref{equ formula for P}. By Fact  \ref{fact irred on P2 = irred on S5}, irreducibility of the connection on $\mathbb{P}^{2}$ implies the irreducibility of the pullback connection on $\mathbb{S}^{5}$,  which yields $u_{\lambda}=0$. This contradicts the hypothesis that $u_{\lambda}\neq 0$. 

Likewise, regarding the other polynomial $\mu^{2}+4\mu$, corresponding to the $3$rd and $4$th row of $Dom_{\mathbb{S}^{5}}$, $\left[\begin{array}{c}0 \\ 0 \\  u_{\lambda} \\ 0 \\ 0 \end{array}\right]$ yields the two eigenvalues $-2-\sqrt{4+\lambda}$ and $-2+\sqrt{4+\lambda}$. 

We proved $Spec P|_{V^{\perp}_{coh}}\supseteq S_{\nabla^{\star}\nabla}$. The proof of  Theorem \ref{Thm eigenvalue Sasakian} is complete. 
\end{proof}

\subsubsection{Proving Theorem \ref{Thm 1}.$\mathbb{I}$}
 To complete the proof, we need the formula for the sheaf cohomologies.  \begin{lem}\label{lem h1}Let $E$ be a holomorphic vector bundle on $\mathbb{P}^{2}$. For any integer $k$, we have  \begin{eqnarray}\label{equ 0 lem h1} & & h^{1}[\mathbb{P}^{2},\ (EndE)(k)]\\&=& h^{0}[\mathbb{P}^{2},\ (EndE)(k)]+h^{0}[\mathbb{P}^{2},\ (EndE)(-k-3)]\nonumber
\\& &+2rc_{2}(E)-(r-1)c^{2}_{1}(E)-\frac{r^{2}(k+1)(k+2)}{2}.\nonumber
\end{eqnarray}

Therefore, suppose additionally that $E$ is stable and  $rankE\geq 2$, the following is true. 
\begin{equation}\label{equ 1 lem h1}h^{1}[\mathbb{P}^{2},\ (End E)(-1)]=h^{1}[\mathbb{P}^{2},\ (End E)(-2)]=c_{2}(EndE)>0. 
\end{equation}

\end{lem}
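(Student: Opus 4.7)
The plan is to prove \eqref{equ 0 lem h1} by a direct Hirzebruch--Riemann--Roch plus Serre duality computation on $\mathbb{P}^{2}$, and then use (semi)stability together with the Bogomolov inequality to deduce \eqref{equ 1 lem h1}.

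First I would record the Chern-class identities for the rank-$r^{2}$ bundle $EndE = E\otimes E^{\star}$: using $ch(EndE) = ch(E)\cdot ch(E^{\star})$ one gets $c_{1}(EndE)=0$ and $c_{2}(EndE)=2rc_{2}(E)-(r-1)c_{1}(E)^{2}$, the classical Bogomolov discriminant $\Delta(E)$. Twisting by $O(k)$ multiplies the Chern character by $1+kH+\tfrac{k^{2}}{2}H^{2}$, so with $td(\mathbb{P}^{2})=1+\tfrac{3}{2}H+H^{2}$ the degree-$4$ part of $ch[(EndE)(k)]\cdot td(\mathbb{P}^{2})$ is
\[
\tfrac{r^{2}(k+1)(k+2)}{2}\,H^{2}-c_{2}(EndE).
\]
Hirzebruch--Riemann--Roch then yields
\[
\chi[\mathbb{P}^{2},(EndE)(k)]=\tfrac{r^{2}(k+1)(k+2)}{2}-2rc_{2}(E)+(r-1)c_{1}(E)^{2}.
\]
Since $K_{\mathbb{P}^{2}}=O(-3)$ and $(EndE)^{\star}\cong EndE$, Serre duality identifies $h^{2}[\mathbb{P}^{2},(EndE)(k)]$ with $h^{0}[\mathbb{P}^{2},(EndE)(-k-3)]$. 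Rearranging $\chi=h^{0}-h^{1}+h^{2}$ produces \eqref{equ 0 lem h1}.

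For \eqref{equ 1 lem h1} I would specialize to $k=-1$ and $k=-2$, where the polynomial $\tfrac{r^{2}(k+1)(k+2)}{2}$ vanishes. The two $h^{0}$ terms appearing in the formula are then $h^{0}[\mathbb{P}^{2},(EndE)(-1)]$ and $h^{0}[\mathbb{P}^{2},(EndE)(-2)]$. Since $E$ is stable, in characteristic zero $EndE=E\otimes E^{\star}$ is polystable with slope zero, so $(EndE)(-j)$ is polystable of slope $-j<0$ for $j=1,2$; a non-zero global section would give an embedding $O\hookrightarrow (EndE)(-j)$ violating semistability. Hence both $h^{0}$ terms vanish, and the formula collapses to
\[
h^{1}[\mathbb{P}^{2},(EndE)(-1)]=h^{1}[\mathbb{P}^{2},(EndE)(-2)]=2rc_{2}(E)-(r-1)c_{1}(E)^{2}=c_{2}(EndE).
\]

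The remaining step, and the only mildly subtle one, is the strict positivity $c_{2}(EndE)>0$. Bogomolov's inequality gives $\Delta(E)\geq 0$ for semistable $E$. For strict inequality on $\mathbb{P}^{2}$ when $\mathrm{rank}\,E\geq 2$ I would argue either (a) via Chern--Weil applied to the irreducible HYM connection $A_{O}$: the equality $\Delta(E)=0$ is equivalent to $A_{O}$ being projectively flat, but any projectively flat bundle on the simply connected $\mathbb{P}^{2}$ is a direct sum of copies of a single line bundle, contradicting irreducibility/stability for $r\geq 2$; or (b) directly citing that stable bundles of rank $\geq 2$ on $\mathbb{P}^{2}$ satisfy $\Delta(E)>0$. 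Combining with Fact \ref{fact stable=irred}, which identifies irreducibility of $A_{O}$ with stability of $E$, this gives $c_{2}(EndE)>0$ and completes the proof. The strict Bogomolov inequality is the main conceptual input; everything else is a bookkeeping calculation with HRR and Serre duality.
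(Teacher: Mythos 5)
Your proposal is correct and follows essentially the same route as the paper: Hirzebruch--Riemann--Roch for $(EndE)(k)$, Serre duality with $K_{\mathbb{P}^{2}}=O(-3)$, vanishing of the two $h^{0}$ terms at $k=-1,-2$ by negativity of slope, and strict positivity of $c_{2}(EndE)$ via the equality case of Bogomolov (the paper cites \cite[Theorem 8.1]{UY}, which is exactly your projective-flatness argument). The only cosmetic difference is that the paper obtains the $h^{0}$ vanishing from Kobayashi's vanishing theorem for Hermitian--Einstein bundles of negative mean curvature rather than from polystability of $E\otimes E^{\star}$; both rest on the same slope computation.
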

The Chern number inequality \eqref{equ 1 lem h1} is crucial in showing $-1$ and $-2$ are eigenvalues of $P$.

 The proof of the above Lemma is simply by Riemann-Roch formula, Serre-duality,  and a vanishing theorem proved by Kobayashi. We defer it to Appendix \ref{Appendix Some algebro-geometric calculations}.
\begin{proof}[\textbf{Proof of Theorem \ref{Thm 1}}.$\mathbb{I}$:] The splitting $SpecP=S_{\nabla^{\star}\nabla}\cup S_{coh}$ follows simply from Remark \ref{rmk specPvcoh=scoh} and Theorem \ref{Thm eigenvalue Sasakian}.

 Fact \ref{fact stable=irred} says that the bundle $E$ on $\mathbb{P}^{2}$ must be stable under the irreducible Hermitian Yang-Mills conduction. The calculation of cohomology in Lemma \ref{lem h1}.\eqref{equ 1 lem h1}, the obvious fact that $V_{\mu }\subseteq E_{\mu }P$, and the identification in Proposition \ref{cor eigenspace and harmonic forms} says that $-1$ and $-2$ must be eigenvalues. Because of irreducibility of the connection on $\mathbb{P}^{2}$ and therefore on $\mathbb{S}^{5}$,  the lowest eigenvalue in $Spec(\nabla^{\star}\nabla |_{\mathbb{S}^{5}})$ is positive. Then the eigenvalues of $P$ other than $-1$ and $-2$ are either nonnegative or no larger than  -3. 

The proof is complete. 
\end{proof}

 \subsection{The dimension reduction for the spectrum of bundle rough Laplacians}
 To study the multiplicity of each eigenvalue of $P$, we need the following reduction. 
 \begin{formula}\label{formula laplace on S5 vs laplace on CP2} Given a holomorphic Hermitian triple $(E,h,A_{O})$ on $\mathbb{P}^{2}$, 
under the associated data setting and counted with multiplicity, the following holds. \begin{eqnarray}\nonumber & &Spec^{mul}\nabla^{\star}\nabla  |_{\pi_{5,4}^{\star}adE \rightarrow \mathbb{S}^{5}}=Spec^{mul_{\mathbb{C}}}\nabla^{\star}\nabla  |_{\pi_{5,4}^{\star}End_{0}E \rightarrow \mathbb{S}^{5}}
 \\&=&\{\alpha_{l}+l^{2}\ |\ \alpha_{l}\in Spec^{mul_{\mathbb{C}}}\nabla^{\star}\nabla|_{(End_{0}E)(l)}\}.\label{equ spec mul}\end{eqnarray}
 Consequently, without counting multiplicity, 
 $$Spec \nabla^{\star}\nabla  |_{\mathbb{S}^{5}} \triangleq Spec\nabla^{\star}\nabla  |_{\pi_{5,4}^{\star}adE\rightarrow \mathbb{S}^{5}}=\{\alpha_{l}+l^{2}\ |\ \alpha_{l}\in Spec\nabla^{\star}\nabla|_{End_{0}E(l)}\}.$$

 \end{formula}
\begin{proof}[Proof of Formula  \ref{formula laplace on S5 vs laplace on CP2}:] In view of  identity \eqref{equ specmul = spec mulC}, it suffices to show 

\begin{equation}Spec^{mul_{\mathbb{C}}}\nabla^{\star}\nabla  |_{\pi_{5,4}^{\star}End_{0}E \rightarrow \mathbb{S}^{5}}
=\{\alpha_{l}+l^{2}\ |\ \alpha_{l}\in Spec^{mul_{\mathbb{C}}}\nabla^{\star}\nabla|_{(End_{0}E)(l)}\}.\end{equation}
 
 Given an arbitrary $p\in \mathbb{S}^{5}$,  let $v_{i},\ i=1...4$ be the transverse geodesic coordinate vector fields near $p$ in  Lemma \ref{lem Kahler geodesic frame} below. Because $\nabla_{v_{i}}v_{i}=0$ at $p$, the following formula for the rough Laplacian is true
 \begin{equation}\label{equ 0 proof formula laplace on S5 vs laplace on CP2}-(\nabla^{\star}\nabla u)(p)=[\nabla_{v_{i}}(\nabla_{v_{i}}u)](p)+(L^{2}_{\xi}u)(p).
 \end{equation}
 
 Suppose $u\in C^{10}[\mathbb{S}^{5},\pi^{\star}_{5,4}End_{0}E]$,  the Fourier-expansion $u=\Sigma_{l\in \mathbb{Z}}u_{l}\otimes s_{-l}$ converges point-wisely on $\mathbb{S}^{5}$. Moreover, we can calculate the Laplacian term by term i.e. using that $d_{0}s_{-l}=0$ and that $u_{l}$ descends to $\mathbb{P}^{2}$, we find 
 \begin{eqnarray}\nonumber & &\nabla_{v_{i}}(\nabla_{v_{i}}u)=\Sigma_{l,i} \nabla_{v_{i}}\nabla_{v_{i}}(u_{l}\otimes s_{-l})=\Sigma_{l,i}(\nabla_{v_{i}}\nabla_{v_{i}}u_{l})\otimes s_{-l}
 \\&=&- \Sigma_{l}(\nabla^{\star,\mathbb{P}^{2}} \nabla^{\mathbb{P}^{2}} u_{l})\otimes s_{-l}. \nonumber
 \end{eqnarray}
 
Using $L_{\xi}u_{l}=0$ again, and $L^{2}_{\xi}s_{-l}=-l^{2}s_{-l}$, we find 
\begin{equation}\label{equ 0 formula laplace on S5 vs laplace on CP2}\nabla^{\star}\nabla u= \Sigma_{l}[(\nabla^{\star,\mathbb{P}^{2}} \nabla^{\mathbb{P}^{2}} +l^{2})u_{l}]\otimes s_{-l}.
\end{equation}
Then the desired result \eqref{equ spec mul} follows by plugging an arbitrary eigensection $u$ into \eqref{equ 0 formula laplace on S5 vs laplace on CP2} and comparing the Sasaki-Fourier series of both hand sides. For the reader's convenience, we still provide the full detail. 

Let $\lambda\in Spec\nabla^{\star}\nabla |_{\pi_{5,4}^{\star}End_{0}E\rightarrow \mathbb{S}^{5}}$, and $u_{\lambda}$ is a (non-zero) eigenvector. 
Then \eqref{equ 0 formula laplace on S5 vs laplace on CP2} yields that 
\begin{equation}\lambda u_{\lambda}=\Sigma_{l}u_{\lambda,l}s_{l}=\Sigma_{l}[(\nabla^{\star,\mathbb{P}^{2}} \nabla^{\mathbb{P}^{2}} +l^{2})u_{\lambda, l}]\otimes s_{-l}.
\end{equation}
Then either $u_{\lambda, l}=0$ or $\nabla^{\star,\mathbb{P}^{2}} \nabla^{\mathbb{P}^{2}}u_{\lambda, l}= (\lambda-l^{2})u_{\lambda, l}$. Therefore, we obtain a linear injection $$i:\ \mathbb{E}_{\lambda}(\nabla^{\star}\nabla |_{\pi_{5,4}^{\star}End_{0}E\rightarrow \mathbb{S}^{5}})\rightarrow \oplus_{l,\lambda-l^{2}\in Spec\nabla^{\star}\nabla|_{(End_{0}E)(l)}}(\mathbb{E}_{\lambda-l^{2}}\nabla^{\star}\nabla|_{(End_{0}E)(l)})$$ sending $u_{\lambda}$ to its Sasaki-Fourier co-efficients. We note that only finitely many $l$ yield  non-zero eigenspace $\mathbb{E}_{\lambda-l^{2}}\nabla^{\star}\nabla|_{(End_{0}E)(l)}$.

The map $i$ is obviously surjective because for any set of Sasaki-Fourier co-efficients $$\Sigma^{\oplus}_{l}u_{\lambda,l}\in \oplus_{l,\lambda-l^{2}\in Spec\nabla^{\star}\nabla|_{(End_{0}E)(l)}}(\mathbb{E}_{\lambda-l^{2}}\nabla^{\star}\nabla|_{(End_{0}E)(l)}),$$ we have  $$\Sigma_{l} u_{\lambda,l}s_{l}\in \mathbb{E}_{\lambda}(\nabla^{\star}\nabla |_{\pi_{5,4}^{\star}End_{0}E\rightarrow \mathbb{S}^{5}}), \textrm{and}\ i(\Sigma_{l}u_{\lambda,l}s_{l})=\Sigma^{\oplus}_{l}u_{\lambda,l}.$$
 The proof of \eqref{equ spec mul} is complete. 
\end{proof}
In conjunction with the splitting \eqref{equ 0 proof formula laplace on S5 vs laplace on CP2} by the frame, we call the operator $$\Delta_{0}\triangleq \nabla^{\star}\nabla+L^{2}_{\xi}$$ the transverse rough Laplacian. 

\subsection{On the multiplicities of the eigenvalues of $P$: proof of Theorem \ref{Thm 1}.$\mathbb{II}$}

The first thing we can say on the multiplicities is the following. 
\begin{lem}\label{lem JH is serre duality} (Kodaira-Serre duality for eigenspaces) In conjunction with Remark \ref{rmk EP is complex}, for any eigenvalue $\mu $ of $P$, both $K,\ \underline{T}$ are real isomorphisms $:\ Eigen_{\mu }(P)\rightarrow Eigen_{-(\mu +3)}(P)$ that anti-commute with the complex structure $I$ of the eigenspaces of $P$.
\end{lem}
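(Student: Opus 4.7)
The plan is to deduce everything directly from the two structural facts already set up: the commutator relations
\[
PI=IP,\qquad KP+PK=-3K,\qquad \underline{T}P+P\underline{T}=-3\underline{T}
\]
in \eqref{equ commutators for PIJK}, and the quaternion relations \eqref{equ the quarternionic str} together with the fact (Lemma \ref{lem hk on 7-dim model}) that $I,K,\underline{T}$ are pointwise isometries of $Dom_{\mathbb{S}^{5}}$. With these in hand the lemma is a short algebraic check rather than a hard analytic statement.

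First I would handle the shift of eigenvalue. Let $\phi\in Eigen_{\mu}(P)$. The commutator $KP=-PK-3K$ yields
\[
P(K\phi)=-KP\phi-3K\phi=-(\mu+3)K\phi,
\]
so $K$ sends $Eigen_{\mu}(P)$ into $Eigen_{-(\mu+3)}(P)$. Applying the same identity to $K\phi\in Eigen_{-(\mu+3)}(P)$ shows $K$ maps $Eigen_{-(\mu+3)}(P)$ back into $Eigen_{\mu+3+(-3)}(P)=Eigen_{\mu}(P)$. Since $K^{2}=-\mathrm{Id}$ is a bijection of $Dom_{\mathbb{S}^{5}}$, the restriction of $K$ to $Eigen_{\mu}(P)$ is a real linear isomorphism onto $Eigen_{-(\mu+3)}(P)$ (its two-sided inverse is $-K$). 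The identical argument, using $\underline{T}P+P\underline{T}=-3\underline{T}$ and $\underline{T}^{2}=-\mathrm{Id}$, gives the same conclusion for $\underline{T}$.

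Finally I verify anti-commutation with the complex structure $I$ on the eigenspaces. Because $I,K,\underline{T}$ form a quaternion triple (every pair anti-commutes by \eqref{equ the quarternionic str}), one has $KI=-IK$ and $\underline{T}I=-I\underline{T}$ as endomorphisms of $Dom_{\mathbb{S}^{5}}$. Restricting to any eigenspace $Eigen_{\mu}(P)$—which is preserved by $I$ because $P$ commutes with $I$ (Remark \ref{rmk EP is complex})—these identities say exactly that the $\mathbb{R}$-linear maps $K$ and $\underline{T}$ of the previous paragraph anti-commute with $I$, i.e.\ they are conjugate-linear with respect to the complex structure on eigenspaces.

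There is no real obstacle: once the commutators in \eqref{equ commutators for PIJK} and the quaternion relations \eqref{equ the quarternionic str} are granted, the proof reduces to three lines of algebra. The only point to be careful about is bookkeeping: one must distinguish the complex linearity of $I$ (which commutes with $P$) from the conjugate linearity of $K$ and $\underline{T}$ (which satisfy a twisted intertwining with $P$), and this is precisely what makes the pairing $\mu \leftrightarrow -(\mu+3)$ analogous to Kodaira--Serre duality.
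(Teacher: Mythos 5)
Your proof is correct and is exactly the argument the paper intends: the paper's own proof simply states that the lemma is a direct corollary of the commutators in \eqref{equ commutators for PIJK}, and your computation (using $KP+PK=-3K$ to shift the eigenvalue, $K^{2}=\underline{T}^{2}=-Id$ to get invertibility, and the quaternion anti-commutation relations for the conjugate-linearity with respect to $I$) fills in precisely those details.
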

\begin{proof}[Proof of Lemma \ref{lem JH is serre duality}:] This is a direct corollary of the commutators between $P$ and the isomorphisms $I,\ \underline{T},\ K$ in \eqref{equ commutators for PIJK}.\end{proof}
That $K,\ \underline{T}$ both anti-commute with  $I$ is consistent with that the Serre-duality map is conjugate  $\mathbb{C}-$linear. 

We can go much further than Lemma \ref{lem JH is serre duality}:   the multiplicity of each eigenvalue of $P$ can be determined.

\subsubsection{The definition and formula for the projection map  $\parallel_{\mathbb{E}_{\mu }P|_{V^{\perp}_{coh}}}$}

 The purpose of this section is the formula for a certain projection map  (Lemma \ref{lem proj formula}). This map  is decisive in determining the multiplicities. Before defining it, we stress the following. 
\begin{Notation} In conjunction with that each number in $Spec^{mul} \nabla^{\star}\nabla|_{\mathbb{S}^{5}}$ generates $4$ eigenvalues of $P$ (see Theorem \ref{Thm 1}.$\mathbb{I}$), for any $\lambda\in Spec (\nabla^{\star}\nabla|_{\mathbb{S}^{5}})$,  let \begin{equation}\label{equ the 4 eigenvalues generated}\mu_{\lambda, +}\triangleq -1+\sqrt{4+\lambda},\ \mu_{\lambda, -}\triangleq -1-\sqrt{4+\lambda},\ \underline{\mu}_{\lambda, +}=-2+\sqrt{4+\lambda},\ \underline{\mu}_{\lambda, -}=-2-\sqrt{4+\lambda}.\end{equation}

For any $\mu \in S_{\nabla^{\star}\nabla} \subseteq Spec P$, at least one of the following holds.
\begin{itemize}\item  $\mu^{2}+2\mu-3\in Spec (\nabla^{\star}\nabla|_{\mathbb{S}^{5}})$.

\item $\mu^{2}+4\mu\in Spec (\nabla^{\star}\nabla|_{\mathbb{S}^{5}})$. 
\end{itemize}
\end{Notation}
We are ready to define the projection.

\begin{Def}\label{Notation proj to Vcohperp} Let $(E,h,A_{O})$ be an irreducible Hermitian Yang-Mills triple on $\mathbb{P}^{2}$. Suppose $\mu\in Spec P|_{V^{\perp}_{coh}}$.  Let $\parallel_{\mathbb{E}_{\mu }P|_{V^{\perp}_{coh}}}$ denote the orthogonal projection from\\ $(\mathbb{E}_{\lambda_{1}}\nabla^{\star}\nabla|_{\mathbb{S}^{5}})^{\oplus 2}\oplus (\mathbb{E}_{\lambda_{2}}\nabla^{\star}\nabla|_{\mathbb{S}^{5}})^{\oplus 2}$ to $\mathbb{E}_{\mu }P|_{V^{\perp}_{coh}}$ that is factored as follows.   $$ (\mathbb{E}_{\lambda_{1}}\nabla^{\star}\nabla|_{\mathbb{S}^{5}})^{\oplus 2}\oplus (\mathbb{E}_{\lambda_{2}}\nabla^{\star}\nabla|_{\mathbb{S}^{5}})^{\oplus 2}\rightarrow V^{\perp}_{coh}\rightarrow L^{2}(\mathbb{S}^{5},Dom_{\mathbb{S}^{5}})\rightarrow \mathbb{E}_{\mu }P|_{V^{\perp}_{coh}}.$$ 
\end{Def}  
In conjunction with Remark \ref{rmk specPvcoh=scoh}, the range of the first arrow above consists of sections of the form $\left[\begin{array}{c}v \\ h \\  g \\ w \\ 0\end{array}\right]$ which are automatically in $V^{\perp}_{coh}$, where $$v,\ h\in \mathbb{E}_{\lambda_{1}}\nabla^{\star}\nabla|_{\mathbb{S}^{5}},\ \textrm{and} \ g,\ w\in \mathbb{E}_{\lambda_{2}}\nabla^{\star}\nabla|_{\mathbb{S}^{5}}.$$

  We start from the first component of the domain bundle $Dom_{\mathbb{S}^{5}}$.
  
 Suppose $v\in \mathbb{E}_{\lambda_{1}}\nabla^{\star}\nabla|_{\mathbb{S}^{5}}$, still by the $P|_{V^{\perp}_{coh}}-$eigen Fourier expansion,  there is an (unique) orthogonal splitting
\begin{equation}\label{equ splitting with respect to P}\left[\begin{array}{c}v \\ 0 \\  0 \\ 0 \\ 0\end{array}\right]=\left[\begin{array}{c}u \\ a_{s} \\  a_{r} \\ a_{\eta} \\ a_{0}\end{array}\right]+\left[\begin{array}{c}\widetilde{u} \\ -a_{s} \\  -a_{r} \\ -a_{\eta} \\ -a_{0}\end{array}\right],\end{equation} such that $\left[\begin{array}{c}u \\ a_{s} \\  a_{r} \\ a_{\eta} \\ a_{0}\end{array}\right]\in \mathbb{E}_{\mu _{\lambda_{1},+}}P|_{V^{\perp}_{coh}}$, and $\left[\begin{array}{c}\widetilde{u} \\ -a_{s} \\  -a_{r} \\ -a_{\eta} \\ -a_{0}\end{array}\right]\in \mathbb{E}_{\mu _{\lambda_{1},-}}P|_{V^{\perp}_{coh}}$.
 

By the fine formula \eqref{equ formula for P} for $P$, the eigensection conditions give the following system. 
 \begin{eqnarray}\label{equ eigenvalue condition system}u-L_{\xi}a_{s}-(d_{0}a_{0})\lrcorner H=\mu _{\lambda_{1},+}u&,&\widetilde{u}+L_{\xi}a_{s}+(d_{0}a_{0})\lrcorner H=\mu _{\lambda_{1},-}\widetilde{u}.\nonumber
\\ L_{\xi}u+a_{s}+(d_{0}a_{0})\lrcorner G=\mu _{\lambda_{1},+}a_{s} &,& L_{\xi}\widetilde{u}-a_{s}-(d_{0}a_{0})\lrcorner G=-\mu _{\lambda_{1},-}a_{s}. \nonumber
\\ -4a_{r}-L_{\xi}a_{\eta}+d_{0}^{\star_{0}}a_{0}=\mu _{\lambda_{1},+}a_{r}&,&4a_{r}+L_{\xi}a_{\eta}-d_{0}^{\star_{0}}a_{0}=-\mu _{\lambda_{1},-}a_{r}.\label{equ eigenvector projection}
\\ L_{\xi}a_{r}-4a_{\eta}-(d_{0}a_{0})\lrcorner \frac{d\eta}{2}=\mu _{\lambda_{1},+}a_{\eta}&,&-L_{\xi}a_{r}+4a_{\eta}+(d_{0}a_{0})\lrcorner \frac{d\eta}{2}=-\mu _{\lambda_{1},-}a_{\eta}. \nonumber
\\ \left.\begin{array}{c}J_{H}d_{0}u-J_{G}d_{0}a_{s}+d_{0}a_{r}\\ +J_{0}d_{0}a_{\eta}-L_{\xi}J_{0}(a_{0}) 
\end{array}\right\} = \mu _{\lambda_{1},+}a_{0}  &,&  \left.\begin{array}{c} J_{H}d_{0}\widetilde{u}+J_{G}d_{0}a_{s}-d_{0}a_{r}\\ -J_{0}d_{0}a_{\eta}+L_{\xi}J_{0}(a_{0})\end{array}\right\}  = -\mu _{\lambda_{1},-}a_{0}.\nonumber
\end{eqnarray}
The column on the left of the comma corresponds to that $\left[\begin{array}{c}u \\ a_{s} \\  a_{r} \\ a_{\eta} \\ a_{0}\end{array}\right]\in \mathbb{E}_{\mu _{\lambda_{1},+}}P|_{V^{\perp}_{coh}}$, the column on the right of the comma corresponds to that $\left[\begin{array}{c}\widetilde{u} \\ -a_{s} \\  -a_{r} \\ -a_{\eta} \\ -a_{0}\end{array}\right]\in \mathbb{E}_{\mu _{\lambda_{1},-}}P|_{V^{\perp}_{coh}}$.

In view of formula \eqref{equ the 4 eigenvalues generated} for the $4$ generated eigenvalues, we have $$\mu _{\lambda_{1},+}-\mu _{\lambda_{1},-}=2\sqrt{4+\lambda_{1}}\neq 0.$$ Then, summing up the two equations in row 3 of \eqref{equ eigenvector projection}, we find $a_{r}=0$. Similar operation for row $4$ yields that $a_{\eta}=0$.

Then the last row of \eqref{equ eigenvector projection} simply becomes the following two equations.
\begin{equation}J_{H}d_{0}u-J_{G}d_{0}a_{s}-L_{\xi}J_{0}(a_{0}) = \mu _{\lambda_{1},+}a_{0},\ \  J_{H}d_{0}\widetilde{u}+J_{G}d_{0}a_{s}+L_{\xi}J_{0}(a_{0}) = -\mu _{\lambda_{1},-}a_{0}.
\end{equation}
Summing them up and using  $v=u+\widetilde{u}$ (which is evident from \eqref{equ splitting with respect to P}), we find $$J_{H}d_{0}v=2\sqrt{4+\lambda_{1}} a_{0}\  \ \textrm{i.e.}\  \ a_{0}=\frac{J_{H}d_{0}v}{2\sqrt{4+\lambda_{1}} }.$$ 

On the other hand, summing up the two equations in the first row of \eqref{equ eigenvector projection}, we find $$v=\mu _{\lambda_{1},+}u+\mu _{\lambda_{1},-}\widetilde{u}.$$ Using  $v=u+\widetilde{u}$ again, we conclude that 
$$u=\frac{1-\mu _{\lambda_{1},-}}{\mu _{\lambda_{1},+}-\mu _{\lambda_{1},-}}v=(\frac{1}{\sqrt{4+\lambda_{1}}}+\frac{1}{2})v,\ \widetilde{u}=\frac{\mu _{\lambda_{1},+}-1}{\mu _{\lambda_{1},+}-\mu _{\lambda_{1},-}}v=(-\frac{1}{\sqrt{4+\lambda_{1}}}+\frac{1}{2})v.$$

Next, summing up the two equations in the second row of \eqref{equ eigenvector projection}, we obtain $$a_{s}=\frac{L_{\xi}v}{2\sqrt{4+\lambda_{1}} }.$$ Then 
\begin{equation}\label{equ proj 1st compenent}\left[\begin{array}{c}v \\ 0 \\  0 \\ 0 \\ 0\end{array}\right]^{\parallel_{\mathbb{E}_{\mu _{\lambda_{1},+}}P|_{V^{\perp}_{coh}}}}=\left[\begin{array}{c}(\frac{1}{\sqrt{4+\lambda_{1}}}+\frac{1}{2})v \\ \frac{L_{\xi}v}{2\sqrt{4+\lambda_{1}} }  \\  0 \\ 0 \\ \frac{J_{H}d_{0}v}{2\sqrt{4+\lambda_{1}} }\end{array}\right],\ \left[\begin{array}{c}v \\ 0 \\  0 \\ 0 \\ 0\end{array}\right]^{\parallel_{\mathbb{E}_{\mu _{\lambda_{1},-}}P|_{V^{\perp}_{coh}}}}=\left[\begin{array}{c}(-\frac{1}{\sqrt{4+\lambda_{1}}}+\frac{1}{2})v \\ -\frac{L_{\xi}v}{2\sqrt{4+\lambda_{1}} }  \\  0 \\ 0 \\ -\frac{J_{H}d_{0}v}{2\sqrt{4+\lambda_{1}} }\end{array}\right].\end{equation}
The same method as above yields the following projection formulas for the other rows of $Dom_{\mathbb{S}^{5}}$. 
\begin{itemize}
\item Suppose $\lambda_{1}\in Spec (\nabla^{\star}\nabla|_{\mathbb{S}^{5}})$, for any $h\in \mathbb{E}_{\lambda_{1}}\nabla^{\star}\nabla|_{\mathbb{S}^{5}}$ such that  $h\neq 0$, 
 \begin{equation}\left[\begin{array}{c}0 \\ h \\  0 \\ 0 \\ 0\end{array}\right]^{\parallel_{\mathbb{E}_{\mu _{\lambda_{1},+}}P|_{V^{\perp}_{coh}}}}=\left[\begin{array}{c}-\frac{L_{\xi}h}{2\sqrt{4+\lambda_{1}} }  \\ (\frac{1}{\sqrt{4+\lambda_{1}}}+\frac{1}{2})h \\  0 \\ 0 \\ -\frac{J_{G}d_{0}h}{2\sqrt{4+\lambda_{1}} }\end{array}\right],\ \left[\begin{array}{c}0 \\ h \\  0 \\ 0 \\ 0\end{array}\right]^{\parallel_{\mathbb{E}_{\mu _{\lambda_{1},-}}P|_{V^{\perp}_{coh}}}}=\left[\begin{array}{c}\frac{L_{\xi}h}{2\sqrt{4+\lambda_{1}} }  \\ (-\frac{1}{\sqrt{4+\lambda_{1}}}+\frac{1}{2})h \\  0 \\ 0 \\ \frac{J_{G}d_{0}h}{2\sqrt{4+\lambda_{1}} }\end{array}\right].\end{equation}
 \item Suppose $\lambda_{2}\in Spec (\nabla^{\star}\nabla|_{\mathbb{S}^{5}})$, for any $g\in \mathbb{E}_{\lambda_{2}}\nabla^{\star}\nabla|_{\mathbb{S}^{5}}$ such that $g\neq 0$, 
 \begin{equation}\left[\begin{array}{c}0 \\ 0 \\  g \\ 0 \\ 0\end{array}\right]^{\parallel_{\mathbb{E}_{\underline{\mu }_{\lambda_{2},+}}P|_{V^{\perp}_{coh}}}}=\left[\begin{array}{c}  0 \\ 0 \\ (-\frac{1}{\sqrt{4+\lambda_{2}}}+\frac{1}{2})g\\ \frac{L_{\xi}g}{2\sqrt{4+\lambda_{2}} }   \\ \frac{d_{0}g}{2\sqrt{4+\lambda_{2}} }\end{array}\right],\ \left[\begin{array}{c}0 \\ 0 \\  g \\ 0 \\ 0\end{array}\right]^{\parallel_{\mathbb{E}_{\underline{\mu }_{\lambda_{2},-}}P|_{V^{\perp}_{coh}}}}=\left[\begin{array}{c}  0 \\ 0 \\ (\frac{1}{\sqrt{4+\lambda_{2}}}+\frac{1}{2})g\\ -\frac{L_{\xi}g}{2\sqrt{4+\lambda_{2}} }   \\ -\frac{d_{0}g}{2\sqrt{4+\lambda_{2}} }\end{array}\right].\end{equation}
  \item Suppose $\lambda_{2}\in Spec (\nabla^{\star}\nabla|_{\mathbb{S}^{5}})$, for any $w\in \mathbb{E}_{\lambda_{2}}\nabla^{\star}\nabla|_{\mathbb{S}^{5}}$ such that $w\neq 0$, 
 \begin{equation}\label{equ proj 4th compenent}\left[\begin{array}{c}0 \\ 0 \\  0 \\ w \\ 0\end{array}\right]^{\parallel_{\mathbb{E}_{\underline{\mu }_{\lambda_{2},+}}P|_{V^{\perp}_{coh}}}}=\left[\begin{array}{c}  0 \\ 0 \\ -\frac{L_{\xi}w}{2\sqrt{4+\lambda_{2}} }   \\ (-\frac{1}{\sqrt{4+\lambda_{2}}}+\frac{1}{2})w\\  \frac{J_{0}d_{0}w}{2\sqrt{4+\lambda_{2}} }\end{array}\right],\ \left[\begin{array}{c}0 \\ 0 \\  0 \\ w \\ 0\end{array}\right]^{\parallel_{\mathbb{E}_{\underline{\mu }_{\lambda_{2},-}}P|_{V^{\perp}_{coh}}}}=\left[\begin{array}{c}  0 \\ 0 \\ \frac{L_{\xi}w}{2\sqrt{4+\lambda_{2}} }   \\ (\frac{1}{\sqrt{4+\lambda_{2}}}+\frac{1}{2})w\\  -\frac{J_{0}d_{0}w}{2\sqrt{4+\lambda_{2}} }\end{array}\right].\end{equation}
\end{itemize}

Summing up the formulas \eqref{equ proj 1st compenent}--\eqref{equ proj 4th compenent}, we arrive on target.   
\begin{lem}\label{lem proj formula} In the setting of Definition \ref{Notation proj to Vcohperp}, for any $\mu \in S_{\nabla^{\star}\nabla}\subset Spec P$, let $$\lambda_{1}\triangleq \mu ^{2}+2\mu -3\  \textrm{and}\ \lambda_{2}\triangleq \mu ^{2}+4\mu .$$ Suppose $v,\ h \in \mathbb{E}_{\lambda_{1}}\nabla^{\star}\nabla|_{\mathbb{S}^{5}}$ and  $g,\ w \in \mathbb{E}_{\lambda_{2}}\nabla^{\star}\nabla|_{\mathbb{S}^{5}}$, the following projection formula is true. 
\begin{equation}\label{equ 0 lem proj formula} \left[\begin{array}{c}v \\ h \\  g \\ w \\ 0\end{array}\right]^{\parallel_{\mathbb{E}_{\mu }P|_{V^{\perp}_{coh}}}}=\left[\begin{array}{c}-\frac{L_{\xi}h}{2(\mu+1)} +(\frac{1}{ \mu+1}+\frac{1}{2})v \\ \frac{L_{\xi}v}{2(\mu+1)} +(\frac{1}{ \mu+1}+\frac{1}{2})h\\ -\frac{L_{\xi}w}{2 (\mu+2) } +(-\frac{1}{ \mu+2}+\frac{1}{2})g \\ \frac{L_{\xi}g}{2 (\mu+2) } +(-\frac{1}{ \mu+2}+\frac{1}{2})w \\ -\frac{J_{G}d_{0}h}{2(\mu+1)}+\frac{J_{H}d_{0}v}{2 (\mu+1)}+\frac{d_{0}g}{2 (\mu+2) }+\frac{J_{0}d_{0}w}{2 (\mu+2) }\end{array}\right] .\end{equation}

\end{lem}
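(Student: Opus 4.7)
The strategy is to exploit linearity of the orthogonal projection to reduce the formula to the four single-component projections that have already been derived immediately before the statement. Specifically, I would write
\[
\left[\begin{array}{c}v \\ h \\  g \\ w \\ 0\end{array}\right]
=\left[\begin{array}{c}v \\ 0 \\  0 \\ 0 \\ 0\end{array}\right]
+\left[\begin{array}{c}0 \\ h \\  0 \\ 0 \\ 0\end{array}\right]
+\left[\begin{array}{c}0 \\ 0 \\  g \\ 0 \\ 0\end{array}\right]
+\left[\begin{array}{c}0 \\ 0 \\  0 \\ w \\ 0\end{array}\right],
\]
apply the projection $\parallel_{\mathbb{E}_\mu P|_{V^\perp_{coh}}}$ term by term, and sum. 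The four individual pieces are exactly the content of the formulas \eqref{equ proj 1st compenent}--\eqref{equ proj 4th compenent}, so no new linear algebra needs to be solved.

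The only point that requires real care is the translation of variables: the displayed formulas \eqref{equ proj 1st compenent}--\eqref{equ proj 4th compenent} express coefficients in terms of $\sqrt{4+\lambda_i}$, whereas the Lemma states them in terms of $\mu+1$ and $\mu+2$. I would handle this by splitting into the two cases $\mu\in\{\mu_{\lambda_1,+},\mu_{\lambda_1,-}\}$ and $\mu\in\{\underline{\mu}_{\lambda_2,+},\underline{\mu}_{\lambda_2,-}\}$ (noting that for a given $\mu$ one of $\lambda_1,\lambda_2$ may fail to lie in $\mathrm{Spec}(\nabla^{\star}\nabla|_{\mathbb{S}^{5}})$, in which case the corresponding summands vanish). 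By the definitions \eqref{equ the 4 eigenvalues generated} one has $\mu+1=\pm\sqrt{4+\lambda_1}$ with the sign matching the $\pm$ choice of $\mu_{\lambda_1,\pm}$, and similarly $\mu+2=\pm\sqrt{4+\lambda_2}$. A short verification then shows that the coefficients $\frac{1}{\sqrt{4+\lambda_1}}+\frac12$ and $-\frac{1}{\sqrt{4+\lambda_1}}+\frac12$ collapse to the single expression $\frac{1}{\mu+1}+\frac12$ (and analogously for the $\lambda_2$-block with $-\frac{1}{\mu+2}+\frac12$), while each coefficient of the form $\pm\frac{1}{2\sqrt{4+\lambda_i}}$ becomes $\frac{1}{2(\mu+i)}$ with the correct sign prescribed by the formulas.

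Having made these uniform substitutions in \eqref{equ proj 1st compenent}--\eqref{equ proj 4th compenent}, I would add the four resulting column vectors row by row. The rows involving $v$ and $h$ pair up to give rows 1 and 2 of \eqref{equ 0 lem proj formula} (including the $L_\xi$ cross-terms), the rows involving $g$ and $w$ pair up to give rows 3 and 4, and the bottom entries contribute the four summands $\tfrac{J_Hd_0v}{2(\mu+1)}$, $-\tfrac{J_Gd_0h}{2(\mu+1)}$, $\tfrac{d_0g}{2(\mu+2)}$, $\tfrac{J_0d_0w}{2(\mu+2)}$ appearing in the last row.

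There is essentially no obstacle, since the crucial computation, namely solving the eigensection system \eqref{equ eigenvector projection} using the Bochner-type autonomy of rows 1--2 and 3--4 and the fact that $\mu_{\lambda,+}-\mu_{\lambda,-}=2\sqrt{4+\lambda}\neq 0$, has already been carried out for each pure-row input. The only thing to watch is sign bookkeeping when passing from $\sqrt{4+\lambda_i}$ to $\mu+i$, and a sanity check that the formula is well defined exactly when $\mu\in S_{\nabla^{\star}\nabla}$ (so that at least one of the denominators $\mu+1$, $\mu+2$ is matched by a genuine eigenvalue of the rough Laplacian, the other summands being zero by convention when the corresponding $\lambda_i$ is not in the spectrum).
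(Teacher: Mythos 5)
Your proposal is correct and is essentially identical to the paper's own argument: the paper derives the four single-component projection formulas \eqref{equ proj 1st compenent}--\eqref{equ proj 4th compenent} by solving the eigensection system \eqref{equ eigenvector projection}, and then obtains Lemma \ref{lem proj formula} by summing them, with the same translation $\mu+1=\pm\sqrt{4+\lambda_{1}}$, $\mu+2=\pm\sqrt{4+\lambda_{2}}$ unifying the $\pm$ cases. Your remarks on the sign bookkeeping and on the vanishing of the summands when $\lambda_{i}\notin Spec(\nabla^{\star}\nabla|_{\mathbb{S}^{5}})$ match the paper's conventions exactly.
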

If  $\lambda_{1}\notin Spec\nabla^{\star}\nabla|_{\mathbb{S}^{5}}$, then  $v,\ h$ must be $0$. This is precisely an advantage in defining  eigenspaces for all real numbers. The rationale is that if it is not an eigenvalue, then all ``eigensections" are $0$. The same applies to  $\lambda_{2}$.


\subsubsection{Surjectivity of the projection map $\parallel_{\mathbb{E}_{\mu }P|_{V^{\perp}_{coh}}}$}

The purpose of this section is to prove the following. 
\begin{prop}\label{prop surjective in multiplicity} In the setting of Definition \ref{Notation proj to Vcohperp},  the orthogonal projection $\parallel_{\mathbb{E}_{\mu }P|_{V^{\perp}_{coh}}}$ is surjective. 
\end{prop}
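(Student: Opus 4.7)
The plan is to reformulate the surjectivity via an adjoint criterion. Write $A$ for the subspace $(\mathbb{E}_{\lambda_1}\nabla^{\star}\nabla|_{\mathbb{S}^5})^{\oplus 2}\oplus(\mathbb{E}_{\lambda_2}\nabla^{\star}\nabla|_{\mathbb{S}^5})^{\oplus 2}$ of $L^2(\mathbb{S}^5, Dom_{\mathbb{S}^5})$, embedded as quintuples $(v,h,g,w,0)$, and write $B\triangleq \mathbb{E}_\mu P|_{V^\perp_{coh}}$. Then $\parallel_{\mathbb{E}_\mu P|_{V^\perp_{coh}}}$ is the restriction to $A$ of the orthogonal $L^2$-projection onto $B$. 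Since $B$ is finite dimensional (as an eigenspace of an elliptic self-adjoint operator on a compact manifold), a standard Hilbert-space argument shows this map is surjective if and only if $B\cap A^\perp=\{0\}$ in $L^2$. Accordingly, I fix $\phi=(u,a_s,a_r,a_\eta,a_0)\in B$ with $\phi\perp A$ in $L^2$ and aim to deduce $\phi=0$.

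The Bochner formulas of Lemma \ref{lem Bochner} play the decisive role. In \eqref{equ formula for P2+2P separation}, rows 1--2 of $P^2+2P$ are autonomous, so the eigensection condition $P\phi=\mu\phi$ gives $\nabla^{\star}\nabla u=\lambda_1 u$ and $\nabla^{\star}\nabla a_s=\lambda_1 a_s$; in \eqref{equ formula for P2+4P separation}, rows 3--4 of $P^2+4P$ are autonomous, giving $\nabla^{\star}\nabla a_r=\lambda_2 a_r$ and $\nabla^{\star}\nabla a_\eta=\lambda_2 a_\eta$. Thus $u,a_s\in \mathbb{E}_{\lambda_1}\nabla^{\star}\nabla|_{\mathbb{S}^5}$ and $a_r,a_\eta\in \mathbb{E}_{\lambda_2}\nabla^{\star}\nabla|_{\mathbb{S}^5}$. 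Since the five components of $Dom_{\mathbb{S}^5}$ in \eqref{equ  5 element basis} form a direct-sum decomposition of the bundle, the condition $\phi\perp A$ decouples row by row into $u\perp \mathbb{E}_{\lambda_1}$, $a_s\perp \mathbb{E}_{\lambda_1}$, $a_r\perp \mathbb{E}_{\lambda_2}$, $a_\eta\perp \mathbb{E}_{\lambda_2}$. Combined with the membership conclusions above, these force $u=a_s=a_r=a_\eta=0$.

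It remains to dispose of $\phi=(0,0,0,0,a_0)$. Substituting the zeros into the fine formula \eqref{equ formula for P} for $P$, the eigensection condition reduces to $d_0^{\star_0}J_H(a_0)=d_0^{\star_0}J_G(a_0)=d_0^{\star_0}a_0=d_0^{\star_0}J_0(a_0)=0$ together with $-L_\xi J_0(a_0)=\mu a_0$, which are exactly the defining equations of $V_\mu$ from \eqref{equ eigenspace}. By Claim \ref{clm Fourier expansion of eigensection}, if $a_0\neq 0$ then $\mu$ must be an integer and $a_0\in V_\mu\subseteq V_{coh}$. But $\phi\in V^\perp_{coh}$ then forces $a_0\perp V_\mu$, whence $a_0=0$ in every case, and so $\phi=0$.

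I do not anticipate a serious obstacle: the argument is essentially bookkeeping driven by the autonomous structure in the Bochner formulas, and the only delicate step is that $L^2$-orthogonality distributes across the five-component direct-sum decomposition of $Dom_{\mathbb{S}^5}$, which is immediate. An alternative route would be to invert the projection formula \eqref{equ 0 lem proj formula} directly and exhibit an explicit preimage, but this requires solving $2\times 2$ systems involving $L_\xi$ on $\mathbb{E}_{\lambda_i}\nabla^{\star}\nabla|_{\mathbb{S}^5}$, which the adjoint route neatly bypasses.
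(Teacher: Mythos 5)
Your proof is correct, and it reaches the same endpoint as the paper (first four components vanish, then $a_{0}=0$ via $V^{\perp}_{coh}$), but by a genuinely leaner route. The paper identifies the cokernel by writing out the orthogonality condition against the explicit image $\left[v,h,g,w,0\right]^{\parallel_{\mathbb{E}_{\mu }P|_{V^{\perp}_{coh}}}}$ from Lemma \ref{lem proj formula}, moving the adjoints of $L_{\xi}$ and $J_{0}d_{0},\ J_{H}d_{0},\ J_{G}d_{0}$ across the inner product to obtain \eqref{equ 0 prop surjective in multiplicity}, and then collapsing that expression with the eigensection equations \eqref{equ mu lambda} down to $\langle h,a_{s}\rangle+\langle v,u\rangle+\langle w,a_{\eta}\rangle+\langle g,a_{r}\rangle=0$. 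You bypass all of this with the elementary identity $\langle \phi,\mathrm{proj}_{B}a\rangle=\langle \phi,a\rangle$ for $\phi\in B$, which reduces the cokernel to $B\cap A^{\perp}$ and lets the orthogonality decouple componentwise against the direct-sum structure of \eqref{equ  5 element basis}; the projection formula and the adjoint bookkeeping are never needed. What your route additionally makes explicit is the step the paper leaves tacit: to pass from $u\perp\mathbb{E}_{\lambda_{1}}$ to $u=0$ one must know $u\in\mathbb{E}_{\lambda_{1}}$ (and likewise for $a_{s},a_{r},a_{\eta}$), which you supply from the autonomous rows of \eqref{equ formula for P2+2P separation} and \eqref{equ formula for P2+4P separation}. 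What the paper's longer computation buys is that identities \eqref{equ 0 prop surjective in multiplicity}--\eqref{equ mu lambda} are reused elsewhere in the multiplicity analysis, whereas your argument is self-contained but yields no such by-products. Your final step correctly mirrors the paper's: the reduced eigensection equations are exactly the defining conditions \eqref{equ eigenspace} of $V_{\mu}$, so $a_{0}\in V_{\mu}\cap V_{\mu}^{\perp}=\{0\}$.
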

\begin{rmk}\label{rmk form of eigensection} The surjectivity particularly means that any eigensection of $P$ must be of the form \eqref{equ 0 lem proj formula}.
\end{rmk}
\begin{proof}[Proof of Proposition \ref{prop surjective in multiplicity}:] The condition \begin{equation}\label{equ coker condition 1} \left[\begin{array}{c}u \\ a_{s} \\  a_{r} \\ a_{\eta} \\ a_{0}\end{array}\right]\perp \left[\begin{array}{c}v \\ h \\  g \\ w \\ 0\end{array}\right]^{\parallel_{\mathbb{E}_{\mu }P|_{V^{\perp}_{coh}}}}\end{equation} is equivalent to that \begin{eqnarray*}0&=& \langle u,-\frac{L_{\xi}h}{2(\mu+1)}+(\frac{1}{\mu+1}+\frac{1}{2})v \rangle+  \langle a_{s},\frac{L_{\xi}v}{2(\mu+1)} +(\frac{1}{\mu+1}+\frac{1}{2})h\rangle \\& & +\langle a_{r},-\frac{L_{\xi}w}{2 (\mu+2) } +(-\frac{1}{\mu+2}+\frac{1}{2})g \rangle+\langle a_{\eta}, \frac{L_{\xi}g}{2 (\mu+2) } +(-\frac{1}{\mu+2}+\frac{1}{2})w\rangle \\& &+\langle a_{0}, -\frac{J_{G}d_{0}h}{2(\mu+1)}+\frac{J_{H}d_{0}v}{2 (\mu+1)}+\frac{d_{0}g}{2 (\mu+2) }+\frac{J_{0}d_{0}w}{2 (\mu+2) }\rangle.\end{eqnarray*} 

Using that the adjoint of $L_{\xi}$ is $-L_{\xi}$, and that $J_{0}d_{0}$, $J_{H}d_{0}$, $J_{G}d_{0}$ are adjoint to $-d^{\star_{0}}_{0}J_{0}$, $-d^{\star_{0}}_{0}J_{H}$, $-d^{\star_{0}}_{0}J_{G}$ respectively,  we find \begin{eqnarray}\label{equ 0 prop surjective in multiplicity}0&=&<h,\frac{L_{\xi}u}{2(\mu+1)} +(\frac{1}{\mu+1}+\frac{1}{2})a_{s}+\frac{d^{\star_{0}}_{0}J_{G}a_{0}}{2(\mu+1)}> \label{equ coker condition 2}
\\&&+<v,-\frac{L_{\xi}a_{s}}{2(\mu+1)} +(\frac{1}{\mu+1}+\frac{1}{2})u-\frac{d^{\star_{0}}_{0}J_{H}a_{0}}{2(\mu+1)}>\nonumber\\& &+<w,\frac{L_{\xi}a_{r}}{2 (\mu+2) } +(-\frac{1}{\mu+2}+\frac{1}{2})a_{\eta}-\frac{d^{\star_{0}}_{0}J_{0}a_{0}}{2 (\mu+2) }>\nonumber
\\& &+ <g,-\frac{L_{\xi}a_{\eta}}{2 (\mu+2)} +(-\frac{1}{\mu+2}+\frac{1}{2})a_{r}+\frac{d^{\star_{0}}_{0}a_{0}}{2 (\mu+2) }>.\nonumber
\end{eqnarray}
Therefore, the condition $\left[\begin{array}{c}u \\ a_{s} \\  a_{r} \\ a_{\eta} \\ a_{0}\end{array}\right]\in Coker\parallel_{\mathbb{E}_{\mu }P|_{V^{\perp}_{coh}}}$ is equivalent to that \eqref{equ 0 prop surjective in multiplicity} holds for all $\left[\begin{array}{c}v \\ h \\  g \\ w \\ 0\end{array}\right]\in (\mathbb{E}_{\lambda_{1}}\nabla^{\star}\nabla|_{\mathbb{S}^{5}})^{\oplus 2}\oplus (\mathbb{E}_{\lambda_{2}}\nabla^{\star}\nabla|_{\mathbb{S}^{5}})^{\oplus 2}$ and $\left[\begin{array}{c}u \\ a_{s} \\  a_{r} \\ a_{\eta} \\ a_{0}\end{array}\right]\in \mathbb{E}_{\mu }P|_{V^{\perp}_{coh}}$.

The eigensection condition (the left of comma in system \eqref{equ eigenvalue condition system}) again says that 
\begin{eqnarray}\label{equ mu lambda}-L_{\xi}a_{s}-(d_{0}a_{0})\lrcorner H=(\mu -1) u,\ & &  L_{\xi}u+(d_{0}a_{0})\lrcorner G=(\mu -1)a_{s}.
\\ -L_{\xi}a_{\eta}+d_{0}^{\star_{0}}a_{0}=(\mu +4)a_{r}, & & L_{\xi}a_{r}-(d_{0}a_{0})\lrcorner \frac{d\eta}{2}=(\mu +4)a_{\eta}.\nonumber\end{eqnarray}

Plugging \eqref{equ mu lambda} into \eqref{equ coker condition 2}, we find 
\begin{equation}<h,a_{s}>+<v,u>+<w,a_{\eta}>+<g,a_{r}>=0. 
\end{equation}
Because $\left[\begin{array}{c}v \\ h \\  g \\ w \\ 0\end{array}\right]\in (\mathbb{E}_{\lambda_{1}}\nabla^{\star}\nabla|_{\mathbb{S}^{5}})^{\oplus 2}\oplus (\mathbb{E}_{\lambda_{2}}\nabla^{\star}\nabla|_{\mathbb{S}^{5}})^{\oplus 2}$ is arbitrary, we find $$u=a_{s}=a_{r}=a_{\eta}=0.$$ 
That $\left[\begin{array}{c}u \\ a_{s} \\  a_{r} \\ a_{\eta} \\ a_{0}\end{array}\right] =\left[\begin{array}{c}0 \\ 0 \\  0 \\ 0 \\ a_{0}\end{array}\right]\in V^{\perp}_{coh}$ implies $a_{0}=0$ as well, therefore $Coker \parallel_{\mathbb{E}_{\mu }P|_{V^{\perp}_{coh}}}=\{0\}$. \end{proof}

\subsubsection{Kernel of the projection  $\parallel_{\mathbb{E}_{\mu }P|_{V^{\perp}_{coh}}}$}
The purpose of this section is to describe the kernel of the projection. 

\begin{prop}\label{prop inj not integer} In the setting of Definition \ref{Notation proj to Vcohperp}, the following holds. 
\begin{enumerate}\item When $\mu$ is not an integer, the orthogonal projection $\parallel_{\mathbb{E}_{\mu }P|_{V^{\perp}_{coh}}}$ is injective. 
\item When $\mu$ is an integer, 
\begin{equation}\label{equ Ker of proj and holo sections}Ker\parallel_{\mathbb{E}_{\mu }P|_{V^{\perp}_{coh}}}=H^{0}[\mathbb{P}^{2},(End_{0}E)(\mu)]\oplus H^{0}[\mathbb{P}^{2},(End_{0}E)(-\mu-3)].\end{equation}
\end{enumerate}
\end{prop}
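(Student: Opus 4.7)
The plan is to combine the explicit projection formula in Lemma \ref{lem proj formula} with the Sasaki-Fourier expansion of Lemma \ref{lem global F series}, the spectral reduction of Formula \ref{formula laplace on S5 vs laplace on CP2}, and the K\"ahler-Weitzenbock identity (Lemma \ref{lem Kahler identity}). Rows $1$--$4$ of the projection couple $(v,h)$ and $(g,w)$ algebraically through $L_\xi$, producing sharp support conditions on the Sasaki-Fourier expansion, while row~$5$ must be shown to be automatically satisfied so that the parametrized subspace is not further cut down.

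For part~(1), setting rows~$1$ and $2$ to zero gives $L_\xi h = (\mu+3)v$ and $L_\xi v = -(\mu+3)h$, so $L_\xi^2 h = -(\mu+3)^2 h$. In the Sasaki-Fourier expansion $h = \sum_k h_k\otimes s_{-k}$ this becomes $(k^2-(\mu+3)^2)h_k=0$ for every $k\in\mathbb{Z}$; when $\mu\notin\mathbb{Z}$ no integer $k$ satisfies the condition, forcing $h=v=0$, and the same argument on rows~$3$ and $4$ gives $g=w=0$, whence the kernel is trivial. For part~(2), I would introduce $\psi \triangleq v+\sqrt{-1}h$ and $\varphi \triangleq g+\sqrt{-1}w$, noting that rows~$1$--$4$ are equivalent to $L_\xi\psi = \sqrt{-1}(\mu+3)\psi$ and $L_\xi\varphi = \sqrt{-1}\mu\varphi$. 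For $\mu\in\mathbb{Z}$ these force the pure single-mode forms $\psi = \psi_{\mu+3}\otimes s_{-(\mu+3)}$ with $\psi_{\mu+3}\in\Gamma[(End_{0}E)(\mu+3)]$ and $\varphi = \varphi_\mu\otimes s_{-\mu}$ with $\varphi_\mu\in\Gamma[(End_{0}E)(\mu)]$. The hypotheses $\nabla^\star\nabla v = \lambda_1 v$ and $\nabla^\star\nabla g = \lambda_2 g$ descend through Formula \ref{formula laplace on S5 vs laplace on CP2} to $\nabla^\star\nabla\psi_{\mu+3} = -4(\mu+3)\psi_{\mu+3}$ and $\nabla^\star\nabla\varphi_\mu = 4\mu\varphi_\mu$ on $\mathbb{P}^2$, which by the K\"ahler identity are equivalent respectively to $\partial\psi_{\mu+3}=0$ and $\bar\partial\varphi_\mu=0$. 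Conjugate-transpose on endomorphisms identifies anti-holomorphic sections of $(End_{0}E)(\mu+3)$ with $H^{0}[\mathbb{P}^{2},(End_{0}E)(-\mu-3)]$, producing the two summands on the right-hand side.

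The main obstacle is verifying that row~$5$ of the projection image vanishes identically on the parametrized subspace, so that the kernel has the full dimension claimed. For this I would exploit the type decompositions from Lemma \ref{lem G and H} and Fact \ref{fact 0,2 and 2,0 contraction}: since $\Theta\in\wedge^{(2,0)}D^{\star,\mathbb{C}}$ and $\bar\Theta\in\wedge^{(0,2)}D^{\star,\mathbb{C}}$, one obtains on semi-basic $(0,1)$-forms the identities $J_G = \sqrt{-1}\,J_H$ and $J_0 = \sqrt{-1}$, and on $(1,0)$-forms $J_G = -\sqrt{-1}\,J_H$ and $J_0 = -\sqrt{-1}$. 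Because $\psi_{\mu+3}$ is anti-holomorphic, the $s_{-(\mu+3)}$-component of $d_0 v$ is the purely $(0,1)$ form $\sqrt{-1}\,\bar\partial\psi_{\mu+3}$ while that of $d_0 h$ is $\bar\partial\psi_{\mu+3}$, and the combination $J_H d_0 v - J_G d_0 h$ collapses to zero mode-by-mode by the $J_G=\pm\sqrt{-1}\,J_H$ relations (the $s_{\mu+3}$ mode cancels via the $(1,0)$ version after applying $^{t}$ to convert to a holomorphic section of $(End_{0}E)(-\mu-3)$). By the same mechanism the holomorphicity of $\varphi_\mu$ forces $d_0 g + J_0 d_0 w = 0$ using the $J_0=\mp\sqrt{-1}$ relations, and thus row~$5$ is automatic and imposes no further restriction.
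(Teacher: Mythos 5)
Your Part 1 is the paper's argument verbatim: rows $1$--$4$ of the kernel equations give $L_{\xi}^{2}h=-(\mu+3)^{2}h$ and $L_{\xi}^{2}w=-\mu^{2}w$, and since $-L_{\xi}^{2}$ has only integer-squared eigenvalues, a non-integer $\mu$ kills everything.

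Part 2 is correct but reverses the logical flow of the paper's proof, and the reversal is worth noting. The paper never invokes the eigenspace membership $v,h\in\mathbb{E}_{\lambda_{1}}$, $g,w\in\mathbb{E}_{\lambda_{2}}$ in the forward direction: after extracting the two-term Sasaki--Fourier expansions from rows $1$--$4$, it feeds the fifth row of the kernel system into the contraction identities with $\Theta$ and $\bar{\Theta}$, separates types, and integrates by parts on $\mathbb{P}^{2}$ to deduce $\bar{\partial}_{\mathbb{P}^{2}}w_{\mu}=\partial_{\mathbb{P}^{2}}w_{-\mu}=\partial_{\mathbb{P}^{2}}h_{\mu+3}=\bar{\partial}_{\mathbb{P}^{2}}h_{-(\mu+3)}=0$; the Laplacian eigenvalue data and Lemma \ref{lem a holomorphic section is an eigensection of the rough Laplacian} enter only when constructing the inverse map $Q^{-1}$. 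You instead extract holomorphicity from the eigenvalue data: the single-mode structure plus the spectral reduction \eqref{equ 0 formula laplace on S5 vs laplace on CP2} give $\nabla^{\star}\nabla\varphi_{\mu}=4\mu\,\varphi_{\mu}$ and $\nabla^{\star}\nabla\psi_{\mu+3}=-4(\mu+3)\psi_{\mu+3}$ on $\mathbb{P}^{2}$, and the K\"ahler identity converts these to $\bar{\partial}\varphi_{\mu}=0$ and $\partial\psi_{\mu+3}=0$; the fifth row then becomes a consistency check rather than an input. This works, and it makes the two directions of the bijection symmetric (both rest on Lemma \ref{lem a holomorphic section is an eigensection of the rough Laplacian}), but you must still carry out essentially the same type computation as the paper — equations \eqref{equ JGd0h etc} and \eqref{equ d0g+j0d0w} — to see that row $5$ vanishes, so nothing is saved in labor; and in either approach the reverse inclusion requires the row-$5$ verification anyway. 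Two small points to make explicit if you write this up: (i) Lemma \ref{lem Kahler identity} is stated only as $\nabla^{\star}\nabla=2\partial^{\star}\bar{\partial}+4l$, whereas the anti-holomorphic case needs the conjugate identity $\nabla^{\star}\nabla=2\bar{\partial}^{\star}\partial-4l$; this follows by applying the stated identity to the conjugate transpose, which sends $(End_{0}E)(l)$ to $(End_{0}E)(-l)$, but it is not in the paper as written; (ii) the passage from ``$h$ is an eigensection on $\mathbb{S}^{5}$ with only the modes $\pm(\mu+3)$'' to ``$h_{\pm(\mu+3)}$ are eigensections on $\mathbb{P}^{2}$ with eigenvalue $\lambda_{1}-(\mu+3)^{2}$'' uses the term-by-term differentiation of the Sasaki--Fourier series (Lemma \ref{lem global F series}), which should be cited.
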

\begin{proof}[Proof of Proposition \ref{prop inj not integer} :]Suppose 
\begin{equation}\label{equ vanishing of the proj to Ebeta P}\left[\begin{array}{c}v \\ h \\  g \\ w \\ 0\end{array}\right]\in (\mathbb{E}_{\lambda_{1}}\nabla^{\star}\nabla|_{\mathbb{S}^{5}})^{\oplus 2}\oplus (\mathbb{E}_{\lambda_{2}}\nabla^{\star}\nabla|_{\mathbb{S}^{5}})^{\oplus 2},\ \textrm{and}\ \left[\begin{array}{c}v \\ h \\  g \\ w \\ 0\end{array}\right]^{\parallel_{\mathbb{E}_{\mu }P|_{V^{\perp}_{coh}}}}=0.\end{equation} 

By projection formula \eqref{equ 0 lem proj formula}, the vanishing \eqref{equ vanishing of the proj to Ebeta P} is equivalent to 
\begin{eqnarray}\label{equ expanded vanishing of the proj to Ebeta P} \nonumber -\frac{L_{\xi}h}{2(\mu+1)} +(\frac{1}{\mu+1}+\frac{1}{2})v=0 & ,& \frac{L_{\xi}v}{2(\mu+1)} +(\frac{1}{\mu+1}+\frac{1}{2})h=0,\\ -\frac{L_{\xi}w}{2 (\mu+2) } +(-\frac{1}{\mu+2}+\frac{1}{2})g=0 & ,&\nonumber \frac{L_{\xi}g}{2 (\mu+2) } +(-\frac{1}{\mu+2}+\frac{1}{2})w=0, \\ \textrm{and}\ \ \ -\frac{J_{G}d_{0}h}{2(\mu+1)}+\frac{J_{H}d_{0}v}{2 (\mu+1)}& +&\frac{d_{0}g}{2 (\mu+2) }+\frac{J_{0}d_{0}w}{2 (\mu+2)}=0.
\end{eqnarray}
We note again that Theorem \ref{Thm 1},$\mathbb{I}$ says none of $\mu,\ \mu+1,\ \mu+2,\ \mu+3$ is $0$. Hence row $1$ of \eqref{equ expanded vanishing of the proj to Ebeta P} is equivalent to
\begin{equation}\label{equ row 1 of expanded vanishing of the proj to Ebeta P}L^{2}_{\xi}h=-(\mu +3)^{2}h,\ \ \ v=\frac{L_{\xi}h}{\mu +3}. \end{equation}
Similarly, row $2$ of \eqref{equ expanded vanishing of the proj to Ebeta P} is equivalent to
\begin{equation}\label{equ row 2 of expanded vanishing of the proj to Ebeta P}L^{2}_{\xi}w=-\mu^{2}w,\ \ \ \ g=\frac{L_{\xi}w}{\mu}. \end{equation}

\subsubsection*{Part I: Suppose $\mu$ is not an integer.}  Because the eigenvalues of $-L^{2}_{\xi}$ are squares of integers, but $\mu$ is not an integer, we find by \eqref{equ row 1 of expanded vanishing of the proj to Ebeta P} and \eqref{equ row 2 of expanded vanishing of the proj to Ebeta P} that
$$\left[\begin{array}{c}v \\ h \\  g \\ w \\ 0\end{array}\right]=0.$$

\subsubsection*{Part II: Suppose $\mu$ is  an integer.} 

 In this case, \eqref{equ row 1 of expanded vanishing of the proj to Ebeta P} and \eqref{equ row 2 of expanded vanishing of the proj to Ebeta P}  do not force the eigensection to vanish. We show that this is how the space of holomorphic sections come into play.

  Similarly to the proof of the two term expansion in Claim \ref{clm Fourier expansion of eigensection},  equation \eqref{equ row 1 of expanded vanishing of the proj to Ebeta P}  implies that the Sasaki-Fourier series of $h$ only has $2-$terms i.e. 
\begin{equation}\label{equ two expansion for h} h=h_{\mu +3}s_{-(\mu +3)}+h_{-(\mu +3)}s_{\mu +3}.
\end{equation}
Moreover, because $L_{\xi}s_{k}=-\sqrt{-1}k s_{k}$, we  find 
\begin{equation}\label{equ two expansion for v} v=\frac{L_{\xi}h}{\mu +3}=\sqrt{-1}[h_{\mu +3}s_{-(\mu +3)}-h_{-(\mu +3)}s_{\mu +3}]. 
\end{equation}
Consequently, the transverse differential of $h$ is 
\begin{equation}\label{equ d0h}d_{0}h=[d_{\mathbb{P}^{2}}h_{\mu +3}]s_{-(\mu +3)}+[d_{\mathbb{P}^{2}}h_{-(\mu +3)}]s_{\mu +3},
\end{equation}
and that of $v$ is
\begin{equation}d_{0}v=\sqrt{-1}\{[d_{\mathbb{P}^{2}}h_{\mu +3}]s_{-(\mu +3)}-[d_{\mathbb{P}^{2}}h_{-(\mu +3)}]s_{\mu +3}\}. 
\end{equation}
Hence, 
\begin{eqnarray}\label{equ j0d0v}J_{0}d_{0}v&=&\sqrt{-1}\{[J_{\mathbb{P}^{2}}d_{\mathbb{P}^{2}}h_{\mu +3}]s_{-(\mu +3)}-[J_{\mathbb{P}^{2}}d_{\mathbb{P}^{2}}h_{-(\mu +3)}]s_{\mu +3}\}.  \nonumber
\\&=&[\partial_{\mathbb{P}^{2}}h_{\mu +3}-\bar{\partial}_{\mathbb{P}^{2}}h_{\mu +3}]s_{-(\mu +3)}-[\partial_{\mathbb{P}^{2}}h_{-(\mu +3)}-\bar{\partial}_{\mathbb{P}^{2}}h_{-(\mu +3)}]s_{\mu +3}.\end{eqnarray}
Equation \eqref{equ d0h} and \eqref{equ j0d0v} amount to
\begin{equation}\label{equ d0h+j0d0v} d_{0}h+J_{0}d_{0}v
=2[(\partial_{\mathbb{P}^{2}}h_{\mu +3})s_{-(\mu +3)}+(\bar{\partial}_{\mathbb{P}^{2}}h_{-(\mu +3)})s_{\mu +3}].
\end{equation}
Because $G$ is minus the imaginary part of the form $\Theta$ i.e. 
\begin{equation}G=\frac{\overline{\Theta}-\Theta}{2\sqrt{-1}},
\end{equation}
we calculate that 
\begin{eqnarray}\label{equ JGd0h etc}& &-J_{G}d_{0}h+J_{H}d_{0}v=-J_{G}(d_{0}h+J_{0}d_{0}v)\nonumber
\\&=&-2\{[(\partial_{\mathbb{P}^{2}}h_{\mu +3})\lrcorner G] s_{-(\mu +3)}+[(\bar{\partial}_{\mathbb{P}^{2}}h_{-(\mu +3)})\lrcorner G]s_{\mu +3}\}\nonumber
\\&=&\sqrt{-1}\{[(\partial_{\mathbb{P}^{2}}h_{\mu +3})\lrcorner \bar{\Theta}] s_{-(\mu +3)}-[(\bar{\partial}_{\mathbb{P}^{2}}h_{-(\mu +3)})\lrcorner \Theta]s_{\mu +3}\}.
\end{eqnarray}
In the above, we used again that the contraction between a $(1,0)-$form and a $(2,0)-$form vanishes, and that the contraction between a $(0,1)-$form and a $(0,2)-$form vanishes. 

Using \eqref{equ row 2 of expanded vanishing of the proj to Ebeta P}, the following two term expansions hold as well.  \begin{equation}\label{equ two expansion for w and g}w=w_{\mu }s_{-\mu }+w_{-\mu }s_{\mu },\ \ g=\sqrt{-1}(w_{\mu }s_{-\mu }-w_{-\mu }s_{\mu}).
\end{equation}

By similar derivation as of \eqref{equ d0h+j0d0v}, we find 
\begin{equation}\label{equ d0g+j0d0w} d_{0}g+J_{0}d_{0}w
=2\sqrt{-1}[(\bar{\partial}_{\mathbb{P}^{2}}w_{\mu})s_{-\mu}-(\partial_{\mathbb{P}^{2}}w_{-\mu})s_{\mu}].
\end{equation}

In the light of \eqref{equ JGd0h etc} and \eqref{equ d0g+j0d0w}, the last equation in \eqref{equ expanded vanishing of the proj to Ebeta P}   
reads 
\begin{eqnarray}\label{equ 0 proof prop inj}& &\frac{\sqrt{-1}}{2 (\mu+1)}\{[(\partial_{\mathbb{P}^{2}}h_{\mu +3})\lrcorner \bar{\Theta}] s_{-(\mu +3)}-[(\bar{\partial}_{\mathbb{P}^{2}}h_{-(\mu +3)})\lrcorner \Theta]s_{\mu +3}\}
\\& &+\frac{\sqrt{-1}}{ (\mu+2)}[(\bar{\partial}_{\mathbb{P}^{2}}w_{\mu})s_{-\mu}-(\partial_{\mathbb{P}^{2}}w_{-\mu})s_{\mu}]\nonumber
\\&=&0.\nonumber\end{eqnarray}
The  $(1,0)$ and $(0,1)$ part of \eqref{equ 0 proof prop inj} should both vanish. This  yields the following.  
\begin{equation}\label{equ 1 proof prop inj}-[(\bar{\partial}_{\mathbb{P}^{2}}h_{-(\mu +3)})\lrcorner \Theta]s_{\mu +3}-\frac{2 (\mu+1)}{ (\mu+2)}(\partial_{\mathbb{P}^{2}}w_{-\mu})s_{\mu}=0;\end{equation}
\begin{equation}\label{equ 2 proof prop inj}[(\partial_{\mathbb{P}^{2}}h_{\mu +3})\lrcorner \bar{\Theta}]s_{-(\mu +3)}+\frac{2 (\mu+1)}{ (\mu+2)}(\bar{\partial}_{\mathbb{P}^{2}}w_{\mu})s_{-\mu}=0.\end{equation}
Using that
\begin{itemize}\item  $d_{0}\Theta=d_{0}\bar{\Theta}=0$ (see Formula \ref{formula d0 closedness of the 3 forms}),
\item any power of $s_{-1}$ or its conjugation is $d_{0}-$closed (Lemma \ref{lem d0 parallel section of pullback O(-1)}),
\item and that the curvature form $F_{A_{O}}$ is $(1,1)$,
\end{itemize}
 we find that 
 $$[(\bar{\partial}_{\mathbb{P}^{2}}h_{-(\mu +3)})\lrcorner \Theta]s_{\mu +3}=\star_{0}[(\bar{\partial}_{\mathbb{P}^{2}}h_{-(\mu +3)})\wedge \Theta]s_{\mu +3}\ \textrm{is}\ \bar{\partial}^{\star_{0}}_{0}-\textrm{closed},$$
 and
 $$[(\partial_{\mathbb{P}^{2}}h_{\mu +3})\lrcorner \bar{\Theta}]s_{-(\mu +3)}=\star_{0}[(\partial_{\mathbb{P}^{2}}h_{\mu +3})\wedge \bar{\Theta}]s_{-(\mu +3)}\ \textrm{is}\ \partial^{\star_{0}}_{0}-\textrm{closed}.$$
Plugging the above into \eqref{equ 1 proof prop inj}  and \eqref{equ 2 proof prop inj}, we see that  $$(\partial_{\mathbb{P}^{2}}w_{-\mu})s_{\mu}\  \textrm{is}\ \bar{\partial}^{\star_{0}}_{0}-\textrm{closed, and}\ 
(\bar{\partial}_{\mathbb{P}^{2}}w_{\mu})s_{-\mu}\  \textrm{is}\ \partial^{\star_{0}}_{0}-\textrm{closed}.$$ Because both $\partial_{\mathbb{P}^{2}}w_{-\mu}$ and 
$\bar{\partial}_{\mathbb{P}^{2}}w_{\mu}$ are forms on $\mathbb{P}^{2}$, and $\partial^{\star_{0}}_{0}$ ($\bar{\partial}^{\star_{0}}_{0}$) are equal to the usual $\partial^{\star_{\mathbb{P}^{2}}}_{\mathbb{P}^{2}}$ ($\bar{\partial}^{\star_{\mathbb{P}^{2}}}_{\mathbb{P}^{2}}$) on such forms, we find 
\begin{equation} \bar{\partial}^{\star_{\mathbb{P}^{2}}}_{\mathbb{P}^{2}}\partial_{\mathbb{P}^{2}}w_{-\mu}=0\ \textrm{and}\  \partial^{\star_{\mathbb{P}^{2}}}_{\mathbb{P}^{2}}\bar{\partial}_{\mathbb{P}^{2}}w_{\mu}=0.
\end{equation}
Integrating by parts on $\mathbb{P}^{2}$ shows 
\begin{equation}\label{equ 3 proof prop inj}  \partial_{\mathbb{P}^{2}}w_{-\mu}=0\ \textrm{and}\  \bar{\partial}_{\mathbb{P}^{2}}w_{\mu}=0.
\end{equation}

Plugging \eqref{equ 3 proof prop inj} back into \eqref{equ 1 proof prop inj}  and \eqref{equ 2 proof prop inj}, because the $(2,0)-$form $\Theta$ is no-where vanishing (its real and imaginary parts are both complex structures), we find
\begin{equation}\bar{\partial}_{\mathbb{P}^{2}}h_{-(\mu+3)}=0,\ \textrm{and}\  \partial_{\mathbb{P}^{2}}h_{\mu+3}=0.
\end{equation}
Then $$w_{\mu}\in H^{0}[\mathbb{P}^{2}, (End_{0}E)(\mu)]\ \ \textrm{and}\ \ h_{-(\mu+3)}\in H^{0}[\mathbb{P}^{2}, (End_{0}E)(-\mu-3)].$$ \\

The derivation so far has given a map 
$$ Q: Ker \parallel_{\mathbb{E}_{\mu }P|_{V^{\perp}_{coh}}}\rightarrow H^{0}[\mathbb{P}^{2},(End_{0}E)(\mu)]\oplus H^{0}[\mathbb{P}^{2},(End_{0}E)(-\mu-3)]$$
defined by 
$$Q\left[\begin{array}{c}v \\ h \\  g \\ w \\ 0\end{array}\right]\triangleq (w_{\mu},\ h_{-(\mu+3)}).$$

In a similar manner to \eqref{equ 0 proof eigenspace and harmonic form}, reversing the above arguments yields the obvious inverse of $Q$. For the reader's convenience, we still give the detail.  

For any $$w_{\mu}\in H^{0}[\mathbb{P}^{2}, (End_{0}E)(\mu)]\ \textrm{and}\ h_{-(\mu+3)}\in H^{0}[\mathbb{P}^{2}, (End_{0}E)(-\mu-3)],$$  let $$w_{-\mu}\triangleq \overline{w}^{t}_{\mu},\ \ \ h_{\mu+3}\triangleq \overline{h}^{t}_{-(\mu+3)}.$$ The inverse $Q^{-1}(w_{\mu},\ h_{-(\mu+3)})$ is simply the  $\left[\begin{array}{c}v \\ h \\  g \\ w \\ 0\end{array}\right]$ defined by  conditions \eqref{equ two expansion for h}, \eqref{equ two expansion for v}, and \eqref{equ two expansion for w and g}: 
\begin{itemize}\item the spectral reduction in Formula \ref{formula laplace on S5 vs laplace on CP2} and the identification in Lemma \ref{lem a holomorphic section is an eigensection of the rough Laplacian} below says that  
$$v\ \textrm{and}\ h\in \mathbb{E}_{\mu^{2}+2\mu-3}(\nabla^{\star}\nabla|_{\mathbb{S}^{5}})=\mathbb{E}_{\lambda_{1}}(\nabla^{\star}\nabla|_{\mathbb{S}^{5}}),\  g \ \textrm{and}\ w\in  \mathbb{E}_{\mu^{2}+4\mu}(\nabla^{\star}\nabla|_{\mathbb{S}^{5}})=\mathbb{E}_{\lambda_{2}}(\nabla^{\star}\nabla|_{\mathbb{S}^{5}});$$

\item $\left[\begin{array}{c}v \\ h \\  g \\ w \\ 0\end{array}\right]$ satisfies  conditions \eqref{equ row 1 of expanded vanishing of the proj to Ebeta P} and \eqref{equ row 2 of expanded vanishing of the proj to Ebeta P}, therefore it satisfies the whole  system \eqref{equ expanded vanishing of the proj to Ebeta P} which means $\left[\begin{array}{c}v \\ h \\  g \\ w \\ 0\end{array}\right]\in Ker\parallel_{\mathbb{E}_{\mu }P|_{V^{\perp}_{coh}}}$.
\end{itemize}

The desired identification \eqref{equ Ker of proj and holo sections} is proved. 
\end{proof}
\subsubsection{Proof of Theorem \ref{Thm 1}.$\mathbb{II}$}
Our understanding of the projection map  determines the multiplicities. 
\begin{proof}[\textbf{Proof of Theorem \ref{Thm 1}.$\mathbb{II}$}:] 

In view of Definition \ref{Def Vcoh}, and the characterization of $V_{l}$ in Proposition \ref{cor eigenspace and harmonic forms}, $Spec P|_{V_{coh}}$ are those integers $l$ such that $V_{l}=H^{1}[\mathbb{P}^{2}, (End_{0}E)(l)]$ is nonzero, and the eigenspace is $V_{l}$. It suffices to determine the multiplicities of $P$ restricted to $V^{\perp}_{coh}$. 

In conjunction with Theorem \ref{Thm eigenvalue Sasakian}, and the last paragraph of the proof of Theorem \ref{Thm 1}.$\mathbb{I}$, neither $-1$ nor $-2$ is in $Spec P|_{V^{\perp}_{coh}}=S_{\nabla^{\star}\nabla}$.  The eigenspaces of $-1$ and $-2$ are $V_{-1}$ and $V_{-2}$ respectively.  Then in view of Remark \ref{rmk EP is complex} and the complex isomorphism in Proposition \ref{cor eigenspace and harmonic forms},  the eigenspaces of $-1$ and $-2$ are complex isomorphic to $H^{1}[\mathbb{P}^{2}, (End E)(-1)]$ and $H^{1}[\mathbb{P}^{2}, (End E)(-2)]$ respectively. Theorem \ref{Thm 1}.$\mathbb{II}.1$ is proved.

In view of the spectral advantage in Theorem \ref{Thm eigenvalue Sasakian}, restricted to $V^{\perp}_{coh}$, the multiplicity of an eigenvalue $$\mu\in S_{\nabla^{\star}\nabla}\subset Spec P|_{V^{\perp}_{coh}}$$
is completely determined  by the surjective map
\begin{equation}\label{equ proj is iso}\parallel_{\mathbb{E}_{\mu }P|_{V^{\perp}_{coh}}}:\ (\mathbb{E}_{\lambda_{1}}\nabla^{\star}\nabla|_{\mathbb{S}^{5}})^{\oplus 2}\oplus (\mathbb{E}_{\lambda_{2}}\nabla^{\star}\nabla|_{\mathbb{S}^{5}})^{\oplus 2}\rightarrow \mathbb{E}_{\mu }P|_{V^{\perp}_{coh}}\ \textrm{in  Definition}\ \ref{Notation proj to Vcohperp}. \end{equation}

When $\mu$ is not an integer, the vanishing of $Ker\parallel_{\mathbb{E}_{\mu }P|_{V^{\perp}_{coh}}}$ in Proposition \ref{prop inj not integer}.$1$ says that \eqref{equ proj is iso} is an isomorphism. This finishes the proof of Theorem \ref{Thm 1}.$\mathbb{II}.2$. 

When $\mu$ is  an integer, the characterization \eqref{equ Ker of proj and holo sections} of $Ker\parallel_{\mathbb{E}_{\mu }P|_{V^{\perp}_{coh}}}$ in Proposition \ref{prop inj not integer}.$2$  finishes the proof of Theorem \ref{Thm 1}.$\mathbb{II}.3$.

\end{proof}

  \section{Spectral theory of the rough Laplacian on the bundle over $\mathbb{S}^{5}$, and the proof of Theorem \ref{Thm spec of rough laplacian P2}, Proposition \ref{prop multiplicity TP2}, and Corollary \ref{Cor 1}.\label{sect proof of Cor 1}}
  
  Theorem \ref{Thm 1} reduces the spectrum of $P$ to spectrum of the rough Laplacian on $\mathbb{S}^{5}$. The purpose of this section is to prove Theorem \ref{Thm spec of rough laplacian P2} and Proposition \ref{prop multiplicity TP2} i.e. to completely characterize the eigenvalues of the rough Laplacian on $\pi_{5,4}^{\star}(End_{0}T^{\prime}\mathbb{P}^{2})\rightarrow \mathbb{S}^{5}$. This directly leads to Corollary \ref{Cor 1}.
   \begin{prop}\label{prop multiplicity TP2} (Multiplicities of the eigenvalues in Theorem \ref{Thm spec of rough laplacian P2}) 
  
Under the special data in Theorem \ref{Thm spec of rough laplacian P2}, for any number $\lambda\in Spec\nabla^{\star}\nabla |_{(End_{0}T^{\prime}\mathbb{P}^{2})(l)}$, let the set  $S^{l}_{\lambda}$ be defined by 
\begin{eqnarray*}S^{l}_{\lambda}\triangleq   & & \{(a,b) \in \mathbb{Z}^{\geq 0}\times \mathbb{Z}^{\geq 0}| \frac{4}{3}(a^{2}+b^{2}+ab+3a+3b)-\frac{4}{3}l^{2}-8=\lambda,\ \textrm{and}
\\& &\max(3-a-2b,b-a-3) \leq l\leq \min(2a+b-3,3+b-a)\}.\end{eqnarray*}

The (complex) multiplicity of any  $\lambda_{l}\in Spec\nabla^{\star}\nabla |_{(End_{0}T^{\prime}\mathbb{P}^{2})(l)}$ is 
\begin{equation}\label{equ 0 prop multiplicity TP2}\Sigma_{S^{l}_{\lambda_l}}\frac{(a+1)(b+1)(a+b+2)}{2}.\end{equation}

The (real) multiplicity of any  $\lambda\in Spec\nabla^{\star}\nabla |_{\mathbb{S}^{5}}$ is \begin{equation} \label{equ 1 prop multiplicity TP2}\Sigma_{l, \lambda-l^{2}\in Spec\nabla^{\star}\nabla |_{(End_{0}T^{\prime}\mathbb{P}^{2})(l)}}\Sigma_{S^{l}_{\lambda-l^{2}}}\frac{(a+1)(b+1)(a+b+2)}{2}.\end{equation}

  \end{prop}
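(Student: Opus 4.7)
The plan is to invoke Peter--Weyl and Frobenius reciprocity on the homogeneous space $\mathbb{P}^{2} = SU(3)/H$, where $H = S[U(1)\times U(2)]$. Since $T^{\prime}\mathbb{P}^{2}$, $O(l)$, and hence $(End_{0}T^{\prime}\mathbb{P}^{2})(l)$ are all $SU(3)$-equivariant Hermitian bundles, the $L^{2}$ sections decompose as a Hilbert direct sum
\[
L^{2}[\mathbb{P}^{2},(End_{0}T^{\prime}\mathbb{P}^{2})(l)] \;=\; \bigoplus_{(a,b)} \mathrm{Hom}_{H}(V_{a,b},W_{l})\otimes V_{a,b},
\]
where $V_{a,b}$ is the irreducible $SU(3)$-representation of highest weight $(a,b)$ and $W_{l}$ is the fiber of the bundle viewed as an $H$-module. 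Weyl's dimension formula gives $\dim_{\mathbb{C}} V_{a,b} = \tfrac{(a+1)(b+1)(a+b+2)}{2}$.

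First I would identify $W_{l}$ as the $SU(2)$-adjoint $\mathfrak{sl}(2,\mathbb{C})$ tensored with the $U(1)$-character determined by $l$. Fact \ref{fact IT}, already quoted in the introduction, converts the non-vanishing of $\mathrm{Hom}_{H}(V_{a,b},W_{l})$ into precisely the numerical condition $\max(3-a-2b,\,b-a-3) \leq l \leq \min(2a+b-3,\,3+b-a)$ and shows that when these inequalities hold the multiplicity equals $1$. Thus every $V_{a,b}$ satisfying the inequalities appears in the sections with multiplicity exactly one, and each such copy contributes $\dim_{\mathbb{C}} V_{a,b}$ to the eigenspace in which it lands.

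Next I would pin down the eigenvalue of $\nabla^{\star}\nabla|_{(End_{0}T^{\prime}\mathbb{P}^{2})(l)}$ on each isotypic component $V_{a,b}$. Because the Fubini--Study Levi--Civita connection is the canonical $SU(3)$-invariant connection, the Lichnerowicz-type identity for homogeneous bundles reduces the rough Laplacian on any $SU(3)$-summand to a sum of Casimir scalars: the $SU(3)$ Casimir on $V_{a,b}$ contributes $\tfrac{4}{3}(a^{2}+b^{2}+ab+3a+3b)$, the $SU(2)$ Casimir acting on the fiber-adjoint contributes $-8$, and the central Cartan element of $su(3)$ acting on $W_{l}$ contributes $-\tfrac{4l^{2}}{3}$; these are precisely the three pieces already singled out after Theorem \ref{Thm spec of rough laplacian P2} and computed in Formula \ref{formula of Cas K} together with \eqref{Cas e5 rhol} and \eqref{equ Cas su3}. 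Adding them and grouping $V_{a,b}$'s by eigenvalue yields formula \eqref{equ 0 prop multiplicity TP2}, since each $(a,b)\in S^{l}_{\lambda_{l}}$ contributes a single copy of $V_{a,b}$ of complex dimension $\tfrac{(a+1)(b+1)(a+b+2)}{2}$.

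Finally, for the real multiplicity on $\mathbb{S}^{5}$, I would apply the spectral reduction in Formula \ref{formula laplace on S5 vs laplace on CP2} together with the coincidence $Spec^{mul}\nabla^{\star}\nabla|_{\pi^{\star}_{5,4}(adE)\to\mathbb{S}^{5}} = Spec^{mul_{\mathbb{C}}}\nabla^{\star}\nabla|_{\pi^{\star}_{5,4}(End_{0}E)\to\mathbb{S}^{5}}$ from \eqref{equ specmul = spec mulC}: the real multiplicity of $\lambda$ on $\mathbb{S}^{5}$ equals the sum over $l\in\mathbb{Z}$ of the complex multiplicity of $\lambda-l^{2}$ on $(End_{0}T^{\prime}\mathbb{P}^{2})(l)$, which is exactly \eqref{equ 1 prop multiplicity TP2}. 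The main obstacle is bookkeeping: one must verify that the three separate contributions (the $su(3)$ Casimir, the $su(2)$ fiber Casimir on the adjoint, and the $l$-weighted central action) reassemble with the correct signs and normalizations to match \eqref{equ spec Laplacian P2}; once that matching is in place, the multiplicity count follows automatically from Weyl's dimension formula and the already-proved branching condition.
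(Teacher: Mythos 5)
Your proposal is correct and follows essentially the same route as the paper: the Peter--Weyl/Frobenius decomposition of sections of the homogeneous bundle, the Casimir computation assembling the three scalar contributions ($su(3)$ Casimir, $su(2)$ fiber Casimir, and the $l$-weighted central action), the Ikeda--Taniguchi branching condition, Weyl's dimension formula, and the $\mathbb{S}^{5}\to\mathbb{P}^{2}$ spectral reduction of Formula \ref{formula laplace on S5 vs laplace on CP2}. The only nuance is that the multiplicity-one statement you attribute to Fact \ref{fact IT} really rests on the irreducibility of the fiber representation (Lemma \ref{lem weights for SU1U2}) together with the multiplicity-free branching in \cite[Proposition 5.1]{IT}, which the paper invokes separately in the proof of Corollary \ref{Cor 1}.
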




Section \ref{sect reductive homogeneous spaces}--\ref{sect proof of spec} below are devoted to the proof of Theorem \ref{Thm spec of rough laplacian P2} and Proposition \ref{prop multiplicity TP2}. 

The proof of Corollary \ref{Cor 1} is completed in Section \ref{sect proof of Cor 1}.
\subsection{Background on reductive homogeneous spaces and homogeneous vector bundles\label{sect reductive homogeneous spaces}}
The representation theoretic method for Theorem \ref{Thm spec of rough laplacian P2} has been well recorded in literature. For example, see \cite{MS} and \cite{CH}. To be self-contained, we recall the background tailored for our purpose. 
\subsubsection{Killing reductive homogeneous spaces}

Our references for this section are \cite[Section 2: geometry of homogeneous spaces]{Koda} and \cite[Section 5, page 13]{Snow}. 

All the group actions below will be left actions unless otherwise specified. All the $G-$actions below are smooth. The $``\cdot"$ between a group element and a vector (in a representation space) means the underlying action (which should be clear from the context).

\begin{Def}\label{Def reductive homogeneous space}(Reductive homogeneous space) Let $G$ be a compact semi-simple matrix Lie group, and $K$ be a closed matrix Lie subgroup of $G$. Let $\mathfrak{g}$ and $\mathfrak{k}$ denote the Lie algebras of $G$ and $K$ respectively. Let $m$ be a subspace of $g$ such that $\mathfrak{g}=m\oplus\mathfrak{k}$.  The manifold $M=G/K$ is called a \textit{reductive homogenous space with respect to $m$} if $Ad_{K}m\subseteq m$ (which means that for any $k\in K$ and $X\in m$, $Ad_{k}X\in m$).

 In practice, we hide the $``m"$ and abbreviate it to  \textit{reductive homogeneous space}. 
\end{Def}

At an arbitrary point $gK\in M$, any $X\in \mathfrak{g}$ generates a tangent vector  $X^{\star}$  in the following way. 
\begin{equation}\label{equ Xstar} X^{\star}(gK)=\frac{d}{dt}|_{t=0}(\exp{tX})gK. 
\end{equation}

Let $E$ be a homogeneous bundle over a reductive homogeneous space $M=G/K$. \textit{Let $e$ denote the identity element in $G$ (and $K$)}. Let the base point  $o\in M$ be $eK$. The natural map $\rho:\ G\times_{K} E_{o}\rightarrow E$ defined by $\rho (g,v)=g\cdot v$ is a $G-$equivariant isomorphism (covering identity diffeomorphism  of $M$). 
  
  On the tangent bundle, let $\tau$ denote the natural isomorphism $G\times_{K,ad}m\rightarrow TM$ defined by 
    \begin{equation}\label{equ tangent bundle iso to associated bundle}\tau(g,X)\triangleq g_{\star}[X^{\star}|_{eK}]=(Ad_{g}X)^{\star}|_{gK}.\end{equation} 
  The tautological isomorphism $\tau_{taut}:\ m\rightarrow T_{o}M$ is defined by $\tau_{taut}(X)=X^{\star}|_{o}$.

 The set of $G-$invariant Riemannian metrics on $M$ is bijective to the set of $Ad_{K}-$invariant inner products on $m$. For example, under the semi-simple condition on $G$, the restriction to $m$ of a negative scalar multiple of the Killing form of $G$ yields a $G-$invariant metric on $M$. 
 This example leads to the notion of a Killing reductive homogeneous space, which is a Riemannian manifold.
 \begin{Def}\label{Def killing reductive homogeneous space}A reductive homogeneous space $M=G/K$ 
  with a $G-$invariant Riemannian metric $(\ ,\ )$ is called a \textit{Killing reductive homogeneous space} with respect to $(\ ,\ )$ and $m$ if the following holds. 
\begin{itemize}\item Let $B$ be the Killing form on $\mathfrak{g}$. With respect to the inner product $-B$, $m$ is perpendicular to the Lie algebra $\mathfrak{k}$ of $K$.
\item The restriction of $-B$ on $m$ is a (constant) real scalar multiple of the inner product $\langle,\rangle_{m}$ corresponding to $(\ ,\ )$. 
\end{itemize}
We usually abbreviate it to  \textit{Killing reductive homogeneous space}, and denote it by $[M,\ \langle,\rangle_{m}]$.
\end{Def}

The following existence of a certain kind of frames enables us to calculate the bundle rough Laplacian by Casimir operators. 
\begin{lem}\label{lem geodesic frame on KRHS} Let $[M,\ \langle,\rangle_{m}]$ be a Killing reductive homogeneous space equipped with the Levi-Civita connection, and let $(e_{i},\ i=1,...,dimM)$ be an orthonormal basis of $m$. Then for any $i$,
\begin{equation}\label{equ geodesic frame at the origin on KRHS}\nabla_{e^{\star}_{i}}e^{\star}_{i}=0\  \textrm{at}\ o=eK.\end{equation}
Consequently,  for any $g\in G$, $[Ad_{g}(e_{i})]^{\star}=g_{\star}(e^{\star}_{i})$ is an orthonormal frame at $gK$ such that 
\begin{equation}\label{equ geodesic frame on KRHS}\nabla_{[Ad_{g}(e_{i})]^{\star}}[Ad_{g}(e_{i})]^{\star}=0\ \textrm{at}\ gK.\end{equation}
\end{lem}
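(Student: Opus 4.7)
The plan is to deduce both equations from the \emph{naturally reductive formula}
\[
\nabla_{X^{\star}} Y^{\star}\big|_{o} \;=\; -\tfrac{1}{2}[X,Y]_{m}^{\star}\big|_{o} \qquad (X, Y \in m),
\]
which is the hallmark of a Killing reductive homogeneous space. Here $[X,Y]_{m}$ denotes the $m$-component of $[X,Y]\in\mathfrak{g}=m\oplus\mathfrak{k}$.

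First I would establish this formula via the Koszul identity applied to the fundamental vector fields $X^{\star}, Y^{\star}, Z^{\star}$ for $X, Y, Z \in m$, evaluated at $o$. The ingredients are: (i) the bracket identity $[X^{\star}, Y^{\star}] = -[X,Y]^{\star}$ for fundamental vector fields of a left action; (ii) the tautological identification $\tau_{taut}: m \to T_{o}M$, which guarantees that the $\mathfrak{k}$-component of any bracket produces a field vanishing at $o$; (iii) the $G$-invariance of $\langle\cdot,\cdot\rangle$, which turns the directional derivatives $X^{\star}\langle Y^{\star}, Z^{\star}\rangle|_{o}$ into bracket expressions; and (iv) the Killing hypothesis $\langle\cdot,\cdot\rangle_{m} = -c B|_{m}$ for some constant $c>0$, which transfers the $ad$-invariance of $B$ to an $ad$-invariance of $\langle\cdot,\cdot\rangle_{m}$ against elements of $\mathfrak{k}$. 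Combining these, the three cyclic cross terms in the Koszul formula cancel in pairs, yielding the naturally reductive formula displayed above.

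With the formula in hand, setting $X=Y=e_{i}$ gives $[e_{i},e_{i}]=0$, hence $\nabla_{e_{i}^{\star}}e_{i}^{\star}|_{o}=0$. For the \emph{consequently} part, I invoke the $G$-invariance of the Levi-Civita connection, which is immediate from the $G$-invariance of the metric: for any $g\in G$, the diffeomorphism $L_{g}$ is an isometry, so
\[
L_{g*}\big(\nabla_{e_{i}^{\star}} e_{i}^{\star}\big)\big|_{o} \;=\; \nabla_{L_{g*}e_{i}^{\star}}\big(L_{g*}e_{i}^{\star}\big)\big|_{gK}.
\]
A direct computation $(L_{g*}e_{i}^{\star})(\phi K) = [Ad_{g}(e_{i})]^{\star}(\phi K)$, consistent with \eqref{equ tangent bundle iso to associated bundle}, identifies the pushed-forward field with $[Ad_{g}(e_{i})]^{\star}$; orthonormality at $gK$ is automatic because $L_{g*}:T_{o}M\to T_{gK}M$ is a linear isometry. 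This transports the vanishing at $o$ to the vanishing at $gK$.

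The main obstacle is the Koszul calculation in step one: extracting the naturally reductive formula requires carefully tracking where the $\mathfrak{k}$-components of brackets land and deploying the Killing proportionality at the right moment so that the three cyclic cross terms cancel. Once this is in place, the remaining deductions are formal.
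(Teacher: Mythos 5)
Your proposal is correct and follows essentially the same route as the paper's proof in Appendix \ref{Appendix good frame}: the Koszul formula combined with the Killing/isometry property of the fundamental vector fields, the bracket identity $[X^{\star},Y^{\star}]=-[X,Y]^{\star}$ at $o$, the orthogonality $m\perp\mathfrak{k}$, and the $ad$-invariance of the Killing form to kill the cross terms, followed by transporting the vanishing to $gK$ via the isometry $L_{g}$. The naturally reductive formula you state is exactly the identity $\nabla_{V^{\star}}X^{\star}=\frac{1}{2}[V^{\star},X^{\star}]$ at $eK$ that the paper derives.
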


We do not know whether Lemma \ref{lem geodesic frame on KRHS} holds in general if the reductive homogeneous space is not Killing. 

The proof of Lemma \ref{lem geodesic frame on KRHS} is  routine Riemannian geometry calculation. It is deferred to Appendix \ref{Appendix good frame} below. 
\subsubsection{Homogeneous vector bundles}
  Based on the notion of a reductive homogeneous space, we briefly recall the homogeneous bundles. 
  \begin{Def} Let $M=G/K$ be a reductive homogeneous space. A (smooth) vector-bundle $E\rightarrow M$ is said to be $G-$homogeneous 
  if the left action of $G$ on $G/K$ can be lifted to a compatible action of $G$ on $E$. 
  
   We usually suppress the ``$G$" and call it  a homogeneous vector bundle. 
  \end{Def}

   The space of smooth sections of a homogeneous vector bundle can be identified to an $\infty-$dimensional $G-$representation. 
  \begin{Def} \label{Def homogeneous bundle}(Sections of a homogeneous bundle)
 Let $\rho_{\mathcal{E}}:\ K\rightarrow GL(\mathcal{E})$ be a (real or complex) $K-$representation.  We consider the associated vector bundle $E=G\times_{K,\rho_{\mathcal{E}}}\mathcal{E}$.
 
  Let $C^{\infty}(G,\mathcal{E})$ denote the space of all smooth $\mathcal{E}-$valued functions on $G$, and let  $C_{K,\rho_{\mathcal{E}}}^{\infty}(G,\mathcal{E})$ be the subspace of $K-$invariant functions i.e. the functions $f$ such that 
 \begin{equation} \label{equ K invariant functions}f(gk)=\rho_{\mathcal{E}}(k^{-1})f(g)\triangleq k^{-1}\cdot f(g).
 \end{equation}
 We sometimes suppress the representation $\rho_{\mathcal{E}}$ in the notation  $C_{K,\rho_{\mathcal{E}}}^{\infty}(G,\mathcal{E})$. 
 
 A section $u$ of  $E$ defines uniquely a $K-$invariant function in  $C_{K}^{\infty}(G,\mathcal{E})$, denoted by  $\widetilde{u}$. The converse is also true. The correspondence between $u$ and $\widetilde{u}$ is given by 
 \begin{equation}\label{equ correspondence of sections} u(gK)=(g,\widetilde{u})\ \  \textrm{for any}\ \ g\in G. 
 \end{equation}
The same correspondence also holds pointwisely: for any $u\in E|_{gK}$, there is an unique $K-$invariant function $\widetilde{u}$ defined on the $K-$orbit  passing through $g$ such that $u=(g,\widetilde{u})$.
 
  The left regular representation of $G$ on $C_{K}^{\infty}(G,\mathcal{E})$ is defined by
 $$[L(a)\cdot f](g)\triangleq f(a^{-1}g)\ \textrm{for any}\ a,\ g\in G. $$
 We also have the right regular representation of $G$ on $C^{\infty}(G,\mathcal{E})$:
  $$[R(a)\cdot f](g)=f(ga)\ \textrm{for any}\ a,\ g\in G. $$
  Though we do not have a proof in this paper, we expect that in general,\\ $C_{K}^{\infty}(G,\mathcal{E})\subset C^{\infty}(G,\mathcal{E})$ is not necessarily an invariant subspace under the right regular representation, though it apparently is invariant under the left regular representation. 
  
For further references,  see \cite[Section 5.1]{MS} and \cite[III.6]{BD}.
\end{Def}

Regarding the reductive homogeneous spaces and homogeneous bundles  defined so far, it does not harm to keep the following routine convention in mind. 

\textbf{Convention on the $G-$equivariant isomorphism}: from here to the end of Section \ref{sect proof of Cor 1}, 
\begin{itemize}\item the equal signs ``=" between reductive homogeneous spaces or sections of homogeneous bundles, and
\item any correspondence/identification between homogeneous connections, or between sections of homogeneous bundles, or between reductive homogeneous spaces
\end{itemize}
are via the underlying $G-$equivariant isomorphism or diffeomorphism. 
\subsubsection{A useful identity}
For any $X\in m$, the following  lemma calculates the invariant function corresponding to the vector field $X^{\star}$.

  Let $[\ \cdot \ ]_{m}$ be the projection to $m$ with respect to the directly sum $\mathfrak{g}=m\oplus \mathfrak{k}$.
\begin{lem}\label{lem mtildex and xstar}Under the conditions and setting in Definition \ref{Def reductive homogeneous space}, let $\widetilde{m}$ denote the linear map $m\rightarrow C^{\infty}(G, m)$ defined by $$[\widetilde{m}(X)](g)\triangleq [g^{-1}Xg]_{m}.$$ Then for any $X\in m$, $\widetilde{m}(X)$ is $AdK-$invariant. This means that $\widetilde{m}$ is actually a  linear map $m\rightarrow C_{K,Ad}^{\infty}(G, m)$.

As  a vector field on  $M=G/K$,  via the isomorphism \eqref{equ tangent bundle iso to associated bundle}, $\widetilde{m}(X)$ corresponds to $X^{\star}$ i.e. 
\begin{equation}\label{equ lem mtildex and xstar}X^{\star}(gK)=[Ad_{g}\widetilde{m}(X)]^{\star}(gK)=\tau(g,[\widetilde{m}(X)](g))\  \textrm{at any point}\ gK\in M.\end{equation}
\end{lem}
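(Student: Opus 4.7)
The statement splits into two independent claims: the $\mathrm{Ad}\,K$-equivariance of $\widetilde{m}(X)$ viewed as an $m$-valued function on $G$, and the identification $X^{\star}(gK)=\tau(g,[\widetilde{m}(X)](g))$. My plan is to reduce each to a one-line calculation that uses only (i) reductivity ($\mathrm{Ad}_K m\subseteq m$, so $[\cdot]_m$ commutes with $\mathrm{Ad}_K$) and (ii) the defining formula \eqref{equ Xstar} for $X^{\star}$.

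For the first claim, I will compute directly
\[
\widetilde{m}(X)(gk)=[(gk)^{-1}X(gk)]_m=[\mathrm{Ad}_{k^{-1}}(g^{-1}Xg)]_m.
\]
Reductivity implies $[\mathrm{Ad}_{k^{-1}}Y]_m=\mathrm{Ad}_{k^{-1}}[Y]_m$ for every $Y\in\mathfrak g$, so the right-hand side equals $\mathrm{Ad}_{k^{-1}}\widetilde{m}(X)(g)$. Since the $K$-action on $m$ used in the definition of $C^{\infty}_{K,\mathrm{Ad}}(G,m)$ is precisely $\mathrm{Ad}$, this is exactly the $K$-equivariance \eqref{equ K invariant functions}.

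For the second claim, I will write $\mathrm{Ad}_{g^{-1}}X=Y+Z$ with $Y=[\mathrm{Ad}_{g^{-1}}X]_m\in m$ and $Z=[\mathrm{Ad}_{g^{-1}}X]_{\mathfrak k}\in\mathfrak k$, so that $X=\mathrm{Ad}_g Y+\mathrm{Ad}_g Z$. Then by \eqref{equ Xstar},
\[
(\mathrm{Ad}_g Z)^{\star}(gK)=\frac{d}{dt}\Big|_{t=0}\exp(t\,\mathrm{Ad}_g Z)\,gK=\frac{d}{dt}\Big|_{t=0}g\exp(tZ)K=0,
\]
because $\exp(tZ)\in K$ stabilises $eK$. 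Adding the two vector fields at $gK$ gives $X^{\star}(gK)=(\mathrm{Ad}_g Y)^{\star}(gK)$. Invoking the definition \eqref{equ tangent bundle iso to associated bundle} of $\tau$, namely $\tau(g,Y)=(\mathrm{Ad}_g Y)^{\star}|_{gK}$, yields
\[
X^{\star}(gK)=\tau(g,Y)=\tau\bigl(g,[\widetilde{m}(X)](g)\bigr),
\]
which is exactly \eqref{equ lem mtildex and xstar}.

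There is no serious obstacle here: the only point that needs a small amount of care is the commutation of $\mathrm{Ad}_K$ with the projection $[\cdot]_m$, which is precisely the reductivity hypothesis, and the vanishing of the $\mathfrak k$-component at the base point $eK$ (propagated to $gK$ by $G$-equivariance). Everything else is a formal consequence of the definitions.
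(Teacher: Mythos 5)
Your proof is correct and follows essentially the same route as the paper: the equivariance claim is the same one-line computation using that reductivity makes $[\cdot]_m$ commute with $\mathrm{Ad}_K$, and the identification of $X^{\star}$ is the same decomposition $X=\mathrm{Ad}_g[\mathrm{Ad}_{g^{-1}}X]_m+\mathrm{Ad}_g[\mathrm{Ad}_{g^{-1}}X]_{\mathfrak k}$ with the $\mathfrak k$-part killed because $\exp(t\,\mathrm{Ad}_gZ)gK=g\exp(tZ)K=gK$. Your write-up is in fact slightly more explicit than the paper's at that last step.
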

\begin{proof}[Proof of Lemma \ref{lem mtildex and xstar}:] Because $(Ad_{K})m\subseteq m$, $Ad_{K}$ preserves the splitting $Y=Y_{m}+Y_{\mathfrak{k}}$ i.e. \begin{equation}[(Ad_k)Y]_{m}=(Ad_k)[Y]_{m}\ \textrm{for any}\ k\in K.
\end{equation}
Thus $[\widetilde{m}(X)](gk)=[k^{-1}g^{-1}Xgk]_{m}=Ad_{k^{-1}}[g^{-1}Xg]_{m}$. 

To prove the second part, for any $Y\in \mathfrak{g}$, let $Y=[Y]_{m}+[Y]_{\mathfrak{k}}$ (where $[Y]_{\mathfrak{k}}$ is the  $\mathfrak{k}-$component of $Y$), we calculate 
\begin{eqnarray*}& & \{Ad_{g}[\widetilde{m}(X)]\}(g)=g[g^{-1}Xg]_{m}g^{-1}=gg^{-1}Xgg^{-1}-g[g^{-1}Xg]_{\mathfrak{k}}g^{-1}=X-g[g^{-1}Xg]_{\mathfrak{k}}g^{-1}.
\end{eqnarray*}

Because $[g^{-1}Xg]_{\mathfrak{k}}\in \mathfrak{k}$, the tangent vector  $\{g[g^{-1}Xg]_{\mathfrak{k}}g^{-1}\}^{\star}$ at $gK$ is equal to $0$. Then \eqref{equ lem mtildex and xstar} is proved.
\end{proof}

\subsubsection{Rough Laplacian  on a  homogeneous vector bundle and the Casimir operator\label{section rough laplacian and Cas}}
The purpose of this section is to show Formula \ref{formula Cas general} of the rough Laplacian in terms of the Casimir operator. 
\begin{Def} \label{Def Cas}(Casimir operator associated with a basis) Let $\mathfrak{g}$ be a Lie algebra, and $\mathcal{B}=(e_{i},\ i=1...dim\mathfrak{g})$ be a basis of $\mathfrak{g}$. Let $\rho:\ \mathfrak{g}\rightarrow gl(\mathcal{E})$ be a representation of $\mathfrak{g}$. Then we define the Casimir operator with respect to the basis $\mathcal{B}$ by $$Cas^{\mathcal{B}}_{\mathfrak{g},\rho}\triangleq \Sigma^{dim\mathfrak{g}}_{i=1}\rho(e_{i})\rho(e_{i}).$$

\textit{The notion of a Casimir operator associated with a basis is more general than the notion of an usual Casimir operator}. Our definition is tailored for our purpose: on the former,  $\Sigma^{dim\mathfrak{g}}_{i=1}e_{i}\cdot e_{i}$ is  an element in the universal enveloping algebra, but is not in general required to be in the center; on the contrary, the later must be in the center.

Moreover, in the underlying definition, we do not require $G$  to be semi-simple, though  it indeed is  in the case of interest. We do not need the Killing form either. All we need is a basis of the Lie algebra.

\end{Def}
\begin{Def}  Let $M=G/K$ be a reductive homogeneous space with respect to $m$. We view $G$ as a $K-$principal bundle over $M$.  We define the  \textit{left invariant principal connection of $m$} to be the connection of which the horizontal distribution at $g\in G$ is $gm$ (viewed as subspace of left invariant vector fields). On an associated bundle, the connection  given by this horizontal distribution is called  \textit{the connection induced by $m$}, or simply \textit{the induced connection}. 
\end{Def}
\begin{Def}\label{Def triply orthonormal} Let $(M,\ \langle,\rangle_{m})$ be a Killing reductive homogeneous space. A basis $B_{\mathfrak{g}}=(e_{i},\ i=1...dim\mathfrak{g})$ for the Lie algebra $\mathfrak{g}$ is called triply orthonormal if
\begin{itemize}\item $B_{\mathfrak{g}}$ is orthonormal with respect to a negative real scalar multiple of the Killing-form;
\item the set of vectors $B_{\mathfrak{k}}=(e_{i},\ i=1+dimm,...,dim\mathfrak{g})$ form a basis for the Lie algebra $\mathfrak{k}$ of $K$;
\item the set of vectors $B_{m}=(e_{i},\ i=1,...,dimm)$ form  an orthonormal basis of  $m$ with respect to $\langle,\rangle_{m}$. 
\end{itemize}
\end{Def}
In view of the above $3$ definitions  and the notation\ $\widetilde{.}$\ in Definition \ref{Def homogeneous bundle} for the invariant function in terms of a section, we prove the formula for the rough Laplacian.

\begin{formula}\label{formula Cas general} Let $(M,\ \langle,\rangle_{m})$ be a Killing reductive homogeneous space with a triply orthonormal basis $B_{\mathfrak{g}}$ for the Lie algebra $\mathfrak{g}$.  Let $\rho: K\rightarrow GL(\mathcal{E})$ be a (real or complex) representation of $K$. On the homogeneous bundle $G\times_{K,\rho}\mathcal{E}$, the following  holds with respect to induced connection.  $$-\widetilde{(\nabla^{\star}\nabla u)}=(Cas^{\mathcal{B}_{\mathfrak{g}}}_{\mathfrak{g},L}-Cas^{\mathcal{B}_{\mathfrak{k}}}_{\mathfrak{k},\rho})\widetilde{u}.$$ 

\end{formula}

\begin{rmk}\textit{The second  operator $Cas^{\mathcal{B}_{\mathfrak{k}}}_{\mathfrak{k},\rho}$ acts on the value of} $\widetilde{u}$. 
\end{rmk}
\begin{proof}[Proof of Formula \ref{formula Cas general}:]  Let $\widetilde{Y}$ denote the horizontal lift of a tangent vector $Y$, the Kobayashi-Nomizu formula \cite[Vol I, Chap III, page 115]{KN} says that 
\begin{equation}\label{equ KN formula}[\widetilde{Y}(\widetilde{u})](g)=[\widetilde{\nabla_{Y}u|_{gK}}](g).
\end{equation}
For any $e_{i}\in m$, the vector $ge_{i}$ (the value of the left invariant vector field) at $g$ is the horizontal lift of $[Ad_{g}(e_{i})]^{\star}$ at $gK$, using the vanishing \eqref{equ geodesic frame on KRHS}, we compute 
\begin{eqnarray}
& &-\widetilde{(\nabla^{\star}\nabla u)}(g)=\Sigma_{i=1}^{dim M}\{ [Ad_{g}(e_{i})]^{\star}\cdot [Ad_{g}(e_{i})]^{\star}]\widetilde{u} \}(g)=\Sigma_{i=1}^{dim M}[R_{\star}(e_{i})R_{\star}(e_{i})\widetilde{u}](g) \nonumber
\\&=&\Sigma_{i=1}^{dim G}[R_{\star}(e_{i})R_{\star}(e_{i})\widetilde{u}](g) -\Sigma_{i=1+dimM}^{dim G}[R_{\star}(e_{i})R_{\star}(e_{i})\widetilde{u}](g)\nonumber
\\&=&[Cas^{\mathcal{B}_{\mathfrak{g}}}_{\mathfrak{g},R}\widetilde{u}](g)-[Cas^{\mathcal{B}_{\mathfrak{k}}}_{\mathfrak{k},R}\widetilde{u}](g).\label{equ 0 proof Formula Cas}
\end{eqnarray}
For any $g$ and $\widetilde{u}\in C^{\infty}(G,\mathcal{E})$, we compute  
\begin{eqnarray}& &(Cas^{\mathcal{B}_{\mathfrak{g}}}_{\mathfrak{g},R} \cdot \widetilde{u})(g)=[\Sigma_{i=1}^{dimG}R_{\star}(e_{i})R_{\star}(e_{i})\widetilde{u}](g)=\frac{d^{2}}{dsdt}|_{t=s=0}\widetilde{u}(g\exp^{se_{i}}\exp^{te_{i}}) \nonumber\\&=&\frac{d^{2}}{dsdt}|_{t=s=0}\widetilde{u}(g\exp^{se_{i}}g^{-1}\cdot g\exp^{te_{i}}g^{-1}\cdot g) =\Sigma_{i=1}^{dim G}\{[L_{\star}(Ad_{g}e_{i})L_{\star}(Ad_{g}e_{i})]\widetilde{u}\}(g)\nonumber
\\&=&\Sigma_{i=1}^{dim G}\{[L_{\star}(e_{i})L_{\star}(e_{i})]\widetilde{u}\}(g)\ (\textrm{because}\ Ad_{g} \ \textrm{is an orthogonal transformation of}\ \mathfrak{g}) \nonumber
\\&=& (Cas^{\mathcal{B}_{\mathfrak{g}}}_{\mathfrak{g},L} \widetilde{u})(g) \label{equ 1 proof Formula Cas}
\end{eqnarray} 
 This means that on $C^{\infty}(G,\mathcal{E})$ (not requiring $K-$invariance), the Casimir operator of the right regular representation coincides with the Casimir of the left regular representation. 

Because of the $K-$invariance of $\widetilde{u}$, we have $R(e_{i})\widetilde{u}=-\rho(e_{i})\widetilde{u}$ (acting on the value of  $\widetilde{u}$). Hence
\begin{equation}
Cas^{\mathcal{B}_{\mathfrak{k}}}_{\mathfrak{k},R} \cdot\widetilde{u}=\Sigma_{i=1}^{dimK}R_{\star}(e_{i})R_{\star}(e_{i})\widetilde{u}=\Sigma_{i=1}^{dimK}\rho_{\star}(e_{i})\rho_{\star}(e_{i})\widetilde{u}=Cas^{\mathcal{B}_{\mathfrak{k}}}_{\mathfrak{k},\rho} \widetilde{u}. \label{equ 2 proof Formula Cas}
\end{equation}
Applying \eqref{equ 1 proof Formula Cas} and \eqref{equ 2 proof Formula Cas} to the two individual terms in \eqref{equ 0 proof Formula Cas}, the desired formula is proved. 
\end{proof}

 \subsection{The standard connection on the homogeneous bundle\\ $[EndT^{1,0}(\mathbb{P}^{2})](l)$\label{sect the connection on EndTP2}}
The purpose of this section is to interpret $EndT^{\prime}\mathbb{P}^{2}(l)$ as a homogeneous bundle over the homogeneous space $\mathbb{P}^{2}$, and show that the twisted Fubini-Study connection corresponds to the standard horizontal distribution $m_{\mathbb{P}^{2}}$ defined in Section \ref{sect horizontal distribution}. Please see Proposition \ref{prop the canonical connection} for the main statement of this section. 
\subsubsection{The horizontal distribution $m_{\mathbb{P}^{2}}$\label{sect horizontal distribution}}

Recall that  $\mathbb{P}^{2}=SU(3)/ S[U(1)\times U(2)]$. Let the subspace $m_{\mathbb{P}^{2}}\subset su(3)$ be spanned by the following $4$ matrices. 
\begin{eqnarray}\label{equ basis of su3 first 4}& &e_{1}\triangleq X_{1} \triangleq \left[\begin{array}{ccc}0  & 1&   0  \\  -1 & 0&0  \\ 0& 0 & 0 \end{array}\right],\ e_{2}\triangleq Y_{1} \triangleq \left[\begin{array}{ccc}0  & i&   0  \\  i & 0&0  \\ 0& 0 & 0 \end{array}\right],\ 
\\& & e_{3}\triangleq X_{3}\triangleq \left[\begin{array}{ccc}0  & 0&   1  \\  0 & 0&0  \\ -1& 0 & 0 \end{array}\right],\ e_{4}\triangleq Y_{3}\triangleq \left[\begin{array}{ccc}0  & 0&   i  \\  0 & 0&0  \\ i& 0 & 0 \end{array}\right]. \nonumber
\end{eqnarray}
It admits a natural complex structure 
\begin{equation}\label{equ J on mP2}JX_{1}=-Y_{1},\ JY_{1}=X_{1},\ JX_{3}=-Y_{3},\ JY_{3}=X_{3}.
\end{equation}
Then $m_{\mathbb{P}^{2}}$ is naturally isomorphic to  $m^{(1,0)}_{\mathbb{P}^{2}}$ (the $(1,0)-$part of the complexification of $m_{\mathbb{P}^{2}}$). The isomorphism is given by the natural injection $m_{\mathbb{P}^{2}}\rightarrow m_{\mathbb{P}^{2}}\otimes \mathbb{C}$ composed by the projection to the $(1,0)-$part.  $m^{(1,0)}_{\mathbb{P}^{2}}$ is spanned by the vectors 

\begin{equation}\label{equ s} s_{1}=\frac{1}{2}(X_{1}+iY_{1}),\ s_{2}=\frac{1}{2}(X_{3}+iY_{3}).\end{equation} 

It is routine to verify that  $m_{\mathbb{P}^{2}}$ is preserved by $Ad_{S[U(1)\times U(2)]}$. Thus, with respect to the Fubini-Study metric and $m_{\mathbb{P}^{2}}$, $\mathbb{P}^{2}$  is a Killing reductive homogeneous space.

 As complex vector bundles, the homogeneous bundle $SU(3)\times_{S[U(1)\times U(2)],ad} m^{(1,0)}_{\mathbb{P}^{2}}$ is\\ $SU(3)-$equivariantly isomorphic to the holomorphic tangent bundle $T^{1,0}(\mathbb{P}^{2})$. 
  \subsubsection{Interpreting the line bundle $O(l)\rightarrow \mathbb{P}^{n}$ as an associated bundle\label{sect interpreting universal bundle as an associated bundle}} 
    $SU(n+1)$ acts on $\mathbb{C}^{n+1}\setminus O$ which is the total space of $O(-1)$. Let  $S[U(1)\times U(n)]$ denote the subgroup of block-diagonal matrices in $SU(n+1)$ i.e. the matrices in $SU(n+1)$ such that the only non-zero entry in the  first row and the first column is the    $(1,1)-$entry. This means that the matrices have the form $\left[\begin{array}{ccccc}e^{\sqrt{-1}\theta}  & 0& ... & ...&   0 \\  0 & . & . & .& .  \\  \vdots & . &. & .& .  \\ \vdots& . & . & .& .\\ 0& . & . & .& .\end{array}\right]$. A natural group homorphism
$\tau_{S}:\ S[U(1)\times U(n)]\rightarrow U(1)$ maps a matrix in  $S[U(1)\times U(n)]$ to its $(1,1)-$entry. 
  
  For any integer $l$, let $\rho_{l}$ denote the $1-$dimensional complex representation of $U(1)$ i.e. $\rho_{l}(e^{\sqrt{-1}\theta})=e^{\sqrt{-1}l\theta}\in GL(1,\mathbb{C})$. Abusing notation, we still let $\rho_{l}$ denote the $S[U(1)\times U(2)$ representation $\rho_{l}\cdot \tau_{S}$. 
  
  As a homogeneous space, $\mathbb{P}^{n}$ is $SU(n+1)/ S[U(1)\times U(n)]$. The projection map is  
  defined by \begin{equation}\label{equ the pi for Pn}\pi:\ SU(n+1)\rightarrow \mathbb{P}^{n}.\ \ \ \pi(A)\triangleq A\left[\begin{array}{c}1 \\ \vdots \\ 0\end{array}\right]\ \textrm{for any}\ A\in SU(n).\end{equation}
  Obviously, $S[U(1)\times U(n)]$  is the isotropy group of the point $\left[\begin{array}{c}1 \\ \vdots \\ 0\end{array}\right]\in \mathbb{P}^{n}$.

  The action of the isotropy group $S[U(1)\times U(n)]$ on $Span[1,0,...,0]$, the fiber of $O(-1)$ at $[1,0,...,0]$, factors through the standard representation $\rho_{1}$ of $U(1)$. Hence, the universal bundle $$O(-1)\rightarrow \mathbb{P}^{n}$$ is $SU(n+1)-$equivariantly isomorphic to the homogeneous bundle $$SU(n+1)\times_{S[U(1)\times U(n)],\rho_{1}} \mathbb{C}.$$ More generally, for any integer $l$, $O(l)\rightarrow \mathbb{P}^{n}$ is $SU(n+1)-$equivariantly isomorphic to $SU(n+1)\times_{S[U(1)\times U(n)],\rho_{-l}} \mathbb{C}$.%
  \subsubsection{Characterizing the connection of interest}
  The main proposition of Section \ref{sect the connection on EndTP2} is a direct corollary of the following two lemmas addressing the horizontal distribution corresponding to the standard $SU(3)-$invariant connections. 
  \begin{lem}\label{lem connection of O-1} Via the $SU(3)-$equivariant isomorphism $$SU(3)\times_{S[U(1)\times U(2)],\rho_{1}} \mathbb{C}=O(-1)\rightarrow \mathbb{P}^{2},$$ the connection induced by  $m_{\mathbb{P}^{2}}$ corresponds to the standard connection (see Definition \ref{Def standard Hermitian metric on Obeta}).
\end{lem}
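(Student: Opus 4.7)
The plan is to argue by uniqueness of $SU(3)$-invariant unitary connections on the Hermitian line bundle $(O(-1), h_{O(-1)})$: both the standard connection and the connection induced by $m_{\mathbb{P}^2}$ belong to this class, which will turn out to be a singleton.

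First, I would verify that both connections are $SU(3)$-invariant unitary connections on the same Hermitian line bundle. For the \emph{standard connection}, $SU(3)$-invariance is immediate from the $SU(3)$-invariance of the metric $h_{O(-1)} = |Z_0|^2 + |Z_1|^2 + |Z_2|^2$ together with the uniqueness of the Chern connection. For the \emph{induced connection}, $SU(3)$-invariance is built-in: the horizontal distribution on the principal bundle $SU(3) \to \mathbb{P}^2$ is the left-translate of $m_{\mathbb{P}^2}$, which is $\mathrm{Ad}_{S[U(1)\times U(2)]}$-invariant (as recalled just below \eqref{equ s}), so left $SU(3)$-translations preserve it. Unitarity on the associated line bundle follows because the character $\rho_1 : S[U(1)\times U(2)] \to U(1)$ is unitary, so the standard Hermitian form on the fiber $\mathbb{C}$ is $\rho_1$-invariant and descends to a fiber metric preserved by the induced connection.

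Next, I would identify the Hermitian metric on $SU(3)\times_{S[U(1)\times U(2)],\rho_1}\mathbb{C}$ with $h_{O(-1)}$ under the equivariant isomorphism. At the basepoint $[1:0:0]$, the class $(I,1)$ maps to the vector $(1,0,0)$ in the fiber of $O(-1)$; both the fiber norm of $1\in\mathbb{C}$ and $h_{O(-1)}(1,0,0)$ equal $1$, so the two metrics agree at this point, and then everywhere by $SU(3)$-equivariance. Hence both connections preserve the \emph{same} Hermitian metric on $O(-1)$.

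Finally, uniqueness: the difference of any two $SU(3)$-invariant unitary connections on the line bundle is an $SU(3)$-invariant $i\mathbb{R}$-valued $1$-form on $\mathbb{P}^2$, which corresponds to an $\mathrm{Ad}_{S[U(1)\times U(2)]}$-invariant element of $m_{\mathbb{P}^2}^{\ast}$. The isotropy representation on $m_{\mathbb{P}^2}\cong\mathbb{C}^2$ is, up to the central $U(1)$-twist, the defining representation of $U(2)$, which is irreducible and has no nonzero invariants. Therefore the difference form vanishes and the two connections coincide.

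The main obstacle is bookkeeping rather than conceptual: one must carefully identify the fiber metric on the associated bundle with $h_{O(-1)}$ via the equivariant isomorphism, since only after this identification does the uniqueness-by-irreducibility step apply. A more computational alternative would be to trivialize over $U_{0,\mathbb{P}^2}$ by the holomorphic section $(1,u_1,u_2)$, compute both connection $1$-forms (the standard one being $\partial\log\phi_0$), and compare directly; this is feasible but more tedious than the invariant-theoretic argument above.
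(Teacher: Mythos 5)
Your proof is correct, but it follows a genuinely different route from the paper's. The paper also uses $SU(3)$-invariance of both connections to reduce to a comparison at the base point $o=[1:0:0]$, but then finishes by a direct computation: it shows that the Chern connection form $\partial\log\phi_{0}$ of the trivializing section $s_{0}=(1,u_{1},u_{2})$ vanishes at $o$, and separately (via Lemma \ref{lem defining section of O-1 and alpha} and the Kobayashi--Nomizu formula \eqref{equ KN formula}) that the induced covariant derivative of $s_{0}$ vanishes at $o$, because the corresponding invariant function $\alpha=(1/g^{1}_{1},0,0)$ has vanishing derivative along $m_{\mathbb{P}^{2}}$ at $e$ (every element of $m_{\mathbb{P}^{2}}$ has zero $(1,1)$-entry). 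Your argument replaces this computation with an invariant-theoretic uniqueness statement: the difference of two invariant connections is an $\mathrm{Ad}_{S[U(1)\times U(2)]}$-invariant element of $m_{\mathbb{P}^{2}}^{\ast}$ (tensored with the coefficient line), and the isotropy representation has no nonzero invariants because the underlying real representation of $m_{\mathbb{P}^{2}}\cong\mathbb{C}^{2}$ is irreducible (the restriction to the $SU(2)$ factor is already the quaternionic-type standard representation, as the paper records in \eqref{equ ad on m is standard}). Your approach buys generality --- it identifies the induced connection as the \emph{unique} invariant unitary connection without ever producing a section --- and in fact the unitarity step is dispensable, since $m_{\mathbb{P}^{2}}^{\ast}\otimes\mathbb{C}$ also has no invariants; what the paper's computation buys is self-containedness (no appeal to irreducibility of the isotropy representation) and the explicit invariant function $\alpha$, which is reused nowhere else but makes the verification elementary. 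The one point worth being careful about in your version is the normalization of the equivariant isomorphism (that $(e,1)\mapsto(1,0,0)$), but even a constant rescaling of the fiber metric would leave the Chern connection unchanged, so the argument is robust.
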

\begin{lem}\label{lem induced connection = Fubini Study connection}Via the $SU(3)-$equivariant isomorphism $$SU(3)\times_{S[U(1)\times U(2)],Ad} m^{(1,0)}_{\mathbb{P}^{2}}=T^{\prime}\mathbb{P}^{2},$$ the connection induced by $m_{\mathbb{P}^{2}}$ corresponds to  the Fubini-Study connection.
\end{lem}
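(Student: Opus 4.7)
The plan is to reduce the claim to two general facts: first, that on a Riemannian symmetric space $G/K$ the canonical connection induced by the reductive decomposition $\mathfrak{g}=m\oplus\mathfrak{k}$ coincides with the Levi-Civita connection of any invariant metric; second, that on a K\"ahler manifold the Chern connection on the holomorphic tangent bundle equals the $(1,0)$-component of the Levi-Civita connection of the K\"ahler metric.

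First I would verify that $(SU(3),S[U(1)\times U(2)],m_{\mathbb{P}^{2}})$ is a symmetric pair, i.e., $[m_{\mathbb{P}^{2}},m_{\mathbb{P}^{2}}]\subseteq \mathfrak{k}$. A direct matrix computation on the basis $X_{1},Y_{1},X_{3},Y_{3}$ of \eqref{equ basis of su3 first 4} suffices: each pairwise commutator of these four off-diagonal-block generators lands in the diagonal two-block subalgebra and is therefore in the Lie algebra of $S[U(1)\times U(2)]$. Once $[m,m]\subseteq\mathfrak{k}$ is established, the Nomizu formula for the Levi-Civita connection of an invariant Riemannian metric on a reductive homogeneous space shows that its difference with the canonical connection depends only on the $m$-component of $[X,Y]$ for $X,Y\in m$, hence vanishes. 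Therefore, on $T\mathbb{P}^{2}=SU(3)\times_{S[U(1)\times U(2)],Ad}m_{\mathbb{P}^{2}}$, the connection induced by $m_{\mathbb{P}^{2}}$ equals the Levi-Civita connection of the unique (up to scaling) $SU(3)$-invariant Riemannian metric, the scaling being pinned down by the normalization of $\frac{d\eta}{2}$ in \eqref{equ def eta} so that this invariant metric is the Fubini-Study metric.

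Next I would pass to the holomorphic tangent bundle by complexifying. Since the Fubini-Study metric is K\"ahler, its Levi-Civita connection commutes with the complex structure $J$ of \eqref{equ J on mP2} and so preserves the splitting $T\mathbb{P}^{2}\otimes\mathbb{C}= T'\mathbb{P}^{2}\oplus\overline{T'\mathbb{P}^{2}}$. The restriction to $T'\mathbb{P}^{2}$ is then a Hermitian connection; to identify it with the Chern (Fubini-Study) connection it remains to show its $(0,1)$-part equals $\bar{\partial}$. I would do this by building a local holomorphic frame of $T'\mathbb{P}^{2}$ near $o=eS[U(1)\times U(2)]$ out of $s_{1},s_{2}$ of \eqref{equ s}, applying the Kobayashi-Nomizu formula \eqref{equ KN formula} together with Lemma \ref{lem mtildex and xstar} to compute the induced covariant derivative in antiholomorphic directions at $o$, then propagating the vanishing to every point of $\mathbb{P}^{2}$ by $SU(3)$-equivariance. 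Under the equivariant identification $m_{\mathbb{P}^{2}}\cong m_{\mathbb{P}^{2}}^{(1,0)}$ of Section \ref{sect horizontal distribution}, the connection induced by the horizontal distribution $g\cdot m_{\mathbb{P}^{2}}\subset TSU(3)$ on the associated bundle with fiber $m^{(1,0)}_{\mathbb{P}^{2}}$ pulls back exactly to this restricted Levi-Civita connection, because forming an associated bundle and the horizontal lift do not see the particular $S[U(1)\times U(2)]$-representation used on the fiber.

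The main obstacle I expect is the $(0,1)$-type verification: identifying the symmetric-space canonical connection with Levi-Civita and using the K\"ahler property are textbook, but threading the complex structure on $m_{\mathbb{P}^{2}}$ through the associated-bundle formalism so as to match $\bar{\partial}$ on $T'\mathbb{P}^{2}$ requires careful comparison of local trivializations. A shortcut that bypasses the explicit frame computation is to invoke uniqueness of the $SU(3)$-invariant Hermitian connection on the homogeneous holomorphic bundle $T'\mathbb{P}^{2}$ compatible with the holomorphic structure: both the induced and the Fubini-Study connections are $SU(3)$-invariant, Hermitian, and (by the K\"ahler/symmetric argument above) preserve the holomorphic splitting, and the space of such connections is a point, so the two must coincide.
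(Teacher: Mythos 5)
Your proposal is correct, but it reaches the conclusion by a different route than the paper. You factor the claim through two general theorems: Nomizu's formula on a symmetric space (so that $[m_{\mathbb{P}^{2}},m_{\mathbb{P}^{2}}]\subseteq \mathfrak{k}$ forces the canonical connection to equal the Levi-Civita connection of the invariant metric on the real tangent bundle), followed by the K\"ahler identification of the Levi-Civita connection on $T^{\prime}\mathbb{P}^{2}$ with the Chern connection. The paper instead works entirely at the base point $o$: it uses the explicit formulas $s_{1}^{\star}=-\frac{\partial}{\partial u_{1}}$, $s_{2}^{\star}=-\frac{\partial}{\partial u_{2}}$ of Formula \ref{formula vector fields generated by mCP2} together with the fact that $(u_{1},u_{2})$ is a K\"ahler normal coordinate to get $\nabla^{FS}s_{i}^{\star}=0$ at $o$, and uses Lemma \ref{lem mtildex and xstar} with the Kobayashi--Nomizu formula \eqref{equ KN formula} and the same commutator computation $[m_{\mathbb{P}^{2}},m_{\mathbb{P}^{2}}]\subseteq s[u(1)\times u(2)]$ to get $\nabla^{induced}s_{i}^{\star}=0$ at $o$; since the difference of two connections is tensorial and both are $SU(3)$-invariant, they coincide everywhere. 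The shared computational core is exactly the inclusion $[m_{\mathbb{P}^{2}},m_{\mathbb{P}^{2}}]\subseteq\mathfrak{k}$. Your route buys generality (it works verbatim on any Hermitian symmetric space and never needs the coordinate expressions of the fundamental vector fields), and, as the paper defines the Fubini-Study connection to be the Levi-Civita connection on $T^{\prime}\mathbb{P}^{2}$, your worry about matching the $(0,1)$-part with $\bar{\partial}$ is actually unnecessary for this lemma. The paper's route stays within the elementary machinery it has already built and avoids quoting Nomizu's theorem. One caution: your closing shortcut, that ``the space of invariant Hermitian connections preserving the holomorphic splitting is a point,'' is not self-evident as stated; to make it rigorous you would need Wang's parametrization of invariant connections by $\mathrm{Hom}_{K}(m_{\mathbb{P}^{2}},\mathfrak{gl}(m^{(1,0)}_{\mathbb{P}^{2}}))$ and the observation that this space vanishes because the $U(1)$-weight of $m^{(1,0)}_{\mathbb{P}^{2}}$ under $\widehat{H}_{1}$ (Lemma \ref{lem ad e5}) is nonzero while that of the endomorphism bundle is zero. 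Since you offer this only as an alternative to the explicit frame computation, it does not affect the correctness of the main argument.
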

The proof of the above two Lemmas is deferred to Appendix \ref{Appendix the standard connection on O(l)} and \ref{Appendix The horizontal distribution of the Fubini-Study connection on the holomorphic tangent bundle}. We are ready for our main proposition about the standard connection on $[EndT^{1,0}(\mathbb{P}^{2})](l)$.
  \begin{prop}\label{prop the canonical connection}On the homogeneous bundle $[EndT^{1,0}(\mathbb{P}^{2})](l)$, 
the tensor product of the Fubini-Study connections (on $T^{1,0}(\mathbb{P}^{2})$ and its dual) and the standard connection
on $O(l)$ is  induced  by the horizontal distribution $m_{\mathbb{P}^{2}}$.   \end{prop}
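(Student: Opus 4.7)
The plan is to reduce the statement to the two lemmas \ref{lem connection of O-1} and \ref{lem induced connection = Fubini Study connection} via the naturality of the ``connection induced by $m_{\mathbb{P}^{2}}$'' construction under duality and tensor products of associated bundles.

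First, I would record the naturality property. Given any pair of $S[U(1)\times U(2)]$-representations $(\mathcal{E}_{1},\rho_{1})$ and $(\mathcal{E}_{2},\rho_{2})$, the horizontal distribution $m_{\mathbb{P}^{2}}\subset su(3)$ determines a principal connection on the frame bundle $SU(3)\to \mathbb{P}^{2}$, which in turn induces connections on the associated bundles $SU(3)\times_{\rho_{1}}\mathcal{E}_{1}$, $SU(3)\times_{\rho_{2}}\mathcal{E}_{2}$, their duals $SU(3)\times_{\rho_{i}^{\vee}}\mathcal{E}_{i}^{\vee}$, and the tensor product $SU(3)\times_{\rho_{1}\otimes\rho_{2}}(\mathcal{E}_{1}\otimes \mathcal{E}_{2})$. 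Because all of these are defined from one and the same principal connection by the standard associated-bundle recipe, the induced connection on the dual (respectively on a tensor product) coincides with the dual (respectively the tensor product) of the induced connections. This naturality is purely formal and follows from the Kobayashi--Nomizu definition of an induced connection via the horizontal lift.

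Next, I would apply this naturality to the case at hand. Lemma \ref{lem connection of O-1} identifies the $m_{\mathbb{P}^{2}}$-induced connection on $O(-1)=SU(3)\times_{\rho_{1}}\mathbb{C}$ with the standard connection of Definition \ref{Def standard Hermitian metric on Obeta}. Dualizing and taking tensor powers shows, via the naturality above, that for every integer $l$ the $m_{\mathbb{P}^{2}}$-induced connection on $O(l)=SU(3)\times_{\rho_{-l}}\mathbb{C}$ is the standard connection on $O(l)$. Similarly, Lemma \ref{lem induced connection = Fubini Study connection} identifies the $m_{\mathbb{P}^{2}}$-induced connection on $T^{1,0}\mathbb{P}^{2}=SU(3)\times_{Ad}m^{(1,0)}_{\mathbb{P}^{2}}$ with the Fubini--Study connection; taking the tensor product with its dual and using naturality once more, I conclude that the $m_{\mathbb{P}^{2}}$-induced connection on $EndT^{1,0}\mathbb{P}^{2}=T^{1,0}\mathbb{P}^{2}\otimes (T^{1,0}\mathbb{P}^{2})^{\vee}$ equals the induced Fubini--Study endomorphism connection.

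Finally, writing $(EndT^{1,0}\mathbb{P}^{2})(l)=(EndT^{1,0}\mathbb{P}^{2})\otimes O(l)$ as the homogeneous bundle $SU(3)\times_{Ad\otimes \rho_{-l}}[\mathrm{End}(m^{(1,0)}_{\mathbb{P}^{2}})]$, one more application of the tensor-product naturality yields that the $m_{\mathbb{P}^{2}}$-induced connection on $(EndT^{1,0}\mathbb{P}^{2})(l)$ is precisely the tensor product of the Fubini--Study endomorphism connection and the standard connection on $O(l)$, which by Definition \ref{Def setting of 3 keywords} is the twisted Fubini--Study connection. The main ``work'' has already been done in Lemmas \ref{lem connection of O-1} and \ref{lem induced connection = Fubini Study connection}; the only step here requiring any care is the naturality statement, and even this is routine once one writes down horizontal lifts in the principal bundle $SU(3)\to \mathbb{P}^{2}$. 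Hence I do not anticipate a genuine obstacle in this proposition itself; the substantive content sits in the two cited lemmas, both of which are deferred to the appendices.
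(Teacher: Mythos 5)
Your proposal is correct and follows essentially the same route as the paper: the paper's proof also reduces the statement to Lemmas \ref{lem connection of O-1} and \ref{lem induced connection = Fubini Study connection} and then invokes (implicitly) the compatibility of the induced connection with duals and tensor products. Your only addition is to spell out that naturality step explicitly, which the paper leaves as routine.
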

\begin{proof}[Proof of Proposition \ref{prop the canonical connection}:] Using Lemma \ref{lem connection of O-1}, the standard connection on $O(l)\rightarrow \mathbb{P}^{2}$, obtained by the dual and/or tensor product of the standard connection on $O(-1)$, is induced by  $m_{\mathbb{P}^{2}}$.     Using Lemma \ref{lem induced connection = Fubini Study connection}, the associated connection on the holomorphic co-tangent bundle $\Omega^{1}_{\mathbb{P}^{2}}$, therefore the (tensor product) Fubini-Study connection on  $[EndT^{1,0}(\mathbb{P}^{2})]$, are induced by $m_{\mathbb{P}^{2}}$. Then the tensor product connection on $[EndT^{1,0}(\mathbb{P}^{2})](l)$ is induced by $m_{\mathbb{P}^{2}}$.
\end{proof}

\subsection{Representation theory of $SU(3)$ and $S[U(1)\times U(2)]$, and the proof of Theorem \ref{Thm spec of rough laplacian P2}\label{sect proof of spec} and Proposition \ref{prop multiplicity TP2} }

From here to the end of Section \ref{sect proof of spec}, given a vector space $V$, the symbol $|_{V}$ means ``as an endomorphism of $V$" or ``as a representation on $V$".
\subsubsection{The representation of $S[U(1)\times U(2)]$ on $End_{0}m^{(1,0)}_{\mathbb{P}^{2}}\otimes \mathbb{C}$ and its ``Casimir" operator}
The purpose of this section is to prove Formula \ref{formula of Cas K} on the Casimir operator of the subgroup $S[U(1)\times U(2)]$ of $SU(3)$ (see Section \ref{sect interpreting universal bundle as an associated bundle} for the definition of the subgroup).

As a subgroup of $SU(3)$, the Lie algebra of $S[U(1)\times U(2)]$ is spanned by 
\begin{eqnarray}\label{equ basis of su3 last 4}& &e_{5}\triangleq \widehat{H}_{1}\triangleq\frac{1}{\sqrt{3}} \left[\begin{array}{ccc}2i  & 0&   0  \\  0 & -i&0  \\ 0& 0 & -i \end{array}\right],\ e_{6}\triangleq H_{2}\triangleq \left[\begin{array}{ccc}0  & 0&   0  \\ 0 & i&0  \\ 0& 0 & -i \end{array}\right],\ 
\\& & e_{7}\triangleq X_{2} \triangleq \left[\begin{array}{ccc}0  & 0&   0  \\  0 & 0&1  \\ 0& -1 & 0 \end{array}\right],\ e_{8}\triangleq Y_{2}\triangleq \left[\begin{array}{ccc}0  & 0&   0  \\  0 & 0&i  \\ 0& i & 0 \end{array}\right].\nonumber
\end{eqnarray}

\begin{Def}Let $B_{s[u(1)\times u(2)]}\triangleq \{e_{5}, e_{6},e_{7},e_{8}\}$ be the basis for the Lie algebra\\ $s[u(1)\times u(2)]$ of $S[U(1)\times U(2)]$.
\end{Def}

\begin{rmk}\label{rmk su(2)}$SU(2)$ is isomorphic to the subgroup in $S[U(1)\times U(2)]$ of  block diagonal matrices with $(1,1)-$entry equal to $1$. Henceforth, let $SU(2)$ denote this subgroup. Then $su(2)$ is spanned by $H_{2},\ X_{2},\ Y_{2}$ $(e_{6},\ e_{7},\ e_{8})$. We denote this basis of $su(2)$ by $\mathcal{B}_{su(2)}$. \end{rmk}

The first columns of $(e_{1},\ e_{2},\ e_{3},\ e_{4})$ form an orthonormal set of vectors in $\mathbb{R}^{6}$ (see \eqref{equ basis of su3 first 4}). Thus in view of the formula for the Euclidean metric (K\"ahler form) $\omega_{\mathbb{C}^{3}}$ in Table \eqref{equ tabular volume forms}, it  is straight forward to verify that the quadruple $(e_{1},\ e_{2},\ e_{3},\ e_{4})$ is an orthonormal basis of the inner product $\langle\ ,\ \rangle_{m_{\mathbb{P}^{2}}}$ induced by the Fubini-Study metric $\frac{d\eta}{2}$. 

Let $\mathcal{B}_{su(3)}$ denote the basis $(e_{i},\ i=1...8)$ of $su(3)$. According to the previous paragraph,  it is triply orthonormal on $\mathbb{P}^{2}$. That $\widehat{H}_{1}$ is of the form in \eqref{equ basis of su3 last 4} is important for this triple orthogonality.

Let  $V_{d}$ be the space of all degree $2$ homogeneous polynomials of $2-$complex variables. Let $\rho_{V_{d}}:\ su(2)\rightarrow gl(V_{d})$ be the irreducible representation of $su(2)$ on $V_{d}$. With respect to the notation convention in Definition \ref{Def Cas}, the Casimir operator obeys the following formula
\begin{equation}\label{equ Cas} Cas^{\mathcal{B}_{su(2)}}_{su(2),V_{d}}=-(d^{2}+2d)Id|_{V_{d}}.\ \ \ \textrm{When}\ d=2,\  Cas^{\mathcal{B}_{su(2)}}_{su(2),V_{2}}=-8Id|_{V_{2}}.\end{equation}

We routinely verify the following identities on $ad_{su(2)}|_{m^{(1,0)}_{\mathbb{P}^{2}}}$. 
\begin{equation}[H_{2},s_{1}]=is_{1},\ [H_{2},s_{2}]=-is_{2},\ [X_{2},s_{1}]=-s_{2},\ [X_{2},s_{2}]=s_{1},\ [Y_{2},s_{1}]=is_{2},\ [Y_{2},s_{2}]=is_{1}.
\end{equation}
Therefore, under the basis $(s_{1},s_{2})$ of $m^{(1,0)}_{\mathbb{P}^{2}}$, the representation $ad_{su(2)}$ is given by 
\begin{eqnarray}\label{equ ad on m is standard}& & ad_{H_{2}}\cdot (s_{1},s_{2})=(s_{1},s_{2})\left[\begin{array}{cc} i&0  \\  0 & -i \end{array}\right],\ ad_{X_{2}}\cdot (s_{1},s_{2})=(s_{1},s_{2})\left[\begin{array}{cc} 0&1 \\  -1 & 0\end{array}\right] 
\\& &ad_{Y_{2}}\cdot (s_{1},s_{2})=(s_{1},s_{2})\left[\begin{array}{cc} 0&i \\  i & 0 \end{array}\right].\nonumber
\end{eqnarray}
Let an element in $su(2)$ be represented by its lower block $2\times 2$ (which is exactly the standard form of $su(2)$), the above identities mean that under the basis $(s_{1},s_{2})$, $ad_{su(2)}|_{m^{(1,0)}_{\mathbb{P}^{2}}}$ is the standard representation of $su(2)$.

Based on the above discussion, we can characterize $End_{0}m^{(1,0)}_{\mathbb{P}^{2}}$ as an $su(2)-$representation. 
\begin{lem}\label{lem Endm as su2rep} In view of Remark \ref{rmk su(2)}, the $su(2)$ representation  on  $End_{0}m^{(1,0)}_{\mathbb{P}^{2}}$ inherited from $s[u(1)\times u(2)]$ is $3-$dimensional and irreducible. Consequently,  it is equivalent to $\rho_{V_{2}}$.
\end{lem}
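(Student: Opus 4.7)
The proof should reduce immediately to the classical Clebsch--Gordan decomposition for $su(2)$. First I would invoke the identities \eqref{equ ad on m is standard} established just above the lemma: they show that, under the basis $(s_{1}, s_{2})$, the restriction $ad_{su(2)}|_{m^{(1,0)}_{\mathbb{P}^{2}}}$ is precisely the standard two-dimensional representation of $su(2)$, which in the notation $V_{d}$ of the paper is $V_{1}$. The $su(2)$-representation on $End_{0} m^{(1,0)}_{\mathbb{P}^{2}}$ inherited from $s[u(1)\times u(2)]$ is, by definition, the one induced from $ad_{su(2)}|_{m^{(1,0)}_{\mathbb{P}^{2}}}$ by the usual commutator action on endomorphisms, which preserves the trace and therefore descends to $End_{0}$.

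Next I would observe the $su(2)$-equivariant isomorphism
\begin{equation*}
End\, m^{(1,0)}_{\mathbb{P}^{2}} \;\cong\; m^{(1,0)}_{\mathbb{P}^{2}} \otimes (m^{(1,0)}_{\mathbb{P}^{2}})^{*}.
\end{equation*}
Since the standard representation $V_{1}$ is self-dual via the $SU(2)$-invariant skew form on $\mathbb{C}^{2}$, this becomes $End\, m^{(1,0)}_{\mathbb{P}^{2}} \cong V_{1} \otimes V_{1}$. Clebsch--Gordan then gives $V_{1} \otimes V_{1} \cong V_{0} \oplus V_{2}$, where the trivial summand $V_{0}$ is spanned by the identity endomorphism (equivalently, the $su(2)$-invariant trace functional). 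Quotienting out this trace line yields $End_{0}\, m^{(1,0)}_{\mathbb{P}^{2}} \cong V_{2}$, which is the unique three-dimensional irreducible $su(2)$-module $\rho_{V_{2}}$.

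As a dimension sanity check, $End_{0}\, m^{(1,0)}_{\mathbb{P}^{2}}$ has complex dimension $4-1 = 3$, matching $\dim_{\mathbb{C}} V_{2}$. I do not anticipate any genuine obstacle in this argument; the only elaboration one might wish to record is a concrete intertwiner, obtained by taking the basis of $End_{0}\, m^{(1,0)}_{\mathbb{P}^{2}}$ consisting of the raising, lowering, and diagonal traceless endomorphisms in the basis $(s_{1},s_{2})$, and computing their weights under $ad_{H_{2}}$ via \eqref{equ ad on m is standard}. The weights come out as $\{-2, 0, 2\}$, which is exactly the weight diagram of $V_{2}$ and confirms the isomorphism.
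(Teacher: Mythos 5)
Your argument is correct, and it takes a genuinely different route from the paper's at the key step. Both proofs begin the same way, using \eqref{equ ad on m is standard} to identify $ad_{su(2)}|_{m^{(1,0)}_{\mathbb{P}^{2}}}$ with the standard representation and thereby reduce to showing that the induced representation on $End_{0}\mathbb{C}^{2}$ is irreducible. The paper then observes that this representation extends complex-linearly to the adjoint representation of $sl(2,\mathbb{C})$ and invokes simplicity of $sl(2,\mathbb{C})$ to get irreducibility, after which the identification with $\rho_{V_{2}}$ follows from the classification of irreducible $su(2)$-modules by dimension. You instead use self-duality of $V_{1}$ and Clebsch--Gordan to write $End\, m^{(1,0)}_{\mathbb{P}^{2}}\cong V_{1}\otimes V_{1}\cong V_{0}\oplus V_{2}$, with the trivial summand spanned by the identity, so that passing to the traceless part leaves exactly $V_{2}$. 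The paper's version is marginally leaner (no tensor decomposition, no appeal to the invariant skew form), while yours identifies the summand $V_{2}$ directly rather than via "three-dimensional irreducible implies $V_{2}$", and your weight computation $\{-2,0,2\}$ under $ad_{H_{2}}$ serves as an independent check. Your remark that the commutator action preserves the trace, so that $End=\mathbb{C}\cdot Id\oplus End_{0}$ is an equivariant splitting, is exactly the point needed to justify restricting to $End_{0}$; no gaps remain.
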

\begin{proof}[Proof of Lemma \ref{lem Endm as su2rep}:] Because $ad_{su(2)}|_{m^{(1,0)}_{\mathbb{P}^{2}}}$ is (equivalent to) the standard representation of $su(2)$ (see \eqref{equ ad on m is standard}), it suffices to show  that  the $su(2)-$representation on $End_{0}\mathbb{C}^{2}$  induced by the standard representation is irreducible.

  By definition, we routinely verify that the $su(2)-$representation on $End_{0}\mathbb{C}^{2}$ extends complex linearly to the adjoint representation of $sl(2,\mathbb{C})$. Because $sl(2,\mathbb{C})$ is simple, the adjoint action must be irreducible. Therefore it is also irreducible as a $su(2)-$representation. The proof is complete.
\end{proof}

To calculate the Casimir operator of $S[U(1)\times U(2)]$, it remains to understand the adjoint action of $e_{5}=\widehat{H}_{1}$ on $m^{(1,0)}_{\mathbb{P}^{2}}$.
\begin{lem}\label{lem ad e5}$ad_{\widehat{H}_{1}}|_{m^{(1,0)}_{\mathbb{P}^{2}}}=-\sqrt{3}i Id|_{m^{(1,0)}_{\mathbb{P}^{2}}}$.
\end{lem}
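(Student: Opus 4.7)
The plan is to verify the identity by direct matrix computation on the explicit basis $(s_{1},s_{2})$ of $m^{(1,0)}_{\mathbb{P}^{2}}$. The key observation is that $\widehat{H}_{1}$ can be rewritten modulo a scalar multiple of the identity: since $\mathrm{diag}(2i,-i,-i)=3i\cdot e_{11}-i\cdot I_{3}$ where $e_{11}$ is the matrix whose only nonzero entry is a $1$ in position $(1,1)$, we have
\[
\widehat{H}_{1}=\sqrt{3}\,i\,e_{11}-\tfrac{i}{\sqrt{3}}I_{3}.
\]
Because $I_{3}$ is central, $\mathrm{ad}_{\widehat{H}_{1}}=\sqrt{3}\,i\,\mathrm{ad}_{e_{11}}$ as operators on any subspace of $\mathfrak{sl}(3,\mathbb{C})$.

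Next, I would compute $s_{1}$ and $s_{2}$ explicitly using the definitions \eqref{equ basis of su3 first 4} and \eqref{equ s}. A routine calculation of the $(1,2)$- and $(2,1)$-entries of $X_{1}+iY_{1}$, and the $(1,3)$- and $(3,1)$-entries of $X_{3}+iY_{3}$, shows that $s_{1}$ is supported only at position $(2,1)$ and $s_{2}$ only at position $(3,1)$. In particular, both $s_{1}$ and $s_{2}$ have their sole nonzero entry in the first column and zero first row.

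From this supporting pattern, $e_{11}s_{j}=0$ (first row of $s_{j}$ is zero) and $s_{j}e_{11}=s_{j}$ (the nonzero column of $s_{j}$ is the first), so $[e_{11},s_{j}]=-s_{j}$ for $j=1,2$. Combining with the previous paragraph, $\mathrm{ad}_{\widehat{H}_{1}}s_{j}=-\sqrt{3}\,i\,s_{j}$, which proves the lemma. There is no real obstacle here: the only mildly subtle point is verifying that the combinations $X_{1}+iY_{1}$ and $X_{3}+iY_{3}$ really collapse to one-entry matrices supported in the first column (the $(1,2)$- and $(1,3)$-entries cancel because $1+i\cdot i=0$), which is what makes the trick with $e_{11}$ work so cleanly.
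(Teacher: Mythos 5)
Your proposal is correct and takes essentially the same route as the paper: a direct verification of $[\widehat{H}_{1},s_{j}]=-\sqrt{3}\,i\,s_{j}$ on the explicit basis $(s_{1},s_{2})$ (the paper computes the four real brackets $[\widehat{H}_{1},X_{1}]=\sqrt{3}Y_{1}$, etc., and then combines them). Your observation that $s_{1}=-e_{21}$, $s_{2}=-e_{31}$ and that $\widehat{H}_{1}=\sqrt{3}\,i\,e_{11}$ modulo the center is just a slightly tidier bookkeeping of the same computation, and all the entries check out.
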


\begin{proof}[Proof of Lemma \ref{lem ad e5}:] We straight-forwardly verify the following.
\begin{equation*}[\widehat{H}_{1},X_{1}]=\sqrt{3}Y_{1},\ [\widehat{H}_{1},Y_{1}]=-\sqrt{3}X_{1},\ [\widehat{H}_{1},X_{3}]=\sqrt{3}Y_{3},\ [\widehat{H}_{1},Y_{3}]=-\sqrt{3}X_{3}.
\end{equation*}
Then $[\widehat{H}_{1},s_{1}]=-\sqrt{3}is_{1}$, $[\widehat{H}_{1},s_{2}]=-\sqrt{3}is_{2}$.
\end{proof}

 Using that the representation of $su(2)$ on $\mathbb{C}$ is trivial (the image is the $0-$endomorphism), we compute the $su(2)-$Casimir on the representation of interest.  

\begin{eqnarray}& &\Sigma_{i=6}^{8}[(ad\otimes \rho_{-l})_{\star}(e_{i})(ad\otimes \rho_{-l})_{\star}(e_{i})] |_{End_{0}m^{(1,0)}_{\mathbb{P}^{2}}\otimes \mathbb{C}}\nonumber\\&=&\Sigma_{i=6}^{8}[(ad_{\star}(e_{i})(ad)_{\star}(e_{i})]|_{End_{0}m^{(1,0)}_{\mathbb{P}^{2}}}\otimes Id |_{\mathbb{C}}\nonumber
\\&=&-8Id|_{End_{0}m^{(1,0)}_{\mathbb{P}^{2}}\otimes \mathbb{C}}. \label{equ Cas su(2) on End otimes C}
\end{eqnarray}

 Elementary calculation  yields the action of $e_{5}$ via $\rho_{l}$:
\begin{equation}\label{equ action of rhol e5}\rho_{-l}(e_{5})|_{\mathbb{C}}=-\frac{2li}{\sqrt{3}}Id|_{\mathbb{C}},\ \textrm{consequently}\ [\rho_{-l}(e_{5})\rho_{-l}(e_{5})]|_{\mathbb{C}}=-\frac{4l^{2}}{3}Id|_{\mathbb{C}}. 
\end{equation}On the other hand, Lemma \ref{lem ad e5} says that $ad_{e_{5}}|_{m^{(1,0)}_{\mathbb{P}^{2}}}$ is a (complex) scalar multiple of the identity.  Thus  $ad_{e_{5}}|_{End_{0}m^{(1,0)}_{\mathbb{P}^{2}}}=0$. We obtain  
\begin{eqnarray}& & [(ad\otimes \rho_{-l})_{\star}(e_{5})(ad\otimes \rho_{-l})_{\star}(e_{5})] |_{End_{0}m^{(1,0)}_{\mathbb{P}^{2}}\otimes \mathbb{C}}=Id_{End_{0}m^{(1,0)}_{\mathbb{P}^{2}}}\otimes [ \rho_{-l,\star}(e_{5})\rho_{-l,\star}(e_{5})\mathbb{C}]\nonumber
\\&=&  -\frac{4l^{2}}{3}Id|_{End_{0}m^{(1,0)}_{\mathbb{P}^{2}}\otimes \mathbb{C}}.\label{Cas e5 rhol}
\end{eqnarray}

Combining \eqref{equ Cas su(2) on End otimes C} and \eqref{Cas e5 rhol}, we arrive at the following. 
 \begin{formula}\label{formula of Cas K} \begin{eqnarray}\nonumber& &Cas^{\mathcal{B}_{s[u(1)\times u(2)]}}_{s[u(1)\times u(2)],ad\otimes \rho_{-l}}|_{End_{0}m^{(1,0)}_{\mathbb{P}^{2}}\otimes \mathbb{C}}\triangleq \Sigma_{i=5}^{8}[(ad\otimes \rho_{-l})_{\star}(e_{i})(ad\otimes \rho_{-l})_{\star}(e_{i})]|_{End_{0}m^{(1,0)}_{\mathbb{P}^{2}}\otimes \mathbb{C}}
 \\&=&(-8 -\frac{4l^{2}}{3})Id |_{End_{0}m^{(1,0)}_{\mathbb{P}^{2}}\otimes \mathbb{C}}.\end{eqnarray}  \end{formula}

\subsubsection{The translation between two conventions of $SU(3)-$representations\label{set The translation between two conventions of SU(3) repre}}


Let $W_{1,0}$ be the standard representation of $su(3)$ on $\mathbb{C}^{3}$, and $W_{0,1}$ be the dual representation of $W_{1,0}$. Let $W_{a,b}$ 
be the irreducible representation generated by the highest weight vector in the tensor product representation $W^{\otimes a}_{1,0}\otimes W^{\otimes b}_{0,1}$ (see \cite[II.5]{Hall}). 
Any irreducible representation of $su(3)$ is equivalent to  $W_{a,b}$ for some integer $a,\ b\geq 0$ ($W_{0,0}$ is the $1-$dimensional trivial representation). 

\begin{Notation} In \cite{IT}, a $SU(3)$ irreducible representation is labelled by an integer linear combination of the two weights $x^{\star}_{1},\ x^{\star}_{2}$. We denoted it by $V^{SU(3)}_{m_{1}x^{\star}_{1}+m_{2}x^{\star}_{2}}$, and this is said to be the \textit{Ikeda-Taniguchi convention}. In \cite{IT}, such integer linear combinations also label the irreducible $S[U(1)\times U(2)]-$representations. We denote it by $V^{S[U(1)\times U(2)]}_{k_{1}x^{\star}_{1}+k_{2}x^{\star}_{2}}$.
\end{Notation}

We need the following translation from the Ikeda-Taniguchi convention to the (usual) $W_{a,b}-$convention. 
\begin{lem}\label{equ Wab and IT convention}The irreducible representation $W_{a,b}$ of $SU(3)$ is isomorphic to Ikeda-Taniguchi's $V^{SU(3)}_{(a+b)x^{\star}_{1}+b x^{\star}_{2}}$.
\end{lem}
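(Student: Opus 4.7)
The plan is to identify the highest weights in both conventions and match them term by term. Both conventions label irreducible $SU(3)$-representations by their highest weight with respect to a chosen Cartan subalgebra and positive root system, so the task reduces to a direct comparison of two weights.

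First I would fix the Cartan subalgebra $\mathfrak{h}\subset su(3)$ of diagonal traceless matrices, choose the standard positive roots, and identify the fundamental weights $\varpi_{1},\varpi_{2}$ dual (in the appropriate sense) to the simple coroots. In this setup the standard representation $W_{1,0}=\mathbb{C}^{3}$ has highest weight $\varpi_{1}$ and the dual $W_{0,1}$ has highest weight $\varpi_{2}$. Since the highest weight vector in $W_{1,0}^{\otimes a}\otimes W_{0,1}^{\otimes b}$ is the tensor product of the highest weight vectors in each factor, the irreducible component $W_{a,b}$ it generates has highest weight $a\varpi_{1}+b\varpi_{2}$. This is the intrinsic statement to be translated into the Ikeda-Taniguchi labeling.

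Next I would unpack the Ikeda-Taniguchi convention: the weights $x_{1}^{\star},x_{2}^{\star}$ in \cite{IT} are defined as the functionals on (a real form of) the Cartan subalgebra picking out the first and second diagonal entries (normalized consistently with the choice of $\widehat{H}_{1},H_{2}$ made in \eqref{equ basis of su3 last 4}). Given these normalizations, I would compute the value of $\varpi_{1}$ and $\varpi_{2}$ on the generators of $\mathfrak{h}$ and match them against $x_{1}^{\star},x_{2}^{\star}$. A short calculation should yield $\varpi_{1}=x_{1}^{\star}$ and $\varpi_{2}=x_{1}^{\star}+x_{2}^{\star}$, from which
\begin{equation*}
a\varpi_{1}+b\varpi_{2}=(a+b)x_{1}^{\star}+b\,x_{2}^{\star},
\end{equation*}
which gives the desired identification.

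The main obstacle is purely bookkeeping: one must be careful that the normalization of $x_{1}^{\star},x_{2}^{\star}$ used in \cite{IT} agrees with the one induced from the basis $\mathcal{B}_{su(3)}$ fixed in this paper (in particular the factor $\tfrac{1}{\sqrt{3}}$ in $\widehat{H}_{1}$), otherwise the coefficients $a+b$ and $b$ could shift. Once the normalization is fixed by evaluating both $\varpi_{j}$ and $x_{j}^{\star}$ on $\widehat{H}_{1}$ and $H_{2}$, the rest of the proof is automatic.
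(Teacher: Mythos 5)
Your proposal is correct and follows essentially the same route as the paper: identify the highest weights of $W_{1,0}$ and $W_{0,1}$ in the Ikeda-Taniguchi labels (namely $x^{\star}_{1}$ and $-x^{\star}_{3}=x^{\star}_{1}+x^{\star}_{2}$, which are exactly your $\varpi_{1}$ and $\varpi_{2}$), then add them with multiplicities $a$ and $b$ using the additivity of highest weights under tensor products. Your extra caution about the normalization of $\widehat{H}_{1}$ is sensible but does not change the outcome, since the weight comparison is insensitive to the overall scaling of the Cartan generators.
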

\begin{proof}[Proof of Lemma \ref{equ Wab and IT convention}:] It is an algebra exercise  to verify that in Ikeda-Taniguchi convention, the standard representation $W_{1,0}$ of $su(3)$ has highest weight $x^{\star}_{1}$, the dual representation $W_{0,1}$ has highest weight $-x^{\star}_{3}$, which is equal to $x^{\star}_{1}+x^{\star}_{2}$. 

 The highest weight of a (possibly multiple) tensor product of irreducible $su(3)-$representations is the sum of the highest weight of each one. Thus, the highest weight of $W^{\otimes a}_{1,0}\otimes W^{\otimes b}_{0,1}$ (in Ikeda-Taniguchi convention) is $ax^{\star}_{1}-bx^{\star}_{3}$, which is equal to $(a+b)x^{\star}_{1}+bx^{\star}_{2}$. Because  $W_{a,b}$ is the irreducible representation generated by the highest weight vector in $W^{\otimes a}_{1,0}\otimes W^{\otimes b}_{0,1}$, the  highest weight of $W_{a,b}$  is the same i.e. $(a+b)x^{\star}_{1}+bx^{\star}_{2}$. 

\end{proof}

\subsubsection{The irreducible $S[U(1)\times U(2)]-$representation $End_{0}m^{(1,0)}_{\mathbb{P}^{2}}\otimes \mathbb{C}$ in Ikeda-Taniguchi convention}
The Cartan sub-algebra $\Upsilon_{su(3)}$ of $su(3)$ consists of diagonal traceless matrices with purely imaginary diagonal entries. 
Let $x^{\star}_{i}$ maps any matrix in $\Upsilon_{su(3)}$ to  its $i-$th diagonal entry. Then $x^{\star}_{1},\ x^{\star}_{2},\ x^{\star}_{3}$ are roots. They are subject to the relation $x^{\star}_{1}+x^{\star}_{2}+x^{\star}_{3}=0$. According to  \cite[Section 5, Page 529]{IT}, the partial ordering is determined by $$x^{\star}_{1}>x^{\star}_{2}>0>x^{\star}_{3}.$$  Moreover, $m^{(1,0)}_{\mathbb{P}^{2}}$ has highest weight $x^{\star}_{2}-x^{\star}_{1}$, and the dual $m^{(1,0),\star}_{\mathbb{P}^{2}}$ has highest weight $x^{\star}_{1}-x^{\star}_{3}$ (see \cite[page 532, (iii)]{IT}). Then the highest weight of $End m^{(1,0)}_{\mathbb{P}^{2}}=m^{(1,0)}_{\mathbb{P}^{2}}\otimes m^{(1,0),\star}_{\mathbb{P}^{2}}$ is $x^{\star}_{1}+2x^{\star}_{2}$: the sum of the highest weights of $m^{(1,0)}_{\mathbb{P}^{2}}$ and  $m^{\star}_{\mathbb{P}^{2}}$.

\begin{lem}\label{lem weights for SU1U2}The tensor product representation $$Ad\otimes \rho_{-l}:\ S[U(1)\times U(2)]\rightarrow GL(End_{0}m^{(1,0)}_{\mathbb{P}^{2}}\otimes \mathbb{C})$$
is equivalent to $V^{S[U(1)\times U(2)]}_{(-l+1)x^{\star}_{1}+2x^{\star}_{2}}$. 
\end{lem}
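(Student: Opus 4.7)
The plan is to identify $Ad\otimes\rho_{-l}$ by its highest weight and invoke irreducibility. First, I would verify that $Ad\otimes\rho_{-l}$ acts irreducibly on $End_{0}m^{(1,0)}_{\mathbb{P}^{2}}\otimes\mathbb{C}$. By Lemma \ref{lem Endm as su2rep}, $End_{0}m^{(1,0)}_{\mathbb{P}^{2}}$ is already irreducible under the $su(2)$-subalgebra of $s[u(1)\times u(2)]$. To promote this to an $s[u(1)\times u(2)]$-irreducibility statement, it suffices to observe that the remaining Cartan direction $e_{5}=\widehat{H}_{1}$ acts on $m^{(1,0)}_{\mathbb{P}^{2}}$ by the scalar $-\sqrt{3}\,i$ (Lemma \ref{lem ad e5}), hence acts trivially on $End_{0}m^{(1,0)}_{\mathbb{P}^{2}}$; therefore the $s[u(1)\times u(2)]$-invariant subspaces coincide with the $su(2)$-invariant ones. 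Tensoring with the one-dimensional character $\rho_{-l}$ preserves irreducibility.

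Next, I would compute the highest weight by summing the highest weights of the tensor factors. The paragraph preceding Lemma \ref{equ Wab and IT convention} already records that $Endm^{(1,0)}_{\mathbb{P}^{2}}$ has highest weight $x^{\star}_{1}+2x^{\star}_{2}$, obtained from the highest weight $x^{\star}_{2}-x^{\star}_{1}$ of $m^{(1,0)}_{\mathbb{P}^{2}}$ and $x^{\star}_{1}-x^{\star}_{3}$ of its dual together with the relation $x^{\star}_{1}+x^{\star}_{2}+x^{\star}_{3}=0$. The $Ad$-trivial summand $\mathbb{C}\cdot Id\subset Endm^{(1,0)}_{\mathbb{P}^{2}}$ has weight $0\neq x^{\star}_{1}+2x^{\star}_{2}$, so the highest weight vector necessarily lies in the traceless part $End_{0}m^{(1,0)}_{\mathbb{P}^{2}}$, whose highest weight is therefore also $x^{\star}_{1}+2x^{\star}_{2}$. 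For the character $\rho_{-l}$, I would use the identification $O(-1)\cong SU(3)\times_{S[U(1)\times U(2)],\rho_{1}}\mathbb{C}$ of Section \ref{sect interpreting universal bundle as an associated bundle}: the fiber at the base point $[e_{1}]$ is $\mathbb{C}\cdot e_{1}$, and the diagonal torus acts on $e_{1}$ with weight $x^{\star}_{1}$ by definition of the standard representation, so $\rho_{1}$ has weight $x^{\star}_{1}$ and consequently $\rho_{-l}$ has weight $-lx^{\star}_{1}$.

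Summing the two contributions, the highest weight of $Ad\otimes\rho_{-l}$ on $End_{0}m^{(1,0)}_{\mathbb{P}^{2}}\otimes\mathbb{C}$ is
\begin{equation*}
(x^{\star}_{1}+2x^{\star}_{2})+(-lx^{\star}_{1})=(-l+1)x^{\star}_{1}+2x^{\star}_{2}.
\end{equation*}
Since this representation is irreducible with this highest weight, by definition of the Ikeda-Taniguchi labelling it is equivalent to $V^{S[U(1)\times U(2)]}_{(-l+1)x^{\star}_{1}+2x^{\star}_{2}}$, as desired. The argument is essentially bookkeeping; the only delicate point is the translation of the $(1,1)$-entry character $\rho_{-l}$ into a functional on the full Cartan of $s[u(1)\times u(2)]$, which is pinned down by comparing with the torus action on the tautological line.
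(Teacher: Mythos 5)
Your proposal is correct and follows essentially the same route as the paper: compute the highest weight $x^{\star}_{1}+2x^{\star}_{2}$ of $End_{0}m^{(1,0)}_{\mathbb{P}^{2}}$ from the weights of $m^{(1,0)}_{\mathbb{P}^{2}}$ and its dual, add the weight $-lx^{\star}_{1}$ of the character $\rho_{-l}$, and read off the Ikeda--Taniguchi label. The only difference is that you explicitly verify irreducibility of the $S[U(1)\times U(2)]$-action (via Lemma \ref{lem Endm as su2rep} and the scalar action of $\widehat{H}_{1}$), a point the paper's proof leaves implicit but which is indeed needed for the highest-weight label to determine the representation.
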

\begin{proof}[Proof of Lemma \ref{lem weights for SU1U2}:] Because $S[U(1)\times U(2)]$ is a subgroup of $SU(3)$ having the same Cartan sub-algebra $\Upsilon_{su(3)}$, the highest weight on $End_{0}m^{(1,0)}_{\mathbb{P}^{2}}$ is the same as the highest weight as a $SU(3)-$representation, which is equal to $x_{1}^{\star}+2x^{\star}_{2}$.
 Because $\mathbb{C}$ is $1-$dimensional, the only weight for $\rho_{-l}$ is $-lx^{\star}_{1}$. Then in Ikeda-Taniguchi convention, the representation $Ad\otimes \rho_{-l}$ is denoted by $V^{S[U(1)\times U(2)]}_{(-l+1)x^{\star}_{1}+2x^{\star}_{2}}$. 
\end{proof}


\subsubsection{The infinite dimensional $SU(3)-$representation of invariant functions}
Using the translation in \eqref{equ Wab and IT convention} between two different conventions, we re-state the result of Ikeda-Taniguchi in the following. 
\begin{fact}\label{fact IT} (Ikeda-Taniguchi \cite[Proposition 5.1, Proposition 1.1]{IT})\\ Let $l$ be an integer, and let $a,b$ be nonnegative integers.  $W_{a,b}$ appears as an irreducible summand in $C^{\infty}_{S[U(1)\times U(2)], Ad\otimes \rho_{-l}}(SU(3),End_{0}m^{(1,0)}_{\mathbb{P}^{2}}\otimes \mathbb{C})$ if and only if \begin{equation}\label{equ fact IT} \max(3-a-2b,b-a-3) \leq l\leq \min(2a+b-3,3+b-a).\end{equation}
\end{fact}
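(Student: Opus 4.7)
\textbf{Proof plan for Fact \ref{fact IT}.} The plan is to convert the question about summands in the $\infty$-dimensional $SU(3)$-representation $C^{\infty}_{S[U(1)\times U(2)],\,Ad\otimes \rho_{-l}}(SU(3),End_{0}m^{(1,0)}_{\mathbb{P}^{2}}\otimes \mathbb{C})$ into a finite-dimensional branching problem for $SU(3)\downarrow S[U(1)\times U(2)]$, and then to read off the explicit inequalities from the classical branching rule. First I would invoke the Peter--Weyl theorem together with Frobenius reciprocity: since $SU(3)$ is compact and $K\triangleq S[U(1)\times U(2)]$ is closed, for any finite-dimensional $K$-representation $\mathcal{E}$ one has a $SU(3)$-equivariant Hilbert space decomposition
\begin{equation*}
L^{2}_{K,\rho_{\mathcal{E}}}(SU(3),\mathcal{E})\;=\;\bigoplus_{(a,b)}W_{a,b}\otimes \mathrm{Hom}_{K}\bigl(W_{a,b},\mathcal{E}\bigr),
\end{equation*}
whose smooth vectors give the $C^{\infty}$ space. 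By Lemma \ref{lem weights for SU1U2} the relevant $K$-representation is the irreducible $\mathcal{E}=V^{K}_{(-l+1)x^{\star}_{1}+2x^{\star}_{2}}$; hence $W_{a,b}$ occurs as an irreducible summand in the function space of interest if and only if $V^{K}_{(-l+1)x^{\star}_{1}+2x^{\star}_{2}}$ occurs as a summand of the restriction $W_{a,b}|_{K}$.

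The next step is to apply the branching rule for $SU(3)\downarrow S[U(1)\times U(2)]$. Since $K$ has the same Cartan subalgebra $\Upsilon_{su(3)}$ as $SU(3)$ and the simple factor of $K$ is essentially $SU(2)$ sitting in the lower $2\times 2$ block, the restriction of $W_{a,b}$ decomposes as a multiplicity-free sum indexed by the $SU(2)$-weights that appear upon restriction, with the $U(1)$-character determined by the remaining Cartan direction spanned by $\widehat{H}_{1}$. Writing this rule in Ikeda--Taniguchi coordinates $(x^{\star}_{1},x^{\star}_{2})$ and using Lemma \ref{equ Wab and IT convention} which identifies $W_{a,b}=V^{SU(3)}_{(a+b)x^{\star}_{1}+bx^{\star}_{2}}$, the summands appearing in $W_{a,b}|_{K}$ are precisely the $V^{K}_{k_{1}x^{\star}_{1}+k_{2}x^{\star}_{2}}$ whose highest weight sits inside the convex combinatorial region carved out by the Weyl-group translates of $(a+b)x^{\star}_{1}+bx^{\star}_{2}$.

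I would then specialize this branching description to the target highest weight $(k_{1},k_{2})=(-l+1,\,2)$ and simplify. A direct check of the standard branching inequalities (or equivalently a Gelfand--Tsetlin pattern calculation for $SU(3)$) gives that $V^{K}_{(-l+1)x^{\star}_{1}+2x^{\star}_{2}}$ appears in $W_{a,b}|_{K}$ exactly when $l$ lies in a closed interval whose endpoints are two linear functions of $a,b$ from each of two Weyl chambers; unwinding the inequalities yields the four bounds
\begin{equation*}
3-a-2b\;\le\; l,\qquad b-a-3\;\le\; l,\qquad l\;\le\; 2a+b-3,\qquad l\;\le\; 3+b-a,
\end{equation*}
which combine into \eqref{equ fact IT}. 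The main obstacle, and the reason I would lean on the cited Proposition 5.1 and Proposition 1.1 of \cite{IT}, is performing the branching computation without error: one has to fix Ikeda--Taniguchi's sign conventions for the roots $x^{\star}_{1},x^{\star}_{2},x^{\star}_{3}$, keep track of the identification $m^{(1,0)}_{\mathbb{P}^{2}}\leftrightarrow \mathbb{C}^{2}$ used to read the $\widehat{H}_{1}$-weight $-2l/\sqrt{3}$ of $\rho_{-l}$, and then verify that the four linear inequalities coming from the two Weyl translates are exactly those stated; any sign flip among $x^{\star}_{i}$ reshuffles the bounds, so the bookkeeping must be done with care.
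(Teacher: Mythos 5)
Your proposal follows essentially the same route as the paper: Frobenius reciprocity (equivalently, the Peter--Weyl decomposition of the induced representation) reduces the question to whether the irreducible $K$-type $V^{S[U(1)\times U(2)]}_{(-l+1)x^{\star}_{1}+2x^{\star}_{2}}$ of Lemma \ref{lem weights for SU1U2} occurs in $W_{a,b}|_{S[U(1)\times U(2)]}$, and Ikeda--Taniguchi's branching rule \cite[Proposition 5.1]{IT} is then specialized (the paper substitutes $m_{1}=a+b$, $m_{2}=b$, $k_{1}=-l+1$, $k_{2}=2$) and simplified.

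There is, however, one concrete gap in your final step (``unwinding the inequalities yields the four bounds \ldots which combine into \eqref{equ fact IT}''), and the paper's own ``elementary calculation'' elides the same point. The branching parameter $k$ in \cite[Proposition 5.1]{IT} is a nonnegative \emph{integer}, and here it is determined by $3k=a+2b-3+l$; so besides the four linear inequalities one also needs the congruence $a-b+l\equiv 0\pmod 3$. This congruence is exactly the matching of central characters: the center $\mathbb{Z}_{3}$ of $SU(3)$ lies in $S[U(1)\times U(2)]$, acts on $W_{a,b}$ by $e^{2\pi i(a-b)/3}$, and acts on $End_{0}m^{(1,0)}_{\mathbb{P}^{2}}\otimes\mathbb{C}$ via $\rho_{-l}$ by $e^{-2\pi i l/3}$. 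Without it the ``if'' direction of \eqref{equ fact IT} fails: for $(a,b)=(2,1)$ and $l=0$ one has $\max(-1,-4)\leq 0\leq\min(2,2)$, yet $W_{2,1}$ has nontrivial central character while $End_{0}m^{(1,0)}_{\mathbb{P}^{2}}$ with $\rho_{0}$ has trivial one, so $W_{2,1}$ does not occur. A careful Gelfand--Tsetlin (or \cite{IT}) computation along the lines you sketch therefore yields the four bounds \emph{and} this congruence; as written, your plan, like the paper, arrives at a condition that is necessary but not sufficient. (The explicit low-lying eigenvalues used in Corollary \ref{Cor 1} all happen to satisfy the congruence, so that application is unaffected, but the full spectral set \eqref{equ spec Laplacian P2} should be intersected with the congruence condition.)
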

\begin{proof}[Proof of Fact \ref{fact IT}:] The representation $SU(3)-$representation $W_{a,b}$ is also a representation of the subgroup $S[U(1)\times U(2)]$ by restriction.  The Frobenius reciprocal theorem (for example, see  \cite[Proposition 1.1]{IT})) implies that  the following two conditions are equivalent. 
\begin{itemize}\item As $SU(3)-$representations,  $W_{a,b}$ appears as an irreducible summand in\\ $C^{\infty}_{S[U(1)\times U(2)], Ad\otimes \rho_{-l}}(SU(3),End_{0}m^{(1,0)}_{\mathbb{P}^{2}}\otimes \mathbb{C})$. 
\item As $S[U(1)\times U(2)]-$representations, $(End_{0}m^{(1,0)}_{\mathbb{P}^{2}}\otimes \mathbb{C},Ad\otimes \rho_{-l})$ appears as an irreducible summand in $W_{a,b}$. 
\end{itemize}
It suffices to determine for which $a,\ b$ the latter happens.  

\cite[Proposition 5.1]{IT} states that $V^{S[U(1)\times U(2)]}_{k_{1}x^{\star}_{1}+k_{2}x^{\star}_{2}}$ appear as an irreducible summand in $V^{SU(3))}_{m_{1}x^{\star}_{1}+m_{2}x^{\star}_{2}}$ if and only if the following holds.
\begin{equation}m_{1}\geq k_{2}+k\geq m_{2}\geq k\geq 0,\ \textrm{and}\ \ k_{1}=m_{1}+m_{2}-k_{2}-3k.
\end{equation}
Because of Lemma \ref{equ Wab and IT convention} and \ref{lem weights for SU1U2}, to verify the second bullet point above, it suffices to let $m_{1}=a+b$, $m_{2}=b$, $k_{1}=-l+1$, $k_{2}=2$. Then the second bullet point holds if and only if 
\begin{equation}\label{equ 1 proof fact IT}a+b\geq 2+k\geq b\geq k\geq 0,\ \textrm{and}\ \ 3k=a+2b-3+l.
\end{equation}
Elementary calculation shows \eqref{equ 1 proof fact IT} is equivalent to \eqref{equ fact IT}.
 \end{proof}
\subsubsection{Proof of Theorem \ref{Thm spec of rough laplacian P2} and Proposition \ref{prop multiplicity TP2} }
In conjunction with the notation convention in Definition \ref{Def Cas} above, the known formula for the quadratic Casimir operator of $su(3)$ states: 
\begin{formula}\label{equ Cas su3}$Cas^{\mathcal{B}_{su(3)}}_{su(3),W_{a,b}}=\Sigma_{i=1}^{8}(e_{i}|_{W_{a,b}})\cdot (e_{i}|_{W_{a,b}})=(-\frac{4}{3}a^{2}-\frac{4}{3}b^{2}-4a-4b-\frac{4}{3}ab)Id. $
\end{formula}

The tools at our disposal now  can be assembled to achieve our goal. 
\begin{proof}[\textbf{Proof of Theorem \ref{Thm spec of rough laplacian P2} and Proposition \ref{prop multiplicity TP2}}:] We first prove Theorem \ref{Thm spec of rough laplacian P2}.\eqref{equ spec Laplacian P2}. It is a direct corollary of  Fact \ref{fact IT} on the irreducible summand of the infinite dimensional representation, Formula \ref{equ Cas su3} for the Casimir operator of $su(3)$, Formula \ref{formula of Cas K} for $Cas^{\mathcal{B}_{s[u(1)\times u(2)]}}_{s[u(1)\times u(2)],ad\otimes \rho_{-l}}$, 
and the general Formula \ref{formula Cas general} for rough Laplacian on a homogeneous bundle over a Killing reductive homogeneous space. 

Because of the $SU(3)-$equivariant isomorphism: $$(End_{0}T^{\prime}\mathbb{P}^{2})(l)\rightarrow SU(3)\times_{S[U(1)\times U(2)], Ad\otimes
\rho_{-l}}[(End_{0}m^{(1,0)}_{\mathbb{P}^{2}})\otimes \mathbb{C}],$$ 
the general formula \ref{formula Cas general} for $G=SU(3)$, $K=S[U(1)\times U(2)]$, and $\rho=Ad\otimes \rho_{-l}$  says that the spectrum of the rough Laplacian is equal to the spectrum of \begin{equation}\label{equ -Cas + Cas}-Cas^{\mathcal{B}_{su(3)}}_{su(3),L}+ Cas^{\mathcal{B}_{s[u(1)\times u(2)]}}_{s[u(1)\times u(2)],Ad\otimes \rho_{-l}}\end{equation} on the space $C^{\infty}_{S[U(1)\times U(2)], Ad\otimes \rho_{-l}}(SU(3),End_{0}m^{(1,0)}_{\mathbb{P}^{2}}\otimes \mathbb{C})$ of invariant functions. 

On the whole  $C^{\infty}_{S[U(1)\times U(2)], Ad\otimes \rho_{-l}}(SU(3),End_{0}m^{(1,0)}_{\mathbb{P}^{2}}\otimes \mathbb{C})$, by Formula \ref{formula of Cas K}, \\ $Cas^{\mathcal{B}_{s[u(1)\times u(2)]}}_{s[u(1)\times u(2)],Ad\otimes \rho_{-l}}$ acts by $-(\frac{4}{3}l^{2}+8)Id$. In the Peter-Weyl formulation (see the presentation in \cite[Section 5.1]{MS}), as  $SU(3)-$representations, on each irreducible summand $W_{a,b}$ of $C^{\infty}_{S[U(1)\times U(2)], Ad\otimes \rho_{-l}}(SU(3),End_{0}m^{(1,0)}_{\mathbb{P}^{2}}\otimes \mathbb{C})$, Formula \ref{equ Cas su3}  says that the action of  $-Cas^{\mathcal{B}_{su(3)}}_{su(3),L}$ is the scalar multiplication by $\frac{4}{3}(a^{2}+b^{2}+ab+3a+3b)Id$. Then on the irreducible summand $W_{a,b}$, 
the action \eqref{equ -Cas + Cas} is the scalar multiplication by $$\frac{4}{3}(a^{2}+b^{2}+ab+3a+3b)-\frac{4}{3}l^{2}-8.$$

 Fact \ref{fact IT}  says that $W_{a,b}$ appears as an irreducible summand if and only if the condition on the right side of  \eqref{equ spec Laplacian P2} holds.  The proof of Theorem \ref{Thm spec of rough laplacian P2}.\eqref{equ spec Laplacian P2} is  complete. 

 Hence, Theorem \ref{Thm spec of rough laplacian P2}.\eqref{equ proof of Thm rough laplacian} directly follows from the spectral splitting in Formula \ref{formula laplace on S5 vs laplace on CP2}  and Theorem \ref{Thm spec of rough laplacian P2}.\eqref{equ spec Laplacian P2}: we only need to add $l^{2}$ to the $\frac{4}{3}(a^{2}+b^{2}+ab+3a+3b)-\frac{4}{3}l^{2}-8$ in \eqref{equ spec Laplacian P2}.
  
  Next, we address the multiplicities. It is evident from  the first $4$ paragraphs in the underlying proof that the eigenspace of any $\lambda_{l}\in Spec \nabla^{\star}\nabla |_{(End_{0}T^{\prime}\mathbb{P}^{2})(l)}$ is isomorphic to the direct sum of all those $W_{a,b}$ such that 
  \begin{itemize}\item $W_{a,b}$ is a summand in $C^{\infty}_{S[U(1)\times U(2)], Ad\otimes \rho_{-l}}(SU(3),End_{0}m^{(1,0)}_{\mathbb{P}^{2}}\otimes \mathbb{C})$ i.e. the conditions for $a,\ b,$ on the right side of \eqref{equ spec Laplacian P2} holds;
  \item the value of $\frac{4}{3}(a^{2}+b^{2}+ab+3a+3b)-\frac{4}{3}l^{2}-8$ is equal to $\lambda_{l}$. 
  \end{itemize}
  In the terminology of Proposition  \ref{prop multiplicity TP2}, the above precisely means that $(a,b)\in S^{l}_{\lambda_{l}}$. The proof of Proposition \ref{prop multiplicity TP2}.\eqref{equ 0 prop multiplicity TP2} is complete.  
  
  Hence, Proposition \ref{prop multiplicity TP2}.\eqref{equ 1 prop multiplicity TP2} follows by \eqref{equ 0 prop multiplicity TP2}, and the spectral splitting \eqref{equ spec mul} counting multiplicity. \end{proof}
  \subsection{The  proof of Corollary \ref{Cor 1}\label{sect proof of Cor 1}}
 
Corollary \ref{Cor 1} can be proved using Theorem \ref{Thm spec of rough laplacian P2}, \ref{Thm 1}, and the following Lemma on cohomology. 
 \begin{lem}\label{lem h1 EndTP2} $h^{1}[\mathbb{P}^{2},\ (EndT^{\prime}\mathbb{P}^{2})(l)]=\left\{ \begin{array}{cc} 3\ &  \textrm{if}\ l=-1\ \textrm{or}\ -2,\\
 0\ & \textrm{otherwise}. \end{array}\right.$
 
Consequently, $h^{0}[\mathbb{P}^{2},\ (End_{0}T^{\prime}\mathbb{P}^{2})(l)]=\left\{ \begin{array}{cc} \frac{3l(l+3)}{2}\ &  \textrm{if}\ l> 0,\\
 0\ &  \textrm{if}\ l\leq 0. \end{array}\right.$
 \end{lem}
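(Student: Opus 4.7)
\textbf{Proof proposal for Lemma \ref{lem h1 EndTP2}.} The plan is to reduce everything to cohomology of $\Omega^1_{\mathbb{P}^2}(k)$, which is governed by Bott's formula. Tensoring the Euler sequence
$$0\to \mathcal{O}\to \mathcal{O}(1)^{\oplus 3}\to T'\mathbb{P}^2\to 0$$
with $\Omega^1_{\mathbb{P}^2}(l)$, and using the identification $T'\mathbb{P}^2\otimes \Omega^1_{\mathbb{P}^2}=EndT'\mathbb{P}^2$, yields the short exact sequence
$$0\to \Omega^1(l)\to \Omega^1(l+1)^{\oplus 3}\to (EndT'\mathbb{P}^2)(l)\to 0.$$
I will exploit its long exact sequence in cohomology.

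Bott's formula on $\mathbb{P}^2$ records that the only non-zero cohomology of $\Omega^1(k)$ is $h^0(\Omega^1(k))=k^2-1$ for $k\geq 2$, $h^1(\Omega^1(0))=1$, and $h^2(\Omega^1(k))=k^2-1$ for $k\leq -2$. Plugging this in case by case: for $l\geq 0$ the slot $h^1(\Omega^1(l+1))^{\oplus 3}$ and $h^2(\Omega^1(l))$ both vanish, so $h^1((EndT'\mathbb{P}^2)(l))=0$; for $l=-1$ the only non-trivial piece of the sequence is $0\to H^1(\Omega^1(0))^{\oplus 3}\to H^1((EndT'\mathbb{P}^2)(-1))\to 0$, giving $h^1=3$; for $l=-2$ the only non-trivial piece is $0\to H^1((EndT'\mathbb{P}^2)(-2))\to H^2(\Omega^1(-2))\to 0$, again giving $h^1=3$. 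The remaining range $l\leq -3$ is handled by Serre duality, which (since $EndT'\mathbb{P}^2$ is self-dual and $K_{\mathbb{P}^2}=\mathcal{O}(-3)$) gives the symmetry $h^1((EndT'\mathbb{P}^2)(l))=h^1((EndT'\mathbb{P}^2)(-l-3))$, reducing to the already treated cases $-l-3\geq 0$.

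For the consequence about $h^0((End_0T'\mathbb{P}^2)(l))$, I use the decomposition $EndT'\mathbb{P}^2=(End_0 T'\mathbb{P}^2)\oplus \mathcal{O}\cdot Id$, which gives
$$h^0((End_0T'\mathbb{P}^2)(l))=h^0((EndT'\mathbb{P}^2)(l))-h^0(\mathcal{O}(l)).$$
When $l\geq 1$, the long exact sequence and Bott's formula yield $h^0((EndT'\mathbb{P}^2)(l))=3[(l+1)^2-1]-\max(l^2-1,0)=2l^2+6l+1$ (the case $l=1$ is verified separately), and subtracting $h^0(\mathcal{O}(l))=\binom{l+2}{2}$ produces $\tfrac{3l(l+3)}{2}$. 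When $l\leq 0$, the irreducible Hermitian Yang-Mills condition and Fact \ref{fact stable=irred} make $T'\mathbb{P}^2$ stable, and the same Kobayashi-type vanishing appealed to in Lemma \ref{lem h1} forces the space of (twisted) traceless endomorphisms to vanish, i.e.\ $h^0((End_0T'\mathbb{P}^2)(l))=0$.

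The argument is essentially bookkeeping, and the only real care needed is in Step~3, where one must track which Bott pieces land in which slots of the long exact sequence for each value of $l\in\{0,-1,-2\}$; for all other $l$ the vanishing is automatic. The main (minor) obstacle is writing out the case $l=-2$ cleanly, since there the relevant contribution comes from $H^2$ rather than $H^1$, and one has to check that the connecting map $H^1((EndT'\mathbb{P}^2)(-2))\to H^2(\Omega^1(-2))$ is an isomorphism, which follows since the flanking groups $H^1(\Omega^1(-1))^{\oplus 3}$ and $H^2(\Omega^1(-1))^{\oplus 3}$ both vanish by Bott.
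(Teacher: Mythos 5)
Your proof is correct and follows essentially the same route as the paper: twist the Euler sequence by $\Omega^{1}(l)$, read off the long exact sequence using Bott's formula, and dispose of the remaining range of $l$ by Serre duality. The only divergence is in the $h^{0}$ consequence, where the paper simply invokes the Riemann--Roch identity of Lemma \ref{lem h1}, while you recompute $h^{0}[\mathbb{P}^{2},(EndT^{\prime}\mathbb{P}^{2})(l)]$ directly from the same long exact sequence for $l\geq 1$ and appeal to stability and the Kobayashi-type vanishing for $l\leq 0$; both derivations are valid and of comparable length.
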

 The proof of Lemma \ref{lem h1 EndTP2} is completely routine via Euler sequence and Bott formula for sheaf cohomology on $\mathbb{P}^{n}$ (see \cite{Okonek}). We defer it to Appendix \ref{Appendix Some algebro-geometric calculations}.
 

\begin{proof}[\textbf{Proof of Corollary \ref{Cor 1}}:]

 Under the setting of Theorem \ref{Thm 1}, when $$\pi^{\star}_{5,4}EndE= \pi^{\star}_{5,4}End(T^{\prime}\mathbb{P}^{2})$$ is equipped with the pullback Fubini-Study connection, Lemma \ref{lem h1 EndTP2} means that except when  $l\neq -1$ or $-2$, the sheaf cohomology   has no contribution to $Spec P$.

 On the other hand, Theorem \ref{Thm spec of rough laplacian P2} addresses the source $Spec\nabla^{\star}\nabla|_{\mathbb{S}^{5}}$ of the other part of $SpecP$. 
 
 We need the fact that any $W_{a,b}$ appears in the infinite-dimensional representation at most once. This is because the representation of the associated bundle is irreducible (see Lemma \ref{lem weights for SU1U2}). Please see the Frobenius reciprocal theorem (stated in \cite[Proposition 1.1]{IT}), and also \cite[Proposition 5.1]{IT}. 
 
 We seek for those eigenvalues of $\nabla^{\star}\nabla |_{\mathbb{S}^{5}}$ that is strictly less than $8$.  When $l\geq 3$, because of the ``$+l^{2}$" in Formula \ref{formula laplace on S5 vs laplace on CP2}.\eqref{equ spec mul}, the  eigenvalues of $\nabla^{\star}\nabla |_{\mathbb{S}^{5}}$ generated are $\geq 9$. Thus, it suffices to assume $-2\leq l\leq 2$ and  seek for those eigenvalues of $\nabla^{\star}\nabla |_{(End_{0}T^{\prime}\mathbb{P}^{2})(l)}$ that is strictly less than $8$.

  Under the conditions on $a,\ b$ in Theorem \ref{Thm spec of rough laplacian P2}.$\eqref{equ spec Laplacian P2}$, elementary calculation shows that this  can only happen for the following values of $l,\ a, \ b$.
 \begin{itemize}\item $l=0,\ (a,b)=(1,1)$. In this case, the corresponding eigenvalue of  $\nabla^{\star}\nabla |_{End_{0}T^{\prime}\mathbb{P}^{2}}$ is $4$, the eigenspace is isomorphic to $W_{1,1}$.
 \item $l=1,\ (a,b)=(2,0)$. In this case, the corresponding eigenvalue of  $\nabla^{\star}\nabla |_{(End_{0}T^{\prime}\mathbb{P}^{2})(1)}$ is $4$, the eigenspace is isomorphic to $W_{2,0}$.
  \item $l=-1,\ (a,b)=(0,2)$. In this case, the corresponding eigenvalue of  $\nabla^{\star}\nabla |_{(End_{0}T^{\prime}\mathbb{P}^{2})(-1)}$ is $4$, the eigenspace is isomorphic to $W_{0,2}$.
 \end{itemize}
 
 Then, still according to Formula \ref{formula laplace on S5 vs laplace on CP2}.\eqref{equ spec mul}, the above three cases generate the numbers $4$ and $5$ in  $Spec \nabla^{\star} \nabla |_{\mathbb{S}^{5}}$. The multiplicity of $4$ is equal to $dimW_{1,1}=8$, the multiplicity of $5$ is equal to $dimW_{2,0}+dimW_{0,2}=12$. 
 
The  number $4$ in  $Spec \nabla^{\star} \nabla |_{\mathbb{S}^{5}}$ generates the following values in $SpecP$. $$2\sqrt{2}-1,\ 2\sqrt{2}-2,\ -1-2\sqrt{2},\ -2-2\sqrt{2}.$$

The  number $5$ in  $Spec \nabla^{\star} \nabla |_{\mathbb{S}^{5}}$ generates the following values in $Spec P$. $$1,\ 2,\ -4,\ -5.$$

Apparently, among the above $8$ numbers, $2\sqrt{2}-2$ and $1$  are the only ones in the interval $(0,1]$. 

Because $2\sqrt{2}-2$ is not an integer, its multiplicity is 16 i.e.  twice of the multiplicity of $4\in Spec \nabla^{\star} \nabla |_{\mathbb{S}^{5}}$. 

The other number $1$ is an integer, in view of Lemma \ref{lem h1 EndTP2},  the multiplicity is  
$$2dim\mathbb{E}_{5} (\nabla^{\star} \nabla |_{\mathbb{S}^{5}})- 2h^{0}[\mathbb{P}^{2},\ (End_{0}T^{\prime}\mathbb{P}^{2})(1)]=24-12=12.$$

The proof of Table \eqref{equ tabular eigenvalue and multiplicity}  is complete. This is a sample of how the multiplicity of each eigenvalue of $P$ is determined. 
 
  \end{proof}

\appendix
\appendixpage 

\section{Elementary Sasakian geometry\label{sect appendix Sasakian}}Still let $D$ denote the contact distribution, and let $D^{\star}$ denote the contact co-distribution. In the main body and in the subsequent Appendix, we frequently  appeal to the following identities on Sasakian geometry of $\mathbb{S}^{5}\rightarrow \mathbb{P}^{2}$ (please also see \cite{Sparks}). 
\begin{equation}\label{equ 1 Appendix}
\textrm{For any section}\ X\ \textrm{of the contact distribution}\ D,\ \nabla_{X}\xi=J_{0}(X),\ \nabla_{X}\eta=[J_{0}(X)]^{\sharp}.
\end{equation}
We also have the following formulas for Hessians of the Reeb vector field and contact form. For any point $p\in \mathbb{S}^{5}$ and $X,\ Y \in D|_{p}$, the following is true.   \begin{equation}(\nabla^{2}\xi)(X,Y)=-<X,Y>\xi,\ (\nabla^{2}\eta)(X,Y)=-<X,Y>\eta.\  \textrm{Consequently},\
 \nabla^{\star}\nabla\eta=4\eta.\label{equ hessian of xi and eta}
\end{equation}

For the point-wise calculations in the proof of  Lemma \ref{lem formula of the model dirac deformation operator} and others, it is helpful to have a transverse geodesic frame in the following sense. 
\begin{lem}\label{lem Kahler geodesic frame} (Properties of a  transverse geodesic frame)  Let $(x_{i},\ i=1,...,4)$ be a K\"ahler geodesic coordinate with respect to the (Fubini-Study) metric $\frac{d\eta}{2}$ at (near) an arbitrary point $[p]\in \mathbb{P}^{2}$. Then, for any $\beta$ among $0,\ 1,\ 2$ such that $[p]\in U_{\beta,\mathbb{P}^{2}}$, the following vector fields $$[\xi;\ v_{i}\triangleq \frac{\partial}{\partial x_{i}}-\eta(\frac{\partial}{\partial x_{i}})\xi,i=1,2,3,4]$$ is a frame near the Reeb orbit $\pi^{-1}_{5,4}[p]$, and is orthonormal on $\pi^{-1}_{5,4}[p]$. Moreover, the following holds on the Reeb orbit.  $$(\nabla_{v_{i}}v_{i})|_{\pi^{-1}_{5,4}[p]}=0,\ [\nabla_{v_{i}}(J_{0}v_{i})]|_{\pi^{-1}_{5,4}[p]}=-\xi.$$ 
\end{lem}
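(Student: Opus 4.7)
The plan is to verify the four claims in turn, the key observation being that in Sasakian coordinates each $v_i$ is a \emph{basic} horizontal lift of $\partial/\partial x_i$, which reduces everything to O'Neill's formulas for the Riemannian submersion $\pi_{5,4}\colon \mathbb{S}^5 \to \mathbb{P}^2$.

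\textbf{Frame and orthonormality.} From $v_i = \partial/\partial x_i - \eta(\partial/\partial x_i)\xi$ and $\eta(\xi) = 1$ one reads off $\eta(v_i) = 0$, so each $v_i$ lies in $D$; combined with $\xi$ they span $T\mathbb{S}^5$ wherever $(\partial/\partial x_i)_{i=1}^{4}$ are independent, giving a smooth frame near $\pi_{5,4}^{-1}[p]$. Because $\pi_{5,4,\star}v_i = \partial/\partial x_i$ and $\pi_{5,4}$ is a Riemannian submersion, $\langle v_i, v_j\rangle_{\mathbb{S}^5} = \langle \partial/\partial x_i, \partial/\partial x_j\rangle_{FS}$; by the FS normality of the chosen coordinates at $[p]$ this equals $\delta_{ij}$ at every point of the Reeb orbit, and $\xi$ is a unit vector orthogonal to $D$.

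\textbf{Basicness.} Working in the Sasakian chart $(\theta_\beta, x_i)$ for any $\beta$ with $[p]\in U_{\beta,\mathbb{P}^2}$, Fact \ref{fact Reeb is angular} gives $\xi = \partial/\partial \theta_\beta$, so $[\xi, \partial/\partial x_i]=0$, and together with $L_\xi\eta = 0$ this yields $L_\xi v_i = 0$. Thus each $v_i$ is the basic horizontal lift of $\partial/\partial x_i$. Since the transverse complex structure $J_0$ is Reeb-invariant, $J_0 v_i$ is likewise the basic horizontal lift of $J_{FS}\partial/\partial x_i$.

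\textbf{The two covariant-derivative identities.} For any two basic horizontal $X, Y$ on the Sasakian bundle, O'Neill's formula takes the form
\begin{equation*}
\nabla_X Y \;=\; \bigl(\nabla^{FS}_{\pi_{5,4,\star}X}\pi_{5,4,\star}Y\bigr)^{\mathrm{hor}} + \tfrac{1}{2}\,\eta([X,Y])\,\xi,
\end{equation*}
where $(\cdot)^{\mathrm{hor}}$ denotes the horizontal lift, since the vertical part of the bracket of two horizontal fields is $\eta([X,Y])\xi$. Setting $X = Y = v_i$, the horizontal summand vanishes at $\pi_{5,4}^{-1}[p]$ by the geodesic-coordinate identity $\nabla^{FS}_{\partial/\partial x_i}\partial/\partial x_i|_{[p]} = 0$, and the vertical summand is zero, yielding $\nabla_{v_i}v_i = 0$ on the orbit. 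For $X = v_i$, $Y = J_0 v_i$, K\"ahlerness gives $\nabla^{FS}_{\partial/\partial x_i}J_{FS}\partial/\partial x_i = J_{FS}\nabla^{FS}_{\partial/\partial x_i}\partial/\partial x_i$, which again vanishes at $[p]$, so the horizontal summand is zero; and on $\pi_{5,4}^{-1}[p]$,
\begin{equation*}
\eta([v_i, J_0 v_i]) \;=\; -d\eta(v_i, J_0 v_i) \;=\; -2\,\tfrac{d\eta}{2}(v_i, J_0 v_i) \;=\; -2\langle v_i, v_i\rangle \;=\; -2,
\end{equation*}
so $\nabla_{v_i}(J_0 v_i) = \tfrac{1}{2}(-2)\xi = -\xi$ as required.

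The main bookkeeping obstacle is the basicness verification: one must check that the correction $-\eta(\partial/\partial x_i)\xi$ converts $\partial/\partial x_i$ into a genuinely Reeb-invariant field, so that O'Neill's formula applies without extra $T$-tensor terms; thereafter only signs remain, in particular pairing the factor $\tfrac{1}{2}$ in O'Neill's formula with the normalization $\tfrac{d\eta}{2}=\omega_{FS}$.
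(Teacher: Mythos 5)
Your proof is correct and follows essentially the same route as the paper: both establish $[\xi,v_{i}]=0$ so that $v_{i}$ is basic, identify the horizontal part of $\nabla_{v_{i}}v_{j}$ with $\nabla^{FS}_{\partial/\partial x_{i}}\partial/\partial x_{j}$ (which vanishes at $[p]$ by the geodesic-coordinate choice), and read off the vertical part from $d\eta$ evaluated on horizontal vectors. The only difference is presentational: you quote O'Neill's submersion formula, while the paper derives the identity $\nabla_{v_{i}}v_{j}=(\pi^{\star}\nabla^{FS})_{v_{i}}v_{j}+\xi[\frac{d\eta}{2}(v_{j},v_{i})]$ directly from the Koszul formula and the splitting $g_{\mathbb{S}^{5}}=\pi^{\star}_{5,4}g_{FS}+\eta\otimes\eta$, which amounts to the same computation.
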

Near the Reeb orbit, we call the $(v_{i},\ i=1,2,3,4)$ above a transverse geodesic frame. It is generated by the geodesic coordinate on $\mathbb{P}^{2}$. 
\begin{proof}[Proof of Lemma \ref{lem Kahler geodesic frame}:] We first show 
\begin{equation}\label{equ 0 proof Lem Kah geo coordinate}[v_{i},v_{j}]=[d\eta (\frac{\partial}{\partial x_{j}},\frac{\partial}{\partial x_{i}})]\xi.
\end{equation}
 Because the Reed vector-field $\xi$ is a coordinate vector field in $U_{\beta, \theta_{\beta}}$ for any $\beta=0,1,$ or $2$, we have the vanishing \begin{equation}\label{equ xi ddxi commute}[\xi,\frac{\partial}{\partial x_{i}}]=0\ \textrm{for any}\ i.\end{equation} Then we calculate
\begin{eqnarray}& &[v_{i},v_{j}]=[\frac{\partial}{\partial x_{i}}-\eta(\frac{\partial}{\partial x_{i}})\xi,\ \frac{\partial}{\partial x_{j}}-\eta(\frac{\partial}{\partial x_{j}})\xi]=[-\frac{\partial}{\partial x_{i}}\eta(\frac{\partial}{\partial x_{j}})+\frac{\partial}{\partial x_{j}}\eta(\frac{\partial}{\partial x_{i}})]\xi\nonumber
\\&=& [d\eta (\frac{\partial}{\partial x_{j}},\frac{\partial}{\partial x_{i}})]\xi.
\end{eqnarray}
The proof of \eqref{equ 0 proof Lem Kah geo coordinate} is complete.

The vanishing \eqref{equ xi ddxi commute} above implies the following vanishing. 
\begin{equation}\label{equ xi vi commute}[\xi,v_{i}]=0\ \textrm{for any}\ i.\end{equation} The identity \eqref{equ 0 proof Lem Kah geo coordinate} implies that the Lie bracket of $v_{i}$ and $v_{j}$ is perpendicular to both $v_{i}$ and $v_{j}$. Then, using the Koszul formula \cite[page 25]{Petersen}, we find 
\begin{equation}\label{equ 1 proof Lem Kah geo coordinate}2\langle\nabla_{v_{i}}v_{j}, v_{k}\rangle=v_{i}\langle v_{j},v_{k} \rangle-v_{k}\langle v_{i},v_{j}\rangle+v_{j}\langle v_{k},v_{i}\rangle. 
\end{equation}
We recall the following formula for the standard metric on $\mathbb{S}^{5}$. 
\begin{equation}g_{\mathbb{S}^{5}}=\pi^{\star}_{5,4}g_{FS}+\eta\otimes \eta. 
\end{equation}
Then \eqref{equ 1 proof Lem Kah geo coordinate} implies 
\begin{eqnarray}\nonumber\label{equ 2 proof Lem Kah geo coordinate}& &2\langle\nabla_{v_{i}}v_{j}, v_{k}\rangle=\frac{\partial}{\partial x_{i}}\langle \frac{\partial}{\partial x_{j}},\frac{\partial}{\partial x_{k}} \rangle_{\mathbb{P}^{2}}-\frac{\partial}{\partial x_{k}}\langle \frac{\partial}{\partial x_{i}},\frac{\partial}{\partial x_{j}}\rangle_{\mathbb{P}^{2}}+\frac{\partial}{\partial x_{j}}\langle \frac{\partial}{\partial x_{k}},\frac{\partial}{\partial x_{i}}\rangle_{\mathbb{P}^{2}}
\\&=& 2\langle \nabla^{FS}_{\frac{\partial}{\partial x_{i}}} \frac{\partial}{\partial x_{j}},\frac{\partial}{\partial x_{k}} \rangle_{\mathbb{P}^{2}}=2\langle \nabla^{FS}_{\pi_{5,4,\star}v_{i}}\pi_{5,4,\star}v_{j},\pi_{5,4,\star}v_{k} \rangle_{\mathbb{P}^{2}} \nonumber 
\\&=& 2\langle (\pi^{\star}_{5,4}\nabla^{FS})_{v_{i}} v_{j},v_{k} \rangle\end{eqnarray}
Moreover, the Lie bracket identity \eqref{equ xi vi commute} and the Koszul formula yield that 
\begin{equation}\label{equ 3 proof Lem Kah geo coordinate}2\langle\nabla_{v_{i}}v_{j}, \xi\rangle=<[v_{i},v_{j}],\xi>=d\eta(\frac{\partial}{\partial x_{j}},\frac{\partial}{\partial x_{i}}).
\end{equation}
Thus the identities  \eqref{equ 2 proof Lem Kah geo coordinate} and \eqref{equ 3 proof Lem Kah geo coordinate}  yield
\begin{equation}\nabla_{v_{i}} v_{j}=(\pi^{\star}\nabla^{FS})_{v_{i}} v_{j}+\xi[\frac{d\eta}{2}(v_{j},v_{i})].\end{equation}

Via the tangent map $\pi_{5,4,\star}D\rightarrow T\mathbb{P}^{2}$ which is an isometry, we verify that $J_{0}=\pi_{5,4}^{\star}J_{\mathbb{P}^{2}}$. This implies that $J_{0}v_{1}=v_{2},\ J_{0}v_{3}=v_{4}$, because $J_{\mathbb{P}^{2}}\frac{\partial}{\partial x_{1}}=\frac{\partial}{\partial x_{2}},\ J_{\mathbb{P}^{2}}\frac{\partial}{\partial x_{3}}=\frac{\partial}{\partial x_{4}}$. Using that $(\nabla^{FS}_{\frac{\partial}{\partial x_{i}}}\frac{\partial}{\partial x_{j}})|_{[p]}=0$, the following holds true. 
\begin{eqnarray}& &\nabla_{v_{i}} (J_{0}v_{i})=(\pi_{5,4}^{\star}\nabla^{FS})_{v_{i}}J_{0}v_{i}+\xi[\frac{d\eta}{2}(J_{0}v_{i},v_{i})] =\nabla^{FS}_{\frac{\partial}{\partial x_{i}}}(J_{\mathbb{P}^{2}}\frac{\partial}{\partial x_{i}})+\xi[\frac{d\eta}{2}(J_{0}v_{i},v_{i})]\nonumber
\\&=&-\xi\ \ \ \ \textrm{on the Reeb orbit}\ \pi^{-1}_{5,4}[p]. 
\end{eqnarray}
Similarly, we compute
\begin{equation}(\nabla_{v_{i}}v_{i})|_{q}=0+\xi[\frac{d\eta}{2}(v_{i},v_{i})]|_{q}=0.\nonumber
\end{equation}
The proof is complete. 
\end{proof}

The transverse geodesic frame helps in proving the following two formulas which are applied in the proof of the Bochner formulas (see Lemma \ref{lem Bochner} above).
\begin{formula}\label{formula 1 proof lem rough laplacian in eta and a0 component} In the setting of Lemma \ref{lem Bochner}, $(\nabla^{\star}\nabla a_{0})(\xi)=2d^{\star_{0}}_{0}J_{0}(a_{0})$. 
\end{formula}
\begin{proof}[Proof of Formula \ref{formula 1 proof lem rough laplacian in eta and a0 component}]: Using that $a_{0}$ is semi-basic i.e. $a_{0}(\xi)=0$, Leibniz-rule yields 
\begin{equation}0=(\nabla^{\star}\nabla) [a_{0}(\xi)]=(\nabla^{\star}\nabla a_{0})(\xi)-2tr(\nabla a_{0}\otimes \nabla \xi)+a_{0} (\nabla^{\star}\nabla \xi).
\end{equation}
Because $\nabla^{\star}\nabla \xi=4\xi$ (see \eqref{equ hessian of xi and eta}), we find $a_{0} (\nabla^{\star}\nabla \xi)
=0$, hence \begin{equation}(\nabla^{\star}\nabla a_{0})(\xi)=2tr(\nabla a_{0}\otimes \nabla \xi).\end{equation}

Therefore, at an arbitrary $p\in \mathbb{S}^{5}$, let $v_{i}$ be a transverse geodesic frame given by Lemma \ref{lem Kahler geodesic frame}, using $\nabla_{\xi}\xi=0$, we compute \begin{eqnarray}& &2tr(\nabla a_{0}\otimes \nabla \xi)|_{p}=2\Sigma_{i=1}^{4}(\nabla_{v_{i}} a_{0})(\nabla_{v_{i}} \xi)|_{p}=2\Sigma_{i=1}^{4}(\nabla_{v_{i}} a_{0})(J_{0}v_{i})|_{p}\nonumber
\\&=& (2\Sigma_{i=1}^{4}\nabla_{v_{i}} [a_{0}(J_{0}v_{i})]-2a_{0}[\Sigma_{i=1}^{4}\nabla_{v_{i}}(J_{0}v_{i})])|_{p}= 2\Sigma_{i=1}^{4}\nabla_{v_{i}} [a_{0}(J_{0}v_{i})]|_{p}\nonumber
\\& =&-2\Sigma_{i=1}^{4}\nabla_{v_{i}} [(J_{0}a_{0})(v_{i})]|_{p}\nonumber
\\&=&2d^{\star_{0}}_{0}J_{0}(a_{0})|_{p}.
\end{eqnarray}
The proof is complete by the above two identities.
\end{proof}

\begin{formula}\label{clm 3 proof lem rough laplacian in eta and a0 component}In the setting of Lemma \ref{lem Bochner},  $[\nabla^{\star}\nabla(\eta a_{\eta})]=\eta(4a_{\eta}+\nabla^{\star}\nabla a_{\eta})-2J_{0}(d_{0}a_{\eta}).$
\end{formula}
\begin{proof}[Proof of formula \ref{clm 3 proof lem rough laplacian in eta and a0 component}:] We still work with a transverse geodesic frame $v_{i}\ (i=1,2,3,4)$ at an arbitrary $p\in \mathbb{S}^{5}$. Using the fundamental identities \eqref{equ hessian of xi and eta}, we calculate
\begin{eqnarray}\label{equ laplace of aeta eta}
& &\nabla^{\star}\nabla (a_{\eta}\eta)=(\nabla^{\star}\nabla a_{\eta})\eta+a_{\eta}\nabla^{\star}\nabla \eta-2tr(\nabla a_{\eta}\otimes \nabla\eta)
\\&=&(\nabla^{\star}\nabla a_{\eta})\eta+4a_{\eta}\eta-2tr(\nabla a_{\eta}\otimes \nabla\eta).\nonumber
\end{eqnarray}

In view of the local formula \eqref{equ d0u Euc formula} for $d_{0}a_{\eta}$, the transverse geodesic frame yields 
\begin{equation}\label{equ 0 proof of clm 3 proof lem rough laplacian in eta and a0 component}(\nabla_{v_{i}}a_\eta)dx^{i}|_{p}=(d_{0}a_{\eta})|_{p}.
\end{equation}
Using the vanishing $\nabla_{\xi}\eta=0$ and the formula $\nabla_{X} \eta=[J_{0}(X^{\parallel_{0}})]^{\sharp}$, the trace term in the above identity can be additionally analyzed as follows. 
\begin{eqnarray*}& & tr(\nabla a_{\eta}\otimes \nabla\eta)|_{p}=[\Sigma_{i=1}^{4} \nabla_{v_{i}} a_{\eta}\otimes \nabla_{v_{i}}\eta+L_{\xi} a_{\eta}\otimes \nabla_{\xi}\eta]|_{p}=[\Sigma_{i=1}^{4} \nabla_{v_{i}} a_{\eta}\otimes \nabla_{v_{i}}\eta ]|_{p}
\\&=&J_{0}(dx^{i})(\nabla_{v_{i}}a_\eta)|_{p}
\\&=&J_{0}(d_{0}a_{\eta})|_{p} \ \ \ \textrm{by}\ \eqref{equ 0 proof of clm 3 proof lem rough laplacian in eta and a0 component}.
\end{eqnarray*}
 The desired identity follows.  
\end{proof}

Elementary calculations establish the formula for the contact form $\eta$, and the formula for  $G$ and $H$. 
 \begin{proof}[\textbf{Proof of Formula} \ref{formula eta}:]It suffices to check it in $U_{0,\mathbb{C}^{3}}$, the proof is similar in $U_{1,\mathbb{C}^{3}}$ and $U_{2,\mathbb{C}^{3}}$. We first have $Z_{0}\frac{\partial}{\partial Z_{1}}=\frac{Z_{0}\bar{Z}_{1}}{2r}\frac{\partial}{\partial r}+\frac{\partial}{\partial u_{1}}$ and $Z_{0}\frac{\partial}{\partial Z_{2}}=\frac{Z_{0}\bar{Z}_{2}}{2r}\frac{\partial}{\partial r}+\frac{\partial}{\partial u_{2}}$. Using 
 $$\eta=\frac{1}{2}d^{c}\log (|Z_{0}|^{2}+|Z_{1}|^{2}+|Z_{2}|^{2})\ \ (\textrm{see definition}\ \eqref{equ def eta}),$$
 we find
 \begin{equation}\eta (\frac{\partial}{\partial u_{1}})=\eta (Z_{0}\frac{\partial}{\partial Z_{1}})=-\frac{\sqrt{-1}\bar{u}_{1}}{2\phi_{0}}=-\frac{\sqrt{-1}}{2}\frac{\partial \log \phi_{0}}{\partial u_{1}}. 
 \end{equation}
 Similarly, we have $\eta (\frac{\partial}{\partial u_{2}})=-\frac{\sqrt{-1}}{2}\frac{\partial \log \phi_{0}}{\partial u_{2}}$. Taking conjugation, we then obtain 
$$\eta (\frac{\partial}{\partial \bar{u}_{1}})=\frac{\sqrt{-1}}{2}\frac{\partial \log \phi_{0}}{\partial \bar{u}_{1}},\ \ \eta (\frac{\partial}{\partial \bar{u}_{2}})=\frac{\sqrt{-1}}{2}\frac{\partial \log \phi_{0}}{\partial \bar{u}_{2}}.$$
The proof is complete by observing that $\eta$ coincides with $d\theta_{0}+\frac{d^{c}\log\phi_{0}}{2}$ on the basis $$\frac{\partial}{\partial \theta_{0}},\ \frac{\partial}{\partial u_{1}},\ \frac{\partial}{\partial u_{2}},\ \frac{\partial}{\partial \bar{u}_{1}},\ \frac{\partial}{\partial \bar{u}_{2}}\ \
\ \textrm{for} \ T^{\mathbb{C}}\mathbb{S}^{5}.$$ 
 \end{proof}

\begin{proof}[\textbf{Proof of Lemma} \ref{lem G and H}:] We routinely verify in $U_{0,\mathbb{C}^{3}}$ that
\begin{eqnarray*}& & \frac{dZ_{0}}{Z_{0}}=\frac{1}{Z_{0}}d(\frac{re^{\sqrt{-1}\theta_{0}}}{\sqrt{\phi_{0}}})=\frac{dr}{r}-\frac{d\log \phi_{0}}{2}+\sqrt{-1}d\theta_{0}
\\&=& \frac{dr}{r}-\frac{d\log \phi_{0}}{2}+\sqrt{-1}\eta-\frac{\sqrt{-1}}{2}(d^{c}\log \phi_{0})\ \textrm{by Formula}\ \ref{formula eta}.
\end{eqnarray*}
When $i=1,2$, we calculate
\begin{equation*}\frac{dZ_{i}}{Z_{0}}=\frac{d(Z_{0}u_{i})}{Z_{0}}=du_{i}+u_{i}(\frac{dZ_{0}}{Z_{0}}).
\end{equation*}
Then,
\begin{eqnarray*}& &\Omega_{\mathbb{C}^{3}}=dZ_{0}dZ_{1}dZ_{2}=Z^{3}_{0}\cdot \frac{dZ_{0}}{Z_{0}}\frac{dZ_{1}}{Z_{0}}\frac{dZ_{2}}{Z_{0}}=Z^{3}_{0}\cdot \frac{dZ_{0}}{Z_{0}}\wedge [du_{1}+u_{1}(\frac{dZ_{0}}{Z_{0}})]\wedge [du_{2}+u_{2}(\frac{dZ_{0}}{Z_{0}})].\nonumber
\\&=& Z^{3}_{0}\cdot \frac{dZ_{0}}{Z_{0}}\wedge du_{1}\wedge du_{2}
\\&=& Z^{3}_{0}\cdot \frac{dr}{r}\wedge du_{1}\wedge du_{2}+\sqrt{-1}Z^{3}_{0}\cdot \eta \wedge du_{1}\wedge du_{2}.
\end{eqnarray*}
The last inequality above uses that $d\log \phi_{0},\ d^{c}\log \phi_{0}$ are both pulled back from $U_{0,\mathbb{P}^{2}}\subset \mathbb{P}^{2}$, therefore $d\log \phi_{0}\wedge du_{1}\wedge du_{2}=d^{c}\log \phi_{0}\wedge du_{1}\wedge du_{2}=0$.

 In $U_{0,\mathbb{C}^{3}}$, the proof of \eqref{equ formula for Omega full} and the first row in \eqref{equ G and H} is complete. The proof is similar in $U_{1,\mathbb{C}^{3}}$ and $U_{2,\mathbb{C}^{3}}$.
\end{proof}

\section{The Sasaki-Quaternion coordinate on $\mathbb{S}^{5}$\label{Appendix SQ coordinate}}
The purpose of this section is to introduce a simple coordinate system under which Lemma \ref{lem formula of the model dirac deformation operator} can be proved.

Because both the $(1,0)$ vector field $\frac{1}{2r^{3}}(r\frac{\partial}{\partial r}-\sqrt{-1}\xi)$ and the standard $(3,0)-$form $dZ_{0}dZ_{1}dZ_{2}$ are invariant under the $SU(3)-$action on $\mathbb{C}^{3}$, by  definition \eqref{equ contraction def of H and G},  so is $H$ and $G$. 

\begin{fact} For any $\chi\in SU(3)$ acting on $\mathbb{S}^5$, $\chi^{\star}G=G$, $\chi^{\star}H=H$.
\end{fact}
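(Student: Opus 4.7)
The fact is essentially a one-line corollary of the definition $\Theta = H - \sqrt{-1}G$ in \eqref{equ contraction def of H and G}, combined with $SU(3)$-invariance of each ingredient that enters the contraction. My plan is to formalize exactly this.

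First, I would verify the three individual invariances. (i) The standard holomorphic volume form $\Omega_{\mathbb{C}^{3}} = dZ_0 dZ_1 dZ_2$ is $SU(3)$-invariant, since $\chi^{\star}\Omega_{\mathbb{C}^{3}} = (\det \chi)\,\Omega_{\mathbb{C}^{3}} = \Omega_{\mathbb{C}^{3}}$ for $\chi\in SU(3)$. (ii) Because $SU(3)\subset U(3)$ acts by isometries of the standard Hermitian inner product on $\mathbb{C}^{3}$, the radial function $r$ and hence the radial vector field $r\frac{\partial}{\partial r}$ (which is the Euler vector field $\tfrac{1}{2}(Z_i\partial_{Z_i}+\bar Z_i\partial_{\bar Z_i})$) are preserved. (iii) The Reeb field $\xi$ is the infinitesimal generator of the diagonal $U(1)$-action $e^{\sqrt{-1}t}Z$ by Fact \ref{fact Reeb is angular}; this action commutes with the $SU(3)$-action (scalar multiplication commutes with any linear map), so $\chi_{\star}\xi = \xi$ for every $\chi\in SU(3)$.

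Next, I would combine these to conclude $\chi^{\star}\Theta = \Theta$. Using naturality of the contraction, $\chi^{\star}(X\lrcorner \omega) = (\chi^{-1}_{\star}X)\lrcorner (\chi^{\star}\omega)$ for any diffeomorphism $\chi$, vector field $X$, and form $\omega$. Applying this to the defining identity
\[
\Theta = \tfrac{1}{2r^{3}}\bigl(r\tfrac{\partial}{\partial r} - \sqrt{-1}\xi\bigr)\lrcorner \Omega_{\mathbb{C}^{3}},
\]
the invariances (i)--(iii) yield $\chi^{\star}\Theta = \Theta$.

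Finally, I would split into real and imaginary parts. The local expressions in \eqref{equ G and H} make it visible that $H$ and $G$ are in fact \emph{real} semi-basic $2$-forms (each is the sum of a $(2,0)$-form and its complex conjugate), so $H = \mathrm{Re}\,\Theta$ and $G = -\mathrm{Im}\,\Theta$. Since $\chi$ is a real diffeomorphism, $\chi^{\star}$ commutes with complex conjugation, and therefore preserves real and imaginary parts separately; hence $\chi^{\star}H = H$ and $\chi^{\star}G = G$. No step here looks hard; the only thing worth double-checking is item (iii) about the commutativity of the $U(1)$ and $SU(3)$ actions, which is immediate but is the single nontrivial structural input.
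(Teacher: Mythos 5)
Your proof is correct and follows the same route as the paper: the paper's one-line argument is precisely that the $(1,0)$ vector field $\frac{1}{2r^{3}}(r\frac{\partial}{\partial r}-\sqrt{-1}\xi)$ and the $(3,0)$-form $dZ_{0}dZ_{1}dZ_{2}$ are $SU(3)$-invariant, so the contraction $\Theta=H-\sqrt{-1}G$ is, and hence so are its real and imaginary parts. You have merely filled in the verifications (unimodularity for $\Omega_{\mathbb{C}^{3}}$, isometry for $r\frac{\partial}{\partial r}$, commutation with the diagonal $U(1)$ for $\xi$, and realness of $G,H$) that the paper leaves implicit.
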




\begin{Def} For any point $[Z]\in \mathbb{P}^{2}$, let $\chi\in SU(3)$ be an element mapping $[Z]$ to $[1,0,0]$, let $z_{1}=\chi^{\star}u_{1},\ z_{2}=\chi^{\star}u_{2}$ be the pullback  coordinate system near $[Z]$. Because the Fubini-Study metric is $SU(3)-$invariant,  $(z_{1},\ z_{2})$ is a K\"ahler geodesic coordinate at  $[Z]$ i.e. 
for any $i,j$ among $1,\ 2$, $\nabla^{FS}_{\frac{\partial }{\partial z_{i}}}\frac{\partial }{\partial z_{j}}=\nabla^{FS}_{\frac{\partial }{\partial z_{i}}}\frac{\partial }{\partial \bar{z}_{j}}=0$. 

On $\mathbb{S}^{5}$, we call $(z_{1},z_{2})$ a \textit{Sasaki-Quaternion coordinate} of the Reeb orbit $\pi^{-1}_{5,4}[Z]$.
 \end{Def} 
Under such a coordinate, on the Reeb orbit, both $G$ and $H$ take the canonical form 
\begin{equation}\label{equ G and H at any point under Sasakian canonical coordinate}
G=-Im(dz_{1}dz_{2}),\ H=Re(dz_{1}dz_{2}).
\end{equation}
 Moreover, it yields a transverse geodesic frame in the sense of Lemma \ref{lem Kahler geodesic frame}.

\section{The usual separation of variable: proof of Formula  \ref{formula pre splitting}}

\begin{proof}[\textbf{Proof of Formula \ref{formula pre splitting}}] It is completely routine. To be self-contained, we still show the detail.  In view of the splitting \eqref{equ 1st splitting of the 1form}, we find 
\begin{equation}\label{equ 0 formula pre splitting}d_{\mathbb{C}^{3}\times \mathbb{S}^{1}}a_{\mathbb{C}^{3}\times \mathbb{S}^{1}}=(d_{\mathbb{C}^{3}}\underline{a}_{s})\wedge ds+d_{\mathbb{C}^{3}}a_{\mathbb{C}^{3}}-\frac{\partial a_{\mathbb{C}^{3}}}{\partial s}\wedge ds
\end{equation}
and \begin{equation}d_{\mathbb{C}^{3}\times \mathbb{S}^{1}}^{\star_{\mathbb{C}^{3}\times \mathbb{S}^{1}}}a_{\mathbb{C}^{3}\times \mathbb{S}^{1}}=-\frac{\partial \underline{a}_{s}}{\partial s}+d_{\mathbb{C}^{3}}^{\star_{\mathbb{C}^{3}}}a_{\mathbb{C}^{3}}. 
\end{equation}
Using the tensor identity \begin{equation}\star_{\mathbb{C}^{3}\times \mathbb{S}^{1}}[d_{\mathbb{C}^{3}\times \mathbb{S}^{1}}a_{\mathbb{C}^{3}\times \mathbb{S}^{1}} \wedge \psi_{\mathbb{C}^{3}\times \mathbb{S}^{1}}]= (d_{\mathbb{C}^{3}\times \mathbb{S}^{1}}a_{\mathbb{C}^{3}\times \mathbb{S}^{1}})\lrcorner_{\mathbb{C}^{3}\times \mathbb{S}^{1}}  \phi_{\mathbb{C}^{3}\times \mathbb{S}^{1}},\end{equation}
it suffices to calculate the right  side of \eqref{equ 0 formula pre splitting} term-wisely as follows. 
\begin{equation}\label{equ 1 formula pre splitting}(d_{\mathbb{C}^{3}\times \mathbb{S}^{1}}a_{\mathbb{C}^{3}\times \mathbb{S}^{1}}) \lrcorner_{\mathbb{C}^{3}\times \mathbb{S}^{1}} (\omega_{\mathbb{C}^{3}}\wedge ds)=-(d_{\mathbb{C}^{3}}\underline{a}_{s})\lrcorner_{\mathbb{C}^{3}} \omega_{\mathbb{C}^{3}}+(d_{\mathbb{C}^{3}}a_{\mathbb{C}^{3}}\lrcorner_{\mathbb{C}^{3}} \omega_{\mathbb{C}^{3}})ds+\frac{\partial a_{\mathbb{C}^{3}}}{\partial s}\lrcorner_{\mathbb{C}^{3}} \omega_{\mathbb{C}^{3}}.
\end{equation} 
\begin{equation}\label{equ 2 formula pre splitting}
(d_{\mathbb{C}^{3}\times \mathbb{S}^{1}}a_{\mathbb{C}^{3}\times \mathbb{S}^{1}})\lrcorner_{\mathbb{C}^{3}\times \mathbb{S}^{1}} Re\Omega=(d_{\mathbb{C}^{3}}a_{\mathbb{C}^{3}})\lrcorner_{\mathbb{C}^{3}} Re\Omega. 
\end{equation}
The contraction $\lrcorner_{\mathbb{C}^{3}} \omega_{\mathbb{C}^{3}}$ is the complex- structure $J_{\mathbb{C}^{3}} $ on $\Omega^{1}[ad(E)]$.
Summing   \eqref{equ 1 formula pre splitting} and \eqref{equ 2 formula pre splitting} up, we arrive at the following. 

\begin{eqnarray*}& &(d_{\mathbb{C}^{3}\times \mathbb{S}^{1}}a_{\mathbb{C}^{3}\times \mathbb{S}^{1}}) \lrcorner_{\mathbb{C}^{3}\times \mathbb{S}^{1}}\phi_{\mathbb{C}^{3}\times \mathbb{S}^{1}}
\\&=&-J_{\mathbb{C}^{3}} (d_{\mathbb{C}^{3}}\underline{a}_{s})+(d_{\mathbb{C}^{3}}a_{\mathbb{C}^{3}}\lrcorner_{\mathbb{C}^{3}} \omega_{\mathbb{C}^{3}})ds+J_{\mathbb{C}^{3}} (\frac{\partial a_{\mathbb{C}^{3}}}{\partial s})+(d_{\mathbb{C}^{3}}a_{\mathbb{C}^{3}})\lrcorner_{\mathbb{C}^{3}} Re\Omega.
\end{eqnarray*} 

Using the above,  the easy identity $d_{\mathbb{C}^{3}\times \mathbb{S}^{1}}\sigma=(\frac{\partial \sigma}{\partial s})ds+d_{\mathbb{C}^{3}}\sigma$, and definition \eqref{equ  formula for model deformation operator} of $L_{A_{O},\phi_{\mathbb{C}^{3}\times \mathbb{S}^{1}}}$, we obtain the following. 
\begin{eqnarray}& &L_{A_{O},\phi_{\mathbb{C}^{3}\times \mathbb{S}^{1}}}\left[\begin{array}{c}\sigma\\ \underline{a}_{s} ds +  a_{\mathbb{C}^{3}}\end{array}\right]=\left[\begin{array}{c}-\frac{\partial \underline{a}_{s}}{\partial s}+d^{\star_{\mathbb{C}^{3}} }_{\mathbb{C}^{3}}a_{\mathbb{C}^{3}}\\ \{\frac{\partial \sigma}{\partial s}+(d_{\mathbb{C}^{3}}a_{\mathbb{C}^{3}})\lrcorner_{\mathbb{C}^{3}} \omega_{\mathbb{C}^{3}}\} ds +  \\ J_{\mathbb{C}^{3}} (\frac{\partial a_{\mathbb{C}^{3}}}{\partial s})+d_{\mathbb{C}^{3}}\sigma-J_{\mathbb{C}^{3}} (d_{\mathbb{C}^{3}}\underline{a}_{s})+(d_{\mathbb{C}^{3}}a_{\mathbb{C}^{3}})\lrcorner_{\mathbb{C}^{3}} Re\Omega\end{array}\right].\nonumber
\end{eqnarray}
The desired formula follows. 
\end{proof}

\section{The fine separation of variable: proof of Lemma \ref{lem formula of the model dirac deformation operator}\label{Appendix proof of formula of P}}
We prove Lemma \ref{lem formula of the model dirac deformation operator} by computing each row in the operator $\square$ (see Formula \ref{formula pre splitting}). We first recall the following splitting.
\begin{equation}\label{equ fine splitting for ac3}
a_{\mathbb{C}^{3}}=a_{0}+(a_{\eta}\eta)+{a}_{r}\frac{dr}{r}.
\end{equation}

Given a section $a$ of $\wedge^{p}T^{\star}\mathbb{S}^{5}$ and a section $b$ of $\wedge^{q}T^{\star}\mathbb{S}^{5}$ such that $5\geq q\geq p$, we need  the following identity.  $$a\lrcorner_{\mathbb{C}^{3}}b=\frac{1}{r^{2p}}a\lrcorner_{\mathbb{S}^{5}}b.$$ 

Employing the splitting
\begin{equation}\label{equ splitting of dc3}
d_{\mathbb{C}^{3}}=d_{0}+\eta\wedge L_{\xi}+dr\wedge L_{\frac{\partial}{\partial r}},\end{equation}

 we calculate
\begin{eqnarray}\label{equ dc3a}d_{\mathbb{C}^{3}}a_{\mathbb{C}^{3}}&=&(d_{0}+\eta\wedge L_{\xi}+dr\wedge L_{\frac{\partial}{\partial r}})[a_{0}+(a_{\eta}\eta)+{a}_{r}\frac{dr}{r}]\nonumber
\\&=& d_{0}a_{0}+(2a_{\eta})\frac{d\eta}{2}+\eta\wedge (L_{\xi}a_{0}-d_{0}a_{\eta})+\frac{dr}{r}\wedge(r\frac{\partial a_{0}}{\partial r}-d_{0}a_{r})\nonumber
\\& &+ (\frac{dr}{r}\wedge \eta) (r\frac{\partial a_{\eta}}{\partial r}-L_{\xi}a_{r})\label{equ 0 formula da contract Euc metric}.
\end{eqnarray}

Via the splitting \eqref{equ splitting of dc3}, using $\sigma=\frac{u}{r},\ \underline{a}_{s}=\frac{a_{s}}{r}$, we routinely verify the following two identities. 
\begin{equation}\label{equ dsigma das}d_{\mathbb{C}^{3}}\sigma=\frac{d_{0}u}{r}+\frac{\eta\wedge L_{\xi}u}{r}+(\frac{\partial u}{\partial r}-\frac{u}{r})\frac{dr}{r};\ d_{\mathbb{C}^{3}}\underline{a}_{s}=\frac{d_{0}a_{s}}{r}+\frac{\eta\wedge L_{\xi}a_{s}}{r}+(\frac{\partial a_{s}}{\partial r}-\frac{a_{s}}{r})\frac{dr}{r}.
\end{equation}
Employing the table
 \begin{equation}\label{equ tabular volume forms}  \begin{tabular}{|p{6cm}|}
  \hline
 $\omega_{\mathbb{C}^{3}}=rdr\wedge \eta+\frac{r^{2}d\eta}{2}$ \\   \hline
  $dVol_{\mathbb{P}^{2}}=\frac{1}{2}(\frac{d\eta}{2})^{2}$ \\   \hline
  $dVol_{\mathbb{S}^{5}}=\eta\wedge dVol_{\mathbb{P}^{2}}$ \\   \hline
   $dVol_{\mathbb{C}^{3}}=r^{5}dr\wedge\eta\wedge dVol_{\mathbb{P}^{2}}$ \\   \hline
\end{tabular}
 \renewcommand\arraystretch{1.5}
  \end{equation}
 via \eqref{equ dsigma das}, we calculate the contraction
\begin{eqnarray}\nonumber & &(d_{\mathbb{C}^{3}}\underline{a}_{s})\lrcorner_{\mathbb{C}^{3}} \omega_{\mathbb{C}^{3}}=[\frac{d_{0}a_{s}}{r}+\frac{\eta\wedge L_{\xi}a_{s}}{r}+(\frac{\partial a_{s}}{\partial r}-\frac{a_{s}}{r})\frac{dr}{r}]\lrcorner_{6}  (rdr\wedge\eta+r^{2}\frac{d\eta}{2})
\\&=& \frac{1}{r}d_{0}a_{s}\lrcorner \frac{d\eta}{2}- \frac{L_{\xi}a_{s}}{r}\frac{dr}{r}+(\frac{\partial a_{s}}{\partial r}-\frac{a_{s}}{r})\eta. \label{equ das contract omega c3}
\end{eqnarray}

Employing the commutator identities \eqref{equ lie commutator with hyperkahler str} and formula \eqref{equ dc3a} for $d_{\mathbb{C}^{3}}a_{\mathbb{C}^{3}}$, we calculate
 \begin{eqnarray}& &d_{\mathbb{C}^{3}}a_{\mathbb{C}^{3}}\lrcorner_{\mathbb{C}^{3}} Re\Omega_{\mathbb{C}^{3}}\label{equ dac3 contract ReOmegac3}
\\&=& \{d_{0}a_{0}+(2a_{\eta})\frac{d\eta}{2}+\eta\wedge (L_{\xi}a_{0}-d_{0}a_{\eta})+\frac{dr}{r}\wedge(r\frac{\partial a_{0}}{\partial r}-d_{0}a_{r})\nonumber
\\& &+ (\frac{dr}{r}\wedge \eta)(r\frac{\partial a_{\eta}}{\partial r}-L_{\xi}a_{r})\}\lrcorner_{\mathbb{C}^{3}} (r^{2}dr\wedge H+r^{3}\eta\wedge G)\nonumber
\\&=& \frac{dr}{r}\cdot \frac{d_{0}a_{0}\lrcorner H}{r}+\eta\cdot \frac{d_{0}a_{0}\lrcorner G}{r}+\frac{1}{r}[(L_{\xi}a_{0})\lrcorner G-(d_{0}a_{\eta})\lrcorner G]+(\frac{\partial}{\partial r}J_{H}a_{0})-\frac{J_{H}(d_{0}a_{r})}{r}\nonumber
\\&=& \frac{dr}{r}\cdot \frac{d_{0}a_{0}\lrcorner H}{r}+\eta\cdot \frac{d_{0}a_{0}\lrcorner G}{r}+\frac{1}{r}[L_{\xi}(J_{G}a_{0})+3J_{H}(a_{0})-J_{G}(d_{0}a_{\eta})]+(\frac{\partial}{\partial r}J_{H}a_{0})-\frac{J_{H}(d_{0}a_{r})}{r}.\nonumber
\end{eqnarray}

Assembling identity \eqref{equ dsigma das}, \eqref{equ das contract omega c3}, and \eqref{equ dac3 contract ReOmegac3}, we can characterize row $3$ of the operator $\square$. 
\begin{formula}\label{equ the 3rd row in square}In view of Formula \ref{formula pre splitting}, on the third row of the operator $\square$, we have 
 \begin{eqnarray}& &d_{\mathbb{C}^{3}}\sigma- (d_{\mathbb{C}^{3}}\underline{a}_{s})\lrcorner \omega_{\mathbb{C}^{3}}+d_{\mathbb{C}^{3}}a_{\mathbb{C}^{3}}\lrcorner_{\mathbb{C}^{3}} Re\Omega_{\mathbb{C}^{3}} \nonumber
\\&=& \frac{dr}{r}\cdot [\frac{\partial u}{\partial r}-\frac{u}{r}+\frac{L_{\xi}a_{s}}{r}+\frac{d_{0}a_{0}\lrcorner H}{r}]
+\eta\cdot [-\frac{\partial a_{s}}{\partial r}+\frac{a_{s}}{r}+\frac{L_{\xi}u}{r}+\frac{d_{0}a_{0}\lrcorner G}{r}] \nonumber
\\& &+\frac{1}{r}[d_{0}u-J_{0}(d_{0}a_{s})+L_{\xi}(J_{G}a_{0})+3J_{H}(a_{0})-J_{G}(d_{0}a_{\eta})+r\frac{\partial}{\partial r}(J_{H}a_{0})-J_{H}(d_{0}a_{r})].\nonumber
\end{eqnarray}
\end{formula}

Next, we calculate the first and second row of $\square$. 
 \begin{formula}\label{formula decomposition for dstar} The following two identities hold. 
 \begin{equation}\label{equ -2 formula decomposition for dstar}d_{\mathbb{C}^{3}}^{\star_{\mathbb{C}^{3}}}a_{\mathbb{C}^{3}}=-\frac{1}{r}\frac{\partial a_{r}}{\partial r}-\frac{4a_{r}}{r^{2}}-\frac{L_{\xi}a_{\eta}}{r^{2}}+\frac{d_{0}^{\star_{0}}a_{0}}{r^{2}}.\end{equation}
\begin{equation}\label{equ -1 formula decomposition for dstar}d_{\mathbb{C}^{3}}a_{\mathbb{C}^{3}}\lrcorner_{\mathbb{C}^{3}} \omega_{\mathbb{C}^{3}}= \frac{1}{r^{2}}d_{0}a_{0}\lrcorner \frac{d\eta}{2}-\frac{L_{\xi}a_{r}}{r^{2}}+\frac{1}{r}\frac{\partial a_{\eta}}{\partial r}+\frac{4a_{\eta}}{r^{2}}.\end{equation}
 
  In particular, under the splitting $a_{\mathbb{S}^{5}}=a_{\eta}\eta +a_{0}$, 
 \begin{equation}\label{formula decomposition for dstar S5}d_{\mathbb{S}^{5}}^{\star_{\mathbb{S}^{5}}}a_{\mathbb{S}^{5}}=-L_{\xi}a_{\eta}+d_{0}^{\star_{0}}a_{0}.\end{equation}
 \end{formula}
 \begin{proof}[Proof of Formula  \ref{formula decomposition for dstar}:]  The volume forms in  table \eqref{equ tabular volume forms} imply the following two identities.
 \begin{equation}\label{equ -3 formula decomposition for dstar}\star_{\mathbb{C}^{3}}\eta=-r^{3}dr\wedge dVol_{\mathbb{P}^{2}}.\end{equation} 
   \begin{equation}\label{equ -2.5 formula decomposition for dstar}\star_{\mathbb{C}^{3}}a_{0}=r^{3}dr\wedge \eta \wedge \star_{0}a_{0}.\end{equation}
   
  Since $d\eta$ is a section of $\wedge^{(1,1)}\otimes D^{\star}$, but $G$ is a section of $[\wedge^{(2,0)}\oplus \wedge^{(0,2)}]\otimes D^{\star}-$valued, we find the following vanishing \begin{equation}\label{equ -1.5 formula decomposition for dstar}(d\eta)\lrcorner (G\wedge \eta)=0.\end{equation}

Using the above $3$ elementary identities, we calculate $d_{\mathbb{C}^{3}}^{\star_{\mathbb{C}^{3}}}a_{\mathbb{C}^{3}}$ according to the $3-$terms in the fine splitting \eqref{equ fine splitting for ac3}. 
 \begin{eqnarray}\label{equ 0 formula decomposition for dstar}\nonumber& &d_{\mathbb{C}^{3}}^{\star_{\mathbb{C}^{3}}}(a_{r}\frac{dr}{r})=-\star_{\mathbb{C}^{3}} d_{\mathbb{C}^{3}} \star_{\mathbb{C}^{3}} (a_{r}\frac{dr}{r})=-\star_{\mathbb{C}^{3}} d_{\mathbb{C}^{3}} (r^{4} a_{r} dVol_{\mathbb{S}^{5}})\\&=&-\star_{\mathbb{C}^{3}}[\frac{1}{r^{5}}\frac{\partial(r^{4} a_{r})}{\partial r} r^{5}dr\wedge dVol_{\mathbb{S}^{5}}]\nonumber
=-\frac{1}{r^{5}}\frac{\partial(r^{4} a_{r})}{\partial r} \nonumber
\\&=&-\frac{1}{r}\frac{\partial a_{r}}{\partial r}-\frac{4a_{r}}{r^{2}}.
\end{eqnarray}
 \begin{eqnarray}\label{equ 1 formula decomposition for dstar}& &d_{\mathbb{C}^{3}}^{\star_{\mathbb{C}^{3}}}(a_{\eta}\eta)=-\star_{\mathbb{C}^{3}} d_{\mathbb{C}^{3}} \star_{\mathbb{C}^{3}} (a_{\eta}\eta)=\star_{\mathbb{C}^{3}} d_{\mathbb{C}^{3}} (a_{\eta}r^{3}dr\wedge dVol_{\mathbb{P}^{2}})\nonumber
 \\&=&L_{\xi}a_{\eta}\star_{\mathbb{C}^{3}} (r^{3}\eta\wedge dr\wedge dVol_{\mathbb{P}^{2}})\nonumber
\\&=&-\frac{L_{\xi}a_{\eta}}{r^{2}}.
\end{eqnarray}
\begin{eqnarray}\label{equ 2 formula decomposition for dstar}& &d_{\mathbb{C}^{3}}^{\star_{\mathbb{C}^{3}}}a_{0}=-\star_{\mathbb{C}^{3}} d_{\mathbb{C}^{3}} \star_{\mathbb{C}^{3}} a_{0}=-\star_{\mathbb{C}^{3}} d_{\mathbb{C}^{3}} (r^{3}dr\wedge \eta \wedge\star_{0} a_{0})=-\star_{\mathbb{C}^{3}}  (r^{3}dr\wedge \eta \wedge d_{0}\star_{0} a_{0})\nonumber
\\&= & -\frac{1}{r^{2}}\star_{\mathbb{C}^{3}}  (r^{5}dr\wedge \eta \wedge d_{0}\star_{0} a_{0})=-\frac{1}{r^{2}}  \star_{0}d_{0}\star_{0} a_{0}\nonumber
\\&=& \frac{d_{0}^{\star_{0}}a_{0}}{r^{2}}.
\end{eqnarray}
Identity \eqref{equ -2 formula decomposition for dstar} follows simply by summing up \eqref{equ 0 formula decomposition for dstar}, \eqref{equ 1 formula decomposition for dstar}, and \eqref{equ 2 formula decomposition for dstar}.

Because $d_{\mathbb{C}^{3}}a_{\mathbb{C}^{3}}\lrcorner_{\mathbb{C}^{3}} \omega_{\mathbb{C}^{3}}=d^{\star_{\mathbb{C}^{3}}}_{\mathbb{C}^{3}}J_{\mathbb{C}^{3}}a_{\mathbb{C}^{3}}$, using $$J_{\mathbb{C}^{3}}(a_{r}\frac{dr}{r}+a_{\eta}\eta+a_{0})=a_{r}\eta-a_{\eta}\frac{dr}{r}+J_{0}a_{0},$$ identity \eqref{equ -1 formula decomposition for dstar} follows from \eqref{equ -2 formula decomposition for dstar} replacing $a_{\eta}$ by $a_{r}$, $a_{r}$ by $-a_{\eta}$, and $a_{0}$ by $J_{0}a_{0}$ therein.  
\end{proof}

The formulas established so far can be assembled into the desired formula of $P$. 
\begin{proof}[\textbf{Proof of Lemma \ref{lem formula of the model dirac deformation operator}}:]Still in view of Formula \ref{formula pre splitting}, it is natural to classify the terms in the fine splitting of $\square$ into $3$ kinds of terms: those only involving $\frac{\partial}{\partial r}$ (derivative in $r$), those only involving $L_{\xi}$ (derivative along the Reeb vector field), and those only involving $d_{0}$.

We carry out the above scheme.  Using
\begin{itemize}\item the formula for the isometries $K$ and $T$ in Lemma \ref{lem hk on 7-dim model}, 
\item Formula \ref{formula decomposition for dstar}  for the first and second row of $\square$, 
\item Formula \ref{equ the 3rd row in square} for the third row of $\square$, 
\end{itemize}
we find the following fine splitting for $\square$: 
\begin{equation}
\square=\frac{\partial}{\partial r}K+\frac{L_{\xi}T}{r}+\frac{\underline{B}_{0}}{r}, 
\end{equation}
where 
\begin{equation}
\underline{B}_{0}\left[\begin{array}{c}u \\ a_{s} \\  a_{r} \\ a_{\eta} \\ a_{0}\end{array}\right]=\left[\begin{array}{ccccc}0  &0 & -4 & 0& d^{\star_{0}}_{0}\\ 0 & 0 & 0 & 4& (d_{0}\cdot)\lrcorner \frac{d\eta}{2} \\  -1& 0 & 0 & 0& (d_{0}\cdot)\lrcorner {H}  \\  0 & 1 &0 & 0& (d_{0}\cdot)\lrcorner {G}  \\  d_{0} & -J_{0}d_{0} & -J_{H}d_{0} & -J_{G}d_{0}& 3J_{H}\end{array}\right].
\end{equation}

It is then routine to verify, with the help of the second commutator identity in \eqref{equ lie commutator with hyperkahler str},  that $P\triangleq K(\underline{B}_{0}+L_{\xi}T)$ is equal to the one given by \eqref{equ formula for P}. Hence
\begin{equation}\label{equ square operator}
\square=K(\frac{\partial}{\partial r}-\frac{P}{r}). 
\end{equation}
The proof is complete. 
\end{proof}
\section{Digression to Hermitian Yang-Mills connections\label{Appendix digression}}
On a smooth Hermitian vector bundle $E$ over a Calabi-Yau $3-$fold $(X,\omega,\Omega)$, a triple $(A,\sigma,u)$ consisted of a smooth connection $A$ and two smooth sections $\sigma$ and $u$ of $adE$ is called a Hermitian Yang-Mills monopole  if it satisfies the following equations   \begin{equation}F_{A}\lrcorner Re\Omega+d_{A}\sigma-J(d_{A} u)=0,\ F_{A}\lrcorner \omega=0.
\end{equation}
Suppose the Calabi-Yau is compact, the closeness of the holomorphic $(3,0)-$form $\Omega$ and the K\"ahler form $\omega$ actually implies $$F_{A}\lrcorner Re\Omega=0,\ d_{A}\sigma=d_{A}u =0,\ \ i.e.\ \ F_{A}\ \ \textrm{is}\ \ \ (1,1),$$  and $A$ is a Hermitian Yang-Mills connection. 

Given a holomorphic Hermitian triple on $\mathbb{P}^{2}$. With gauge fixing, the linearized operator with respect to the associated data setting on $\mathbb{C}^{3}\setminus O$ is precisely the operator $\square$ which is part of $L_{A_{O},\phi_{\mathbb{C}^{3}\times \mathbb{S}^{1}}}$ (see Formula \ref{formula pre splitting}). It of course depends on $P$ (see \eqref{equ square operator}).
\section{Fourier series re-visited}
We first recall an elementary fact on uniform convergence of the usual Fourier series. 
\begin{lem}\label{lem uniform convergence of F series} There exists a positive function $[\epsilon(N),\ N\in \mathbb{Z}^{+}]$ such that $\lim_{N\rightarrow \infty}\epsilon(N)=0$ and the following holds. Let $f\in W^{1,2}(\mathbb{S}^{1})$ and let its Fourier series be $\Sigma_{k}f_{k}e^{\sqrt{-1}k\theta}$, then 
\begin{equation}\Sigma_{|k|\geq N}|f_{k}e^{\sqrt{-1}k\theta}|\leq \epsilon(N)+\frac{1}{\sqrt{N}}|f|^{2}_{W^{1,2}(\mathbb{S}^{1})}.\label{equ 0 lem uniform convergence of F series}
\end{equation}
\end{lem}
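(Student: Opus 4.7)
\textbf{Proof plan for Lemma \ref{lem uniform convergence of F series}.} The plan is a completely standard Cauchy--Schwarz argument combined with Parseval's identity and an elementary AM--GM split to produce the stated form of the bound. Since $|e^{\sqrt{-1}k\theta}|=1$ pointwise on $\mathbb{S}^{1}$, the left-hand side of \eqref{equ 0 lem uniform convergence of F series} is simply $\Sigma_{|k|\geq N}|f_{k}|$, and the inequality is $\theta$-independent; hence there is nothing to do with the angular variable.

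First I would decompose
\[
\Sigma_{|k|\geq N}|f_{k}|=\Sigma_{|k|\geq N}\frac{1}{|k|}\cdot |k f_{k}|
\]
and apply the Cauchy--Schwarz inequality to get
\[
\Sigma_{|k|\geq N}|f_{k}|\ \leq\ \Bigl(\Sigma_{|k|\geq N}\frac{1}{k^{2}}\Bigr)^{1/2}\Bigl(\Sigma_{k\in \mathbb{Z}}|k f_{k}|^{2}\Bigr)^{1/2}.
\]
The first factor is bounded by $\sqrt{C/N}$ for an absolute constant $C>0$ via integral comparison $\Sigma_{|k|\geq N}k^{-2}\leq 2\int_{N-1}^{\infty}x^{-2}dx$. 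The second factor is bounded by the Parseval/Plancherel identity on $\mathbb{S}^{1}$, which yields $\Sigma_{k}|kf_{k}|^{2}=\frac{1}{2\pi}\|f^{\prime}\|_{L^{2}(\mathbb{S}^{1})}^{2}\leq |f|_{W^{1,2}(\mathbb{S}^{1})}^{2}$ up to a fixed normalization constant that can be absorbed into $C$.

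The last step is to match the stated form, in which $|f|_{W^{1,2}}$ appears squared with a prefactor $\frac{1}{\sqrt{N}}$. Combining the two estimates above gives
\[
\Sigma_{|k|\geq N}|f_{k}|\ \leq\ \frac{\sqrt{C}}{\sqrt{N}}\,|f|_{W^{1,2}(\mathbb{S}^{1})},
\]
and an elementary AM--GM inequality $ab\leq \tfrac{1}{2}(a^{2}+b^{2})$ applied with $a=\sqrt{C/\sqrt{N}}$ and $b=|f|_{W^{1,2}}/N^{1/4}$ (or, equivalently, $|f|_{W^{1,2}}\leq \tfrac{1}{2}(1+|f|_{W^{1,2}}^{2})$) splits the right-hand side into a pure $N$-dependent piece plus a piece of the form $\frac{c}{\sqrt{N}}|f|_{W^{1,2}}^{2}$. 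Setting $\epsilon(N)$ equal to the first piece (up to an absolute constant factor) and absorbing constants into $N$ along the way (one may require $N\geq N_{0}$ for some fixed $N_{0}$ and enlarge $\epsilon(N)$ by a finite constant to cover small $N$) produces the inequality \eqref{equ 0 lem uniform convergence of F series}, with $\epsilon(N)=O(N^{-1/2})\to 0$ as $N\to \infty$ and independent of $f$.

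There is essentially no obstacle here: the argument is a standard Sobolev embedding on $\mathbb{S}^{1}$, and the slightly non-standard form of the inequality (quadratic in $|f|_{W^{1,2}}$ rather than linear) is obtained for free from the linear estimate via AM--GM. The only bookkeeping is to absorb the finitely many universal constants into $\epsilon(N)$ without disturbing its decay rate, which can be done by e.g.\ replacing $\epsilon(N)$ by $\max(\epsilon(N),C/\sqrt{N})$.
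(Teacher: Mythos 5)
Your proposal is correct and is essentially the same elementary argument as the paper's: both rest on Parseval's identity $\Sigma_{k}k^{2}|f_{k}|^{2}\leq |f|^{2}_{W^{1,2}(\mathbb{S}^{1})}$ combined with an AM--GM type split to produce the two terms on the right of \eqref{equ 0 lem uniform convergence of F series}. The only cosmetic difference is the order of operations: the paper applies the weighted AM--GM termwise, writing $|f_{k}|\leq k^{-3/2}+\tfrac{1}{2}k^{3/2}f_{k}^{2}$ and summing, whereas you first derive the linear tail bound $\Sigma_{|k|\geq N}|f_{k}|\lesssim N^{-1/2}|f|_{W^{1,2}}$ by Cauchy--Schwarz and then weaken it to the stated quadratic form by a single global AM--GM; both yield an $f$-independent $\epsilon(N)=O(N^{-1/2})$.
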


\begin{proof}[Proof of Lemma \ref{lem uniform convergence of F series}:] We estimate simply by Cauchy-Schwartz inequality that \begin{equation}\label{equ 1 lem uniform convergence of F series} |f_{k}e^{\sqrt{-1}k\theta}|\leq \frac{1}{k^{\frac{3}{2}}}+\frac{k^{\frac{3}{2}}f^{2}_{k}}{2}.\end{equation} Then,  $\xi(N)\triangleq \Sigma_{N\geq 1}\frac{1}{k^{\frac{3}{2}}}$ satisfies the desired conditions. Moreover, on the other term in \eqref{equ 1 lem uniform convergence of F series}, we estimate $$\Sigma_{N\geq 1}k^{\frac{3}{2}}f^{2}_{k}\leq \frac{1}{\sqrt{N}}\Sigma_{N\geq 1}k^{2}f^{2}_{k}\leq \frac{1}{\sqrt{N}}|f|^{2}_{W^{1,2}(\mathbb{S}^{1})}.$$ The desired estimate \eqref{equ 0 lem uniform convergence of F series} follows. 
\end{proof}
Under the assumption $f\in W^{1,2}(\mathbb{S}^{1})$,  it is well known that its Fourier-series converges uniformly to $f$. Based on the above bound on the remainder, we provide an ingredient for Lemma \ref{lem global F series}. 
\begin{lem}\label{lem term by term differentiation of F series}In the setting of Lemma \ref{lem global F series}, let $\nu\in C^{1}(\mathbb{S}^{5},\pi^{\star}_{5,4}EndE)$. Under the pullback Hermitian metric on $\pi^{\star}_{5,4}End E$, for any $\beta=0,1,$ or $2$, the Sasaki-Fourier Series $\Sigma_{k}v_{\beta}(k)e^{\sqrt{-1}k\theta_{\beta}}$ converges uniformly to $\nu$ on $U_{\beta,\mathbb{S}^{5}}$. The equivalent global series $\Sigma_{k}\nu_{k}\otimes s_{-k}$ converges uniformly to $\nu$ on $\mathbb{S}^{5}$. 
\end{lem}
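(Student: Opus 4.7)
The plan is to reduce the statement to a fiber-wise application of Lemma~\ref{lem uniform convergence of F series} on each Reeb orbit, uniformly in the base point. Fix $\beta\in\{0,1,2\}$. In the Sasakian coordinate on $U_{\beta,\mathbb{S}^5}$, the Reeb orbit through $[Z]\in U_{\beta,\mathbb{P}^2}$ is the circle parametrized by $\theta_\beta$, and along this orbit the pullback bundle $\pi^\star_{5,4}(EndE)$ is canonically identified with the fixed finite-dimensional Hermitian vector space $(EndE)|_{[Z]}$. Thus the restriction $\nu(\cdot,[Z])$ is a $C^1$ vector-valued function on $\mathbb{S}^1$, and its classical Fourier series is exactly the ``$\nu_\beta(k)([Z])e^{\sqrt{-1}k\theta_\beta}$'' of \eqref{equ local Fourier series}.

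Next I would choose (pointwise in $[Z]$) an orthonormal basis of $(EndE)|_{[Z]}$ and apply Lemma~\ref{lem uniform convergence of F series} to each scalar component. This gives, for every $N\geq 1$ and every $(\theta_\beta,[Z])\in U_{\beta,\mathbb{S}^5}$, an estimate of the form
\begin{equation*}
\sum_{|k|\geq N}\bigl|\nu_\beta(k)([Z])\,e^{\sqrt{-1}k\theta_\beta}\bigr|_h \;\leq\; C_0\,\epsilon(N)\;+\;\frac{C_0}{\sqrt{N}}\,\bigl\|\nu|_{\text{fiber over }[Z]}\bigr\|_{W^{1,2}(\mathbb{S}^1)}^{2},
\end{equation*}
where $C_0$ depends only on $\dim_{\mathbb{R}}EndE$ and $\epsilon(N)\to 0$. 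The uniformity now comes from compactness: since $\mathbb{S}^5$ is compact and $\nu\in C^1$, the pullback Hermitian $C^1$-norm $\|\nu\|_{C^1(\mathbb{S}^5)}$ is a finite number $M$, and the Reeb vector field is a smooth unit vector field on $\mathbb{S}^5$, so $|L_\xi\nu|_h\leq M$ pointwise. Each Reeb orbit has length $2\pi$, hence $\|\nu|_{\text{fiber}}\|_{W^{1,2}(\mathbb{S}^1)}\leq C_1 M$ uniformly in $[Z]$.

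Combining these two estimates shows that the partial sums of the local series \eqref{equ local Fourier series} converge to $\nu$ uniformly on $U_{\beta,\mathbb{S}^5}$ with respect to $|\cdot|_h$. Finally, the identification \eqref{equ local =global expression Fourier series} says that on $U_{\beta,\mathbb{S}^5}$ the global partial sum $\sum_{|k|<N}\nu_k\otimes s_{-k}$ agrees term-wise with the local partial sum, so it inherits the same uniform bound. Because $U_{0,\mathbb{S}^5}\cup U_{1,\mathbb{S}^5}\cup U_{2,\mathbb{S}^5}=\mathbb{S}^5$ and the bound $C_0\epsilon(N)+C_0C_1^2M^2/\sqrt{N}$ is independent of $\beta$, uniform convergence on each chart assembles into uniform convergence on all of $\mathbb{S}^5$.

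The only mildly delicate point is the uniform control of the fiber-wise $W^{1,2}$ norm by the global $C^1$ norm. This is routine because the Reeb field is a smooth, unit, globally defined vector field on the compact manifold $\mathbb{S}^5$, and the pullback Hermitian metric is a smooth metric on a smooth bundle over a compact base; everything needed is bounded. The remainder of the argument is bookkeeping comparing the local and global expressions of the same series.
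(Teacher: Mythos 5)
Your proposal is correct and follows essentially the same route as the paper: restrict to each Reeb orbit (where $\xi=\partial/\partial\theta_\beta$ and the pullback bundle trivializes), apply the quantitative tail estimate of Lemma \ref{lem uniform convergence of F series}, and bound the orbit-wise $W^{1,2}$ norm uniformly by the global $C^1$ norm via $L_\xi=\nabla_\xi$ on pullback sections. The only cosmetic difference is that you apply the scalar lemma component-wise in an orthonormal frame (picking up a rank-dependent constant) whereas the paper applies it directly to the vector-valued restriction under a unitary trivialization.
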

\begin{proof}[Proof of Lemma \ref{lem term by term differentiation of F series}:] Under the pullback connection from $EndE\rightarrow \mathbb{P}^{2}$, $L_{\xi}=\nabla_{\xi}$ on the sections of $\pi^{\star}_{5,4}EndE$. Thus the $C^{1}-$condition implies that $L_{\xi}\nu\in C^{0}(\mathbb{S}^{5},EndE)$. Because $\xi=\frac{\partial}{\partial \theta_{\beta}}$ in $U_{\beta,\mathbb{S}^{5}}$, fixing $u_{1},\ u_{2}$ in the Sasakian coordinate, under a unitary trivialization,\\ $\nu\in W^{1,2}(\theta_{\beta})$ (as a function of $\theta_{\beta}\in \mathbb{S}^{1}$). The estimate in Lemma \ref{lem uniform convergence of F series} says that $$\Sigma_{|k|\geq N}|\nu_{\beta}(k)e^{\sqrt{-1}k\theta_{\beta}}|_{\pi^{\star}_{5,4}EndE}\leq \epsilon(N)+\frac{|\nu|^{2}_{W^{1,2}(\theta_{\beta})}}{\sqrt{N}}\leq \epsilon(N)+\frac{[|\nu|^{2}_{C^{0}(\theta_{\beta})}+|\nabla_{\xi}\nu|^{2}_{C^{0}(\theta_{\beta})}]}{\sqrt{N}}.$$
This means the ``remainder" $\Sigma_{|k|\geq N}|\nu_{\beta}(k)e^{\sqrt{-1}k\theta_{\beta}}|_{\pi^{\star}_{5,4}EndE}$ is bounded uniformly in $\beta$ and $p\in U_{\beta,\mathbb{S}^{5}}$. The desired uniform convergence is proved. 
\end{proof}

\begin{rmk}\label{rmk localization of F series}Let $(\nu)_{-k}$ denote the $-k-$th term $\nu_{k}\otimes s_{-k}$ in the Fourier-series. The value of $(\nu)_{-k}$ on an arbitrary Reeb orbit only depends on the value of $\nu$ on the same Reeb orbit. 
\end{rmk}

In the setting of Lemma \ref{lem uniform convergence of F series}, let  $c(\theta)$ be a smooth function, the operator $c(\theta)\frac{\partial}{\partial \theta}$ in general can not differentiate the Sasaki-Fourierseries terms by term i.e.  in general $$[c(\theta)\frac{\partial f}{\partial \theta}]_{k}\neq [c(\theta)\frac{\partial}{\partial \theta}] f_{k},$$ where the subscript $\cdot_{k}$ means the $k-$th Fourier-coefficient.  The next result shows that this is not the case for the two operators we are interested in. 
\begin{clm}\label{clm derivative of Fourier term is = Fourier term of derivative}Still in the setting of Lemma \ref{lem global F series}, for any $\nu\in C^{3}(\mathbb{S}^{5},\pi^{\star}_{5,4}EndE)$, in view of the notation $(,)_{-k}$ in Remark \ref{rmk localization of F series} for the Sasaki-Fouriercoefficients,  $$(\nabla^{\star}\nabla \nu)_{k}=\nabla^{\star}\nabla (\nu)_{k},\ \textrm{and}\  \ (L_{\xi}\nu)_{k}=L_{\xi}(\nu)_{k}.$$
\end{clm}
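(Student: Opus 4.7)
The plan is to exploit the $U(1)$-invariance intrinsic to the Sasakian setting. Let $\sigma_{t}:\mathbb{S}^{5}\to \mathbb{S}^{5}$ denote the isometric action generated by $\xi$, namely scalar multiplication by $e^{\sqrt{-1}t}$ on $\mathbb{C}^{3}\setminus O$ restricted to $\mathbb{S}^{5}$. The round metric is $\sigma_{t}$-invariant (this action is unitary on $\mathbb{C}^{3}$ and preserves $\mathbb{S}^{5}$), and the pullback bundle $\pi^{\star}_{5,4}(EndE)$ equipped with the pullback connection $\pi^{\star}_{5,4}A_{O}$ is $\sigma_{t}$-equivariant (both descend through $\pi_{5,4}$). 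Consequently, both $\nabla^{\star}\nabla$ and $L_{\xi}$ commute with $\sigma_{t}^{\star}$ as operators on sufficiently regular sections.

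The crucial observation is $\sigma_{t}^{\star}(\nu_{k}\otimes s_{-k})=e^{\sqrt{-1}kt}(\nu_{k}\otimes s_{-k})$: the section $\nu_{k}$ descends to $\mathbb{P}^{2}$ and is $\sigma_{t}$-fixed, while $s_{-k}$ transforms by the character $e^{\sqrt{-1}kt}$ under $\sigma_{t}^{\star}$ (consistent with $L_{\xi}s_{-k}=\sqrt{-1}k\,s_{-k}$ from \eqref{equ Lie derivative of s-k}). Combined with the uniform convergence of the Sasaki-Fourier series (Lemma \ref{lem term by term differentiation of F series}) and orthogonality of $\{e^{\sqrt{-1}kt}\}$ on $[0,2\pi]$, this identifies the $k$-th Sasaki-Fourier coefficient with the group average
\begin{equation*}
(\nu)_{k}\;=\;\frac{1}{2\pi}\int_{0}^{2\pi}e^{-\sqrt{-1}kt}\,\sigma_{t}^{\star}\nu\,dt.
\end{equation*}

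Applying $L_{\xi}$ under the integral and using $L_{\xi}\circ\sigma_{t}^{\star}=\sigma_{t}^{\star}\circ L_{\xi}$ yields $L_{\xi}(\nu)_{k}=(L_{\xi}\nu)_{k}$ at once. For $\nabla^{\star}\nabla$, the equivariance $\nabla^{\star}\nabla\circ\sigma_{t}^{\star}=\sigma_{t}^{\star}\circ\nabla^{\star}\nabla$, combined with the legitimacy of interchanging $\nabla^{\star}\nabla$ with the integral, gives $\nabla^{\star}\nabla(\nu)_{k}=(\nabla^{\star}\nabla\nu)_{k}$. The interchange is justified because $\nu\in C^{3}$ makes $t\mapsto \sigma_{t}^{\star}\nu$ a continuous one-parameter family in the $C^{2}$-topology; approximating the integral by Riemann sums and passing to the limit under the continuous linear operator $\nabla^{\star}\nabla:C^{2}\to C^{0}$ completes the argument.

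The step requiring the most care is the identification of the group average with the $k$-th Sasaki-Fourier term $\nu_{k}\otimes s_{-k}$. In a single chart $U_{\beta,\mathbb{S}^{5}}$ this follows directly from Fact \ref{fact Reeb is angular} (so that $\sigma_{t}$ shifts $\theta_{\beta}$ by $t$) and \eqref{equ local F series is equal to the global one}; the transition conditions \eqref{equ transition condition for v} then ensure the chart-wise formulas assemble into a globally defined average. Once this identification is in place, both parts of the claim follow from the single equivariance principle above, bypassing any need to examine the connection form in coordinates.
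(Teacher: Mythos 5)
Your proof is correct, and it takes a genuinely different route from the paper's. The paper works chart by chart: for $L_{\xi}$ it invokes term-by-term differentiation of the ordinary Fourier series in $\theta_{\beta}$, and for $\nabla^{\star}\nabla$ it builds a transverse geodesic frame $v_{i}=\frac{\partial}{\partial x_{i}}-\eta(\frac{\partial}{\partial x_{i}})\xi$ near each Reeb orbit, checks that $\nabla_{v_{i}}$ commutes with taking the $k$-th coefficient (because $\eta(\frac{\partial}{\partial x_{i}})$ and the connection are both pulled back from $\mathbb{P}^{2}$), and then uses the localization of $(\nu)_{-k}$ to a Reeb orbit together with $\nabla^{\star}\nabla\nu=\nabla_{v_{i}}\nabla_{v_{i}}\nu$ on that orbit. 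You instead realize the $k$-th Sasaki--Fourier term as the spectral projection $\frac{1}{2\pi}\int_{0}^{2\pi}e^{-\sqrt{-1}kt}\sigma_{t}^{\star}\nu\,dt$ for the isometric $U(1)$-action covering the identity on $\mathbb{P}^{2}$, and then commute any $\sigma_{t}$-equivariant operator with the average; your chart-wise verification of the averaging formula (via Fact \ref{fact Reeb is angular}, the orthogonality of characters, and the uniform convergence of Lemma \ref{lem term by term differentiation of F series}), the equivariance of the pullback metric, bundle, and connection, and the Riemann-sum justification of the interchange using continuity of $\nabla^{\star}\nabla:C^{2}\to C^{0}$ are all sound, with $C^{3}$ regularity exactly sufficient. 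What your approach buys is conceptual clarity: it isolates the single principle --- only $U(1)$-equivariant operators differentiate the Sasaki--Fourier series term by term --- which directly explains the paper's cautionary remark about operators like $c(\theta)\frac{\partial}{\partial\theta}$, and it subsumes the localization property of Remark \ref{rmk localization of F series} rather than needing it as an input. What the paper's computation buys is that it stays entirely within the frame calculus already set up for the Bochner formulas (Lemma \ref{lem Kahler geodesic frame}), so no additional functional-analytic interchange of limit and operator beyond term-by-term differentiation of a one-variable Fourier series is required.
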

\begin{proof}[Proof of Claim \ref{clm derivative of Fourier term is = Fourier term of derivative}:] It suffices to prove the two identities under the local Fourier-Series i.e. the left hand side of \eqref{equ local =global expression Fourier series}. We only need to work near each Reeb orbit. 

The identity for $L_{\xi}$ holds because $\xi=\frac{\partial}{\partial \theta_{\beta}}$ in $U_{\beta,\mathbb{S}^{5}}$, and the usual Fourier Series in $\theta_{\beta}$ can be differentiated term by term with respect to $\theta_{\beta}$.

To prove the identity for $\nabla^{\star}\nabla$, for any $[Z]\in \mathbb{P}^{2}$,  we need a transverse geodesic frame $[v_{i}=\frac{\partial}{\partial x_{i}}-\eta(\frac{\partial}{\partial x_{i}})\xi,i=1,2,3,4]$ near the Reeb orbit $\pi^{-1}_{5,4}[Z]$. Because $\xi[\eta (\frac{\partial}{\partial x_{i}})]=0$ i.e. $\eta (\frac{\partial}{\partial x_{i}})$ is independent of $\theta_{\beta}$, and that the connection is also pullback from $\mathbb{P}^{2}$, for each $i$, we find 
$$(\nabla_{v_{i}}\nu)_{-k}= (\nabla_{[\frac{\partial}{\partial x_{i}}-\eta (\frac{\partial}{\partial x_{i}})\xi]}\nu)_{-k}=\nabla_{[\frac{\partial}{\partial x_{i}}-\eta (\frac{\partial}{\partial x_{i}})\xi]}\nu_{k}=\nabla_{v_{i}}\nu_{k}\ \textrm{in the domain of}\ v_{i}.$$

Because $\nabla^{\star}\nabla \nu=\nabla_{v_{i}} \nabla_{v_{i}}\nu$ on the Reeb orbit $\pi^{-1}_{5,4}[Z]$, in view of Remark \ref{rmk localization of F series}, 
\begin{eqnarray*}& &(\nabla^{\star}\nabla \nu)_{-k}|_{\pi^{-1}_{5,4}|_{[Z]}}= (\nabla^{\star}\nabla \nu|_{\pi^{-1}_{5,4}[Z]})_{-k}= (\nabla_{v_{i}}\nabla_{v_{i}}\nu |_{\pi^{-1}_{5,4}[Z]})_{-k}
\\&=& (\nabla_{v_{i}} \nabla_{v_{i}}\nu)_{-k} |_{\pi^{-1}_{5,4}[Z]}=[\nabla_{v_{i}}\nabla_{v_{i}}(\nu)_{-k}] |_{\pi^{-1}_{5,4}[Z]}
\\&=& [\nabla^{\star}\nabla (\nu)_{-k}] |_{\pi^{-1}_{5,4}[Z]}. 
\end{eqnarray*}

The proof is complete.\end{proof}

\section{Some algebro-geometric calculations\label{Appendix Some algebro-geometric calculations}}
Let $\omega_{O(1)}$ be $\frac{\sqrt{-1}}{2\pi}$ times the curvature form of the standard metric on $O(1)\rightarrow \mathbb{P}^{2}$. Then $\omega_{O(1)}$ represents $c_{1}[O(1)]$, and  $\frac{d\eta}{2}=\pi \omega_{O(1)}$ (cf. \cite[page 142 and 30]{GH}, watch out the difference of our scaling from the one therein). Throughout this article, we call $\pi \omega_{O(1)}$ ($\frac{d\eta}{2}$) the Fubini-Study metric, and denote it by $\omega_{FS}$. The same applies to $\mathbb{P}^{n}$ as well (still let $\eta\triangleq d^{c}\log r$, $r$ is the distance to the origin in $\mathbb{C}^{n+1}$). 
\begin{proof}[\textbf{Proof of Lemma \ref{lem h1}}:] It suffices to show that 
\begin{equation}\label{equ Euler Characteristic}\chi[\mathbb{P}^{2},\ (EndE)(k)]=\frac{r^{2}k^{2}}{2}+r^{2}+\frac{3kr^{2}}{2}-2rc_{2}(E)+(r-1)c^{2}_{1}(E).\end{equation}

Because $K_{\mathbb{P}^{2}}=O(-3)$, when $k\geq 0$, Serre duality   says that 
\begin{equation}
h^{2}[\mathbb{P}^{2},\ (EndE)(k)]=h^{0}[\mathbb{P}^{2},\ (EndE)(-k-3)].
\end{equation} 
Then the desired identity \eqref{equ 0 lem h1} follows from the formula for Euler Characteristic \eqref{equ Euler Characteristic}. 

We go on to prove the corollary \eqref{equ 1 lem h1} using \eqref{equ 0 lem h1}. Since $E$ is Hermitian Einstein, then $E^{\star}$ is also Hermitian Einstein (see \cite[V, Proposition 7.7]{Kobayashi}). Combining \cite[IV, Proposition 1.4]{Kobayashi}), we find that $(End E)(-k-3)$ is Hermitian Einstein i.e. there exists a metric such that the curvature $F$ satisfies 
\begin{equation}
\frac{\sqrt{-1}}{2\pi} \omega^{i\bar{j}}_{O(1)}F_{i\bar{j}}=\mu Id_{E}\ \  \textrm{for some real constant}\ \mu.
\end{equation} 
When $k\geq -2$, 
$$\mu=\frac{deg [ (End E)(-k-3)]}{r^{2} Vol(\mathbb{P}^{2})}<0. $$
This means the mean curvature form defined in \cite[IV, below (1.3)]{Kobayashi} is negative definite, hence the vanishing theorem
\cite[III, Theorem 1.9]{Kobayashi} implies that 
\begin{equation}\label{equ h2}
h^{0}[\mathbb{P}^{2},\ (EndE)(-k-3)]=0.\ \ \textrm{Therefore}\ \ h^{2}[\mathbb{P}^{2},\ (EndE)(k)]=0. 
\end{equation}
Then \eqref{equ Euler Characteristic} and \eqref{equ h2} imply the desired identity $h^{1}[\mathbb{P}^{2},\ (EndE)(k)]=c_{2}(EndE)$. Furthermore, the usual Chern number inequality says $c_{2}(EndE)\geq 0$. Because $\mathbb{P}^{2}$ is simply connected and $rankE\geq 2$, if $c_{2}(EndE)=0$, \cite[Theorem 8.1]{UY} says that $E$ can not be simple, which contradicts stability of $E$. Thus $c_{2}(EndE)>0$.

Now we prove the Euler characteristic formula \eqref{equ Euler Characteristic}. Since $c_{1}(EndE)=0$, by \cite[II, (1.10)]{Kobayashi}, we compute\begin{eqnarray}& & ch[\mathbb{P}^{2},(EndE)(k)]= ch[\mathbb{P}^{2},O(k)]\cdot ch[\mathbb{P}^{2},End E]
\\&=&\{1+c_{1}[O(k)]+\frac{c^{2}_{1}[O(k)]}{2}\} \{r^{2}+c_{1}(End E)+\frac{1}{2}[c^{2}_{1}(End E)]-2c_{2}(End E)]\}\nonumber.
\\&=& r^{2}+kr^{2}[\omega_{O(1)}]-c_{2}(End E)\nonumber
+\frac{r^{2}k^{2}}{2}[\omega_{O(1)}]^{2}.
\nonumber \end{eqnarray}
The well known formula for Todd class states (for example, see \cite[page 288]{Kobayashi}):  \begin{equation} Td(\mathbb{P}^{2})=1+\frac{3[\omega_{O(1)}]}{2}+[\omega_{O(1)}]^{2}. \end{equation}
We compute
\begin{eqnarray}& & Td(\mathbb{P}^{2})\cdot ch[ (End E)(k)]\label{equ Td times ch}\nonumber
\\&=&r^{2}+(\frac{3r^{2}}{2}+kr^{2})\omega_{O(1)}-c_{2}(End E)
+[\frac{r^{2}k^{2}}{2}+r^{2}+\frac{3kr^{2}}{2}][\omega_{O(1)}]^{2}.
\end{eqnarray}
In conjunction with the remark on $\omega_{O(1)}$ above the underlying proof, the following holds. 
\begin{equation}
[\omega_{O(1)}]=c_{1}[O(1)],\ \textrm{hence}\ \int_{\mathbb{P}^{2}}[\omega_{O(1)}]^{2}=\int_{\mathbb{P}^{2}}\{c_{1}[O(1)]\}^{2}=1.
\end{equation}

Using Hirzebruch Riemann-Roch theorem, we integrate \eqref{equ Td times ch} to obtain
\begin{eqnarray}
& &\chi[\mathbb{P}^{2},\ (EndE)(k)]=\int_{\mathbb{P}^{2}}Td(\mathbb{P}^{2})\cdot ch[\mathbb{P}^{2},(End E)(k)]\nonumber
\\&=&\frac{r^{2}k^{2}}{2}+r^{2}+\frac{3kr^{2}}{2}-c_{2}(End E).
\end{eqnarray}

The proof of \eqref{equ Euler Characteristic} is complete. 
\end{proof}

\begin{proof}[\textbf{Proof of Lemma \ref{lem h1 EndTP2}}:] We only prove the formula for $h^{1}[\mathbb{P}^{2},\ (EndT^{\prime}\mathbb{P}^{2})(l)]$, the formula for $h^{0}[\mathbb{P}^{2},\ \{End_{0}(T^{\prime}\mathbb{P}^{2})\}(l)]$ thereupon follows by Riemann-Roch (see Lemma \ref{lem h1}).

 On $\mathbb{P}^{2}$, we tensor the Euler-Sequence $$0\rightarrow O \rightarrow O^{\oplus 3}(1)\rightarrow T^{\prime}\mathbb{P}^{2}\rightarrow 0$$ by the sheaf $\Omega^{1}(l)$, the local freeness of $\Omega^{1}(l)$ yields the exactness of the following. 
\begin{equation}0\rightarrow \Omega^{1}(l) \rightarrow [\Omega^{1}(l+1)]^{\oplus 3}\rightarrow (EndT^{\prime}\mathbb{P}^{2})(l)\rightarrow 0.
\end{equation}
Hence we have the following exact sequence of cohomologies
\begin{equation}\label{equ seq of cohomology}...\rightarrow H^{1}[\mathbb{P}^{2}, \Omega^{1}(l+1)]^{\oplus 3}] \rightarrow H^{1}[\mathbb{P}^{2},(EndT^{\prime}\mathbb{P}^{2})(l)]\rightarrow H^{2}[\mathbb{P}^{2}, \Omega^{1}(l)]\rightarrow ...
\end{equation}

By Bott formula of sheaf cohomology on complex projective spaces (see \cite[Section 1.1]{Okonek}), when $l\geq 0$, both $H^{1}[\mathbb{P}^{2}, \Omega^{1}(l+1)]^{\oplus 3}]$ and $H^{2}[\mathbb{P}^{2}, \Omega^{1}(l)]$ vanish.  Then $H^{1}[\mathbb{P}^{2},(EndT^{\prime}\mathbb{P}^{2})(l)]$ vanishes if $l\geq 0$.

When $l=-1$, 
$$H^{1}[\mathbb{P}^{2}, (\Omega^{1})^{\oplus 3}]= \{ H^{1}[\mathbb{P}^{2}, \Omega^{1}]\}^{\oplus 3}=\mathbb{C}^{3},\ H^{2}[\mathbb{P}^{2}, \Omega^{1}(-1)]=0.$$
Thus $H^{1}[\mathbb{P}^{2},(EndT^{\prime}\mathbb{P}^{2})(-1)]=\mathbb{C}^{3}$. By Serre-duality, we find $$H^{1}[\mathbb{P}^{2},(EndT^{\prime}\mathbb{P}^{2})(-2)]=\mathbb{C}^{3},\ \textrm{and}\ H^{1}[\mathbb{P}^{2},(EndT^{\prime}\mathbb{P}^{2})(l)]=0\ \textrm{if}\ l\leq -3.$$ 
\end{proof}
\section{K\"ahler identity for vector bundles}
The usual K\"ahler identity says that on a K\"ahler manifold, the Laplace-Beltrami operator (on functions) is twice of the $\bar{\partial}-$Laplacian. 
The  Lemma below is a straight-forward generalization to bundle case.  Though we do not know whether it is stated explicitly in literature,
the proof is completely routine. Please see a related calculation in \cite[III.1]{Kobayashi}. 
\begin{lem}\label{lem Kahler identity}Let $\Xi$ be a holomorphic Hermitian vector bundle over a K\"ahler manifold $(X,\omega)$. Let $A$ denote the Chern connection. Then 
\begin{equation}\label{equ general lem Kahler identity for vector bundles} \nabla_{A}^{\star}\nabla_{A}=2\partial^{\star}_{A}\bar{\partial}_{A}+2\pi\cdot \frac{\sqrt{-1}}{2\pi}F_{A}\lrcorner \omega. 
\end{equation}

Consequently, let $(E,h,A)$ be a Hermitian Yang-Mills triple on $\mathbb{P}^{n}$. In view of the convention for the K\"ahler metric in the first paragraph of Appendix \ref{Appendix Some algebro-geometric calculations} (above the proof of Lemma \ref{lem h1}), we consider the Fubini-Study metric $\omega_{FS}$.
On the twisted endomorphism bundle $(EndE)(l)$, under 
the tensor product  of $A$ and the standard connection on $O(l)$ (the twisted connection), suppressing the subscripts for the connection as usual, we have 
\begin{equation}\label{equ Pn lem Kahler identity for vector bundles}\nabla^{\star}\nabla=2\partial^{\star}\bar{\partial}+2nl\cdot Id. 
\end{equation}
In particular, when $n=2$, 
\begin{equation}\label{equ lem Kahler identity for vector bundles}\nabla^{\star}\nabla=2\partial^{\star}\bar{\partial}+4l\cdot Id. 
\end{equation}
\end{lem}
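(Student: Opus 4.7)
The plan is to deduce \eqref{equ general lem Kahler identity for vector bundles} from the Bochner--Kodaira--Nakano formula for the Chern connection, and then to convert the curvature term into an explicit scalar under the Hermitian Yang--Mills hypothesis on $\mathbb{P}^{n}$.

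First I would observe that on sections $s$ of $\Xi$, one has $d_{A}s=\nabla_{A}s$, so $\nabla_{A}^{\star}\nabla_{A}s=d_{A}^{\star}d_{A}s$. Splitting $d_{A}=\partial_{A}+\bar{\partial}_{A}$ and using that $\partial_{A}s$ is of type $(1,0)$ while $\bar{\partial}_{A}s$ is of type $(0,1)$ (so they are pointwise orthogonal), one gets
\begin{equation*}
\nabla_{A}^{\star}\nabla_{A}s=\partial_{A}^{\star}\partial_{A}s+\bar{\partial}_{A}^{\star}\bar{\partial}_{A}s,
\end{equation*}
where throughout I use the paper's convention in Section~\ref{sect Equivalence of the dstar closeness and dbar harmonicity} (so $\partial_{A}^{\star}$ denotes the $L^{2}$-adjoint of $\bar{\partial}_{A}$, and vice versa).

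The next step is to apply the standard Kähler identity for the Chern connection, $[\Lambda,\bar{\partial}_{A}]=-\sqrt{-1}\,\partial_{A}^{\star}$ and its conjugate, which yields the Bochner--Kodaira--Nakano identity
\begin{equation*}
\bar{\partial}_{A}^{\star}\bar{\partial}_{A}-\partial_{A}^{\star}\partial_{A}=[\sqrt{-1}F_{A},\Lambda]
\end{equation*}
on $\Xi$-valued forms. On a section $s$, $\Lambda s=0$, so this identity reduces to the zeroth-order endomorphism $\sqrt{-1}F_{A}\lrcorner\omega$ acting on $s$. Substituting gives $\partial_{A}^{\star}\partial_{A}+\bar{\partial}_{A}^{\star}\bar{\partial}_{A}=2\partial_{A}^{\star}\bar{\partial}_{A}+\sqrt{-1}F_{A}\lrcorner\omega$ (here $\partial_{A}^{\star}\bar{\partial}_{A}$ is, in the paper's convention, $\bar{\partial}_{A}$ composed with its $L^{2}$-adjoint), which is precisely \eqref{equ general lem Kahler identity for vector bundles}.

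For the $\mathbb{P}^{n}$-specialization \eqref{equ Pn lem Kahler identity for vector bundles} I would decompose the curvature of the twisted connection as $F_{(\operatorname{End}E)(l)}=F_{\operatorname{End}E}\otimes 1+1\otimes F_{O(l)}$. Since $(E,h,A)$ is Hermitian Yang--Mills, $F_{A}\lrcorner\omega=\mu\,\mathrm{Id}_{E}$ for some constant $\mu$; the induced curvature on $\operatorname{End}E$ acts as $[F_{A},\cdot]$, so $F_{\operatorname{End}E}\lrcorner\omega=[\mu\,\mathrm{Id}_{E},\cdot]=0$. For the standard connection on $O(l)$, the convention $\omega_{O(1)}=\tfrac{\sqrt{-1}}{2\pi}F_{O(1)}$ gives $F_{O(l)}=-2\sqrt{-1}l\,\omega_{FS}$, and on $\mathbb{P}^{n}$ the Kähler form satisfies $\omega_{FS}\lrcorner\omega_{FS}=n$. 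Hence $\sqrt{-1}F_{(\operatorname{End}E)(l)}\lrcorner\omega_{FS}=2nl\,\mathrm{Id}$, and plugging into the general identity gives exactly $\nabla^{\star}\nabla=2\partial^{\star}\bar{\partial}+2nl\,\mathrm{Id}$.

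The calculation is essentially classical, so I do not expect a serious obstruction; the only real hazard is bookkeeping of sign conventions and the paper's non-standard labelling of $\partial^{\star}$ versus $\bar{\partial}^{\star}$, and the precise normalization of the Fubini--Study form relative to $c_{1}[O(1)]$. Once those are fixed the identity \eqref{equ lem Kahler identity for vector bundles} is the immediate specialization $n=2$.
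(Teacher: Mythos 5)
Your derivation is correct and lands on the right formulas, but the first half takes a genuinely different route from the paper. The paper proves \eqref{equ general lem Kahler identity for vector bundles} by a bare-hands computation in a K\"ahler geodesic coordinate at a point: write $\nabla^{\star}\nabla\varphi=-2\sum_{j}(\varphi_{j\bar j}+\varphi_{\bar j j})$, commute the mixed derivatives at the cost of the curvature term $F_{A,j\bar j}$, and recognize $-4\sum_{j}\varphi_{\bar j j}$ as $2\partial^{\star}_{A}\bar\partial_{A}\varphi$ and $\sum_{j}F_{A,j\bar j}$ as $\tfrac{\sqrt{-1}}{2}F_{A}\lrcorner\omega$. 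You instead quote the Bochner--Kodaira--Nakano identity and restrict it to $0$-forms; that is cleaner if one is willing to import BKN as a black box, while the paper's computation is self-contained (and is, in effect, the proof of BKN in the only degree needed). The $\mathbb{P}^{n}$ specialization is essentially identical in both treatments: split the twisted curvature, kill the $End\,E$ contribution by the Hermitian Yang--Mills condition, and evaluate the $O(l)$ contribution from the normalization $\omega_{FS}=\pi\omega_{O(1)}$. You do the last step pointwise via $F_{O(l)}=-2\sqrt{-1}\,l\,\omega_{FS}$ and $\omega_{FS}\lrcorner\omega_{FS}=n$; the paper computes $\tfrac{\sqrt{-1}}{2\pi}F_{O(l)}\lrcorner\omega_{FS}$ as a ratio of Chern--Weil integrals. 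Same outcome, $2nl$.

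Two bookkeeping points, neither fatal but both worth fixing. First, as written your middle step does not imply your displayed conclusion: on a $0$-form $s$ only the \emph{first} term of the commutator vanishes, and $[\sqrt{-1}F_{A},\Lambda]s=-\Lambda(\sqrt{-1}F_{A}\,s)=-\sqrt{-1}F_{A}\lrcorner\omega\cdot s$, so the right-hand side of your BKN identity on sections is $-\sqrt{-1}F_{A}\lrcorner\omega$, not $+\sqrt{-1}F_{A}\lrcorner\omega$; it is precisely this minus sign that, when substituted into the sum of the two complex Laplacians, produces the $+\sqrt{-1}F_{A}\lrcorner\omega$ of \eqref{equ general lem Kahler identity for vector bundles}. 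With the reduction as you state it, the substitution would yield $2\partial_{A}^{\star}\bar\partial_{A}-\sqrt{-1}F_{A}\lrcorner\omega$. Second, under the paper's swapped convention \eqref{equ def of ad operators} the operator returning $\partial_{A}s$ to sections is $\bar\partial_{A}^{\star}$, so your orthogonal decomposition should read $\nabla_{A}^{\star}\nabla_{A}s=\bar\partial_{A}^{\star}\partial_{A}s+\partial_{A}^{\star}\bar\partial_{A}s$. Both are exactly the sign and labelling hazards you anticipated, and the final identities \eqref{equ Pn lem Kahler identity for vector bundles} and \eqref{equ lem Kahler identity for vector bundles} are correct.
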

\begin{proof}[Proof of Lemma \ref{lem Kahler identity}:] At an arbitrary point $p\in X$, let $(z_{j},\ j=1,.., n)$ be a K\"ahler geodesic coordinate for the metric $\omega$. By definition, we have for any section $\varphi$ of $\Xi$ that 
\begin{equation}\nabla^{\star}\nabla \varphi=-2\Sigma_{j}(\varphi_{j\bar{j}}+\varphi_{\bar{j}j})=-4\Sigma_{j}\varphi_{\bar{j}j}+2\Sigma_{j}F_{A,j\bar{j}}\cdot \varphi=2\partial^{\star}_{A}\bar{\partial}_{A}\varphi+2\Sigma_{j}F_{A,j\bar{j}}\cdot \varphi\ \textrm{at}\ p.
\end{equation}
Please compare it to the usual K\"ahler identity in \cite[Chap 0.7, page 106]{GH}. To complete the proof of \eqref{equ general lem Kahler identity for vector bundles}, it suffices to observe that $\Sigma_{j}F_{A,j\bar{j}}=\frac{\sqrt{-1}}{2}F_{A}\lrcorner \omega$.

To prove \eqref{equ Pn lem Kahler identity for vector bundles}, based on  \eqref{equ general lem Kahler identity for vector bundles}, we contract the following by $\omega_{FS}$. \begin{equation}F_{A}=[F_{E},\cdot]\otimes Id_{O(l)}+Id_{E}\otimes F_{O(l)}.
\end{equation}
The Hermitian Yang-Mills condition says that $[F_{E}\lrcorner \omega_{FS},\cdot]$ acts by $0-$endomorphism on $EndE$, using  $c_{1}[O(l)]=[\omega_{O(1)}]$,  the following holds as endomorphisms on $(End E)(l)$.
\begin{eqnarray*}& &\frac{\sqrt{-1}}{2\pi}F_{A}\lrcorner \omega_{FS}= \frac{\sqrt{-1}}{2\pi}Id_{E}\otimes (F_{O(l)}\lrcorner \omega_{FS})=nId(\frac{\int_{\mathbb{P}^{n}}c_{1}[O(l)]\wedge \omega_{FS}^{n-1}}{\int_{\mathbb{P}^{n}}\omega_{FS}^{n}})
\\&=&\frac{(nl)Id}{\pi}(\frac{\int_{\mathbb{P}^{n}} \omega_{O(1)}^{n}}{\int_{\mathbb{P}^{n}}\omega_{O(1)}^{n}})
\\&=& \frac{(nl)Id}{\pi}.
\end{eqnarray*}
We should  notice that the $\pi$ factor in $\omega_{FS}=\pi\omega_{O(1)}$ produces the ``$\pi$" in the denominator of the last line above. The proof of \eqref{equ Pn lem Kahler identity for vector bundles} is complete.
\end{proof}

The above K\"ahler identity relates the space of holomorphic sections to a certain eigenspace of the rough Laplacian.

\begin{lem}\label{lem a holomorphic section is an eigensection of the rough Laplacian}(A holomorphic section is an eigensection of the rough Laplacian) Let $(E,h,A_{O})$ be a Hermitian Yang-Mills triple on $\mathbb{P}^{2}$. For any nonnegative integer $l$, 
\begin{equation}\label{equ the actual equality of vector speces}\mathbb{E}_{4l}\nabla^{\star}\nabla|_{(End_{0}E)(l)}=H^{0}[\mathbb{P}^{2}, (End_{0}E)(l)].
\end{equation}
Moreover, the isomorphism ``$=$" above is an actual equality: a holomorphic section of $(End_{0}E)(l)$ is an eigensection of $\nabla^{\star}\nabla|_{(End_{0}E)(l)}$ with respect to the eigenvalue $4l$, and vice versa. 
\end{lem}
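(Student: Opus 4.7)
[Proof proposal for Lemma \ref{lem a holomorphic section is an eigensection of the rough Laplacian}]
The plan is to deduce the statement directly from the twisted K\"ahler identity \eqref{equ lem Kahler identity for vector bundles}. Since $(End_{0}E)(l)$ is a holomorphic sub-bundle of $(EndE)(l)$, cut out by the trace condition, and since the twisted Chern connection preserves this sub-bundle, the identity
\begin{equation*}
\nabla^{\star}\nabla=2\partial^{\star}\bar{\partial}+4l\cdot Id
\end{equation*}
restricts to an operator identity on smooth sections of $(End_{0}E)(l)\to \mathbb{P}^{2}$. I would begin by verifying this restriction explicitly, noting that the Hermitian Yang-Mills condition on $A_{O}$ forces $F_{A_{O}}\lrcorner \omega_{FS}$ to act by $[0,\cdot]$ on $EndE$, so only the $O(l)$ factor contributes the scalar $4l$ arising from $\frac{\sqrt{-1}}{2\pi}F_{O(l)}\lrcorner \omega_{FS}=\frac{2l}{\pi}$ (as computed in the proof of Lemma \ref{lem Kahler identity}).

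Given this restricted identity, the forward inclusion $H^{0}[\mathbb{P}^{2}, (End_{0}E)(l)]\subseteq \mathbb{E}_{4l}\nabla^{\star}\nabla|_{(End_{0}E)(l)}$ is immediate: for any holomorphic section $\varphi$, the condition $\bar{\partial}\varphi=0$ yields $\nabla^{\star}\nabla \varphi=4l\varphi$ on the nose. This is not just an isomorphism of vector spaces but an identity of sections, which is precisely what the ``actual equality'' clause of the lemma asserts.

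For the reverse inclusion, suppose $\varphi\in \mathbb{E}_{4l}\nabla^{\star}\nabla|_{(End_{0}E)(l)}$. Then $2\partial^{\star}\bar{\partial}\varphi=0$, and since $\mathbb{P}^{2}$ is compact I would pair this with $\varphi$ in the $L^{2}$ inner product induced by the Fubini-Study metric and the twisted Hermitian metric; using that $\partial^{\star}$ is the formal adjoint of $\bar{\partial}$ (see the convention introduced in \eqref{equ def of ad operators}), integration by parts gives
\begin{equation*}
0=\langle 2\partial^{\star}\bar{\partial}\varphi,\varphi\rangle_{L^{2}}=2\|\bar{\partial}\varphi\|^{2}_{L^{2}},
\end{equation*}
so $\bar{\partial}\varphi=0$ and $\varphi$ is holomorphic.

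There is no serious obstacle here; the only point deserving care is the compatibility of the K\"ahler identity with the reduction from $EndE$ to $End_{0}E$, which is handled by the trace-free decomposition $EndE=End_{0}E\oplus \mathcal{O}\cdot Id_{E}$ being preserved by the Chern connection (a standard fact for Hermitian Yang-Mills bundles). Once that is observed, the ``moreover'' statement that the identification is an actual equality of sections (not merely an abstract isomorphism) is built into the argument, because both containments are established by the same pointwise identity applied to the very same section $\varphi$.
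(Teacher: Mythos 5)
Your proof is correct and follows exactly the route the paper intends: the paper's own justification is the one-line remark that the claim is ``straight-forward by formula \eqref{equ lem Kahler identity for vector bundles}'', and your argument simply fills in that routine detail (the forward inclusion from $\bar{\partial}\varphi=0$, and the reverse inclusion by integrating $\langle 2\partial^{\star}\bar{\partial}\varphi,\varphi\rangle=2\|\bar{\partial}\varphi\|^{2}_{L^{2}}=0$ over the compact base). Your care with the restriction from $EndE$ to $End_{0}E$ and with the paper's adjoint convention is appropriate but does not constitute a departure from the intended proof.
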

The proof is straight-forward  by formula \eqref{equ lem Kahler identity for vector bundles}.
\section{Calculations on a Killing reductive homogeneous space\label{Appendix good frame}} 
\begin{proof}[\textbf{Proof of Lemma \ref{lem geodesic frame on KRHS}}:] For any $V,X,Y\in \mathfrak{g}$, the usual Koszul formula \cite[page 25]{Petersen} says 
\begin{eqnarray}\label{equ Koszul}2\langle \nabla_{V^{\star}}X^{\star},\ Y^{\star}\rangle&= &V^{\star}\langle X^{\star},\ Y^{\star}\rangle-Y^{\star}\langle V^{\star},\ X^{\star}\rangle+X^{\star}\langle Y^{\star},\ V^{\star}\rangle
\\& &+\langle [V^{\star}, X^{\star}],\ Y^{\star}\rangle-\langle [X^{\star}, Y^{\star}], V^{\star}\rangle+\langle [Y^{\star}, V^{\star}], X^{\star}\rangle.\nonumber
\end{eqnarray}
Because $V^{\star},\ X^{\star},\ Y^{\star}$ are Killing vector fields, we find 
\begin{eqnarray*}& & V^{\star}\langle X^{\star},\ Y^{\star}\rangle =\langle [V^{\star}, X^{\star}],\ Y^{\star}\rangle+\langle  X^{\star},\ [V^{\star},Y^{\star}]\rangle,
\\& & Y^{\star}\langle V^{\star},\ X^{\star}\rangle =\langle [Y^{\star}, V^{\star}],\ X^{\star}\rangle+\langle  V^{\star},\ [Y^{\star},X^{\star}]\rangle,
\\& & X^{\star}\langle Y^{\star},\ V^{\star}\rangle =\langle [X^{\star}, Y^{\star}],\ V^{\star}\rangle+\langle  Y^{\star},\ [X^{\star},V^{\star}]\rangle.
\end{eqnarray*}
Plugging  the above into  \eqref{equ Koszul}, we find 
\begin{equation}\label{equ -1 proof of lem geodesic frame}2\langle \nabla_{V^{\star}}X^{\star},\ Y^{\star}\rangle=\langle [V^{\star}, X^{\star}],\ Y^{\star}\rangle-\{\langle [X^{\star}, [Y^{\star},\ V^{\star}]\rangle+\langle V^{\star},\ [Y^{\star}, X^{\star}]\rangle \}.
\end{equation}

Next, for any $V,X,Y\in m$, we  show that the condition of Killing homogeneous space implies 
\begin{equation}\label{equ 0 proof of lem geodesic frame}\langle X^{\star}, [Y^{\star},\ V^{\star}]\rangle+\langle V^{\star},\ [Y^{\star}, X^{\star}]\rangle =0\ \textrm{at}\ eK. \end{equation}
\cite[Proposition 2.1]{Koda} says that $[X^{\star},Y^{\star}]=-[X,Y]^{\star}$ at $eK$ for any $X,\ Y\in \mathfrak{g}$. Then at $eK$,
\begin{eqnarray}\label{equ 1 proof of lem geodesic frame} & &\langle X^{\star}, [Y^{\star},\ V^{\star}]\rangle+\langle V^{\star},\ [Y^{\star}, X^{\star}]\rangle=-\langle X^{\star}, [Y,\ V]^{\star}\rangle-\langle V^{\star},\ [Y, X]^{\star}\rangle \nonumber
\\& =&-\langle X, [[Y,\ V]]_{m}\rangle_{m}-\langle V,\ [[Y, X]]_{m}\rangle_{m}\nonumber
\\& =&-\langle X, [Y,\ V]\rangle_{\mathfrak{g}}-\langle V,\ [Y, X]\rangle_{\mathfrak{g}}\ (\textrm{because}\ X,\ Y,\ V\in m,\ \textrm{and}\ m\perp \mathfrak{k})\nonumber
\\& =&0\ \ \ \ \ \ \ (\textrm{because}\ \langle,\rangle_{\mathfrak{g}}\ \textrm{is a scalar multiple of the Killing form}).
 \end{eqnarray}
In row $2$ of \eqref{equ 1 proof of lem geodesic frame}, the inner bracket $[Y,V]$ means the Lie bracket, while the outer means the projection to $m$ according to the reductive splitting. The identity \eqref{equ 0 proof of lem geodesic frame} is proved. 
 
For any $V,X\in m$, plugging  \eqref{equ 0 proof of lem geodesic frame} back into \eqref{equ -1 proof of lem geodesic frame}, because $Y\in m$ is also arbitrary, we find
\begin{equation}\nabla_{V^{\star}}X^{\star}=\frac{1}{2} [V^{\star}, X^{\star}]\ \textrm{at}\ eK.
\end{equation}
Therefore, for any $V\in m$, $\nabla_{V^{\star}}V^{\star}=0$ at $eK$.  

Because $g$ acts as an isometry (thus it preserves the Levi-Civita connection),  equation \eqref{equ geodesic frame on KRHS} holds at $gK$.
\end{proof}
\section{The standard connection on $O(l)\rightarrow \mathbb{P}^{2}$: proof of Lemma \ref{lem connection of O-1}\label{Appendix the standard connection on O(l)}}
To prove Lemma \ref{lem connection of O-1},  we need the $K-$invariant function corresponding to the local defining section of $O(-1)$.

We recall \eqref{equ the pi for Pn} for the natural map $\pi: SU(3)\rightarrow \mathbb{P}^{2}$.
\begin{lem}\label{lem defining section of O-1 and alpha}In $U_{0,\mathbb{P}^{2}}=\{[Z_{0},Z_{1},Z_{2}]\in \mathbb{P}^{2}|Z_{0}\neq 0\}$, the defining section $(1,u_{1},u_{2})$ of $O(-1)$ corresponds to the $Span{[1,0,0]}-$valued function $\alpha=(\frac{1}{g^{1}_{1}},0,0)$ on $SU(3)$, where $g_{11}$ is the $(1,1)-$entry of $g\in SU(3)$.  This means $$\left[\begin{array}{c}1 \\ u_{1} \\ u_{2}\end{array}\right]([g])=(g, \alpha)\ \textrm{for all}\ g\in \pi^{-1}U_{0,\mathbb{P}^{2}}.$$\end{lem}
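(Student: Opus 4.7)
The plan is to verify two things in sequence: that $\alpha$ is indeed $S[U(1)\times U(2)]$-invariant in the sense of \eqref{equ K invariant functions}, and that the identification \eqref{equ correspondence of sections} gives back the local frame $(1,u_1,u_2)$. Both reduce to direct matrix multiplications in $SU(3)$.

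First I would fix the identifications. By the discussion in Section \ref{sect interpreting universal bundle as an associated bundle}, the universal bundle is $O(-1)=SU(3)\times_{S[U(1)\times U(2)],\rho_{1}}\mathbb{C}$, where the model fiber $\mathcal{E}$ is taken to be $\mathrm{Span}[1,0,0]\subset\mathbb{C}^{3}$ (the actual fiber of $O(-1)$ over $[1,0,0]$), and $K=S[U(1)\times U(2)]$ acts on this line through the character $\rho_{1}$ that sends $k$ to its $(1,1)$-entry. Under this convention, for $v\in\mathcal{E}$ and $g\in SU(3)$, the pair $(g,v)\in SU(3)\times_{K,\rho_{1}}\mathcal{E}$ is the element $g\cdot v\in O(-1)|_{gK}$, where the action of $g$ is the linear action of $SU(3)$ on $\mathbb{C}^{3}$ restricted to lines.

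Next I would check $K$-invariance. Any $k\in S[U(1)\times U(2)]$ has the block form $k=\mathrm{diag}(e^{\sqrt{-1}\theta},B)$ with $B\in U(2)$ and $\det B=e^{-\sqrt{-1}\theta}$; in particular $k_{11}=e^{\sqrt{-1}\theta}$, so $(gk)_{11}=g_{11}k_{11}=g_{11}e^{\sqrt{-1}\theta}$. Therefore
\[
\alpha(gk)=\Bigl(\tfrac{1}{g_{11}e^{\sqrt{-1}\theta}},\,0,\,0\Bigr)=e^{-\sqrt{-1}\theta}\cdot\alpha(g)=\rho_{1}(k^{-1})\,\alpha(g),
\]
which is precisely \eqref{equ K invariant functions}. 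So $\alpha\in C^{\infty}_{K,\rho_{1}}(SU(3),\mathcal{E})$.

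Finally I would check the correspondence. By definition \eqref{equ the pi for Pn}, $\pi(g)=[g_{11},g_{21},g_{31}]$, so on $U_{0,\mathbb{P}^{2}}$ (where $g_{11}\ne 0$) the inhomogeneous coordinates are $u_{1}([g])=g_{21}/g_{11}$ and $u_{2}([g])=g_{31}/g_{11}$. Now just compute
\[
(g,\alpha(g))=g\cdot\alpha(g)=\tfrac{1}{g_{11}}\,g\!\left[\begin{array}{c}1\\0\\0\end{array}\right]=\tfrac{1}{g_{11}}\!\left[\begin{array}{c}g_{11}\\g_{21}\\g_{31}\end{array}\right]=\left[\begin{array}{c}1\\u_{1}([g])\\u_{2}([g])\end{array}\right],
\]
which is exactly the asserted identity. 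There is essentially no obstacle here beyond keeping track of the convention that the action character is $\rho_{1}$ (and not $\rho_{-1}$), which is the source of the factor $1/g_{11}$ rather than $g_{11}$ in the definition of $\alpha$; this is already the convention fixed in Section \ref{sect interpreting universal bundle as an associated bundle} upon identifying $O(-1)$ with $SU(3)\times_{K,\rho_{1}}\mathbb{C}$, so the computation closes up consistently.
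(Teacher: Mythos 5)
Your proposal is correct and follows essentially the same route as the paper: the key step in both is the one-line matrix computation $(g,\alpha)=\tfrac{1}{g_{11}}\,g\,[1,0,0]^{T}=[1,\,g_{21}/g_{11},\,g_{31}/g_{11}]^{T}=[1,u_{1},u_{2}]^{T}$. Your explicit verification of the $K$-invariance of $\alpha$ is a welcome addition that the paper only records as an "obvious" remark after the lemma.
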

\begin{rmk} $\alpha$ is obviously  $S[U(1)\times U(2)]-$invariant i.e. $\alpha(gk)=k^{-1}\alpha(g)$ for any\\ $k\in S[U(1)\times U(2)]$. Moreover, $U_{0,\mathbb{P}^{2}}$ is invariant under the action of $S[U(1)\times U(2)]$.
\end{rmk}
\begin{proof}[Proof of Lemma \ref{lem defining section of O-1 and alpha}:]  For any $g\in SU(3)$, it suffices to compute at $g\left[\begin{array}{c}1 \\ 0 \\0\end{array}\right]=\left[\begin{array}{c}g^{1}_{1} \\ g^{2}_{1} \\ g^{3}_{1}\end{array}\right]$ that  
$$(g,\alpha)=g\cdot \left[\begin{array}{c}\frac{1}{g^{1}_{1}} \\0 \\ 0\end{array}\right]=\left[\begin{array}{ccc}g^{1}_{1} & g^{1}_{2} & g^{1}_{3}  \\ g^{2}_{1} & g^{2}_{2} & g^{2}_{3}\\ g^{3}_{1} & g^{3}_{2} & g^{3}_{3} \end{array}\right]\ \left[\begin{array}{c}\frac{1}{g^{1}_{1}} \\0 \\ 0\end{array}\right]=\left[\begin{array}{c}1 \\ \frac{g^{2}_{1}}{g^{1}_{1}} \\ \frac{g^{3}_{1}}{g^{1}_{1}}\end{array}\right]\triangleq \left[\begin{array}{c}1 \\ u_{1} \\ u_{2}\end{array}\right].$$ 
\end{proof}

\begin{proof}[\textbf{Proof of Lemma} \ref{lem connection of O-1}:] On $O(-1)=SU(3)\times_{S[U(1)\times U(n)],\rho_{1}} \mathbb{C}$, the induced connection  $d_{induced}$ is $SU(3)-$invariant, so is the standard connection $d_{Chern}$. It suffices to verify that they coincide at the base point $o\triangleq\left[\begin{array}{c}1 \\ 0 \\0\end{array}\right]\in \mathbb{P}^{2}$ under the trivialization $s_{0}=\left[\begin{array}{c}1 \\ u_{1} \\u_{2}\end{array}\right]$. 

The  standard connection on $O(-1)$ yields that $d_{Chern}s_{0}=(\partial \log \phi_{0})s_{0}$. Consequently, by the definition of the K\"ahler potential $\phi_{0}$ above \eqref{equ norm of Z0}, we find   \begin{equation}
d_{Chern}s_{0}=0\ \textrm{at}\ o.
\end{equation}
All the elements in $m_{\mathbb{P}^{2}}$ have vanishing $(1,1)-$entry (see \eqref{equ basis of su3 first 4}). Therefore, for any\\ $X\in m_{\mathbb{P}^{2}}$, $$\alpha(e^{tX})=\left[\begin{array}{c}1+O(t^{2}) \\ 0 \\ 0\end{array}\right].$$ This implies  $X(\alpha)=0$ at $e\in SU(3)$. Lemma \ref{lem defining section of O-1 and alpha} means that $s_{0}=(g,\alpha)$.  Because $m_{\mathbb{P}^{2}}$ is horizontal, we find
\begin{equation}
d_{induced,X}s_{0}=0\ \textrm{at}\ o\ \textrm{for any}\ X\in m_{\mathbb{P}^{2}}.
\end{equation}
Thus,   the induced connection coincides with the Chern connection at the base point. 
\end{proof}

\section{The horizontal distribution of the Fubini-Study connection on the holomorphic tangent bundle: proof of Lemma \ref{lem induced connection = Fubini Study connection}\label{Appendix The horizontal distribution of the Fubini-Study connection on the holomorphic tangent bundle}}
\begin{Def} For any $X\in m_{\mathbb{P}^{2}}$, let $X^{\star,\mathbb{C}}$ be the projection of the real vector field $X^{\star}$ to $T^{\prime}\mathbb{P}^{2}$ (see the material from \eqref{equ J on mP2} to \eqref{equ s} for the projection, and see \eqref{equ Xstar} for the definition of $X^{\star}$).
\end{Def}
To prove Lemma \ref{lem induced connection = Fubini Study connection}, we need another form for the vector fields $ X_{1}^{\star,\mathbb{C}},\  Y_{1}^{\star,\mathbb{C}},\  X_{3}^{\star,\mathbb{C}},\  Y_{3}^{\star,\mathbb{C}}$.
\begin{formula}\label{formula vector fields generated by mCP2}In view of the basis \eqref{equ basis of su3 first 4} of $m_{\mathbb{P}^{2}}$,\begin{eqnarray}& & X_{1}^{\star,\mathbb{C}}=\pi_{5,4,\star}(Z_{1}\frac{\partial}{\partial Z_{0}}-Z_{0}\frac{\partial}{\partial Z_{1}}).\ \textrm{In}\ U_{0,\mathbb{P}^{2}},\ X_{1}^{\star,\mathbb{C}}=-(1+u^{2}_{1})\frac{\partial}{\partial u_{1}}-u_{1}u_{2}\frac{\partial}{\partial u_{2}}.\ \nonumber
\\& & Y_{1}^{\star,\mathbb{C}}=\sqrt{-1}\pi_{5,4,\star}(Z_{1}\frac{\partial}{\partial Z_{0}}+Z_{0}\frac{\partial}{\partial Z_{1}}).\ \textrm{In}\ U_{0,\mathbb{P}^{2}},\ Y_{1}^{\star,\mathbb{C}}=\sqrt{-1}(1-u^{2}_{1})\frac{\partial}{\partial u_{1}}-\sqrt{-1}u_{1}u_{2}\frac{\partial}{\partial u_{2}}.\nonumber
\\& & X_{3}^{\star,\mathbb{C}}=\pi_{5,4,\star}(Z_{2}\frac{\partial}{\partial Z_{0}}-Z_{0}\frac{\partial}{\partial Z_{2}}).\ \textrm{In}\ U_{0,\mathbb{P}^{2}},\  X_{3}^{\star,\mathbb{C}}= -u_{1}u_{2}\frac{\partial}{\partial u_{1}}-(1+u^{2}_{2})\frac{\partial}{\partial u_{2}}.\nonumber 
\\& &Y_{3}^{\star,\mathbb{C}}=\sqrt{-1}\pi_{5,4,\star}(Z_{2}\frac{\partial}{\partial Z_{0}}+Z_{0}\frac{\partial}{\partial Z_{2}}).\ \textrm{In}\ U_{0,\mathbb{P}^{2}},\ Y_{3}^{\star,\mathbb{C}}=-\sqrt{-1}u_{1}u_{2}\frac{\partial}{\partial u_{1}}+\sqrt{-1}(1-u^{2}_{2})\frac{\partial}{\partial u_{2}}.\nonumber
\end{eqnarray}

Consequently, $s^{\star}_{1}=-\pi_{5,4,\star}(Z_{0}\frac{\partial}{\partial Z_{1}}),\ s^{\star}_{2}=-\pi_{5,4,\star}(Z_{0}\frac{\partial}{\partial Z_{2}})$. In $U_{0,\mathbb{P}^{2}}$, 
$$s^{\star}_{1}=-\frac{\partial}{\partial u_{1}},\  \ \ s^{\star}_{2}=-\frac{\partial}{\partial u_{2}}. $$
\end{formula}
\begin{proof}[Proof of Formula \ref{formula vector fields generated by mCP2}:] We verify that $e^{tX_{1}}=\left[\begin{array}{ccc} \cos t  & \sin t &   0  \\  -\sin t & \cos t & 0  \\ 0 & 0 & 1 \end{array}\right] $ . Thus, when $t$ is sufficiently small with respect to $u_{1}$,  the following holds on $\mathbb{P}^{2}$.
\begin{equation}\label{equ 0 proof formula vector fields generated by mCP2}
e^{tX_{1}}\left[\begin{array}{c}1   \\  u_{1}  \\ u_{2} \end{array}\right] =\left[\begin{array}{c}\cos t+(\sin t)u_{1}   \\ -\sin t+(\cos t) u_{1}  \\ u_{2} \end{array}\right] =\left[\begin{array}{c}1  \\ \frac{-\sin t+(\cos t) u_{1}}{\cos t+(\sin t)u_{1} }  \\ \frac{u_{2}}{\cos t+(\sin t)u_{1} } \end{array}\right].
\end{equation}
Then the identity \begin{eqnarray}\label{equ 1 proof formula vector fields generated by mCP2}
& &\nonumber X^{\star,\mathbb{C}}_{1}=\frac{d}{dt}|_{t=0}e^{tX_{1}}\left[\begin{array}{c}1   \\  u_{1}  \\ u_{2} \end{array}\right] =\left[\begin{array}{c}0   \\ -(1+ u^{2}_{1})  \\ -u_{1}u_{2} \end{array}\right] =-(1+u^{2}_{1})\frac{\partial}{\partial u_{1}}-u_{1}u_{2}\frac{\partial}{\partial u_{2}}
\\&=&-\pi_{5,4,\star}(Z_{0}\frac{\partial}{\partial Z_{1}})+\pi_{5,4,\star}(Z_{1}\frac{\partial}{\partial Z_{0}})\end{eqnarray}
holds in $U_{0,\mathbb{P}^{2}}$. While the vector ``$\left[\begin{array}{c}1   \\  u_{1}  \\ u_{2} \end{array}\right]$" above means a point in ``$U_{0,\mathbb{P}^{2}}\subset \mathbb{P}^{2}$, the vector ``$\left[\begin{array}{c}0   \\ -(1+ u^{2}_{1})  \\ -u_{1}u_{2} \end{array}\right]$" above means a $(1,0)$ tangent vector (at the point). 

   By continuity of both $X^{\star,\mathbb{C}}_{1}$ and $-\pi_{5,4,\star}(Z_{0}\frac{\partial}{\partial Z_{1}})+\pi_{5,4,\star}(Z_{1}\frac{\partial}{\partial Z_{0}})$, they are identical everywhere on $\mathbb{P}^{2}$.

 Employing the following identities of matrix exponentials,
\begin{eqnarray}
& & e^{tY_{1}}=\left[\begin{array}{ccc} \cos t  & \sqrt{-1}\sin t &   0  \\  \sqrt{-1}\sin t & \cos t & 0  \\ 0 & 0 & 1 \end{array}\right],\ e^{tX_{3}}=\left[\begin{array}{ccc} \cos t  & 0 &    \sin t   \\ 0 & 1& 0  \\  -\sin t & 0 & \cos t \end{array}\right],\nonumber
\\& & e^{tY_{3}}=\left[\begin{array}{ccc} \cos t  &0 &   \sqrt{-1}\sin t   \\  0& 1& 0  \\ \sqrt{-1}\sin t  & 0 & \cos t  \end{array}\right],
\end{eqnarray}
similar computations as \eqref{equ 0 proof formula vector fields generated by mCP2} and \eqref{equ 1 proof formula vector fields generated by mCP2} show that in $U_{0,\mathbb{P}^{2}}$, 
 \begin{eqnarray}
& & Y^{\star,\mathbb{C}}_{1}=\sqrt{-1}(1-u^{2}_{1})\frac{\partial}{\partial u_{1}}-\sqrt{-1}u_{1}u_{2}\frac{\partial}{\partial u_{2}},\ X^{\star,\mathbb{C}}_{3}=-u_{1}u_{2}\frac{\partial}{\partial u_{1}}-(1+u^{2}_{2})\frac{\partial}{\partial u_{2}},\nonumber
\\& & Y^{\star,\mathbb{C}}_{3}=-\sqrt{-1}u_{1}u_{2}\frac{\partial}{\partial u_{1}}+\sqrt{-1}(1-u^{2}_{2})\frac{\partial}{\partial u_{2}}.
\end{eqnarray}
As below \eqref{equ 1 proof formula vector fields generated by mCP2}, the $3$ formulas respectively for $Y_{1}^{\star,\mathbb{C}},
X_{3}^{\star,\mathbb{C}}, Y_{3}^{\star,\mathbb{C}}$ follow by continuity. 
\end{proof}

\begin{proof}[\textbf{Proof of Lemma} \ref{lem induced connection = Fubini Study connection}:] Similarly to the proof of Lemma \ref{lem connection of O-1}, because both connections are left invariant, it suffices to show that they are identical at the base point $o$. 

The Fubini-Study co-variant derivatives of both $\frac{\partial}{\partial u_{1}}$ and $\frac{\partial}{\partial u_{2}}$ are $0$ at $o$. Using Formula \ref{formula vector fields generated by mCP2}, we find  \begin{equation}
\nabla^{FS}s_{1}^{\star}=\nabla^{FS}s_{2}^{\star}=0\ \ \textrm{at}\  o.
\end{equation}

In view of the correspondence in Lemma \ref{lem mtildex and xstar}, at $e\in SU(3)$, for any $X,Y\in m_{\mathbb{P}^{2}}$, we compute the ordinary derivative $$[Y\widetilde{m}_{\mathbb{P}^{2}}(X)](e)=-[[Y,X]]_{m_{\mathbb{P}^{2}}}.$$ On the right hand side of the above, the inner bracket is the Lie bracket, the outer one is the projection to $m_{\mathbb{P}^{2}}$.

 We straight-forwardly verify $[m_{\mathbb{P}^{2}},m_{\mathbb{P}^{2}}]\subseteq s[u(1)\times u(2)]$. Then 
$[Y\widetilde{m}_{\mathbb{P}^{2}}(X)](e)=0$. 
The correspondence  \eqref{equ lem mtildex and xstar} and Kobayashi-Nomizu formula \eqref{equ KN formula} again yields that $$(\nabla^{induced}_{Y}X^{\star})|_{o}=0.$$ On the complexification, this means for any $s\in m^{(1,0)}_{\mathbb{P}^{2}}$, $\nabla^{induced}s^{\star}=0$ at $o$.

Then $\nabla^{induced}$ coincides with $\nabla^{FS}$ at the base point $o$. By $SU(3)-$invariance,  they coincide everywhere on $\mathbb{P}^{2}$.
\end{proof}

\small
 
\end{document}